\newtheorem{maintheorem}{Theorem}
\newtheorem*{corollary*}{Corollary}
\newtheorem{theorem}{Theorem}[section]
\newtheorem{lemma}[theorem]{Lemma}
\newtheorem{proposition}[theorem]{Proposition}
\newtheorem{corollary}[theorem]{Corollary}
\theoremstyle{definition}
\newtheorem{definition}[theorem]{Definition}
\newtheorem{remark}[theorem]{Remark}
\newtheorem{assumption}[theorem]{Assumption}
\DeclareMathOperator{\mrk}{mrk}
\DeclareMathOperator{\wt}{wt}
\DeclareMathOperator{\height}{ht}
\DeclareMathOperator{\Edg}{E}
\DeclareMathOperator{\faces}{Face}
\DeclareMathOperator{\Gr}{Gr}
\DeclareMathOperator{\bd}{bd}
\DeclareMathOperator{\fr}{fr}
\DeclareMathOperator{\pr}{pr}
\DeclareMathOperator{\ext}{ext}
\DeclareMathOperator{\Newton}{{\rm Newton}}
\def\extQ{Q^{\ext}}
\def\frzQ{Q^{\fr}}
\def\Qprin{Q^{\pr}}
\def\prinSig{\Sigma^{\pr}}
\def\hvec^#1_#2{\mathbf{h}^{#1}\!(#2)}
\title{Dimer face polynomials in knot theory and cluster algebras}
\author{Karola M\'esz\'aros, Gregg Musiker, Melissa Sherman-Bennett, Alexander Vidinas}
\date{August 2024}
\begin{document}

 \begin{abstract} 

The set of perfect matchings of a connected bipartite plane graph $G$ has the structure of a distributive lattice, as shown by Propp, where the partial order is induced by the \emph{height} of a matching. In this article, our focus is the \emph{dimer face polynomial} of $G$, which is the height generating function of all perfect matchings of $G$.  We connect the dimer face polynomial on the one hand to knot theory, and on the other to cluster algebras. We show that certain dimer face polynomials are multivariate generalizations of Alexander polynomials of links, highlighting another combinatorial view of the Alexander polynomial. We also show that an arbitrary dimer face polynomial is an $F$-polynomial in the cluster algebra whose initial quiver is dual to the graph $G$. As a result, we recover a recent representation theoretic result of Bazier-Matte and Schiffler that connects $F$-polynomials and Alexander polynomials, albeit from a very different, dimer-based perspective. As another application of our results, we also show that all nonvanishing Pl\"ucker coordinates on open positroid varieties are cluster monomials.

 \end{abstract}

\maketitle

\setcounter{tocdepth}{1}
	\tableofcontents

\section{Introduction}

Perfect matchings, also called \emph{dimers}, are ubiquitous in mathematics. They appear in statistical mechanics \cite{cohn1996local, kenyon2010dimer}, the theory of cluster algebras \cite{di2014t,GK}, the study of string theory \cite{ eager2012colored, franco2006brane}, and in knot theory \cite{CohenThesis, TwistedDimer,  ClockLattice}, to name a few. 

Propp \cite{P02} showed that the set of dimers of a connected plane graph $G$ can be endowed with the structure of a distributive lattice, as long as every edge of $G$ is in some perfect matching (see Theorem \ref{thm:dimer-lattice} below). One way to phrase this lattice structure, which is also due to Propp, is to give each matching $M$ of $G$ a height vector $\height(M) \in \mathbb{Z}^{\faces(G)}$. The partial order on matchings is then given by coordinate-wise comparison of height vectors.

\medskip

The main object of study throughout this paper is the height generating function of all perfect matchings
\[D_G= \sum_{M \text{ dimer}} \mathbf{y}^{\height(M)}\]
which we call the \textbf{dimer face polynomial}. The height function has been widely used and studied in literature on perfect matchings, for example in \cite{EKLP,P02}. For special graphs called \emph{snake graphs}, the dimer face polynomial was studied in \cite{mussch,MSW,MSW2}. The dimer face polynomial also is implicit in work of \cite{vichitkunakorn2016solutions} and \cite{jeong2013gale} in the context of the Octahedron and Gale-Robinson recurrences, respectively. See also \cite[Section 6]{eager2012colored} where such polynomials are studied for brane tilings in string theory, and are referred to as \emph{colored partition functions}. 
The dimer face polynomial is related to the edge generating function of perfect matchings, also sometimes called the \emph{partition function}, by a change of variables followed by a rescaling (see \cref{rem:dimer-polyomial-edges}). 

Our main results relate the dimer face polynomial on the one hand to knot theory, and on the other hand to cluster algebras. In the knot theory direction, building on results of Cohen, Dasbach, Russell and Teicher  \cite{CohenThesis, TwistedDimer, ClockLattice}, we express the Alexander polynomials of links as specializations of dimer face polynomials:

\begin{maintheorem}\label{thm:intro-Alex-poly} For a link diagram $L$ and a segment $i$, let $G_{L,i}$ be the associated   truncated face-crossing incidence graph. Then, the dimer face polynomial $D_{G_{L,i}}$ equals the  Alexander polynomial $\Delta_L(t)$ after we specialize its variables to $-1, -t, -t^{-1}$ (as determined by $L$).
   \end{maintheorem}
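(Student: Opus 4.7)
The plan is to recognize the Alexander polynomial via Kauffman's classical state-sum formula, which expresses $\Delta_L(t)$ (up to the unit ambiguity $\pm t^k$ inherent in $\Delta_L$) as a signed sum over Kauffman states of the link diagram, with each state contributing a product of local weights drawn from $\{1,-1,t,-t,t^{-1},-t^{-1}\}$, one per crossing. The cited work of Cohen--Dasbach--Russell--Teicher already identifies Kauffman states with the perfect matchings of the face-crossing incidence graph, and the truncation at a segment $i$ is designed to match Kauffman's normalization choice (which eliminates exactly the overall unit ambiguity). Consequently, on the Alexander side we obtain a sum indexed by the same set $\{M : M \text{ a dimer of } G_{L,i}\}$ as the dimer face polynomial $D_{G_{L,i}}$, and the theorem reduces to matching the two weightings term by term.

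To carry this out, I would fix a base matching $M_0$ (say, the minimum of the dimer lattice) and the corresponding Kauffman state $s_0$, and first check that the Kauffman weight of $s_0$, under the specialization $y_f \mapsto -1, -t, -t^{-1}$ determined by $L$, equals the monomial $\mathbf{y}^{\height(M_0)}$ under the same specialization; this is essentially a choice of normalization. By \cite{P02}, every other dimer $M$ is reached from $M_0$ by a sequence of elementary face rotations, and each rotation at a face $f$ changes the height vector by the standard basis vector $\mathbf{e}_f$. The heart of the proof is then the local claim: a face rotation at $f$ alters Kauffman's state weight by exactly the value $-1$, $-t$, or $-t^{-1}$ assigned to $y_f$ by the specialization, the choice being dictated by whether $f$ is labeled by a non-marked region, an $x$-region, or a $y$-region in Kauffman's labeling scheme at the surrounding crossings.

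I would verify this local claim by working through each configuration of over- and under-strands at the (at most four) crossings bounding $f$: a face rotation swaps the two edges of $f$ incident to each such crossing, permuting the two Kauffman labels at that crossing, and the resulting ratio of weights is a direct local computation. Once this identity is established, the theorem follows by induction on the distance in the dimer lattice from $M_0$: both sides change by the same factor at each step, and they agree at $M_0$. The main obstacle, and the reason the argument requires care rather than a one-line invocation, is the bookkeeping: correctly pairing each face of the link diagram with one of the three specialization values, tracking the signs contributed by Kauffman's orientation conventions against the alternating structure of the bipartite matchings, and checking that the truncation of $G_{L,i}$ at segment $i$ is compatible with Kauffman's rule for removing one crossing from the state sum. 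Once these compatibilities are verified, the inductive match of weights yields the result.
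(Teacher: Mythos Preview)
Your proposal is correct and follows essentially the same route as the paper: both arguments use the bijection between Kauffman states and dimers of $G_{L,i}$, normalize so that the bottom element $\hat{0}$ contributes $1$ on each side, and then verify by a local case analysis that a single face flip (equivalently, a clock move) multiplies the signed Kauffman weight $(-1)^{b(S)}\langle L\mid S\rangle$ by exactly the value $-t$, $-t^{-1}$, or $-1$ assigned to $y_f$ under the specialization. One small geometric correction that will streamline your case analysis: each non-infinite face of $G_{L,i}$ is a quadrilateral with exactly \emph{two} black (crossing) vertices, not ``at most four,'' so the local ratio is governed by the over/under status at the two endpoints of the corresponding segment $j$; the paper's proof runs through precisely these four cases (under$\to$over, over$\to$under, over$\to$over, under$\to$under), and the truncation at segment $i$ removes two white \emph{region} vertices rather than a crossing.
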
 

Theorem \ref{thm:intro-Alex-poly} appears as \cref{thm:alexdimer} in the text. There are previous formulations of the Alexander polynomial in terms of dimers, using edges rather than faces \cite{CohenThesis, TwistedDimer}, which are closely related to Theorem ~\ref{thm:intro-Alex-poly} as we explain in Section \ref{sec:connection-twisted-dimers}. Indeed, there is a vast pool of combinatorial interpretations of the Alexander polynomial \cite{crowell1959genus, K06, murasugi2003alexander}, and Theorem ~\ref{thm:intro-Alex-poly} adds  another way of thinking about the Alexander polynomial by weighting the faces of planar bipartite graphs. 

\medskip
In the cluster algebra direction, we show the following result, which appears as \cref{thm:dimer-poly-is-F-poly}.

\begin{maintheorem}\label{thm:intro-dimer-F-poly} Let $G$ be a connected plane graph and suppose every edge of $G$ is in some perfect matching. Let $Q_G$ denote the dual quiver of $G$.
The dimer face polynomial of $G$ is an $F$-polynomial in the cluster algebra $\mathcal{A}(Q_G)$.
\end{maintheorem}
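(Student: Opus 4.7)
The plan is to exhibit $D_G$ as the $F$-polynomial of a specific cluster variable or cluster monomial in $\mathcal{A}(Q_G)$, constructed by a mutation-based induction. The foundation is the classical correspondence between urban renewals (square moves) on plane bipartite graphs and mutations of the dual quiver: when $f$ is a square face of $G$, an urban renewal $G \to G'$ at $f$ satisfies $Q_{G'} = \mu_f(Q_G)$, so a sequence of urban renewals gives a concrete realization of a mutation sequence on $Q_G$.

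First I would handle the base case by identifying plane bipartite graphs whose dimer face polynomial is visibly an $F$-polynomial --- the simplest being graphs with a unique perfect matching (giving $D_G = 1$, the $F$-polynomial of an initial cluster variable) or graphs with exactly two matchings related by a single face twist (giving $D_G = 1 + y_f$, the $F$-polynomial produced by a single mutation at $f$). A preliminary reduction may be needed to assume every face of $G$ is a square; subdivision by insertion of $2$-valent vertices along edges leaves both $D_G$ and the combinatorial type of $Q_G$ essentially intact after the obvious bookkeeping, so one may freely square up non-square faces.

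The main technical step is the transformation law: if $G'$ is obtained from $G$ by urban renewal at a square face $f$, then $D_G$ and $D_{G'}$ are related by the $F$-polynomial mutation formula at $f$, of the form
\[
D_{G'} \cdot D_G \;=\; \mathbf{y}^{a}\,\prod_{j \to f} D_{G_j}^{b_{jf}} \;+\; \prod_{f \to j} D_{G_j}^{b_{fj}},
\]
where the $D_{G_j}$ are dimer face polynomials corresponding to the neighboring face vertices in the mutation sequence. I would prove this by a dimer bijection: the four edges around $f$ in $G$ are swapped for the four new edges around $f'$ in $G'$, and each matching of $G$ pairs with one or two matchings of $G'$ according to the local configuration at $f$. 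Using Propp's height function to track how exponents in $\mathbf{y}^{\height(M)}$ change under the swap, this pairing realizes precisely the cluster exchange relation. Induction on the number of faces of $G$, reducing to the base case via urban renewals, then identifies $D_G$ as an $F$-polynomial in $\mathcal{A}(Q_G)$.

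The hard part is the transformation law. Matching urban renewal to the $F$-polynomial mutation \emph{on the nose} requires meticulous control of heights and of the arrow multiplicities in $Q_G$ (which may exceed one), and one must verify that the neighboring $F$-polynomials entering the mutation formula are themselves realized as dimer face polynomials of natural subgraphs. A secondary subtlety is the preliminary reduction to square-face graphs: if clean subdivision moves are not available, a fallback would be to bypass induction entirely and give a direct representation-theoretic construction of a $Q_G$-representation $M$ whose Grassmannian Euler characteristics $F_M = \sum_{\mathbf{e}} \chi(\Gr_{\mathbf{e}}(M))\,\mathbf{y}^{\mathbf{e}}$ recover $D_G$, by interpreting the submodules of $M$ (graded by dimension vector) as the dimers of $G$ (graded by height).
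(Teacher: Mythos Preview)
Your overall strategy---induct via urban renewal and match the combinatorial move to a cluster-algebraic recursion---is the right shape, and is essentially what the paper does. But two points in your outline are genuine gaps.

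First, the transformation law you write is the wrong one. When $G'$ is obtained from $G$ by a square move at $f$, the polynomials $D_G$ and $D_{G'}$ are \emph{not} related by the exchange relation $D_{G'}\cdot D_G = \mathbf{y}^a\prod D_{G_j}^{b_{jf}} + \cdots$. Rather, since $Q_{G'}=\mu_f(Q_G)$, the paper realizes $D_G$ and $D_{G'}$ as the $F$-polynomials of the \emph{same} cluster variable $z$ computed with respect to two adjacent initial seeds. The relevant recursion is therefore the Fomin--Zelevinsky formula for how $F_z^{\Sigma}$ and $F_z^{\mu_f(\Sigma)}$ are related (Proposition~\ref{prop:adjacent-F}): a substitution $y_a\mapsto y_a'$ followed by multiplication by $(1+y_f^{-1})^{\min(0,-g_f)}/(1+y_f)^{\min(0,g_f)}$, where $g_f$ is the $f$-entry of the $g$-vector. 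There are no ``neighboring $D_{G_j}$''; nothing in the argument identifies the cluster variables at vertices other than the final one as dimer face polynomials of auxiliary graphs. A consequence is that the induction must simultaneously track the $g$-vector: the paper proves $\mathbf{g}_z^{Q_G}=\mathbf{h}^G(\hat 0)$, the vector with entries $h_f=|f|/2-|\hat 0\cap f|-1$, and verifies that this transforms correctly under each move---this is what pins down the multiplicative factor above, and it requires its own case analysis on how $\hat 0$ sits around the square face.

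Second, urban renewals do not reduce the number of faces, so your induction as written does not terminate. The paper introduces, alongside (E) and (S), a \emph{bigon removal} move (B), and proves (via the theory of reduced plabic graphs) that every graph with property~$(*)$ admits a reduction sequence of (E), (S), (B) moves down to a single edge. On the quiver side, bigon removal at $f$ corresponds to mutating at $f$ and then deleting vertex~$f$; the paper handles this with a separate lemma (Proposition~\ref{prop:F-g-adding-mutable}) showing that $F$-polynomials and the relevant $g$-vector entries are unchanged when passing to an induced subquiver containing all vertices ever mutated. The base case is then the bigon, where $D_G=1+y_f$ and $\mathbf{g}=(-1)$, matching a single mutation. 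Your proposed ``square up by subdivision'' preprocessing does not substitute for this, since subdivision only inserts degree-$2$ vertices (an (E) move) and does not by itself produce the face-count decrease needed for the induction.
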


Further, we give an explicit mutation sequence giving this $F$-polynomial (which can be read off of $G$), determine the corresponding $g$-vector, and give a cluster expansion formula for the corresponding cluster variable $x$ in terms of the dimers of $G$. Interestingly, the denominator vector of $x$ is the all-1's vector.

Perfect matchings of plane graphs have appeared previously in cluster algebras and related fields. The $F$-polynomials in cluster algebras from surfaces can be obtained by specializing (attaching the same variable to multiple faces)
dimer face polynomials of various \emph{snake graphs} \cite{MSW}. If the surface is a polygon, so the cluster algebra is type $A$, no specialization is necessary and the $F$-polynomials are dimer face polynomials. \cref{thm:intro-dimer-F-poly} recovers these dimer formulas for type $A$ $F$-polynomials when $G$ is chosen to be a snake graph (see \cref{prop:TypeAF}). 
For special graphs coming from link diagrams, \cref{thm:intro-dimer-F-poly} has the following corollary, which appears in the text as \cref{cor:alex-poly-specialization-F-poly}.

\begin{corollary*}
  The Alexander polynomial of any link $L$ is a specialization of an $F$-polynomial in $\mathcal{A}(Q_L)$.  
\end{corollary*}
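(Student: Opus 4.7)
The plan is to simply compose the two main theorems stated in the introduction. By \cref{thm:intro-Alex-poly}, for a chosen segment $i$ of the link diagram $L$, the Alexander polynomial $\Delta_L(t)$ is obtained from the dimer face polynomial $D_{G_{L,i}}$ by evaluating each face variable $y_f$ at one of $-1, -t, -t^{-1}$, where the value depends on the local structure of $L$ at the face $f$. By \cref{thm:intro-dimer-F-poly}, the polynomial $D_{G_{L,i}}$ is itself an $F$-polynomial in the cluster algebra $\mathcal{A}(Q_{G_{L,i}})$, where $Q_{G_{L,i}}$ is the dual quiver of $G_{L,i}$. Identifying $Q_L$ with $Q_{G_{L,i}}$ (which I take to be the definition of $Q_L$ used in the corollary), the composition of the two specializations is itself a specialization, yielding $\Delta_L(t)$ as a specialization of an $F$-polynomial in $\mathcal{A}(Q_L)$.

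The only substantive step is verifying that the hypothesis of \cref{thm:intro-dimer-F-poly} holds for the truncated face-crossing incidence graph $G_{L,i}$: namely, that $G_{L,i}$ is connected and that every edge of $G_{L,i}$ lies in some perfect matching. Connectedness should follow directly from the construction of $G_{L,i}$ as a face-crossing incidence graph of a link diagram, since the underlying link diagram is connected. The fact that every edge of $G_{L,i}$ is in some perfect matching is essentially the statement needed for \cref{thm:intro-Alex-poly} to be nontrivial, and so should already be in hand, either as an intermediate result en route to \cref{thm:intro-Alex-poly} or as a known property of the Kauffman-type graphs used to compute Alexander polynomials via dimers (cf.\ \cite{CohenThesis, TwistedDimer}).

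I would write the proof as a short two-line argument: invoke \cref{thm:intro-Alex-poly} to obtain the first specialization, cite the lemma (or remark) verifying that the hypotheses of \cref{thm:intro-dimer-F-poly} are satisfied for $G_{L,i}$, and then invoke \cref{thm:intro-dimer-F-poly} to obtain the realization of $D_{G_{L,i}}$ as an $F$-polynomial. The composition of the two evaluations then gives the corollary. The main potential obstacle is bookkeeping: making sure that the variables of the $F$-polynomial, indexed by the vertices of $Q_L$ (equivalently, the faces of $G_{L,i}$), are matched correctly to the face variables $y_f$ that appear in the specialization of \cref{thm:intro-Alex-poly}. This is a matter of tracing through definitions rather than a genuine mathematical difficulty.
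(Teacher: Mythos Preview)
Your overall strategy---combine \cref{thm:intro-Alex-poly} with \cref{thm:intro-dimer-F-poly}---is exactly the right idea, and the paper's proof is indeed short. But your identification of $Q_L$ with $Q_{G_{L,i}}$ is not the paper's definition, and this is the one substantive point where your argument falls short.

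In the paper (see \cref{cor:alex-poly-specialization-F-poly}), $Q_L$ is defined as $\extQ_{G_L}$, the \emph{extended} dual quiver of the full face-crossing incidence graph $G_L$. Its vertices are indexed by \emph{all} segments of $L$, including the one you deleted. In particular, $Q_L$ does not depend on the choice of segment $i$, whereas $Q_{G_{L,i}}$ does. What \cref{thm:intro-dimer-F-poly} gives you directly is only that $D_{G_{L,i}}$ is an $F$-polynomial in $\mathcal{A}(Q_{G_{L,i}})$. To upgrade this to an $F$-polynomial in the larger cluster algebra $\mathcal{A}(\extQ_{G_L})$, the paper uses two additional facts: first, that $Q_{G_{L,i}}$ is an induced subquiver of $\extQ_{G_L}$ (this comes from observing that deleting the two white vertices of $f_i$ from $G_L$ removes exactly the quiver vertices for $f_i$ and the faces sharing a white vertex with it); second, \cref{prop:F-g-adding-mutable}, which says that adding mutable vertices to a quiver does not change the $F$-polynomials of cluster variables reachable by mutations in the original subquiver. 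These two steps are packaged together in \cref{thm:cluster-stuff-for-link-diag}, and the corollary then follows by combining that theorem with \cref{thm:alexdimer}.

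So your proof sketch proves a slightly weaker statement (the Alexander polynomial is a specialization of an $F$-polynomial in a cluster algebra that depends on the segment $i$), and needs the induced-subquiver observation plus \cref{prop:F-g-adding-mutable} to reach the stated corollary. Your remarks about verifying property $(*)$ for $G_{L,i}$ are correct: this is handled in the paper by \cref{assump:connected-prime-like-no-nugatory} and the cited result of Cohen--Teicher.
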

 This insight was first conjectured in the beautiful work of Bazier-Matte and Schiffler \cite{B21}, which served as an inspiration for our work. It was also proved independently by Bazier-Matte and Schiffler in their recent preprint \cite{BMS24}, which appeared during the final stages of the preparation of this manuscript. 

We also give two applications of \cref{thm:intro-dimer-F-poly}. The first gives a deeper connection between the knot-theoretic and cluster algebraic results on the dimer face polynomial. Each link diagram for a link $L$ gives rise to many dimer face polynomials, all of which specialize to the Alexander polynomial of $L$. In \cref{thm:cluster-stuff-for-link-diag}, we show that all of these dimer face polynomials are $F$-polynomials in a single cluster algebra, confirming part of a conjecture of \cite{B21}. A similar result independently appeared in \cite{BMS24}.

There is a resemblance between the combinatorics of dimers on plane graphs and the study of almost-perfect matchings, see \cref{def:almost-perfect}, on planar bicolored graphs on a disk (plabic graphs).  Almost-perfect matchings on plabic graphs have been used in the combinatorial study of total positivity as initiated by Postnikov \cite{Postnikov}, and in particular in the works \cite{marsh2016twists,MS-twist}.  

Using this connection, the second application of \cref{thm:intro-dimer-F-poly} involves \emph{open positroid varieties} $\Pi_G^{\circ}$. Open positroid varieties $\Pi_G^{\circ}$ are subvarieties of the Grassmannian, introduced in \cite{KLS}. 
The coordinate ring $\mathbb{C}[\Pi_G^{\circ}]$ is generated as an algebra by Pl\"ucker coordinates. Additionally, $\mathbb{C}[\Pi_G^{\circ}]$ is a cluster algebra \cite{GL-positroid}, and so is generated as an algebra by the cluster variables. We determine the relationship between these two generating sets in the result below, which appears as \cref{thm:pluckers-cluster-monomials}.

\begin{maintheorem}\label{thm:intro-pluckers-cluster-mono}
    All nonvanishing Pl\"ucker coordinates on $\Pi_G^{\circ}$ are cluster monomials.
\end{maintheorem}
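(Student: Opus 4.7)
The plan is to combine \cref{thm:intro-dimer-F-poly} with a dimer-based formula for Plücker coordinates on open positroid varieties, using the twist of Muller-Speyer \cite{MS-twist} as a bridge. First, fix a reduced plabic graph $\mathcal{G}$ representing the positroid of $\Pi_G^{\circ}$. By Galashin-Lam \cite{GL-positroid}, the faces of $\mathcal{G}$ yield a seed of $\mathbb{C}[\Pi_G^{\circ}]$ whose initial cluster (and frozen) variables are the target-labeled face Plücker coordinates and whose quiver $Q_{\mathcal{G}}$ is the face quiver of $\mathcal{G}$. The aim is, for each nonvanishing Plücker coordinate $P_I$, to exhibit a seed in which $P_I$ is a monomial in the cluster and frozen variables.

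The second step is to invoke the Muller-Speyer twist formula: $\tau^*(P_I)$ is a generating function in the initial face Plückers, summed over almost-perfect matchings of $\mathcal{G}$ with boundary $I$. I would then convert such almost-perfect matchings into perfect matchings of an associated connected bipartite plane graph $G_I$, obtained from $\mathcal{G}$ by excising the boundary vertices outside $I$ along with any resulting pendant structure. After factoring out a reference matching $M_I^{\circ}$ (the minimum of the dimer lattice, which will play the role of the $g$-vector), the generating function becomes the dimer face polynomial $D_{G_I}$ evaluated at appropriate $\hat{\mathbf{y}}$-variables.

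By \cref{thm:intro-dimer-F-poly}, $D_{G_I}$ is an F-polynomial in $\mathcal{A}(Q_{G_I})$. Next, I would verify that, after freezing the boundary faces suitably, $Q_{G_I}$ agrees (up to a controlled mutation sequence) with the quiver of some seed of $\mathbb{C}[\Pi_G^{\circ}]$. Granting this compatibility, $\tau^*(P_I) = \mathbf{x}^{g}\, F_{G_I}(\hat{\mathbf{y}})$ is precisely the standard cluster-monomial presentation of a cluster variable (times a monomial in frozens). Finally, since the Muller-Speyer twist is a quasi-cluster automorphism, it sends cluster monomials to cluster monomials, so applying $(\tau^*)^{-1}$ shows that $P_I$ is itself a cluster monomial in $\mathbb{C}[\Pi_G^{\circ}]$.

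The hardest part will be the compatibility check in the third step: confirming that the quiver $Q_{G_I}$ of the plane graph obtained by boundary deletion on $\mathcal{G}$ genuinely agrees (after freezing and mutation) with a seed of the positroid cluster algebra, and that the reference matching $M_I^{\circ}$ yields exactly the expected $g$-vector. The combinatorial bookkeeping between almost-perfect matchings on the plabic graph and perfect matchings on $G_I$, together with the translation from plabic face labels to dimer height coordinates, is the crux; handling the quasi-cluster (rather than strictly cluster) nature of the twist, which can rescale cluster variables by monomials in frozens, will also require care.
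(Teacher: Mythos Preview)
Your overall strategy matches the paper's: use the Muller--Speyer twist formula, interpret it via dimer face polynomials, apply \cref{thm:intro-dimer-F-poly}, and then untwist using the quasi-cluster property. However, there are two genuine gaps.

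First, the graph you call $G_I$ is in general \emph{not} connected. After deleting the appropriate boundary data and the edges not used in any almost-perfect matching with boundary $I$, you obtain $H = H_1 \sqcup \cdots \sqcup H_r$, and only each component $H_i$ has property $(*)$. Consequently $D_H = \prod_i D_{H_i}$ is a \emph{product} of $F$-polynomials, not a single one, and your claim that $\tau^*(P_I)$ is ``a cluster variable (times a monomial in frozens)'' is false in general. The paper handles this by observing that the dual quivers $Q_{H_i}$ are disjoint induced subquivers of the mutable part of $Q_{\mathcal G}^{\mathrm{fr}}$, so the corresponding cluster variables $z_i$ are pairwise compatible; thus $\prod_i z_i$ is a cluster monomial. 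This is precisely why the conclusion is ``cluster monomial'' rather than ``cluster variable.''

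Second, to obtain property $(*)$ you must delete edges not lying in any almost-perfect matching with boundary $I$, and a priori this deletion could create new (non-infinite) faces not present in $\mathcal G$. If it did, $Q_H$ would fail to be an induced subquiver of $Q_{\mathcal G}$ and the argument would break. The paper proves this cannot happen (their Proposition~8.9 and Lemma~8.10), using the distributive lattice structure on almost-perfect matchings with fixed boundary from \cite{MS-twist}; this is a nontrivial step you have not addressed.

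On the other hand, your ``hardest part'' is easier than you anticipate: no mutation is required to compare quivers. Once the face-preservation step above is established, $Q_H$ is literally an induced subquiver of $Q_{\mathcal G}$ on mutable vertices, and \cref{prop:F-g-adding-mutable} in the paper directly transports each $F$-polynomial $D_{H_i}$ from $\mathcal A(Q_{H_i})$ to $\mathcal A(Q_{\mathcal G})$.
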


At first glance, this result has little to do with dimers. However, we prove \cref{thm:intro-pluckers-cluster-mono} by relating the \emph{almost perfect matching} formulas of Muller--Speyer for twisted Pl\"ucker coordinates $P_J \circ \tau$ \cite{MS-twist} to the dimer formulation for $F$-polynomials in \cref{thm:intro-dimer-F-poly}.

We begin our paper with a focus on the combinatorics of graphs and lattices, followed by a presentation of topological and algebraic applications. In \cref{sec:dimer-lattice-and-poly}, we discuss the dimer lattice and the dimer face polynomial, as well as some of its properties.  In \cref{sec:newton-polytope} we study the Newton polytope of the dimer face polynomial $D_G$, show it is affinely isomorphic to the perfect matching polytope of $G$, and determine its face lattice. 
\cref{sec:alexander-poly-and-dimers} covers background on Kauffmann's lattice and Alexander polynomial, followed by a proof of Theorem \ref{thm:intro-Alex-poly}. The remaining three sections focus on the dimer face polynomial and its relation to cluster algebras.  In \cref{sec:background-cluster}, we review the relevant theory of cluster algebras and then prove \cref{thm:intro-dimer-F-poly} in \cref{sec:dimer_f_poly} after building up the theory even further. In \cref{sec:further-applications-to-links}, we restrict our attention to graphs coming from links, and show that many different $F$-polynomials specializing to the Alexander polynomial of $L$ can be found in one cluster algebra. We also examine the case of $2$-bridge links and their relation to snake graphs. In \cref{sec:positroid-var-applications}, we discuss open positroid varieties and prove \cref{thm:intro-pluckers-cluster-mono}.

\section{Dimer lattices}\label{sec:dimer-lattice-and-poly}

In this section, we introduce a distributive lattice of perfect matchings on plane bipartite graphs, following the work of Propp \cite{P02}. Using this lattice for a plane bipartite graph, we define the \textit{dimer face polynomial} in terms of both \emph{multivariate ranks} and \emph{heights} of matchings. As we will discuss in Section \ref{sec:alexander-poly-and-dimers}, the dimer face polynomial generalizes the Alexander polynomial of a link, and as we will discuss in Section \ref{sec:dimer_f_poly}, it is an $F$-polynomial in a distinguished cluster algebra.

\subsection{The dimer lattice of a plane graph} \label{sec:dimer lattice}
Recall that a plane graph is a planar graph with a choice of embedding in the plane $\mathbb{R}^2$.  We call the connected regions of the complement $\mathbb{R}^2 \setminus G$ the \textbf{faces} of $G$. We often identify the faces of plane graphs with the set of edges in their boundary. A \textbf{perfect matching} of a graph is a subset of the edges incident to each vertex exactly once. We often say ``matching" or ``dimer" instead of ``perfect matching." 

\begin{definition} 
    A graph $G$ \textbf{has property $(*)$} if it is a finite connected bipartite plane graph, with vertices properly colored black and white, and every edge is in some perfect matching. We view the vertex coloring as fixed, just as the embedding of $G$ in the plane is fixed.

    We denote the collection of all perfect matchings on a plane graph $G$ by $\mathcal{D}_G$. Additionally, we use $\faces(G)$ to denote the set of all non-infinite faces of $G$.
\end{definition}

\begin{definition}
    Let $G$ be a bipartite plane graph. Choose a non-infinite face $f$ of $G$ whose boundary is a cycle. An edge $e$ in the boundary of $f$ is \textbf{black-white} in $f$ if, going around $f$ clockwise, we see first the black vertex of $e$ and then the white vertex. The \textbf{white-black} edges of $f$ are defined similarly.
\end{definition}

\begin{definition}\label{def:dimer-poset}
    Let $G$ be a connected bipartite plane graph. Let $M$ be a perfect matching and let $f\in\faces(G)$ be a face whose boundary is a cycle. If $M$ contains all of the black-white edges of $f$, then we may obtain a new matching $M'$ by removing the black-white edges of $f$ from $M$ and replacing them with the white-black edges of $f$. We call this operation the \textbf{down-flip} at $f$. Similarly, if $M$ contains all of the white-black edges of $f$, we may replace them with the black-white edges to obtain a new matching $M'$; we call this operation the \textbf{up-flip} at $f$. In either of these situations, we say $M, M'$ are related by a flip at $f$.
\end{definition}

\begin{remark}\label{rmk:prop*-no-dangling-edges}
    If $G$ has property $(*)$, then the boundary of any face is a cycle (otherwise, some edge would have the same face on both sides.  We later show that this would lead to a contradiction, see \cref{lem:edges-dif-face-each-side}). Thus, when $G$ has property $(*)$, flips are well-defined at any face $f \in \faces(G)$.
\end{remark}

Propp showed that the dimer sets of graphs with property $(*)$ are very structured.

\begin{theorem}[{\cite[Theorem 2]{P02}}]\label{thm:dimer-lattice}
    Let $G$ be a graph with property $(*)$. We define a binary relation on the set $\mathcal{D}_G$ of dimers by declaring $M \le M'$ if $M$ is obtained from $M'$ by applying a sequence of down-flips.
    
    Then $(\mathcal{D}_G, \le)$ is a distributive lattice, which we call the \textbf{dimer lattice of $G$}. 
\end{theorem}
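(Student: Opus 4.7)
The plan is to follow Propp's original strategy, which hinges on building a height function on matchings and showing that the flip partial order is captured by coordinatewise comparison of heights. First I would fix a reference matching $M_0 \in \mathcal{D}_G$ (which exists by property $(*)$) and, for each $M \in \mathcal{D}_G$, consider the symmetric difference $M \triangle M_0$. Because both are perfect matchings of a bipartite graph, this symmetric difference decomposes into a disjoint union of simple cycles alternating between edges of $M$ and edges of $M_0$. Each such cycle is an oriented Jordan curve (with orientation determined by the bipartition: traverse edges of $M$ from white to black, edges of $M_0$ from black to white), and I would define $\height(M)_f \in \mathbb{Z}$ for $f \in \faces(G)$ as the signed winding number of $f$ around the cycles of $M \triangle M_0$. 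A direct local check then shows that an up-flip of $M$ at $f$ changes $\height(M)$ by adding $+\mathbf{e}_f$, and leaves every other coordinate fixed.

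Next I would verify that the relation $\le$ is a partial order, and in fact is detected by heights: $M \le M'$ if and only if $\height(M)_g \le \height(M')_g$ for every $g \in \faces(G)$. Reflexivity and transitivity are clear from the definition, and the ``only if'' direction of the height characterization follows because each down-flip decreases exactly one coordinate of the height vector by one. For the ``if'' direction I would induct on $\sum_g (\height(M')_g - \height(M)_g)$: whenever $M \ne M'$ but $\height(M) \le \height(M')$, I would produce a face $g$ at which $M'$ admits a down-flip yielding $M''$ with $\height(M) \le \height(M'') < \height(M')$. Antisymmetry then follows immediately.

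To build the lattice structure, I would define $M \wedge M'$ and $M \vee M'$ as the unique matchings with heights $\min(\height(M),\height(M'))$ and $\max(\height(M),\height(M'))$ respectively (uniqueness following from the bijectivity of $\height$ on $\mathcal{D}_G$, which itself follows from the characterization above). The real work is showing \emph{existence}, i.e.\ that the coordinatewise $\min$ and $\max$ of two height vectors of matchings are again realized by matchings. I expect this to be the main obstacle. One way to handle it is constructive: starting from $M$, greedily perform down-flips at faces $g$ where $\height(M)_g > \min(\height(M)_g,\height(M')_g)$; one must argue that such a flip is always available until the desired heights are reached. The availability comes from the bipartite structure: if $\height(M)_g > \height(M'')_g$ for some candidate meet $M''$, then the boundary of $g$ must carry the black-white pattern compatible with a down-flip (otherwise some other face would first have to be adjusted). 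A slicker route is Birkhoff's representation theorem: show that $\mathcal{D}_G$ embeds as a sublattice of $(\mathbb{Z}^{\faces(G)}, \min, \max)$ that is closed under the componentwise operations, by induction on height differences as above.

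Finally, distributivity of $(\mathcal{D}_G, \le)$ is then immediate, since the componentwise $\min$ and $\max$ on $\mathbb{Z}^{\faces(G)}$ satisfy $a \wedge (b \vee c) = (a \wedge b) \vee (a \wedge c)$, and $\height$ intertwines the lattice operations on $\mathcal{D}_G$ with those on the image inside $\mathbb{Z}^{\faces(G)}$. The only nontrivial content in the whole argument, therefore, is the closure of $\height(\mathcal{D}_G)$ under $\min$ and $\max$, and that is the step I would spend the most care on, likely by a careful topological argument combining the local flip picture with the fact that cycles of $M \triangle M'$ can be toggled independently inside any face where the two heights differ.
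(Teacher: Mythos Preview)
The paper does not give its own proof of this theorem: it is quoted verbatim as \cite[Theorem~2]{P02} and used as a black box. So there is no ``paper's proof'' to compare against. What you have written is essentially Propp's original argument, and the paper in fact reconstructs much of the same height-function machinery later in Section~2.2 (Definitions~2.8--2.9, Propositions~2.10 and~2.12) for its own purposes, with the only difference being a sign convention (the paper orients edges of $M\setminus M_0$ from black to white, you from white to black).

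Your outline is correct in structure. One caution: the step you flag as the main obstacle---closure of $\height(\mathcal{D}_G)$ under coordinatewise $\min$ and $\max$---is indeed where the work lies, and your sketch of it is vague. The greedy argument (``such a flip is always available'') needs the following lemma made precise: if $\height(M)\neq\height(M')$ and $\height(M)_g$ is locally maximal among faces where the two heights differ, then the boundary of $g$ lies entirely in $M\triangle M'$ and carries the black-white edges of $g$ in $M$, so a down-flip at $g$ is available. This is what Propp proves, and it uses property~$(*)$ (via the fact that every face boundary is a cycle, cf.\ Remark~2.5 and Lemma~6.8 in the paper). Your final sentence about ``toggling cycles independently'' is not quite the right picture; the argument is purely local at a single extremal face.
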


See \cref{fig:general_weights} for an illustration of \cref{thm:dimer-lattice}.

\begin{remark}\label{rmk:dimer-lattice-arbitrary-graph}
    One can place a distributive lattice structure on $\mathcal{D}_{G}$ for an arbitrary bipartite plane graph as follows. If some edges of $G$ are not in any perfect matching, delete them to obtain a new plane graph $G'=G_1 \sqcup \cdots \sqcup G_r$, possibly with multiple connected components. Each connected component $G_i$ has property $(*)$, so $(\mathcal{D}_{G_i}, \le)$ is a lattice by \cref{thm:dimer-lattice}. There is a natural bijection between $\mathcal{D}_G$, and the product $\prod \mathcal{D}_{G_i}$, so one may endow $\mathcal{D}_G$ with a lattice structure via this bijection. However, if any non-infinite face of $G_i$ is not a face of $G$, some covering relations in this lattice structure do not correspond to flips on faces of $G$.  See an example of this in \Cref{fig:example-delete-edges-lattice}.
\end{remark}

\begin{figure}
    \centering
    \includegraphics[width=0.9\linewidth]{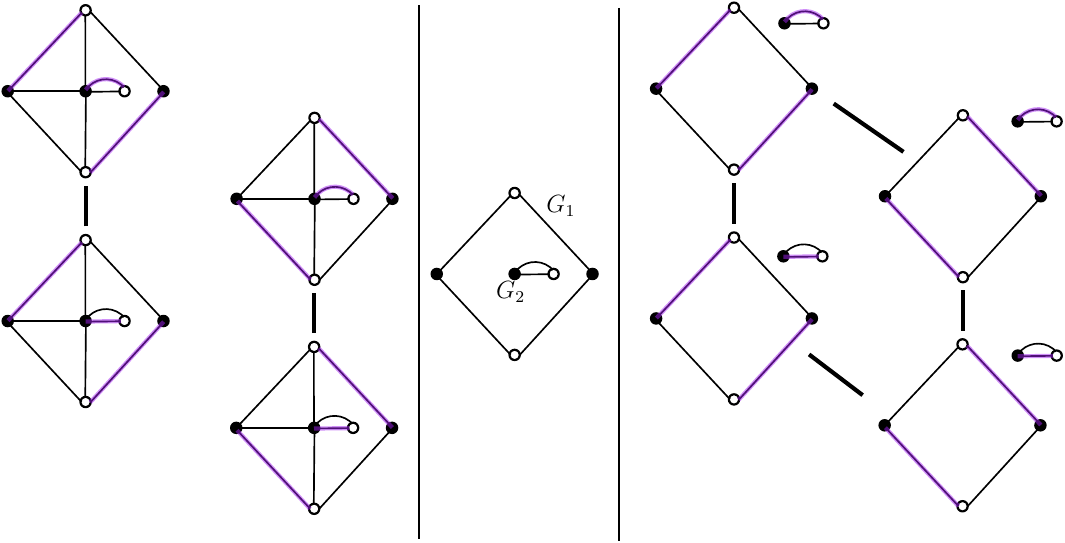}
    \caption{An illustration of \cref{rmk:dimer-lattice-arbitrary-graph}. Left: the poset $(\mathcal{D}_G, \le)$ for a graph $G$ which does \emph{not} have property $(*)$. Center: the graph $G'=G_1 \sqcup G_2$ obtained from $G$ by deleting edges not in any matching. Right: The lattice $\mathcal{D}_{G_1} \times \mathcal{D}_{G_2}$. Note that there are two cover relations that are not present in $(\mathcal{D}_G, \le)$, since the unique face of $G_1$  is not a face of $G$. 
    }
    \label{fig:example-delete-edges-lattice}
\end{figure}

 \subsection{The dimer face polynomial}

 In this section, building on \cite[Section 2]{P02} and subsequent work (see \cite[Section 5]{P02} for a detailed discussion), we associate a multivariate polynomial to a graph $G$ with property $(*)$, using the dimer lattice. In particular, we associate to each dimer $M$ a monomial \emph{multivariate rank} $\mrk(M)$, and the dimer face polynomial is the sum of these multivariate ranks. At the end of the section, we rephrase the dimer face polynomial using heights.
 
 \begin{definition} 
    Let $G$ be a graph with property $(*)$. Let $C$ be a saturated chain $\hat{0}=M_0\lessdot M_1\lessdot \dots \lessdot M_\ell$ in the dimer lattice and suppose that $M_{i-1}, M_{i}$ are related by a flip at face $f_i$. The \textbf{weight of the saturated chain} $C$ is 
    \[\wt(C):= y_{f_1} \cdots y_{f_\ell}.\]
\end{definition}

\begin{proposition}\label{prop:satchains}
     Let $G$ be a graph with property $(*)$. Choose $M$ a dimer on $G$ and let $C, C'$ be two saturated chains from $\hat{0}$ to $M$. Then $\wt(C)=\wt(C')$, or, in other words, the weight of a saturated chain depends only on the largest element in the chain.
\end{proposition}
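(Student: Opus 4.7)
The plan is to deduce the claim from the distributive lattice structure of $\mathcal{D}_G$ established in \cref{thm:dimer-lattice}, together with a local commutation property for flips. Since $\mathcal{D}_G$ is a distributive lattice, any two saturated chains from $\hat{0}$ to $M$ have the same length, and they can be connected by a sequence of \emph{diamond swaps}: a single swap replaces a two-step subchain $N \lessdot N_1 \lessdot N_3$ with $N \lessdot N_2 \lessdot N_3$, where $\{N_1, N_2\}$ are the two atoms of the Boolean square $[N, N_3]$ (which is automatically a Boolean square because the lattice is distributive). It therefore suffices to show that each diamond swap preserves the multiset of face labels along the chain.

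To that end, I would establish a commutation lemma: if a matching $N$ admits up-flips at two distinct faces $f$ and $g$, then $f$ and $g$ are edge-disjoint, so the two flips act on disjoint edge sets and commute. The key point is that a shared edge $e \in \partial f \cap \partial g$ is traversed in opposite directions clockwise around the two faces, so it is white-black in one iff it is black-white in the other. Say $e$ is white-black in $f$, so $e \in N$ and both endpoints of $e$ are already saturated by $e$. By bipartite alternation along $\partial g$, the two edges of $\partial g$ adjacent to $e$ are both white-black in $g$, yet each shares an endpoint with $e$ and hence cannot lie in $N$ — contradicting the assumption that all white-black edges of $g$ lie in $N$. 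Once $f$ and $g$ are edge-disjoint, the joint flip is unambiguous and the image $N_3$ in a diamond swap labelled $(f,g)$ is also reached by the swap labelled $(g,f)$.

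With the commutation lemma in hand, any diamond swap $N \lessdot N_1 \lessdot N_3 \rightsquigarrow N \lessdot N_2 \lessdot N_3$ labelled $(f, g)$ in one order is labelled $(g, f)$ in the other, so the multiset of face labels along the chain is preserved. Since $\wt(C)$ depends only on this multiset, $\wt(C) = \wt(C')$ for any two saturated chains from $\hat{0}$ to $M$. I expect the main obstacle to be the edge-disjointness step in the commutation lemma, which is where the bipartite alternation around face boundaries must be combined with the vertex-saturation constraint from perfect matchings; everything else is standard distributive lattice machinery.
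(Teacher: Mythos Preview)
Your proposal is correct and follows essentially the same approach as the paper: both prove a commutation lemma showing that two faces simultaneously flippable in a matching must be disjoint (the paper argues vertex-disjointness via down-flips, you argue edge-disjointness via up-flips, and in this setting these are equivalent), and then use this to see that any diamond in the lattice has the same multiset of face labels on its two sides. The only cosmetic difference is that you invoke the general fact that maximal chains in a distributive lattice are connected by diamond swaps, whereas the paper unwinds this into an explicit induction on chain length.
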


We first prove a straightforward lemma.

\begin{lemma}\label{lem:commuting}
   Let $G$ be a graph with property $(*)$, and let $M$ be a dimer on $G$. Suppose $M'$ is obtained from $M$ by a down-flip at face $a$ and $M''$ is obtained from $M$ by a down-flip at face $b$, where $a \neq b$. Then there exists a dimer $M'''$ of $G$ which is obtained from $M'$ by a down-flip at face $b$ and is obtained from $M''$ by a down-flip at face $a$.
\end{lemma}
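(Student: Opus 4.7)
The plan is to show that the two faces $a$ and $b$ must be vertex-disjoint; once this is established, the two flips act on disjoint subsets of edges, so they commute trivially and the desired $M'''$ is obtained by performing both flips on $M$ in either order.

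First I would argue disjointness by contradiction. Suppose $v$ is a vertex lying on both the boundary of $a$ and the boundary of $b$; without loss of generality $v$ is black. Since a down-flip at $a$ is available on $M$, the matching $M$ contains all black-white edges of $a$, so $v$'s partner in $M$ is forced to be the unique white neighbor $w_a$ appearing immediately clockwise after $v$ along the boundary of $a$. Arguing the same way for $b$, the partner of $v$ in $M$ must also be the white vertex $w_b$ immediately clockwise after $v$ along the boundary of $b$. Since $v$ has a unique partner in $M$, we conclude $w_a=w_b=:w$ and the edge $vw$ lies on the boundary of both $a$ and $b$.

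The key geometric step is that since $a$ and $b$ sit on opposite sides of the shared edge $vw$, the clockwise orientation around $b$ traverses $vw$ in the opposite direction from the clockwise orientation around $a$. Thus, although $vw$ is black-white in $a$, it is white-black in $b$. But the hypothesis that $M$ contains all black-white edges of $b$ perfectly matches every boundary vertex of $b$ using only black-white edges of $b$, so no white-black edge of $b$ can belong to $M$. This contradicts $vw\in M$, and therefore $a$ and $b$ share no vertices.

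Finally, with $a$ and $b$ vertex-disjoint, let $M'''$ be obtained from $M$ by deleting the black-white edges of both $a$ and $b$ and adding the white-black edges of both $a$ and $b$. The fact that $a$ and $b$ involve disjoint sets of vertices (hence disjoint edges) makes it immediate that $M'''$ is a perfect matching, that $M'''$ is obtained from $M'$ by a down-flip at $b$, and that $M'''$ is obtained from $M''$ by a down-flip at $a$. The main obstacle here is really just the geometric bookkeeping of the clockwise reversal in the second paragraph; once that observation is in hand, everything else is routine.
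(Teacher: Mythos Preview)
Your proof is correct and follows essentially the same approach as the paper: show that a shared vertex would force its matching edge to lie on both face boundaries, then observe that an edge shared by two distinct faces is black-white in exactly one of them, yielding a contradiction; once $a$ and $b$ are vertex-disjoint, the flips commute. The paper's version is slightly terser (it notes directly that the shared edge is black-white in exactly one of $a,b$, rather than passing through the additional observation that white-black edges of $b$ cannot lie in $M$), but the substance is identical.
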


\begin{proof}
By the assumption that we may do a down-flip at both $a$ and $b$, the black-white edges of $a$ and $b$ are contained in $M$. This implies that $a$ and $b$ are disjoint. Indeed, suppose for contradiction that there is a vertex $v$ in $a \cap b$. As $M$ is a perfect matching, $v$ is in a single edge $e$ of $M$. Since we may perform a down-flip at both $a$ and $b$, $e$ must, in fact, be in both $a$ and $b$. But then $e \in M$ is black-white in exactly one of $a$ and $b$, a contradiction.

Since $a$ and $b$ are disjoint, flips at $a$ and $b$ commute. The desired matching $M'''$ is the matching obtained from $M'$ by a down-flip at $b$, which is the same as the matching obtained from $M''$ by a down-flip at $a$.
\end{proof}

We now proceed to the proof of Proposition \ref{prop:satchains}.

\noindent \textit{Proof of Proposition \ref{prop:satchains}.} We proceed by induction on the length of the saturated chain from $\hat{0}$ to $M$. The base case is $M=\hat{0}$, for which the statement is trivially true.

Suppose the length of any saturated chain from $\hat{0}$ to $M$ is at least two. Let $C'=\hat{0} \lessdot M_1\lessdot \cdots \lessdot M' \lessdot M$ and $C''=\hat{0} \lessdot N_1 \cdots \lessdot M'' \lessdot M$ be saturated chains from $\hat{0}$ to $M$. If $M'=M''$, then we have $\wt(C)=\wt(C')$ by the inductive hypothesis. If $M'$ and $M''$ are distinct, say they are obtained from $M$ by a down-flip at faces $a, b$ respectively. By Lemma \ref{lem:commuting}, there exists $M'''$ covered by both $M'$ and $M''$ such that $\wt(M''' \lessdot M')=y_b$ and $\wt(M''' \lessdot M'')=y_a$.

Now, fix any saturated chain $S=\hat{0}\lessdot\cdots\lessdot M'''$ and let $S'$, $S''$ be the chains obtained by adding $M'$ and $M''$ to the end of this chain. We have 
\begin{align*}
    \wt(C')= \wt(\hat{0} \lessdot M_1 \cdots \lessdot M') y_a
    = \wt(S') \cdot  y_a
    = \wt(S) \cdot y_a y_b
\end{align*}
where the second equality is by the inductive hypothesis and the third equality is by the choice of $M'''$. 
We have a similar string of inequalities for $C''$:
\begin{align*}
    \wt(C'')= \wt(\hat{0} \lessdot N_1 \cdots \lessdot M'') y_b
    = \wt(S'') \cdot  y_b
    = \wt(S) \cdot y_a y_b.
\end{align*}
This shows $\wt(C')=\wt(C'')$ as desired.
\qed

Using \cref{prop:satchains}, we now define the ``multivariate rank" of a dimer and define the dimer face polynomial.

\begin{definition}\label{def:dimer-poly} 
    Let $G$ be a graph with property $(*)$. For $M$ a dimer on $G$ and $C$ any saturated chain from $\hat{0}$ to $M$, define the \textbf{multivariate rank} of $M$ to be 
    \[\mrk(M)=\wt(C).\]
   The \textbf{dimer face polynomial} of $G$ is
    \[
    D_G(\mathbf{y}) = \sum_{M \in \mathcal{D}_G}\mrk(M).
    \]
\end{definition}

\begin{figure}
    \centering
    \includegraphics[height=13cm]{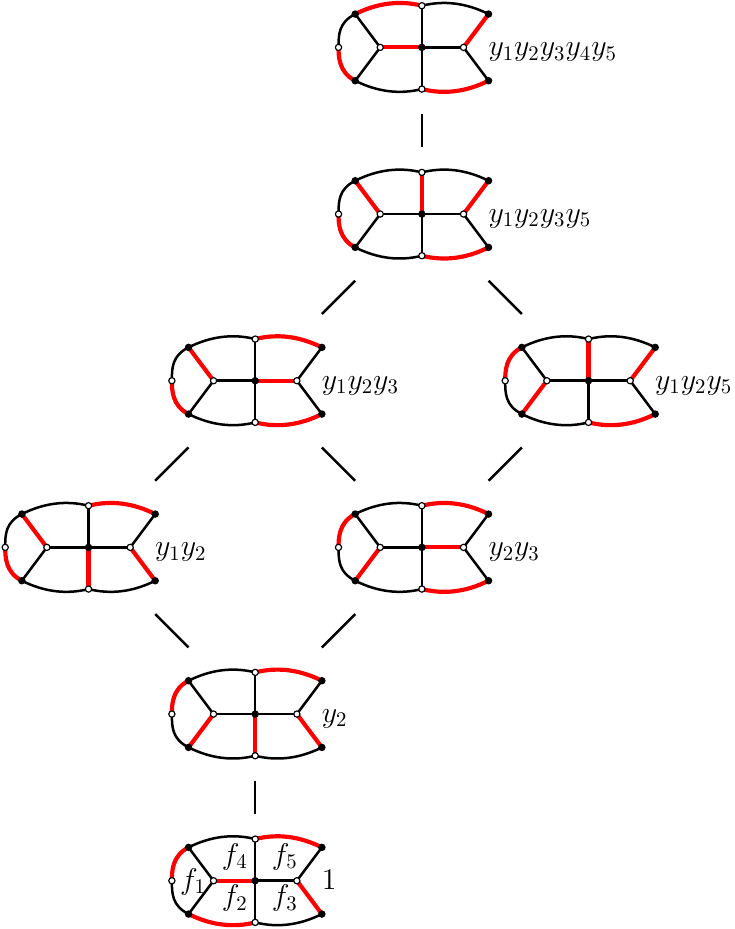}
    \caption{The dimer lattice $\mathcal{D}_G$ depicted with the multivariate rank of each dimer.}
    \label{fig:general_weights}
\end{figure}

In the remainder of this section, we give an alternate definition of the dimer face polynomial using heights.  Height functions, as we use them, are first defined in Elkies, Larsen, Kuperberg, Propp \cite[Section 2]{EKLP} as inspired by earlier work enumerating tilings including work of Conway-Lagarias \cite{conway1990tiling} and Thurston \cite{thurston1990conway}.  The second author and Schiffler originated the use of height functions in the context of cluster algebras in \cite[Section 5]{mussch} in the special case of unpunctured surfaces and certain graphs known as snake graphs (see \cref{def:snake_graphs} for more details).   More broadly, the height of a matching was used by Propp to define the dimer lattice in Section 3 of \cite{P02}, and so many of the properties of heights appear in \cite{P02}.  For the reader's convenience, we provide self-contained proofs here.  

We first need an elementary lemma, whose proof we omit.

\begin{lemma} \label{lem:symm}  Let $G$ be a graph with property $(*)$ and $M, M' \in \mathcal{D}_G$ two dimers. The symmetric difference $M \triangle M'$ is a disjoint union of cycles and isolated vertices. \end{lemma}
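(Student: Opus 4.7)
The plan is to analyze the subgraph $H = (\Ver(G), M \triangle M')$ vertex by vertex and show that every vertex has degree $0$ or $2$ in $H$; then invoke the standard graph-theoretic fact that a graph whose vertices all have degree $0$ or $2$ is a disjoint union of simple cycles and isolated vertices.

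First, fix an arbitrary vertex $v \in \Ver(G)$. Since $M$ is a perfect matching, there is exactly one edge $e_v \in M$ containing $v$; similarly there is a unique $e'_v \in M'$ containing $v$. I would split into two cases: if $e_v = e'_v$, then the unique edge of $M$ at $v$ coincides with the unique edge of $M'$ at $v$, so $v$ is incident to no edges of $M \triangle M'$ and is isolated in $H$. If instead $e_v \neq e'_v$, then both edges belong to $M \triangle M'$ and both are incident to $v$; on the other hand, no \emph{other} edge of $M$ or $M'$ can contain $v$ (again by the perfect matching property), so $v$ has degree exactly $2$ in $H$.

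Having established that every vertex of $H$ has degree $0$ or $2$, I would finish by recalling the standard decomposition: in a finite graph with maximum degree $2$, each connected component is either an isolated vertex, a path, or a cycle. Paths are excluded here because any endpoint of a path would have degree $1$, which we have ruled out. Hence every component of $H$ is either an isolated vertex or a cycle, which is exactly the claim.

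The argument is entirely routine and I do not anticipate a real obstacle; the only mild subtlety is making sure to distinguish ``isolated vertex'' (no incident edge of $M \triangle M'$) from ``vertex not appearing in the symmetric difference at all,'' which amounts to a convention about whether $H$ is viewed as having vertex set $\Ver(G)$ or only the endpoints of $M \triangle M'$. Either convention works for the stated conclusion, so I would simply pick one (taking $\Ver(H) = \Ver(G)$) and move on. Note that bipartiteness and property $(*)$ are not actually used, as the lemma is purely combinatorial about matchings.
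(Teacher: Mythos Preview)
Your argument is correct and is exactly the standard one: each vertex is covered by a unique edge of $M$ and a unique edge of $M'$, so it has degree $0$ or $2$ in $M\triangle M'$, forcing every component to be a cycle or an isolated vertex. The paper does not give a proof at all---it simply states the lemma as elementary and omits the proof---so there is nothing further to compare.
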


To define heights, we first need to orient the cycles of $M \triangle \hat{0}$.

\begin{definition}\label{def:oriented-cycles}
     Let $G$ be a graph with property $(*)$, and let $M \in \mathcal{D}_G$ be a dimer. Orient the edges of $M \triangle \hat{0}$ as follows: each edge of $M \setminus \hat{0}$ is oriented from black vertex to white and each edge in $\hat{0} \setminus M$ is oriented from white vertex to black. We denote the resulting collection of oriented cycles $\overrightarrow{M \triangle 0}$. 
\end{definition}
Note that any cycle in $\overrightarrow{M \triangle 0}$ is either ``clockwise," meaning all edges are oriented clockwise, or ``counterclockwise." See \cref{fig:height} for an example.

\begin{definition}\label{def:height}
   Let $G$ be a graph with property $(*)$, and let $M \in \mathcal{D}_G$ be a dimer.
    The \textbf{height} of $M$ is the vector $\height(M) \in \mathbb{Z}^{\faces(G)}$ where the coordinate indexed by $f \in \faces(G)$ is
    \[\height(M)_f= \#\{\text{clockwise cycles in }\overrightarrow{M \triangle 0} \text{ encircling}\footnote{By the Schoenflies theorem, the complement of any cycle $C$ in $M\triangle\hat{0}$ has a bounded component and an unbounded component. The cycle $C$ encircles $f$ if $f$ is in the bounded component.}f\}- \#\{\text{counterclockwise cycles in }\overrightarrow{M \triangle 0} \text{ encircling }f\}.\]
 Equivalently, if $p$ is any point in the face $f$, 
    $$\height(M)_f = \sum_{C\in\overrightarrow{M \triangle 0}}-\text{wind}(C,p),$$
    where $\text{wind}(C,p)$ is the winding number of $C$ around $p$.
\end{definition}
For convenience, we also allow $\height(M)$ to have a coordinate indexed by the infinite face. Following the definition above, this coordinate is $0$ for all matchings.

\begin{remark} The above notion of height is defined with respect to the minimal element $\hat{0}$ of the dimer lattice. However, we could have equally defined the height $\height_{\widetilde{M}}(M)_f$ with respect to a fixed matching $\widetilde{M}$ in the dimer lattice  \cite{P02}. Such a definition would satisfy $\height(M)_f=\height_{\widetilde{M}}(M)_f-\height_{\widetilde{M}}(\hat{0})_f.$

\end{remark}

\begin{figure}
    \centering
    \includegraphics[height=4cm]{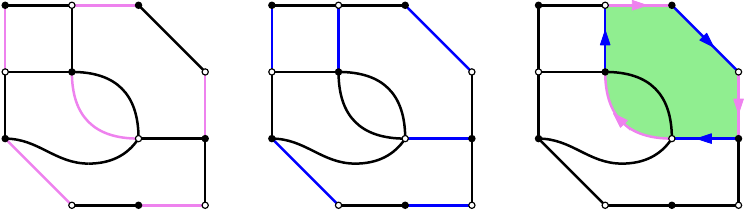}
    \caption{Left: the dimer $\hat{0}$ for the graph $G$ depicted. Center: a dimer $M$. Right: the edges in $M\triangle\hat{0}$, which form a single clockwise cycle encircling the highlighted faces.}
    \label{fig:height}
\end{figure}

The height of a face can also be computed using walks on the dual graph $G^*$ of $G$ as in \cite{P02}, which will be useful in proofs.

\begin{proposition}\label{prop:ht_altitude}
    Let $G$ be a graph with property $(*)$, let $M$ be a dimer on $G$ and let $f\in\faces(G)$. Let $P=(p_1,\dots,p_n)$ be a walk in $G^*$ from $f$ to the infinite face. Starting at $f$ and traveling along $P$, let $L_M(P)$ be the number of edges $p_i$ which pass through an edge in $\overrightarrow{M \triangle 0}$ directed left-to-right, and let $R_M(P)$ be the number of edges $p_i$ which pass through an edge in $\overrightarrow{M \triangle 0}$ directed right-to-left. We have
    $$\height(M)_f=L_M(P)-R_M(P).$$
\end{proposition}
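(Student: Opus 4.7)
The plan is induction on the length $n$ of the walk $P$. In the base case $n=0$, the face $f$ must equal the infinite face, so $L_M(P)-R_M(P)=0$ trivially, and $\height(M)_f=0$ since the infinite face lies in the unbounded component of the complement of every cycle of $\overrightarrow{M\triangle\hat 0}$. For the inductive step, write $P=(p_1,p_2,\ldots,p_n)$, let $e$ be the primal edge of $G$ dual to $p_1$, let $f'$ be the face on the opposite side of $e$ from $f$, and set $P'=(p_2,\ldots,p_n)$. By the inductive hypothesis applied to $P'$ (a walk from $f'$ to the infinite face of length $n-1$), it suffices to show that the contribution of $p_1$ to $L_M(P)-R_M(P)$ equals $\height(M)_f-\height(M)_{f'}$.

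If $e\notin M\triangle\hat 0$, then $p_1$ contributes $0$, and no cycle of $\overrightarrow{M\triangle\hat 0}$ passes through $e$, so $f$ and $f'$ lie in the same component of the complement of each such cycle; hence $\height(M)_f=\height(M)_{f'}$ and we are done. If $e\in M\triangle\hat 0$, then by \cref{lem:symm} $e$ lies on exactly one cycle $C$ of $\overrightarrow{M\triangle\hat 0}$, which is an embedded simple closed curve in the plane; by the Jordan curve theorem $C$ encircles exactly one of $f$ and $f'$, while every other cycle of $\overrightarrow{M\triangle\hat 0}$ encircles both or neither (since such a cycle does not cross $e$). Consequently $\height(M)_f-\height(M)_{f'}\in\{+1,-1\}$, with the sign determined by the orientation of $C$ and by which of $f,f'$ it encircles.

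The remaining task is to check that this $\pm 1$ agrees with the contribution of $p_1$. A single reference picture handles all four subcases: for instance, if $C$ is clockwise and encircles $f$, then $f$ lies on $C$'s right along $e$, so when $p_1$ traverses $e$ from $f$ to $f'$, $C$'s orientation at $e$ runs from $p_1$'s left to $p_1$'s right, contributing $+1$ to $L_M(P)-R_M(P)$; meanwhile $\height(M)_f-\height(M)_{f'}=1-0=1$. The other three combinations (CW encircling $f'$, CCW encircling $f$, CCW encircling $f'$) are symmetric.

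The main obstacle I anticipate is bookkeeping the sign conventions: reconciling the orientation of edges in $\overrightarrow{M\triangle\hat 0}$ given by \cref{def:oriented-cycles}, the ``clockwise vs.\ counterclockwise'' dichotomy, the footnote's definition of encirclement via the bounded component, and the meaning of ``directed left-to-right'' relative to the walker's direction all have to line up. Once the reference picture described above is fixed, the four subcases reduce to the same local check, and together with the clean split on whether $e\in M\triangle\hat 0$ the induction closes.
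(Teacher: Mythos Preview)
Your proof is correct but follows a genuinely different route from the paper's. The paper argues \emph{cycle by cycle}: for each oriented cycle $C$ of $\overrightarrow{M\triangle\hat 0}$, it counts the net contribution of all crossings of $P$ with $C$ to $L_M(P)-R_M(P)$, showing this is $+1$ or $-1$ when $C$ encircles $f$ (depending on orientation) and $0$ otherwise, since $P$ must cross $C$ an odd or even number of times respectively with alternating entry/exit behaviour.

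You instead argue \emph{step by step along the walk}, inducting on its length and reducing to the local identity $\height(M)_f-\height(M)_{f'}=\text{(contribution of }p_1\text{)}$ for adjacent faces $f,f'$. Your decomposition has the advantage of being entirely local: once the single edge $e$ is examined, only the unique cycle through $e$ matters, and there is no need to track multiple crossings of the same cycle. The paper's approach, on the other hand, makes the path-independence of $L_M(P)-R_M(P)$ visually transparent, since the contribution of each cycle depends only on whether it encircles $f$. Both arguments hinge on the same Jordan--curve fact that a cycle not through $e$ encircles $f$ and $f'$ together or not at all, so the underlying geometry is the same; only the bookkeeping differs.
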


\begin{proof}
    We observe that the walk $P$ must cross each cycle encircling $f$ at least once. Moreover, if $C$ encircles $f$, $P$ must cross $C$ an odd number of times to reach the infinite face. If $C$ is oriented clockwise around $f$, each time $P$ exits the region encircled by $C$, it crosses an edge directed left-to-right, and every time $P$ re-enters, it crosses an edge directed right-to-left. 
    As such, after performing all possible cancellations, $C$ contributes $+1$ to the difference $L_M(P)-R_M(P)$. Similarly, if $C$ is oriented counterclockwise around $f$, it contributes $-1$ to the difference (see Figure \ref{fig:altitude_path}). 
On the other hand, if $C$ does not encircle $f$, then $P$ must cross $C$ an even number of times, and the edges of $C$ that $P$ crosses have alternating orientations.
\end{proof}

\begin{figure}
        \centering
        \includegraphics[height=4cm]{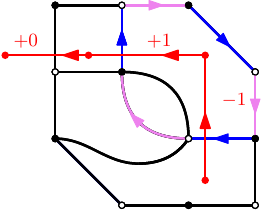}
        \caption{A walk from a height-$0$ face to the exterior, along with the contribution of each oriented edge, continuing the example in Figure \ref{fig:height}.}
    \label{fig:altitude_path}
\end{figure}

As we now show, the height of a matching is simply a reformulation of its multivariate rank (see also Figure 11 in \cite[Section 3]{P02} for a description in terms of contour lines).  

\begin{theorem}\label{cor:ht}
    Let $G$ be a graph with property $(*)$ and $M \in \mathcal{D}_G$ a dimer. Then 
    \[\mrk(M)= \mathbf{y}^{\height(M)}\]
    and in particular, the coordinates of $\height(M)$ are nonnegative. 
    
    Moreover,
    \[D_G(\mathbf{y}) = \sum_{M \in \mathcal{D}_G} \mathbf{y}^{\height(M)}.\]
\end{theorem}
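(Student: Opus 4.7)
The plan is to prove the identity $\mrk(M) = \mathbf{y}^{\height(M)}$ by induction on the length $\ell$ of a saturated chain from $\hat{0}$ to $M$ in the dimer lattice $\mathcal{D}_G$. The base case $\ell = 0$, meaning $M = \hat{0}$, is immediate: the empty chain gives $\mrk(\hat{0}) = 1$, while $\overrightarrow{\hat{0} \triangle \hat{0}}$ is empty, so $\height(\hat{0}) = \mathbf{0}$ and $\mathbf{y}^{\height(\hat{0})} = 1$.

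For the inductive step, choose $M' \lessdot M$ in $\mathcal{D}_G$ related by a flip at some face $f$. By \cref{def:dimer-poly} we have $\mrk(M) = \mrk(M') \cdot y_f$, and by the inductive hypothesis $\mrk(M') = \mathbf{y}^{\height(M')}$. Everything thus reduces to the following cover lemma:
\[
    \height(M)_g - \height(M')_g \;=\; \delta_{g,f} \qquad \text{for every } g \in \faces(G).
\]

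To prove the cover lemma, note that the up-flip at $f$ swaps the black-white and white-black edges of $\partial f$, so $M \triangle M' = \partial f$. The main calculation is a case-by-case analysis of each $e \in \partial f$ according to whether $e$ is black-white or white-black in $f$ and whether $e \in \hat{0}$: in all four cases $e$ lies in exactly one of $M \triangle \hat{0}$ or $M' \triangle \hat{0}$, with the further property that every edge of $\partial f$ contributing to $\overrightarrow{M \triangle \hat{0}}$ is oriented clockwise around $f$, while every edge of $\partial f$ contributing to $\overrightarrow{M' \triangle \hat{0}}$ is oriented counterclockwise. Viewing these collections as $\mathbb{Z}$-valued $1$-chains on the edges of $G$, this yields
\[
    \overrightarrow{M \triangle \hat{0}} \;-\; \overrightarrow{M' \triangle \hat{0}} \;=\; [\partial f]_{\mathrm{cw}},
\]
where $[\partial f]_{\mathrm{cw}}$ denotes $\partial f$ oriented clockwise around $f$.

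The expression $L_M(P) - R_M(P)$ appearing in \cref{prop:ht_altitude} is linear in the underlying oriented $1$-chain, so for any dual walk $P$ from $g$ to the infinite face, $\height(M)_g - \height(M')_g$ equals the height-at-$g$ contribution of a single clockwise cycle around $f$. Under property $(*)$, $\partial f$ is a simple cycle (\cref{rmk:prop*-no-dangling-edges}), and since $G$ is connected and $f$ is a non-infinite face, $f$ is exactly the bounded side of $\partial f$. Hence this contribution is $1$ when $g = f$ and $0$ otherwise, proving the cover lemma. Then $\mathbf{y}^{\height(M)} = y_f \cdot \mathbf{y}^{\height(M')} = \mrk(M)$, and nonnegativity of $\height(M)$ follows by telescoping along any saturated chain from $\hat{0}$ to $M$. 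The stated formula for $D_G$ is immediate from \cref{def:dimer-poly}. I expect the main obstacle to be the four-case bookkeeping for the $1$-chain identity above, which is routine but requires carefully keeping the black/white-vertex convention in sync with the clockwise/counterclockwise orientation of $\partial f$.
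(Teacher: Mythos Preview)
Your proposal is correct and takes essentially the same approach as the paper: induction on the rank of $M$, with the inductive step reducing to the ``cover lemma'' $\height(M) = \height(M') + \mathbf{e}_f$, which is exactly the paper's \cref{lem:ht_flip}. Your proof of the cover lemma via the $1$-chain identity $\overrightarrow{M \triangle \hat{0}} - \overrightarrow{M' \triangle \hat{0}} = [\partial f]_{\mathrm{cw}}$ and linearity of \cref{prop:ht_altitude} is a slightly cleaner repackaging of the same case analysis the paper carries out in \cref{lem:ht_switch} and the proof of \cref{lem:ht_flip}, but the content is the same.
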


To prove \cref{cor:ht}, we need a few preparatory statements. 

\begin{lemma} \label{lem:ht_switch}
If $M'$ is obtained from $M$ by an up-flip or a down-flip over $f$, then for any edge $e$ bounding $f$, $e\in M'\triangle\hat{0}$ if and only if $e\notin M\triangle\hat{0}$.  
\end{lemma}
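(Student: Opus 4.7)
The plan is to observe that the flip operation at face $f$ simply swaps the black-white edges of $f$ with the white-black edges of $f$, so that an edge $e$ bounding $f$ lies in exactly one of $M$ and $M'$. From there, membership in $M \triangle \hat{0}$ versus $M' \triangle \hat{0}$ is forced to flip, because the status of $e$ with respect to $\hat{0}$ has not changed.

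More concretely, first I would unpack \cref{def:dimer-poset}. In a down-flip at $f$, we have $M' = (M \setminus B) \cup W$ where $B$ is the set of black-white edges of $f$ and $W$ the set of white-black edges; the up-flip is analogous. The key intermediate claim is that every edge $e$ in $\partial f$ lies in exactly one of $M$, $M'$. If $e \in B$, this is immediate from the description of $M'$; if $e \in W$, I would note that $e$ cannot belong to $M$, for each endpoint of $e$ is already saturated by an edge of $B \subset M$, and then $e \in M'$ by the definition of the flip. The up-flip case is symmetric.

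With this in hand, the lemma follows by a direct parity argument: $e \in M \triangle \hat{0}$ means exactly one of $e \in M$, $e \in \hat{0}$ holds, and similarly for $M' \triangle \hat{0}$. Since the truth values of $e \in M$ and $e \in M'$ are opposite while $e \in \hat{0}$ is the same in both expressions, the two parity conditions are negations of one another, giving $e \in M' \triangle \hat{0} \iff e \notin M \triangle \hat{0}$.

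I do not expect any real obstacle here; the only thing to be careful about is the verification that no white-black edge of $f$ already belonged to $M$ before a down-flip (and dually for up-flip), which is what makes the flip an honest toggle on each edge of $\partial f$. Everything else is formal manipulation of symmetric differences.
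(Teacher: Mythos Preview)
Your proposal is correct and essentially the same as the paper's proof: both hinge on the observation that on the boundary of $f$ the matchings $M$ and $M'$ are complements, and then deduce the claim by a short formal manipulation of symmetric differences. You spell out the complementarity a bit more explicitly (checking that no white-black edge can lie in $M$ before a down-flip), whereas the paper simply asserts it, but the underlying argument is identical.
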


\proof 
Let $e$ be an edge bounding $f$. The edge $e$ is in $M'\triangle\hat{0}$ if and only if $e \in M' \setminus \hat{0}$ or $e \in \hat{0} \setminus M'$. The former condition is equivalent to $e \notin M \cup \hat{0}$, and the latter is equivalent to $e \in \hat{0} \cap M$, since on the boundary of $f$, $M$ and $M'$ are complements. So $e \in M'\triangle\hat{0}$ if and only if $e \notin M \cup \hat{0}$ or $e \in \hat{0} \cap M$, which is equivalent to $e \notin M \triangle \hat{0}$.
\qed 

\begin{proposition}\label{lem:ht_flip} 
    Let $G$ be a graph with property $(*)$ and let $M,M' \in \mathcal{D}_G$. Suppose that $M$ is obtained from $M'$ by a down-flip on face $f$. Then,
    $$\height(M) = \height(M')-\mathbf{e}_f.$$
\end{proposition}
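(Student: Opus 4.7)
The plan is to apply the walk formula for heights from Proposition~\ref{prop:ht_altitude}. The key observation is that because the down-flip at $f$ only modifies edges of $M'$ lying on the boundary cycle $\partial f$ (which is a cycle by Remark~\ref{rmk:prop*-no-dangling-edges}), the oriented edge sets $\overrightarrow{M \triangle 0}$ and $\overrightarrow{M' \triangle 0}$ coincide on every edge not in $\partial f$; on $\partial f$ they are complementary as unoriented sets by Lemma~\ref{lem:ht_switch}. Consequently, along any walk $P$ in $G^*$ that crosses no edge of $\partial f$, each step contributes equally to $L_\bullet(P) - R_\bullet(P)$ under both $M$ and $M'$.

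For a face $g \neq f$, the strategy is to produce such an avoiding walk from $g$ to the infinite face. Since $\partial f$ is a cycle, its closed interior $\bar{f}$ is a closed topological disk in $\mathbb{R}^2$, so $\mathbb{R}^2 \setminus \bar{f}$ is path-connected. Any path from a point in $g$ to infinity inside $\mathbb{R}^2 \setminus \bar{f}$ can, after a small perturbation avoiding the vertices of $G$, be interpreted as a walk in $G^*$ that uses no edge of $\partial f$. Proposition~\ref{prop:ht_altitude} then yields $\height(M)_g = \height(M')_g$, which matches $\mathbf{e}_f$ on this coordinate.

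For the remaining case $g = f$, I would take a walk $P = (p_1, p_2, \ldots, p_n)$ whose first step $p_1$ crosses a single edge $e_1 \in \partial f$ into some adjacent face $f'$ and whose remainder is an avoiding walk from $f'$ to the infinite face, as above. The latter contributes identically to both heights, so $\height(M)_f - \height(M')_f$ reduces to the difference in contribution of $p_1$. The main technical step is to establish the orientation claim that every edge of $\partial f$ appearing in $\overrightarrow{M' \triangle 0}$ is oriented clockwise around $f$, while every edge appearing in $\overrightarrow{M \triangle 0}$ is oriented counterclockwise; this follows by a short case analysis from Definition~\ref{def:oriented-cycles} together with the facts that $M'$ contains all black-white edges of $\partial f$ and $M$ contains all white-black edges. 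I anticipate the main obstacle to be the geometric bookkeeping that follows: tracking ``left'' versus ``right'' as seen by $p_1$, which crosses $e_1$ outward from $f$, in relation to the orientation of $e_1$. Once this is in hand, Lemma~\ref{lem:ht_switch} gives two cases, and a direct check shows that in each the contribution of $p_1$ to $L_M(P) - R_M(P)$ is exactly one less than its contribution to $L_{M'}(P) - R_{M'}(P)$. This yields $\height(M)_f - \height(M')_f = -1$, completing the proof.
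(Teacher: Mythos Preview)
Your proposal is correct and shares the same backbone as the paper's proof: both apply Proposition~\ref{prop:ht_altitude} together with Lemma~\ref{lem:ht_switch}, and the orientation analysis you outline for the $f$-coordinate (edges of $\partial f$ in $\overrightarrow{M'\triangle 0}$ run clockwise around $f$, those in $\overrightarrow{M\triangle 0}$ run counterclockwise) is exactly what the paper does, though the paper phrases it as a direct case check on whether the first crossed edge lies in $M'\triangle\hat 0$ or not.

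The one genuine difference is in the treatment of faces $g\neq f$. The paper allows an arbitrary walk from $g$ to the infinite face, and when that walk happens to pass through $f$ it verifies that the entry and exit crossings of $\partial f$ contribute cancelling amounts to $L_\bullet(P)-R_\bullet(P)$ under $M$ versus $M'$. You instead invoke the topological fact that $\mathbb{R}^2\setminus\bar f$ is connected to choose a walk that never touches $\partial f$ at all, so the comparison is trivial. Your route is tidier and avoids the four-subcase check in the paper; the paper's route stays purely within the combinatorics of $G^*$ without appealing to the ambient topology. Either is fine.
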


\begin{proof} We first compare $\height(M)_f$ and $\height(M')_f$.
    Let $P$ be a walk in $G^*$ from $f$ to the infinite face and let $p_1$ be its first edge, which crosses edge $e$ of $f$. Suppose $e \in M' \triangle \hat{0}$. Since $M'$ contains exactly the black-white edges of $f$, if $e$ is black-white then $e \in M'$. If $e$ is white-black in $f$, then $e \in \hat{0}$. In either case, according to the definition of $\overrightarrow{M' \triangle 0}$, $e$ is oriented left-to-right with respect to $p_1$. As such, $L_{M'}(P)=L_M(P)+1$. If $e \notin M' \triangle \hat{0}$, then by \cref{lem:ht_switch}, $e \in M \triangle \hat{0}$. A similar argument shows that $e$ is oriented right-to-left with respect to $p_1$, and so $R_{M'}(P)=R_M(P)-1$. In either case, \cref{prop:ht_altitude} implies $\height(M')_f = \height(M)_f+1$.
    
    Next, fix a face $f' \neq f$. We will show that $\height_{f'}(M)=\height_{f'}(M')$, which will complete the proof. Let $P$ be a walk in $G^*$ from $f'$ to the infinite face. If $P$ does not travel through the interior of $f$, then the equality is clear. Otherwise, some edge $p$ of $P$ passes into $f$ through an edge $e$, then is followed by an edge $q$ exiting $f$ through some edge $d$. Note that each of $e$ and $d$ contribute to exactly one of $L_M(P), R_M(P), L_{M'}(P),$ and $R_{M'}(P)$. If $d \in M' \triangle \hat{0}$, the previous paragraph shows that $d$ contributes 1 to $L_{M'}(P)$; otherwise, it contributes $1$ to $R_{M}(P)$. Similar statements hold for $e$, switching $M$ and $M'$. One can check that in all cases, the desired equality holds.
    \end{proof}

\begin{proof}[Proof of \cref{cor:ht}]
    The first sentence follows from \Cref{lem:ht_flip} by induction on the rank of $M$. The base case is $\mrk(\hat{0})=1=\mathbf{y}^{\height(\hat{0})}$. The second sentence follows from the first, by the definition of $D_G$.
\end{proof}

Finally, we deduce some corollaries from the formulation of $D_G$ in terms of height. 

\begin{lemma} \label{lem:mon} Let $G$ be a graph with property $(*)$. If ${\bf y}^{\height(M)}= {\bf y}^{\height(M')}$ for dimers $M, M' \in \mathcal{D}_G$, then $M=M'$.
\end{lemma}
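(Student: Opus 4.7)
The plan is to show that the height vector $\height(M)$ uniquely determines the matching $M$. Since the variables $\{y_f : f \in \faces(G)\}$ are algebraically independent, the equality ${\bf y}^{\height(M)}={\bf y}^{\height(M')}$ forces $\height(M)=\height(M')$, so it suffices to prove that this implies $M=M'$.

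The strategy is to recover the symmetric difference $M\triangle\hat{0}$ edge-by-edge from the height vector, using \cref{prop:ht_altitude}. Concretely, fix an edge $e$ of $G$ and let $f_1, f_2$ be the two faces (possibly including the infinite face) on either side of $e$. Take any walk $P_2$ in $G^*$ from $f_2$ to the infinite face, and let $P_1$ be the walk obtained by prepending to $P_2$ the dual edge that crosses $e$ from $f_1$ to $f_2$. Applying \cref{prop:ht_altitude} to both $P_1$ and $P_2$, the difference $\height(M)_{f_1}-\height(M)_{f_2}$ equals $+1$, $-1$, or $0$, with the value $0$ occurring exactly when $e\notin M\triangle\hat{0}$ and the nonzero values occurring exactly when $e\in M\triangle\hat{0}$ (with the sign recording the orientation of $e$ in $\overrightarrow{M\triangle 0}$). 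When $f_2$ is the infinite face, we use the convention that $\height(M)_{f_2}=0$ stated after \cref{def:height}, and the same conclusion holds by applying \cref{prop:ht_altitude} to the single-edge walk from $f_1$ to $f_2$.

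Now suppose $\height(M)=\height(M')$. Applying the observation above to both $M$ and $M'$ and to every edge $e$ of $G$, we conclude that $e\in M\triangle\hat{0}$ if and only if $e\in M'\triangle\hat{0}$. Hence $M\triangle\hat{0}=M'\triangle\hat{0}$, which gives $M=M'$ since $\hat{0}$ is the same in both cases.

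The only potential obstacle is handling edges bordering the infinite face cleanly, but the convention that $\height(M)_{f_\infty}=0$ for every matching, together with the flexibility in choosing the walk $P$ in \cref{prop:ht_altitude}, takes care of this. The rest is a direct application of the altitude formula and requires no new ideas.
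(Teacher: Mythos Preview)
Your proof is correct and follows essentially the same approach as the paper: both arguments recover $M\triangle\hat{0}$ edge-by-edge by observing, via \cref{prop:ht_altitude}, that $\height(M)_{f_1}$ and $\height(M)_{f_2}$ differ precisely when the edge $e$ separating $f_1$ and $f_2$ lies in $M\triangle\hat{0}$. Your version is slightly more explicit about the infinite-face case and the sign of the difference, but the core idea is identical.
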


\proof We claim that the height function ${\height(M)}$ uniquely determines the cycles in $M \triangle \hat{0}$, which in turn uniquely determines $M$. Let $e$ be an edge of $G$, and let $f$ and $f'$ denote the faces on either side of $e$. Let $p_0$ denote the edge of $G^*$ dual to $e$, and let $P'=(p_1,\dots,p_n)$ be a path from $f'$ to the exterior face. Observe, $P=(p_0,\dots,p_n)$ is a path from $f$ to the exterior. As such, $\height(M)_f$ and $\height(M)_{f'}$ differ if and only if $e$ is in $M\triangle\hat{0}$.
\qed

\begin{corollary} \label{cor:01} The coefficients of $D_G(\mathbf{y})$ are all $0$ and $1$. 
\end{corollary}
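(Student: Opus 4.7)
The plan is simply to read off the corollary from Lemma \ref{lem:mon} combined with the height reformulation of $D_G$ given by Theorem \ref{cor:ht}.

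By Theorem \ref{cor:ht}, we have the expansion
\[
D_G(\mathbf{y}) \;=\; \sum_{M \in \mathcal{D}_G} \mathbf{y}^{\height(M)},
\]
so the coefficient of a monomial $\mathbf{y}^{\mathbf{v}}$ in $D_G$ is exactly the number of dimers $M \in \mathcal{D}_G$ whose height vector equals $\mathbf{v}$. Thus, to show every coefficient is $0$ or $1$, it suffices to show that the map $M \mapsto \height(M)$ is injective on $\mathcal{D}_G$.

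That injectivity, however, is precisely the content of Lemma \ref{lem:mon}: if $\mathbf{y}^{\height(M)} = \mathbf{y}^{\height(M')}$, then $M = M'$. Combining these two facts immediately yields the claim. There is no real obstacle here; the entire proof is a one-line consequence of the two preceding results, and I would write it out in exactly that form.
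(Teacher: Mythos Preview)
Your proof is correct and matches the paper's intended argument exactly: the corollary is stated immediately after Lemma~\ref{lem:mon} without a written proof, precisely because it follows at once from combining that lemma with the height expansion of $D_G$ in Theorem~\ref{cor:ht}, just as you have written.
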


\begin{corollary}\label{cor:prop*=all-faces-flipped}
    Let $G$ be a connected plane graph. Then $G$ has property $(*)$ if and only if the boundary of the infinite face is a cycle and for every non-infinite face of $f$, there exists a matching $M$ of $G$ which admits an up-flip at $f$.
\end{corollary}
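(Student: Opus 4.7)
The plan is to prove both implications separately; the forward direction is substantive while the reverse is essentially a direct construction. For the forward direction, the boundary of every face, including the infinite one, is a cycle by \cref{rmk:prop*-no-dangling-edges}. It remains to show that every non-infinite face $f$ is up-flippable in some matching. I would reduce this to showing $\height(\hat{1})_f \geq 1$: once established, any saturated chain from $\hat{0}$ to $\hat{1}$ must perform an up-flip at $f$ at some step (by \cref{lem:ht_flip}), and the matching preceding that step admits the required up-flip.

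To prove $\height(\hat{1})_f \geq 1$, I would argue by contradiction. If $\height(\hat{1})_f = 0$, then by monotonicity of heights (iterating \cref{lem:ht_flip}) together with nonnegativity (\cref{cor:ht}), $\height(M)_f = 0$ for every $M \in \mathcal{D}_G$. I then claim every black-white edge of $f$ lies in $\hat{0}$. Indeed, if some black-white edge $e$ of $f$ were not in $\hat{0}$, property $(*)$ would provide a matching $M$ containing $e$, and any saturated chain of up-flips from $\hat{0}$ to $M$ must add $e$ at some step. Since an up-flip at a face $g$ adds precisely the black-white edges of $g$, and the clockwise traversals of an edge in its two bounding faces are opposite---so $e$ is black-white in exactly one of its two faces, namely $f$---the chain performs an up-flip at $f$, forcing $\height(M)_f \geq 1$ via \cref{lem:ht_flip}, contradicting $\height(M)_f = 0$. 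Hence every black-white edge of $f$ lies in $\hat{0}$, so $\hat{0}$ admits a down-flip at $f$, contradicting the minimality of $\hat{0}$ in $\mathcal{D}_G$.

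For the reverse direction, let $e$ be any edge of $G$. Because the infinite face's boundary is a cycle, $e$ cannot have the infinite face on both sides, so $e$ lies on the boundary of some non-infinite face $f$. The hypothesized matching $M_f$ admitting an up-flip at $f$ contains every white-black edge of $f$, and the matching $M_f'$ obtained from $M_f$ by this up-flip contains every black-white edge of $f$. Since $e$ is either white-black or black-white in $f$, it lies in $M_f$ or $M_f'$, establishing property $(*)$.

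The main subtlety is the forward contradiction step: the crux is the observation that whenever an edge $e$ is added along an up-flipping chain, the flip in question is uniquely at the face where $e$ is black-white, which pins the contradiction onto the specific face $f$ in question.
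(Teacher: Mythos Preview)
Your argument is correct and follows the same overall skeleton as the paper's: for the forward direction both you and the paper locate a black-white edge $e$ of $f$ not in $\hat{0}$ (else $\hat{0}$ would admit a down-flip), take a matching containing $e$, and conclude that its height at $f$ is positive, whence some matching admits an up-flip at $f$. The difference lies in how that height inequality is established. The paper invokes \cref{prop:ht_altitude}: crossing $e$ from $f$ to the neighbouring face $f'$ shows $\height(N)_{f'}=\height(N)_f-1$, and nonnegativity at $f'$ forces $\height(N)_f\ge 1$. You instead argue purely along the chain: since $e$ is black-white in exactly one of its two bounding faces (namely $f$), any up-flip that adds $e$ must be at $f$, so a saturated chain from $\hat{0}$ to $M\ni e$ contains an up-flip at $f$. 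Your route is a bit more elementary in that it sidesteps the walk-based height formula; the paper's route is a bit more direct in that it does not pass through $\hat{1}$ or the monotonicity reduction. Either way, the appeal to ``two bounding faces'' is underwritten by \cref{rmk:prop*-no-dangling-edges} (equivalently \cref{lem:edges-dif-face-each-side}), which you should cite explicitly.

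One small omission in your reverse direction: property~$(*)$ includes bipartiteness, which you do not state. It does follow immediately from your argument---every edge lies in a non-infinite face where an up-flip is defined, hence has endpoints of opposite colour---but the paper spells this out and you should too.
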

\begin{proof}

    Suppose $G$ has property $(*)$ and choose a non-infinite face $f$ of $G$. The face $f$ has at least one black-white edge $e$ which is not in the bottom matching $\hat{0}$ (otherwise, $\hat{0}$ would contain all black-white edges of $f$ and we could perform a down-flip at $f$ in $\hat{0}$, which is impossible). Since $G$ has property $(*)$, there is a matching $N$ which contains $e$. We have $e \in N \triangle \hat{0}$. Let $f'$ be the other face adjacent to $e$; by \cref{lem:edges-dif-face-each-side}, $f \neq f'$. Note that in $\overrightarrow{N\Delta\hat{0}}$, $e$ is directed black-to-white. Traveling from $f$ to $f'$ across $e$, $e$ is oriented left-to-right. Using \cref{prop:ht_altitude}, we conclude that $\height(N)_{f'}=\height(N)_f-1$. Since all components of $\height(N)$ are nonnegative, this implies $\height(N)_f \geq 1$. In other words, $y_f$ divides $\mathbf{y}^{\height(N)}$. By \cref{cor:ht} and the definition of $\mrk(N)$, this implies that there is some matching $\hat{0}\leq M \leq N$ which admits an up-flip at $f$. Property $(*)$ also implies the condition on the boundary of the infinite face (see \cref{rmk:prop*-no-dangling-edges}).

    Suppose that for every non-infinite face of $f$, there is a matching $M$ which admits an up-flip at $f$. Suppose also that the boundary of the infinite face is a cycle. 
    The first assumption implies that the boundary of every non-infinite face is a cycle, and so every edge in a non-infinite face $f$ is either white-black or black-white in $f$. 
    Since matching $M$ of $G$ admits an up-flip at $f$, there is a matching that contains every white-black edge of $f$.  After applying this up-flip, we obtain a matching $M'$ of $G$ that contains every black-white edge of $f$. The assumption that the boundary of the infinite face is a cycle implies that every edge $e$ is in a non-infinite face of $G$. In this face, $e$ is either a white-black edge or a black-white edge, so the previous arguments imply that $G$ is in fact bipartite and that every edge of $G$ is in some matching, and thus $G$ has property $(*)$.
\end{proof}

\section{Newton polytopes of dimer face polynomials} \label{sec:newton-polytope}

In this section, we investigate the combinatorics of the Newton polytope of the dimer face polynomial $D_G$. We show the Newton polytope is affinely isomorphic to the perfect matching polytope of $G$. From this, we determine the face lattice of the Newton polytope using results of \cite{PSW} and show that all lattice points are vertices.

\begin{definition} If $p$ is a polynomial in a polynomial ring whose variables are indexed by some set $I$,
the \textbf{support} of $p$ is the integer point set in $\mathbb N^I$ consisting of the exponent vectors of monomials with nonzero coefficient in~$p$. 
The \textbf{Newton polytope} $\Newton(p)\subseteq\mathbb R^I$ is the convex hull of the support of~$p$. 
\end{definition}

In particular, if $G$ is a graph with property $(*)$ and $D_G$ its dimer face polynomial, then 
\[\Newton(D_G)= {\rm ConvHull}(\height(M) \colon M \text{ is a perfect matching of }G) \subset \mathbb{R}^{\faces(G)}.\]

Note that $\Newton(D_G) \subset \mathbb{R}^{\faces(G)}$ is a full dimensional polytope, as can be concluded based on its definition and Corollary \ref{cor:prop*=all-faces-flipped}. 

\medskip

\begin{definition} The \textbf{perfect matching polytope} $PM(G)$ of a bipartite graph $G$ is $$PM(G)={\rm ConvHull}(\chi_M\mid M \text{ is a perfect matching of } G),$$ where $\chi_M \in \mathbb{R}^{E(G)}$ is the indicator vector of the perfect matching $M$. 
\end{definition}

Note that  $PM(G)$ is a $0/1$-polytope. As such, the integer points $PM(G) \cap \mathbb{Z}^{E(G)}$ of $PM(G)$ and the vertex set of $PM(G)$ are both exactly the set of the incidence vectors $\chi_M$.

We will show that $\Newton(D_G)$ and $PM(G)$ are affinely isomorphic.

\begin{definition} \label{def:aff-isom} Let $P\subset \mathbb{R}^d$ and $Q \subset \mathbb{R}^l$ be polytopes. Fix $A\in \mathbb{R}^{l\times d}$, ${\bf x}\in \mathbb{R}^{d}$ and ${\bf v}\in \mathbb{R}^{l}$. The affine map $\pi:\mathbb{R}^d\rightarrow \mathbb{R}^l$ defined by $$\pi({\bf x})=A{\bf x}+{\bf v}$$ is an \textbf{affine isomorphism} between $P$ and $Q$ if $\pi|_P$ is a bijection whose image is $Q$.
\end{definition}

Assume that $G$ has property $(*)$. We define the affine transformation $\psi$ between ${\Newton}(D_G) \subset \mathbb{R}^{\faces(G)}$ and $PM(G)\subset \mathbb{R}^{E(G)}$ as follows. 
\medskip

To each non-infinite face $f \in \faces(G)$, we associate a vector ${\bf v}_f \in \mathbb{R}^{E(G)}$ as follows. For an edge $e \in E(G)$, we set
\[(\mathbf{v}_f)_e= \begin{cases}
    1 & \text{if } e\text{ a black-white edge of }f\\
    -1 & \text{if } e\text{ a white-black edge of }f\\
    0 & \text{if }e \text{ is not in the boundary of }f.
\end{cases}\]

Let $A_G \in \mathbb{R}^{E(G)} \times \mathbb{R}^{\faces(G)}$ be the matrix in whose column vectors are ${\bf v}_{f}$. Moreover, recall that  $\chi_{\hat{0}} \in \{0, 1\}^{E(G)}$ is the indicator vector of the minimal element of $\mathcal{D}_G$.

\begin{proposition} \label{prop:image} Suppose $G$ has property $(*)$. Let $\psi: \mathbb{R}^{\faces(G)}\rightarrow \mathbb{R}^{E(G)}$ be the affine map defined by $\psi({\bf x})=A_G {\bf x} +\chi_{\hat{0}}.$ We have that $$\psi(\Newton(D_G))=PM(G).$$
\end{proposition}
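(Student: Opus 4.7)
Since $\psi$ is an affine map and $\Newton(D_G)$ is by definition the convex hull of the set $\{\height(M) : M \in \mathcal{D}_G\}$, we have
\[
\psi(\Newton(D_G)) = {\rm ConvHull}\bigl(\psi(\height(M)) : M \in \mathcal{D}_G\bigr).
\]
Since $PM(G) = {\rm ConvHull}(\chi_M : M \in \mathcal{D}_G)$, the proposition reduces to the vertex-level identity
\[
\psi(\height(M)) = \chi_M \qquad \text{for every } M \in \mathcal{D}_G.
\]

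My plan is to prove this identity by induction on the rank of $M$ in the dimer lattice $\mathcal{D}_G$. The base case $M = \hat{0}$ is immediate: $\height(\hat{0}) = 0$, so $\psi(0) = A_G \cdot 0 + \chi_{\hat{0}} = \chi_{\hat{0}}$. For the inductive step, suppose $M'$ is obtained from $M$ by an up-flip at a face $f$. On one hand, \cref{lem:ht_flip} (applied in reverse, since the up-flip is the inverse of a down-flip) gives $\height(M') = \height(M) + \mathbf{e}_f$, so by linearity
\[
\psi(\height(M')) = \psi(\height(M)) + A_G \mathbf{e}_f = \psi(\height(M)) + \mathbf{v}_f.
\]
On the other hand, an up-flip at $f$ removes exactly the white-black edges of $f$ from $M$ and inserts exactly the black-white edges of $f$, which means $\chi_{M'} - \chi_M$ is $+1$ on black-white edges of $f$, $-1$ on white-black edges of $f$, and $0$ elsewhere. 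By the definition of $\mathbf{v}_f$, this is precisely $\mathbf{v}_f$. Combining, the inductive hypothesis $\psi(\height(M)) = \chi_M$ yields $\psi(\height(M')) = \chi_{M'}$. Since every $M \in \mathcal{D}_G$ lies above $\hat{0}$ in a saturated chain of up-flips (by \cref{thm:dimer-lattice}), the induction covers all matchings.

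Once $\psi(\height(M)) = \chi_M$ is established for all matchings, we get $\psi(\Newton(D_G)) = {\rm ConvHull}(\chi_M : M \in \mathcal{D}_G) = PM(G)$, completing the proof.

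There is no serious obstacle here: the matrix $A_G$ has essentially been engineered so that its columns $\mathbf{v}_f$ record exactly the edge-difference caused by an up-flip at $f$, which matches the unit-vector increment in height provided by \cref{lem:ht_flip}. The only mild subtlety is keeping careful track of the sign conventions (black-white vs.\ white-black edges of $f$ determine the sign of $\mathbf{v}_f$, and an up-flip versus a down-flip correspondingly controls whether we add or subtract $\mathbf{v}_f$), but this is simply a bookkeeping check rather than a genuine difficulty. Note that the infinite face plays no role: $\faces(G)$ excludes it by convention, and its height coordinate is identically zero in any case, so it contributes nothing to $A_G \height(M)$.
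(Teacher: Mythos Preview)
Your proof is correct and follows essentially the same approach as the paper: both establish the pointwise identity $\psi(\height(M)) = \chi_M$ by induction on rank in the dimer lattice, using \cref{lem:ht_flip} for the height increment and the definition of $\mathbf{v}_f$ for the edge-indicator increment. Your final step, invoking that affine maps commute with convex hulls, is in fact slightly cleaner than the paper's detour through vertex sets.
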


\proof
Recall that by \cref{cor:ht}, $\Newton(D_G)$ is the convex hull of $\{\height(M)\}_{M \in \mathcal{D}_G}$. In particular, each vertex of $\Newton(D_G)$ is a height vector of some matching.

We will show that $\psi$ maps the height vector $\height(M)$ to the indicator vector $\chi_M$ and in particular is a bijection between $\{\height(M)\}_{M \in \mathcal{D}_G}$ and $\{\chi_M\}_{M \in \mathcal{M}_G}$. We then use this to show that 
$\psi(\Newton(D_G))=PM(G).$ 

To show that $\psi$ is a bijection between $\{\height(M)\}_{M \in \mathcal{D}_G}$ and $\{\chi_M\}_{M \in \mathcal{D}_G}$, we proceed by induction on rank in $\mathcal{D}_G$.  

\medskip

\textit{Base case:} the unique rank $0$ perfect matching in $\mathcal{D}_G$ corresponds to the integer point $(0,\ldots, 0)=\height(\hat{0})\in \Newton(D_G)$, whereas it corresponds to $\chi_{\hat{0}}\in PM(G)$. Note that $$\psi((0,\ldots, 0))=\chi_{\hat{0}}$$ as desired.

\medskip

\textit{Inductive hypothesis:} Given a matching $M \in \mathcal{D}_G$ of rank $k$ for some fixed $k \geq 0$, the height $\height(M)$ is mapped to $\chi_M$ under $\psi$. That is: 
\begin{equation} \label{ind-hyp}\psi(\height(M)))=A_G (\height(M)))+\chi_{\hat{0}}=
\chi_M.
\end{equation}

\textit{Inductive step:} Consider a matching $M'$ of rank $k+1$ in  $\mathcal{D}_G$. Let $M$ be a matching of rank $k$ that $M'$ covers and let $f$ be the face of $G$ at which we perform a down-flip to get from $M'$ to $M$. 

 By \cref{lem:ht_flip}, 
\begin{equation} \label{exp} \height(M')=\height(M)+{\bf e}_f\end{equation} where ${\bf e}_f \in \mathbb{R}^{\faces{(G)}}$ is the coordinate vector with $1$ in the coordinate indexed by $f$ and $0$s elsewhere. 

By definition, \begin{equation} \label{chi} \chi_{M'}=\chi_M+{\bf v}_f.\end{equation}

Then, putting equations \eqref{ind-hyp}, \eqref{exp}, \eqref{chi} together, we obtain:
\begin{align*}
    \psi(\height(M'))&=\psi(\height(M)+{\bf e}_f)\\&=A_G(\height(M)+{\bf e}_f)+\chi_{\hat{0}} \\&=(A_G(\height(M))+\chi_{\hat{0}})+A_G({\bf e}_f)\\
   & =\chi_M+{\bf v}_f=\chi_{M'}
\end{align*}
as desired.

Now, since $\{\chi_M\}_{M \in \mathcal{D}_G}$ is the set of vertices of $PM(G)$, $\psi$ maps $\{\height(M)\}_{M \in \mathcal{D}_G}$ to $\{\chi_M\}_{M \in \mathcal{D}_G}$, and affine maps preserve convex combinations, the set of vertices of $\Newton(D_G)$ is $\{\height(M)\}_{M \in \mathcal{D}_G}$. Thus $\psi$ is a bijection between the vertices of $\Newton(D_G)$ and $PM(G)$. This implies that 
$\psi(\Newton(D_G))=PM(G).$
\qed

\begin{remark}\label{rem:dimer-polyomial-edges} 
    There is another polynomial associated to the dimers of a planar bipartite graph $G$, called the \emph{partition function} $Z_G$. The partition function is in ``edge variables" $\{z_e\}_{e \in E(G)}$ rather than ``face variables" $\{y_f\}_{f \in \faces(G)}$, and is defined as 
    \[Z_G:=\sum_{M \in \mathcal{D}_G} \prod_{e \in M} z_e.\]
    The polytope $PM(G)$ is the Newton polytope $\Newton(Z_G)$. The above proposition shows that to go from the dimer face polynomial $D_G$ to the partition function $Z_G$, one substitutes
    \begin{equation}\label{eq:face-to-edge-substitution}
    y_f \mapsto \prod_{\substack{e \in f\\ \text{black-white}}} z_e\prod_{\substack{e \in f\\ \text{white-black}}} z_e^{-1},\end{equation}
    and then multiplies by $\prod_{e \in \hat{0}} z_e$. Note that the alternating product of edge weights as in \eqref{eq:face-to-edge-substitution} is important in the dimer theory literature, cf. \cite[Sec. 3.2]{Dimers}.
\end{remark}

\begin{theorem} \label{thm:affine} The affine map 
\begin{align*}
    \psi: \mathbb{R}^{\faces(G)}&\rightarrow \mathbb{R}^{E(G)}\\
    \mathbf{x} &\mapsto A_G {\bf x} +\chi_{\hat{0}}
\end{align*}
is an affine isomorphism of the polytopes $\Newton(D_G)$ and $PM(G)$.
\end{theorem}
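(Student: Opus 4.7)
The plan is to leverage \cref{prop:image}, which already shows that $\psi$ maps $\Newton(D_G)$ onto $PM(G)$ and, in its proof, identifies the vertex set $\{\height(M)\}_{M \in \mathcal{D}_G}$ of $\Newton(D_G)$ with the vertex set $\{\chi_M\}_{M \in \mathcal{D}_G}$ of $PM(G)$ bijectively via $\psi$. To upgrade this to an affine isomorphism, the only remaining task is to verify that $\psi|_{\Newton(D_G)}$ is injective.

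Since $\psi(\mathbf{x}) = A_G \mathbf{x} + \chi_{\hat{0}}$ is affine, this injectivity is equivalent to injectivity of the linear part $A_G$ on the linear span of $\Newton(D_G) - \height(\hat{0}) = \Newton(D_G)$. As the text notes just after the definition of the Newton polytope, $\Newton(D_G)$ is full-dimensional in $\mathbb{R}^{\faces(G)}$, so the problem reduces to showing that $A_G : \mathbb{R}^{\faces(G)} \to \mathbb{R}^{E(G)}$ is an injective linear map; equivalently, the columns $\{\mathbf{v}_f\}_{f \in \faces(G)}$ are linearly independent.

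To establish this, the plan is a rank count. By \cref{prop:image}, the image of $A_G$ contains every difference $\chi_M - \chi_{\hat{0}}$, hence $\mathrm{rank}(A_G) \geq \dim PM(G)$. The classical matching polytope dimension formula $\dim PM(G) = |E(G)| - |V(G)| + 1$, valid for a connected bipartite graph in which every edge lies in some perfect matching, combined with Euler's formula for connected plane graphs (which gives that the number of non-infinite faces is $|E(G)| - |V(G)| + 1$), yields $\dim PM(G) = |\faces(G)|$. Since $A_G$ has exactly $|\faces(G)|$ columns, we conclude $\mathrm{rank}(A_G) = |\faces(G)|$, so $A_G$ is injective and the theorem follows.

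The main obstacle is justifying the identification $\dim PM(G) = |\faces(G)|$. One can either invoke the classical matching polytope dimension formula from combinatorial optimization, or argue linear independence of the $\mathbf{v}_f$ directly via a peeling argument using property $(*)$: the cyclic outer boundary of $G$ (guaranteed by \cref{rmk:prop*-no-dangling-edges}) produces a non-infinite face $f_1$ with an edge $e_1$ belonging to no other non-infinite face, so the $e_1$-coordinate of any linear relation $\sum_f c_f \mathbf{v}_f = 0$ forces $c_{f_1} = 0$, after which one iterates on the plane graph obtained by deleting $f_1$'s purely outer edges.
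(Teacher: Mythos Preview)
Your proposal is correct, and you actually sketch two valid arguments. Your alternative ``peeling'' argument is exactly the paper's proof: the paper takes a linear relation $\sum_f c_f \mathbf{v}_f = 0$, observes that each edge on the boundary of the infinite face lies in a unique non-infinite face (forcing the corresponding $c_f = 0$), and iterates inward.

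Your primary rank-count argument is a genuinely different route. Rather than proving linear independence of the $\mathbf{v}_f$ directly, you bound $\mathrm{rank}(A_G)$ from below by $\dim PM(G)$ (since the image of $A_G$ contains all differences $\chi_M - \chi_{\hat{0}}$), then match $\dim PM(G) = |E(G)| - |V(G)| + 1$ with $|\faces(G)|$ via Euler's formula. This is more conceptual: it explains \emph{why} the two polytopes have the same dimension, namely that both count independent cycles in $G$. The cost is that you must invoke the classical fact that an elementary bipartite graph (connected, every edge in some perfect matching) has matching polytope of dimension exactly $|E| - |V| + 1$; this is standard but not proved in the paper, whereas the peeling argument is fully self-contained. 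Either approach is fine here; the paper opts for the elementary one.
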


\proof We already know from \cref{prop:image} that $\psi(\Newton(D_G))=PM(G)$. So by Definition \ref{def:aff-isom}, it suffices to show that $A_G$ is injective. Since $|E(G)|> |\faces(G)|$, we will prove this by showing that the columns of $A_G$, the vectors $\{{\bf v}_f\}_{f \in \faces(G)}$, are linearly independent. 

Let $$\sum_{f \in \faces(G)} c_f {\bf v}_f={\bf 0},$$ where $c_f \in \mathbb{R}$ for $f \in \faces(G).$ We will show that $c_f=0$ for all $f \in \faces(G)$.

Note that any edge $e\in E(G)$ in the boundary of the infinite face, there is exactly one face $f \in \faces(G)$ that contains $e$, and in particular, exactly one vector among $\{{\bf v}_f\}_{f \in \faces(G)}$ with a nonzero coordinate in position $e$. We thus conclude that for all faces $f \in \faces(G)$ neighboring the infinite face of $G$, we have $c_f=0$. With similar reasoning, we can conclude that for all faces $f \in \faces(G)$ that neighbor any of the faces neighboring the infinite face of $G$, we have $c_f=0$. Continuing this way, we obtain that $c_f=0$ for all $f \in \faces(G)$, as desired. \qed

Recall that affine isomorphism of polytopes implies combinatorial isomorphism; that is, affinely isomorphic polytopes have isomorphic face lattices. We will use this to describe the face lattice of $\Newton(D_G)$. We first need the following definition.

\begin{definition}
    A subgraph $H$ of $G$ is \emph{elementary} if it contains every vertex of $G$ and every edge of $H$ is used in some perfect matching. Equivalently, $H$ is a union of several perfect matchings of $G$. 
\end{definition}

The next proposition follows directly from \cite[Theorem 7.3]{PSW}, which establishes the face lattice of $PM(G)$ once conventions are translated appropriately.

\begin{proposition}\label{prop:face-lattice-newton}
    Suppose $G$ has property $(*)$. The face lattice of $\Newton(D_G)$ is isomorphic to the lattice of all elementary subgraphs of $G$, ordered by inclusion.
\end{proposition}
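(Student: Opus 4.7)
The plan is essentially to transfer an already-known face lattice description across the affine isomorphism $\psi$ of \cref{thm:affine}. The first observation to record is the standard fact that an affine isomorphism of polytopes induces an isomorphism of their face lattices: if $\pi : P \to Q$ is an affine bijection, then $\pi$ carries faces of $P$ bijectively to faces of $Q$ and respects inclusion, so $\mathcal{F}(P) \cong \mathcal{F}(Q)$ as posets. Applying this to $\psi$ from \cref{thm:affine} immediately yields $\mathcal{F}(\Newton(D_G)) \cong \mathcal{F}(PM(G))$.

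Next, I would appeal directly to \cite[Theorem 7.3]{PSW}, which describes the face lattice of the perfect matching polytope $PM(G)$ of a bipartite graph as the lattice of elementary subgraphs of $G$ ordered by inclusion. Under that identification, a face $F$ of $PM(G)$ corresponds to the subgraph $H_F \subseteq G$ obtained as the union of all edges appearing in some perfect matching $M$ with $\chi_M \in F$; equivalently, $H_F$ contains all vertices of $G$ together with the support of any vertex of $F$. Composing with $\psi^{-1}$ and using that $\psi$ sends $\height(M)$ to $\chi_M$ (as shown inside the proof of \cref{prop:image}), the face $\psi^{-1}(F)$ of $\Newton(D_G)$ corresponds to the same elementary subgraph $H_F = \bigcup_{M : \height(M) \in \psi^{-1}(F)} M$.

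The only real work, and the main obstacle, is a bookkeeping check that the conventions of \cite{PSW} align with those used here: that their notion of ``elementary subgraph'' matches \cref{def:dimer-poly}'s usage (all vertices of $G$ included; every edge in some perfect matching), and that their parametrization of faces of $PM(G)$ by such subgraphs is the inclusion-respecting one. Once this translation is carried out, no further computation is needed, and the proposition follows by chaining the two isomorphisms
\[
\mathcal{F}(\Newton(D_G)) \xrightarrow{\ \psi\ } \mathcal{F}(PM(G)) \xrightarrow{\text{\cite[Thm 7.3]{PSW}}} \{\text{elementary subgraphs of } G\}.
\]
Property $(*)$ is used implicitly here to ensure both that \cref{thm:affine} applies and that the minimum element $\hat{0}$ exists so $\psi$ is well-defined.
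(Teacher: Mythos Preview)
Your proposal is correct and follows essentially the same approach as the paper: reduce to the face lattice of $PM(G)$ via the affine isomorphism of \cref{thm:affine}, then invoke \cite[Theorem 7.3]{PSW}. The paper's proof differs only in being more explicit about the convention translation (embedding $G$ in a disk so that \cite{PSW}'s setup with $n=0$ boundary vertices applies and $PM(G)$ coincides with their polytope $P(G')$), which is exactly the ``bookkeeping check'' you flag as the remaining work.
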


\begin{proof}
    Since $\Newton(D_G)$ and $PM(G)$ are affinely isomorphic, their face lattices are isomorphic.
    It follows from \cite[Theorem 7.3]{PSW} that the face lattice of $PM(G)$ is isomorphic to the lattice of elementary subgraphs of $G$, ordered by inclusion. In more detail, in \cite{PSW}, they deal with planar graphs embedded in a disk, perhaps with some vertices on the boundary. So to apply \cite[Theorem 7.3]{PSW}, one should draw $G$ in a disk to obtain the graph $G'$.
    The graph $G'$ satisfies \cite[Definition 2.2]{PSW} (with $n=0$), a perfect matching of $G$ is an \emph{almost-perfect matching} of $G'$, and $PM(G)$ is equal to the polytope $P(G')$ from \cite[Definition 4.1]{PSW}.
\end{proof}

We can also determine the integer points of $\Newton(D_G)$.

\begin{corollary}\label{cor:newton-lattice-points}
    Suppose $G$ has property $(*)$. Then every integer point of $\Newton(D_G)$ is a vertex.
\end{corollary}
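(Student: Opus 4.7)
The plan is to leverage \cref{thm:affine} together with the fact, already noted in the text, that $PM(G)$ is a $0/1$-polytope whose integer points coincide with its vertex set $\{\chi_M : M \in \mathcal{D}_G\}$. The affine isomorphism $\psi(\mathbf{x}) = A_G \mathbf{x} + \chi_{\hat{0}}$ has integer coefficients: the entries of $A_G$ lie in $\{-1, 0, 1\}$ by construction, and $\chi_{\hat{0}} \in \{0,1\}^{E(G)}$. Consequently, $\psi$ restricts to a map from $\Newton(D_G)\cap\mathbb{Z}^{\faces(G)}$ into $PM(G) \cap \mathbb{Z}^{E(G)}$.

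So I would proceed as follows. Let $\mathbf{x} \in \Newton(D_G) \cap \mathbb{Z}^{\faces(G)}$ be an arbitrary integer point. Then $\psi(\mathbf{x}) \in PM(G)$ is an integer point; since $PM(G)$ is a $0/1$-polytope whose only integer points are its vertices $\chi_M$, there exists a perfect matching $M \in \mathcal{D}_G$ with $\psi(\mathbf{x}) = \chi_M$. By \cref{prop:image} (established in the course of proving \cref{thm:affine}), we also have $\psi(\height(M)) = \chi_M$. Since $\psi$ is an affine isomorphism, hence injective, this forces $\mathbf{x} = \height(M)$. But $\height(M)$ is a vertex of $\Newton(D_G)$, so $\mathbf{x}$ is a vertex, as desired.

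There is essentially no obstacle here, since the real content is packaged in \cref{thm:affine}: the only verification needed is that $\psi$ sends integer points to integer points, which is immediate from the entries of $A_G$ and $\chi_{\hat{0}}$ being integral. The corollary is thus just a transfer of the well-known $0/1$-property of the perfect matching polytope to $\Newton(D_G)$ via the isomorphism $\psi$.
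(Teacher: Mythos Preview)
Your proof is correct and follows essentially the same approach as the paper: both use \cref{thm:affine} to observe that $\psi$ sends integer points of $\Newton(D_G)$ injectively into the integer points of $PM(G)$, which are all vertices since $PM(G)$ is a $0/1$-polytope, and then pull this back via the bijection on vertices. Your version is slightly more explicit in identifying the preimage as $\height(M)$ via \cref{prop:image}, but the argument is the same.
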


\proof 
By Theorem \ref{thm:affine}, the map $\psi$ is an affine isomorphism from $\Newton(D_G)$ to $PM(G)$. Observe that by definition, the map $\psi$ sends integer points to integer points, so the integer points $\mathbf{x}$ of $\Newton(D_G)$ inject into the integer points of $PM(G)$. The polytope $PM(G)$ is a $0,1$-polytope with all integer points being vertices. As $\psi$ sends vertices bijectively to vertices, this implies that in the Newton polytope of $D_G$, every integer point is a vertex. 
\qed

\medskip

A special case of this corollary appeared in \cite[Theorem 1.10]{BMS24}, when $G=G_{L,i}$ is a truncated face-crossing incidence graph of a prime link diagram $L$ with no curls (see \cref{def:truncated}), and $D_G=F_{T(i)}$ (see \cref{prop:BMS-poly-and-dimer-poly}). 

\section{On dimers and links}\label{sec:alexander-poly-and-dimers}

In this section, we describe connections between knot theory and dimers. We first recall Kauffman's lattice of states and how it can be used to compute the Alexander polynomial of a link. We describe how to associate a bipartite plane graph $G_L$ to each (oriented) link diagram $L$. We show that Kauffmann's lattice of states is isomorphic to the dimer lattice of a related graph $G_{L,i}$ (see \cref{lem:state-dimer-bijection}). The latter isomorphism is also stated in the work of Cohen and Teicher \cite{ClockLattice}, and this section is in part inspired by their work. We conclude this section by a proof of Theorem \ref{thm:intro-Alex-poly}, which show that the Alexander polynomial of $L$ can be obtained by specializing the dimer face polynomial $D_{G_{L,i}}$.

\subsection{Kauffman's clock lattice and Alexander polynomials}
In this section, we review the \emph{state summation} formula for the Alexander polynomial of an oriented link, given by Kauffman \cite{K06}. We assume some familiarity with knot theory; see e.g. \cite{burde2002knots} for details.

If $L$ is an oriented link diagram, we write $\Delta_L(t) \in \mathbb{Z}[t,t^{-1}]$ for its Alexander polynomial. The Alexander polynomial is defined up to multiplication by a signed power of $t$. We use the notation $P \sim Q$ to indicate that $P,Q \in \mathbb{Z}[t,t^{-1}]$ are related by multiplication by $\pm t^k$ for some integer $k$.

Fix an oriented link diagram $L$ and a distinguished \textbf{segment} $i$ of $L$ (that is, a curve between two crossings). The two planar regions adjacent to $i$ are called \textbf{absent}; the remaining regions are \textbf{present}. Weight the present regions around each crossing as in \cref{fig:weighting}, left.

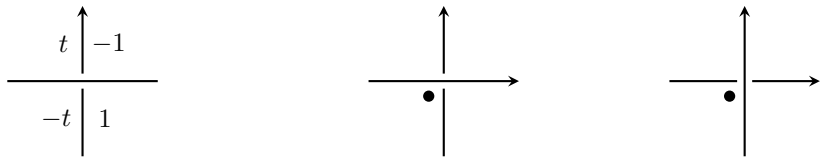
\begin{figure}[h]
\centering
\begin{tikzpicture}
\draw[black, thick] (5,0) -- (5,.9);
\draw[-stealth, black, thick] (5,1.1) -- (5,2);
\draw[black, thick] (4,1) -- (6,1);
\node at (4.75,1.5) {$t$};
\node at (4.65,.5) {$-t$};
\node at (5.35,1.5) {$-1$};
\node at (5.3,.5) {$1$};
\end{tikzpicture}
\hspace{1in}
\begin{tikzpicture}
\draw[black, thick] (1,0) -- (1,.9);
\draw[-stealth, black, thick] (1,1.1) -- (1,2);
\draw[-stealth, black, thick] (0,1) -- (2,1);
\draw[black, thick] (4,1) -- (4.9,1);
\draw[-stealth, black, thick] (5.1,1) -- (6,1);
\draw[-stealth, black, thick] (5,0) -- (5,2);
\node at (.8,.8) [circle, fill, inner sep=1.5pt]{};
\node at (4.8,.8) [circle, fill, inner sep=1.5pt]{};
\end{tikzpicture}
\caption{Left: the weighting of present regions around each crossing in the definition of the state sum. Note that the orientation of the horizontal strand does not matter. Center and right: black holes.}
\label{fig:weighting}
\end{figure}

A \textbf{state} is a bijection between crossings in the diagram and the present regions so that each crossing is mapped to one of the four regions it meets. This bijection is represented diagrammatically by adding a \textbf{state marker} to the region each corner is mapped to; see \cref{fig:clock_lattice} for examples. We denote by $\mathcal{S}_{L,i}$ the set of states of the link diagram $L$ with distinguished segment $i$. The \textbf{weight} of a state $S$, denoted $\langle L|S\rangle$, is the product of the weights corresponding to the marked regions at each crossing. See \cref{fig:clock_lattice} for examples.

A state marker is called a \textbf{black hole} if it is in the configuration depicted in \cref{fig:weighting}, center or right. Let $b(S)$ be the number of black holes in a fixed state $S$. 

\begin{theorem}[\cite{K06} pg. 176]\label{thm:state-sum-alexander-poly}
    Let $L$ be an oriented link diagram and $i$ a segment. Then, \[\Delta_L(t)\sim\sum_{S \in \mathcal{S}_{L,i}} (-1)^{b(S)}\langle L|S\rangle.\]
\end{theorem}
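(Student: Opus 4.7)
The plan is to derive the state sum formula from the matrix-based definition of the Alexander polynomial, which is the classical route taken by Alexander and later streamlined by Kauffman. First, I would construct the \emph{Alexander matrix} $M_L$ of the oriented diagram: rows indexed by crossings, columns indexed by planar regions (including the infinite region), where the row associated with crossing $c$ has entries $t$, $-t$, $-1$, $1$ in the columns corresponding to the four regions adjacent to $c$ according to the local conventions in Figure~\ref{fig:weighting}, and zero entries elsewhere. A classical theorem (due to Alexander, reproved and popularized by Kauffman) states that if one deletes any two columns corresponding to two adjacent regions and takes the determinant of the resulting square matrix, one obtains $\Delta_L(t)$ up to multiplication by $\pm t^k$.

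Next, I would specialize this construction to the distinguished segment $i$: the two regions adjacent to $i$ are exactly the \emph{absent} regions, so deleting their columns yields a square matrix $M_L'$ whose rows are indexed by crossings and whose columns are indexed by present regions. Expanding $\det(M_L')$ via the Leibniz formula gives a sum over bijections $\pi$ from crossings to present regions. A term is nonzero precisely when $\pi(c)$ is one of the four regions meeting $c$ for every crossing $c$ — that is, exactly when $\pi$ defines a state $S \in \mathcal{S}_{L,i}$. For such a state, the corresponding summand is
\[
\mathrm{sgn}(\pi_S)\cdot\langle L \mid S\rangle,
\]
where $\langle L\mid S\rangle$ is the product of the local weights from Figure~\ref{fig:weighting}.

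The main obstacle — and the real content of the theorem — is to establish that
\[
\mathrm{sgn}(\pi_S) \;=\; \varepsilon \cdot (-1)^{b(S)}
\]
for some global sign $\varepsilon \in \{\pm 1\}$ independent of $S$. My plan for this is to argue locally and then propagate. I would verify that a single \textbf{clock move} — the elementary move generating Kauffman's clock lattice, which swaps state markers between two crossings sharing a pair of adjacent regions — simultaneously (i) multiplies $\mathrm{sgn}(\pi_S)$ by $-1$, since it alters $\pi_S$ by a transposition of two values, and (ii) changes the parity of $b(S)$, which can be seen by a case analysis on the four possible local configurations of strand orientations at the two affected crossings. Since Kauffman's clock lattice is connected under clock moves, once the sign identity is verified at a single reference state (say the minimum of the lattice, where a direct check determines $\varepsilon$), it propagates to every state.

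Combining the two ingredients,
\[
\Delta_L(t) \;\sim\; \det(M_L') \;=\; \sum_{S \in \mathcal{S}_{L,i}} \mathrm{sgn}(\pi_S)\,\langle L\mid S\rangle \;\sim\; \sum_{S \in \mathcal{S}_{L,i}} (-1)^{b(S)}\,\langle L\mid S\rangle,
\]
which is the claimed identity. I expect the sign-matching step under clock moves to be the genuinely delicate part; the rest is standard linear algebra once the Alexander matrix is set up correctly, with care given to orientations so that the weights in Figure~\ref{fig:weighting} are reproduced in the correct columns.
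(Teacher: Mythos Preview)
The paper does not prove this theorem; it is quoted as a background result from Kauffman's book (the citation ``\cite{K06} pg.~176'') and used as a black box. So there is no in-paper proof to compare against.

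Your outline is the classical Alexander--Kauffman route and is correct in broad strokes: set up the Alexander matrix, delete the two columns for the regions adjacent to the distinguished segment, expand the resulting determinant by Leibniz, identify the surviving terms with states, and match $\mathrm{sgn}(\pi_S)$ with $(-1)^{b(S)}$ up to a global sign by propagating along clock moves. Your observation that a clock move acts on $\pi_S$ as a transposition (hence flips $\mathrm{sgn}$) is right. The claim that a clock move also flips the parity of $b(S)$ is the genuine content, as you say; it requires a local case analysis on the orientations and over/under data at the two crossings involved. The paper in fact carries out exactly this parity check later, inside the proof of \cref{thm:alexdimer}, where it asserts ``In all cases, performing a clock move causes $S$ to either lose or gain a black hole'' and verifies it diagrammatically case by case. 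So the delicate step you flag is real but entirely tractable, and your plan would go through.
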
 

We now define the clock lattice.

\begin{definition}
    A \textbf{clock move} on a state is the following transposition of state markers at two adjacent crossings (regardless of the orientation of strands and the crossing information).
    \begin{center}
        \includegraphics[height=1cm]{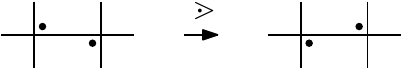}
    \end{center}
\end{definition}
Note that the state markers move \emph{clockwise} in the clock move, hence the name.

\begin{theorem}[{\cite[Theorem 2.5] {K06}}]\label{tmh:clockthm}
Define $S\leq S'$ whenever $S$ is obtained from $S'$ by a sequence of clock moves. The set of states $\mathcal{S}_{L,i}$ equipped with the binary relation $\leq$ is a lattice.
\end{theorem}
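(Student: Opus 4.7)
The strategy is to realize Kauffman's state poset as the dimer lattice of an auxiliary bipartite plane graph $G_{L,i}$ and then invoke Propp's theorem (\cref{thm:dimer-lattice}). This transports the lattice-theoretic content to the well-developed theory of dimer lattices from \cref{sec:dimer lattice}.

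First, I construct $G_{L,i}$, the truncated face-crossing incidence graph: place a black vertex at each crossing of $L$, a white vertex inside each present region, and one edge for every corner of every present region joining the crossing at that corner to the region. The resulting embedded graph is plane and properly $2$-colored, because the four corners around any crossing alternate between the four incident regions. A state of $L$, regarded as a choice of corner at each crossing, selects exactly one edge incident to each black vertex; a standard Euler-characteristic count shows that the number of crossings equals the number of present regions, so the selection forces each white vertex also to be covered exactly once. Hence every state gives a perfect matching, and the assignment $\Phi \colon \mathcal{S}_{L,i} \to \mathcal{D}_{G_{L,i}}$ is a bijection.

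Next, I identify a clock move with a face flip. A clock move transposes state markers at two adjacent crossings sharing two common regions; on the graph side, the two corresponding black vertices and two white vertices, together with the four edges between them, bound a $4$-cycle enclosing a face $\phi$ of $G_{L,i}$. Tracing definitions, the clockwise swap of markers is exactly the down-flip at $\phi$ in the sense of \cref{def:dimer-poset}, so $\Phi$ becomes an order isomorphism between the clock-move poset and the dimer poset. Assuming $G_{L,i}$ has property $(*)$, \cref{thm:dimer-lattice} then gives that $\mathcal{D}_{G_{L,i}}$ is a distributive lattice, and pulling back through $\Phi$ yields the desired (in fact distributive) lattice structure on $\mathcal{S}_{L,i}$.

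The main obstacle is verifying property $(*)$ for $G_{L,i}$, and in particular that every edge lies in some perfect matching. This translates to showing that every corner at every crossing arises as a state marker in some valid state, a nontrivial reachability statement that depends on the global structure of the diagram and the choice of segment $i$. A secondary subtlety is confirming that the bounded faces of $G_{L,i}$ are precisely the $4$-cycles coming from adjacent crossing pairs, so that the dimer-lattice flips are exactly the clock moves and no extraneous flips appear; this requires a careful local analysis around each crossing of the way incidence graph faces arise from the diagram.
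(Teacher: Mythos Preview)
The paper does not prove this theorem; it is quoted from Kauffman \cite{K06} as an external result. Your approach---realizing states as perfect matchings of $G_{L,i}$, clock moves as face flips, and then invoking \cref{thm:dimer-lattice}---is precisely the correspondence the paper sets up in \cref{lem:state-dimer-bijection}, though the paper uses that lemma in the other direction (to transport Kauffman's lattice structure onto dimers) rather than to reprove the Clock Theorem.

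That said, your proposal has a genuine gap, which you yourself flag: you do not verify property~$(*)$ for $G_{L,i}$, and this is not a formality. In the paper, property~$(*)$ is obtained only by citing \cite[Theorems 4.6 and 4.7]{ClockLattice}, and it requires the extra hypotheses that $L$ be connected, prime-like, and free of nugatory crossings (see \cref{assump:connected-prime-like-no-nugatory}). Without these, property~$(*)$ can fail outright---for example, at a nugatory crossing one of the four corners can never receive a state marker, so the corresponding edge of $G_{L,i}$ lies in no perfect matching. Since the statement of \cref{tmh:clockthm} carries no such restrictions, your argument as written would at best establish a special case. Your ``secondary subtlety'' about the bounded faces of $G_{L,i}$ being exactly the quadrilaterals from segments is handled in the paper by the observation in the definition of $G_L$ that faces biject with segments, but again this clean picture relies on the same standing assumptions.
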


We call the lattice of \cref{tmh:clockthm} the \textbf{clock lattice of the link diagram $L$}, and in an abuse of notation denote it by $\mathcal{S}_{L,i}$. Figure \ref{fig:clock_lattice} depicts an example of $\mathcal{S}_{L,i}$.

\begin{figure}
    \centering
    \includegraphics[height=13cm]{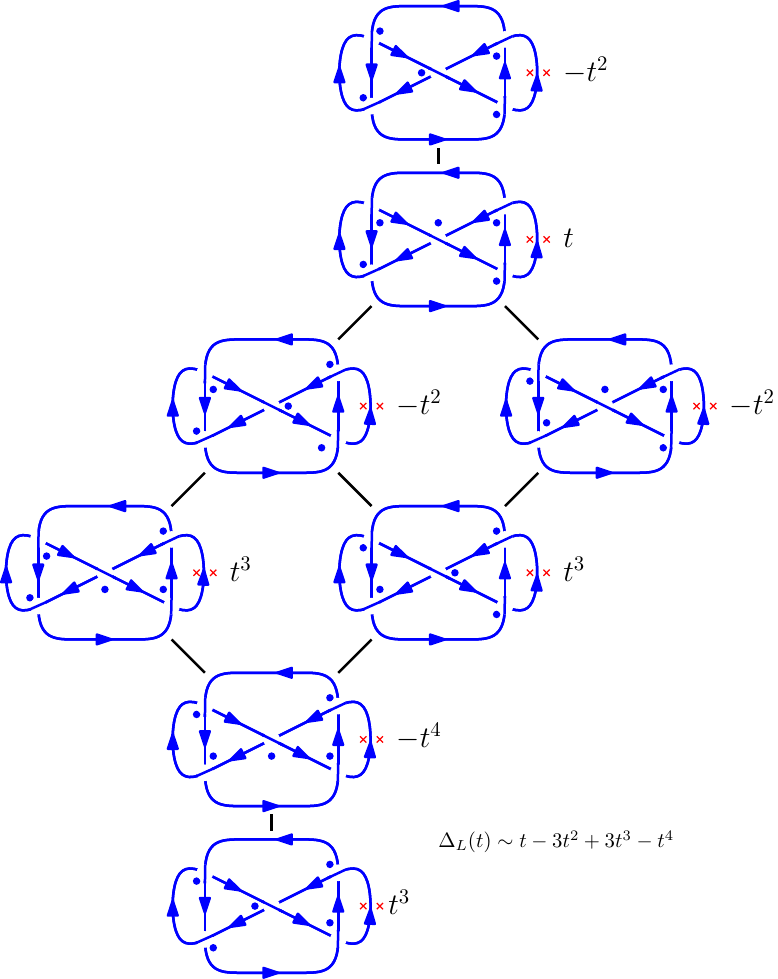}
    \caption{The clock lattice for the Whitehead link, where the distinguished segment is the one adjacent to the red crosses. To the right of each state is its weight. }
    \label{fig:clock_lattice}
\end{figure}

\subsection{Bipartite plane graphs for link diagrams}

As in \cite{TwistedDimer,ClockLattice}, we associate a bipartite plane graph $G_L$ to each link diagram\footnote{In fact, $G_L$ does not depend on the crossing information of the link diagram.} $L$. In other references the graph $G_L$ is also referred to as  the \textit{overlaid Tait graph} of a link.

\begin{definition}
    Let $L$ be a link diagram. The \textbf{face-crossing incidence graph}, $G_L$, is a plane graph defined as follows. Place a black vertex $b_c$ on each crossing $c$ of $L$ and a white vertex $w_r$ in each of its planar regions $r$. There is an edge $(b_c, w_r)$ in $G_L$ if and only if crossing $c$ touches region $r$. The faces of $G_L$ are in bijection with the segments of $L$, and in particular, each segment is contained in a unique face. We use $f_j$ to denote the face of $G_L$ containing segment $j$.
\end{definition}  

\begin{figure}
    \centering
    \includegraphics[height=4cm]{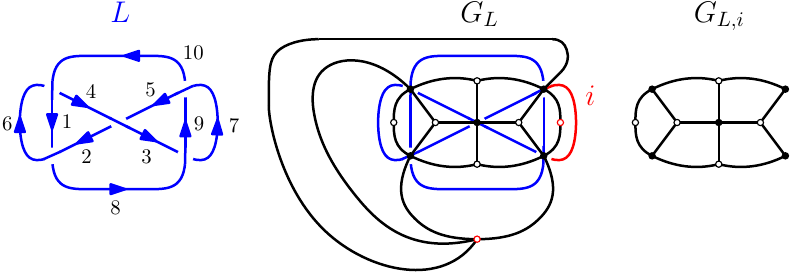}
    \caption{Left: In blue, a link diagram for the Whitehead link. Center: The face-crossing incidence graph $G_L$, with $f_7$ as the infinite face. Right: The truncated face-crossing incidence graph $G_{L,i}$ for $i=7$.}
    \label{fig:whitehead_graph}
\end{figure}

Note that any link diagram $L$ and graph $G_L$ may be viewed on the sphere by compactifying $\mathbb{R}^2$. For any fixed segment $i$ of $L$, stereographically projecting through any point in a region adjacent to $i$ yields a drawing of $L$ in the plane for which $i$ bounds the exterior region, and a drawing of $G_L$ in which $f_i$ is in the infinite face. See Figure \ref{fig:whitehead_graph}.

The graph $G_L$ has two more white vertices than black vertices. In order to make connections to dimer lattices, we will delete white vertices of $G_L$ corresponding to the choice of a segment in the definition of Kauffman's state lattice, see Definition \ref{def:truncated}. Such a construction also appears under the name of \textit{balanced overlaid Tait graph} in other references. 

\begin{definition} \label{def:truncated}
    Let $L$ be a link diagram, and fix a segment $i$ of $L$. Changing the steroegraphic projection of $L$ if necessary, we may assume that $f_i$ is the infinite face of $G_L$. The \textbf{truncated face-crossing incidence graph} is the plane graph $G_{L,i}$ obtained by deleting the two white vertices of $f_i$. 
\end{definition}

We will impose some constraints on the link diagram $L$ to ensure that the graph $G_{L,i}$ has property $(*)$. The   ``prime-like" condition for links introduced in  \cite{ClockLattice} turns out to be just what we need.
 \begin{definition} A link diagram $L$ is
 \begin{itemize}
     \item \textbf{connected} if it has a single connected component;
     \item \textbf{prime-like} if there does not exist a simple closed curve $\gamma$ which intersects $L$ in exactly $2$ points such that $L$ has crossings both inside and outside of $\gamma$, or equivalently, if $L$ is not the connect sum of two link diagrams that both have crossings (see \cref{fig:connect_sum}, left).
 \end{itemize}
 A crossing $c$ of $L$ is \textbf{nugatory} if there exists a simple closed curve $\gamma$ which intersects $L$ only at $c$.
\end{definition}

The following theorem gives a sufficient condition for $G_{L,i}$ to have property $(*)$. It is proved in \cite[Theorems 4.6 and 4.7]{ClockLattice} for prime-like knot diagrams with no nugatory crossings. A careful reading shows that the proof there also holds if ``knot diagram" is replaced with ``connected link diagram." We state the version for link diagrams below.

\begin{theorem}[{cf. \cite[Theorems 4.6 and 4.7]{ClockLattice}}]
Let $L$ be a connected prime-like link diagram without nugatory crossings. Then for any segment $i$, the plane graph $G_{L,i}$ satisfies property $(*)$.
\end{theorem}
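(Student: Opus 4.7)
The plan is to verify property $(*)$ by checking its two non-trivial requirements: that $G_{L,i}$ is connected, and that every edge of $G_{L,i}$ lies in some perfect matching. The remaining conditions (finite, plane, bipartite with black vertices at crossings and white vertices at regions) are immediate from the definition of $G_{L,i}$. Throughout, the ambient strategy is to transport questions about matchings of $G_{L,i}$ into questions about Kauffman states using the bijection between $\mathcal{S}_{L,i}$ and dimers on $G_{L,i}$ that the paper establishes (and which will appear as \cref{lem:state-dimer-bijection}).

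For connectedness, I would start from the observation that $G_L$ is connected whenever $L$ is: any two regions of $L$ can be joined by a sequence of regions sharing a crossing, and this lifts to a path in $G_L$. Truncating $G_L$ to $G_{L,i}$ removes the two white vertices adjacent to segment $i$. To see that this does not disconnect the graph, both hypotheses are used. The absence of nugatory crossings prevents a crossing $c$ from touching only the two deleted regions, which would otherwise isolate $b_c$. Prime-likeness rules out the dual scenario of a separating simple closed curve meeting $L$ transversally in two points, whose interior and exterior would fall into different connected components once the two white vertices along $i$ are removed. A short case analysis along these lines establishes connectedness of $G_{L,i}$.

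The main work is showing that every edge $(b_c, w_r)$ of $G_{L,i}$ lies in some perfect matching. Under the Kauffman state bijection, this edge corresponds to placing a state marker at region $r$ for crossing $c$, so the task reduces to producing, for each valid pair $(c,r)$, a state $S \in \mathcal{S}_{L,i}$ in which $c$ is marked at $r$. My plan is to follow the strategy of Cohen--Teicher: invoke Kauffman's Clock Theorem (\cref{tmh:clockthm}) to populate $\mathcal{S}_{L,i}$ with a sufficiently rich family of states reachable from the minimum by clock moves, and then show that no valid placement $(c,r)$ is forbidden. Prime-likeness and the absence of nugatory crossings supply the key combinatorial input, since an obstructed placement would force a separating disk meeting $L$ in only a few arcs, contradicting one of the hypotheses.

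The main obstacle is carrying out this last step for link diagrams rather than knot diagrams, since the Cohen--Teicher argument is originally written for knots. However, because the relevant structural ingredients (the state-sum formalism, the local analysis of clock moves, and the implications of prime-likeness for existence of states with a prescribed marker) depend only on the planar structure of the diagram and not on the number of link components, the Cohen--Teicher argument transfers without change to the connected link case. This is the point the excerpt flags as a ``careful reading'' of \cite{ClockLattice}, and verifying this step by step is the technical crux of the proof.
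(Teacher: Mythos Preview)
Your proposal is correct and aligns with the paper's treatment: the paper does not give an independent proof of this theorem but defers entirely to \cite[Theorems 4.6 and 4.7]{ClockLattice}, remarking only that ``a careful reading shows that the proof there also holds if `knot diagram' is replaced with `connected link diagram'.'' Your outline fleshes out what that careful reading entails (connectedness via prime-likeness and non-nugatory hypotheses, the edge-in-some-matching condition via the state/dimer bijection and Kauffman's clock theorem), and your concluding observation that the Cohen--Teicher argument depends only on the planar structure and not on the number of components is exactly the point the paper asserts without elaboration.
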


\begin{assumption}\label{assump:connected-prime-like-no-nugatory}
    We assume throughout the remainder of the paper that $L$ is a connected prime-like link diagram with no nugatory crossings. Every link has a connected prime-like diagram without nugatory crossings. Indeed, given any diagram for the link, one can always uncross any nugatory crossings. Then one can repeatedly apply the reverse direction of the second Reidemeister move to make the diagram connected and prime-like (see Figure \ref{fig:prime_like}). 
\end{assumption}

\begin{figure}
    \centering
    \includegraphics[width=0.5\textwidth]{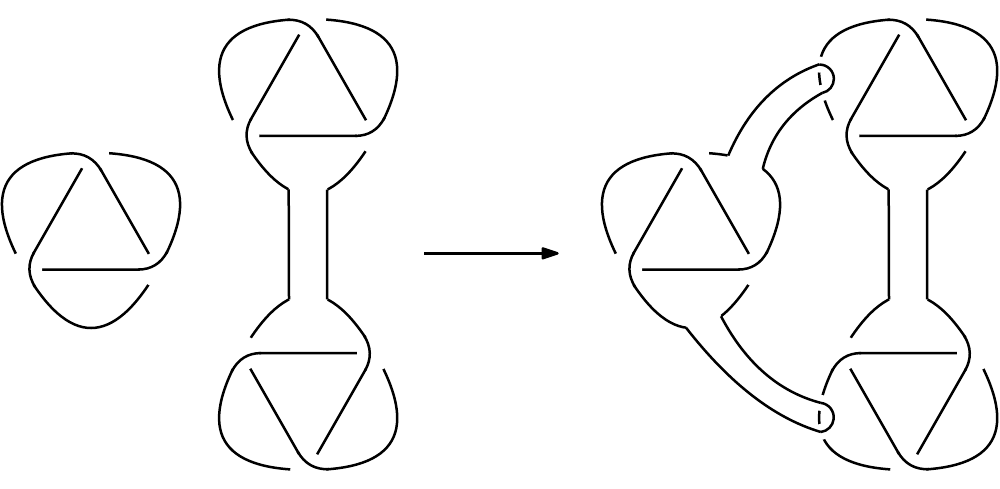}
    \caption{To the left, a non-prime-like and disconnected link diagram, and to the right, an isotopic prime-like and connected diagram.}
    \label{fig:prime_like}
\end{figure}

Recall that, for $L$ a link diagram and $i$ a segment of $L$, a state $S$ in Kauffman's model can be viewed as a pairing of the present regions of $L$ and the crossings of the diagram, where each crossing is paired with a region adjacent to it. Using this, we obtain the following lattice isomorphism between the state lattice of $L$ with respect to $i$ and the dimer lattice of $G_{L,i}$. This isomorphism is also stated in \cite{ClockLattice}.

\begin{lemma} \label{lem:state-dimer-bijection}
Let $L$ be a link diagram and let $i$ be a segment of $L$. Fix an embedding of $L$ for which $i$ is on the boundary of the exterior region. Let $S\in\mathcal{S}_{L,i}$, and define the dimer $D_S\in\mathcal{D}_{G_{L,i}}$ by $e=(b_c,w_r)\in D_S$ if the state marker near crossing $c$ in $S$ is in region $r$. The map 
\begin{align*}
    \mathcal{S}_{L,i}&\rightarrow\mathcal{D}_{L,i}\\
    S &\mapsto D_S
\end{align*}
is a lattice isomorphism.
\end{lemma}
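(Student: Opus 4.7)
The plan is to verify three things about the map $\Phi\colon S \mapsto D_S$: that $D_S$ is indeed a perfect matching of $G_{L,i}$; that $\Phi$ is a bijection; and that $\Phi$ identifies clock covers in $\mathcal{S}_{L,i}$ with flip covers in $\mathcal{D}_{G_{L,i}}$, so that $\Phi$ is a poset isomorphism and hence a lattice isomorphism.

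The first two points are essentially definition-chasing. The black vertices of $G_{L,i}$ are indexed by the crossings of $L$ and the white vertices by the present regions, and a state $S$ is a bijection between these two sets in which each crossing is matched with an adjacent region. Hence $D_S=\{(b_c, w_{S(c)}) : c \text{ a crossing}\}$ covers every vertex of $G_{L,i}$ exactly once, giving a perfect matching; conversely, any perfect matching of $G_{L,i}$ determines such a bijection, so $\Phi$ has an obvious inverse.

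Next I would identify which faces of $G_{L,i}$ support flips with which segments of $L$ support clock moves. A clock move involves two crossings $c_1, c_2$ joined by a segment $j$ together with two markers in the regions $r, r'$ on either side of $j$; since markers occupy only present regions, both $r$ and $r'$ must be present, so in particular $j \neq i$. A short argument analyzing how the absent regions merge into the exterior face when one passes from $G_L$ to $G_{L,i}$ shows that the non-infinite faces of $G_{L,i}$ are exactly the $4$-cycles $f_j = b_{c_1}w_r b_{c_2}w_{r'}b_{c_1}$ for such segments $j$: the face $f_j$ still bounds a bona fide face of $G_{L,i}$ precisely because none of its edges touch an absent white vertex, while any face of $G_L$ coming from a segment adjacent to an absent region gets swallowed up by the exterior. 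Thus clock-movable pairs of crossings are in bijection with non-infinite faces of $G_{L,i}$.

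Finally, I would do the local check that a clock move at $(c_1, c_2)$ corresponds under $\Phi$ to the down-flip at $f_j$. Labelling the vertices of $f_j$ so that $b_{c_1}, w_r, b_{c_2}, w_{r'}$ appear in clockwise order around $f_j$, the black-white edges of $f_j$ are $\{b_{c_1}w_r, b_{c_2}w_{r'}\}$ and the white-black edges are $\{b_{c_1}w_{r'}, b_{c_2}w_r\}$. A clockwise clock move carries the state with $c_1 \mapsto r,\, c_2 \mapsto r'$ to the state with $c_1 \mapsto r',\, c_2 \mapsto r$, which under $\Phi$ removes the black-white edges of $f_j$ and installs the white-black edges --- exactly the down-flip at $f_j$. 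Combined with the bijection of the previous paragraph, covers correspond to covers, and $\Phi$ is a lattice isomorphism. The main delicate point is the orientation bookkeeping: checking that a \emph{clockwise} rotation of state markers really does match the replacement of black-white edges by white-black edges, so that $\Phi$ sends clock moves to down-flips (yielding a lattice isomorphism rather than a lattice anti-isomorphism). Once this single local picture is pinned down, the rest of the argument is mechanical.
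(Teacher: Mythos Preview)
Your proposal is correct and follows essentially the same route as the paper: establish the bijection between states and perfect matchings, then verify that cover relations (clock moves versus face flips) correspond. The paper's proof is extremely terse---it asserts the bijection and then points to a single figure showing that a clock move at a segment $j$ is literally a flip of the quadrilateral face $f_j$, noting only that clock moves never occur on $i$. You spell out more of the structure, in particular the identification of the non-infinite faces of $G_{L,i}$ with the segments $j$ whose two adjacent regions are both present, and you correctly flag the orientation check (clockwise clock move $\leftrightarrow$ down-flip) as the only place requiring care; that is indeed the content of the paper's figure.
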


\begin{proof}
    The map $S\mapsto D_S$ is a bijection between states in $\mathcal{S}_{L,i}$ and dimers in $\mathcal{D}_{G_{L,i}}$, and it remains to show that this map respects cover relations in both lattices. Clock moves relate to flips as shown in \cref{fig:clock-vs-flip}. Because a clock move is never performed on the segment $i$, each clock move corresponds to a flip of an internal face of $G_{L,i}$. 
 \end{proof}
    \begin{figure}[h]
    \centering
        \includegraphics[width=0.4\textwidth]{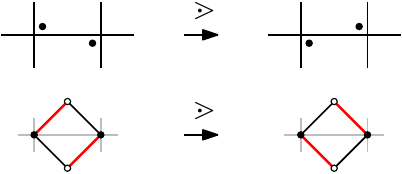}
        \caption{The relationship between clock moves on $L$ and flips on $G_L$.}
        \label{fig:clock-vs-flip}
    \end{figure}

\begin{theorem}\label{thm:alexdimer} 
    Let $L$ be a link diagram, and fix a segment $i$ of $L$. Let $\Delta_L(t)$ denote the Alexander polynomial of $L$. Let $D_{G_{L,i}} (t)$ be obtained from $D_{G_{L,i}}(\mathbf{y})$ by specializing
    \begin{equation}\label{eq:alex-specialization}y_j \mapsto \begin{cases}
       -t & \text{if segment } j \text{ exits an undercrossing and enters an overcrossing,} \\
    -t^{-1} & \text{if segment } j \text{ exits an overcrossing and enters an undercrossing, and} \\
    -1 & \text{otherwise.} \\ 
    \end{cases}\end{equation}
    Then we have
     \begin{equation}\label{eq:alexdimer}
     D_{G_{L,i}}(t)=\left( (-1)^{b(\hat{0})} \langle L|\hat{0}\rangle^{-1} \right) \sum_{S \in \mathcal{S}_{L,i}} (-1)^{b(S)} \langle L|S\rangle
     \end{equation}
     and in particular, $$D_{G_{L,i}}(t)\sim \Delta_L(t).$$ 
   \end{theorem}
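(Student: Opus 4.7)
The plan is to prove equation \eqref{eq:alexdimer} by matching terms on both sides via the lattice isomorphism of Lemma \ref{lem:state-dimer-bijection}, and then to deduce $D_{G_{L,i}}(t) \sim \Delta_L(t)$ by combining with the state-sum formula of Theorem \ref{thm:state-sum-alexander-poly}. Since each crossing contributes a factor $\pm 1$ or $\pm t^{\pm 1}$ to a state weight, the scalar $(-1)^{b(\hat 0)}\langle L|\hat 0\rangle^{-1}$ is a signed power of $t$, so once equation \eqref{eq:alexdimer} is established the equivalence with $\Delta_L(t)$ follows immediately.

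Using Theorem \ref{cor:ht} to write $D_{G_{L,i}}(\mathbf{y}) = \sum_{M} \mathbf{y}^{\height(M)}$ and the bijection $S \leftrightarrow D_S$, equation \eqref{eq:alexdimer} is equivalent to the term-by-term identity
\[
\mathbf{y}^{\height(D_S)}\big|_{\eqref{eq:alex-specialization}} \;=\; (-1)^{b(S) - b(\hat 0)}\,\frac{\langle L|S\rangle}{\langle L|\hat 0\rangle} \qquad \text{for every } S \in \mathcal{S}_{L,i}.
\]
I would establish this by induction on the rank of $D_S$ in the dimer lattice. The base case $S = \hat 0$ is trivial. For the inductive step, suppose $S'$ covers $S$ via a clock move at segment $j$; by Lemma \ref{lem:state-dimer-bijection} this corresponds to a flip at the face $f_j$ of $G_{L,i}$, and Proposition \ref{lem:ht_flip} gives $\mathbf{y}^{\height(D_{S'})} = y_j \cdot \mathbf{y}^{\height(D_S)}$. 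Hence it suffices to verify the local identity
\[
y_j\big|_{\eqref{eq:alex-specialization}} \;=\; (-1)^{b(S') - b(S)}\,\frac{\langle L|S'\rangle}{\langle L|S\rangle}.
\]

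The right-hand side is entirely local: all weights and black-hole contributions at crossings not incident to $j$ are unchanged. This is where the main obstacle lies. One must enumerate the possible local configurations at the two endpoints of $j$ --- determined by which strand (over or under) is incident to $j$ and by the orientation of the over-strand --- and, for each configuration, compute both the product of the two weight ratios forced by the clockwise rotation of the markers depicted in Figure \ref{fig:clock-vs-flip} using the weighting convention of Figure \ref{fig:weighting}, and the net change in the number of black holes. Organizing the cases according to whether segment $j$ exits an over- or undercrossing at each of its endpoints, one checks in each case that the product of the two weight ratios, corrected by the sign coming from $b(S')-b(S)$, equals $-t$, $-t^{-1}$, or $-1$ in exact agreement with the trichotomy \eqref{eq:alex-specialization}.

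Granting the local identity in all cases, induction propagates it to every element of the dimer lattice, yielding the term-by-term identity above and hence equation \eqref{eq:alexdimer}. Combining with Theorem \ref{thm:state-sum-alexander-poly} then gives $D_{G_{L,i}}(t) \sim \Delta_L(t)$, completing the proof.
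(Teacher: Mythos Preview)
Your proposal is correct and follows essentially the same approach as the paper's proof: both reduce \eqref{eq:alexdimer} to a local ratio identity across a single clock move at a segment $j$, and then verify that ratio by a case analysis on the over/under status of the two crossings incident to $j$. The only difference is cosmetic---you phrase the reduction as an induction on rank while the paper phrases it as matching the $\hat 0$ term and then comparing ratios of adjacent terms---and the paper carries out the four cases explicitly with pictures, whereas you defer that verification.
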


Figure \ref{fig:whitehead_dimer} depicts an example of this calculation. Motivated by \cref{thm:alexdimer}, we sometimes call $D_{G_{L,i}}({\bf y})$ a \textbf{multivariate Alexander polynomial} for $L$.

\begin{figure}
    \centering
    \includegraphics[height=13cm]{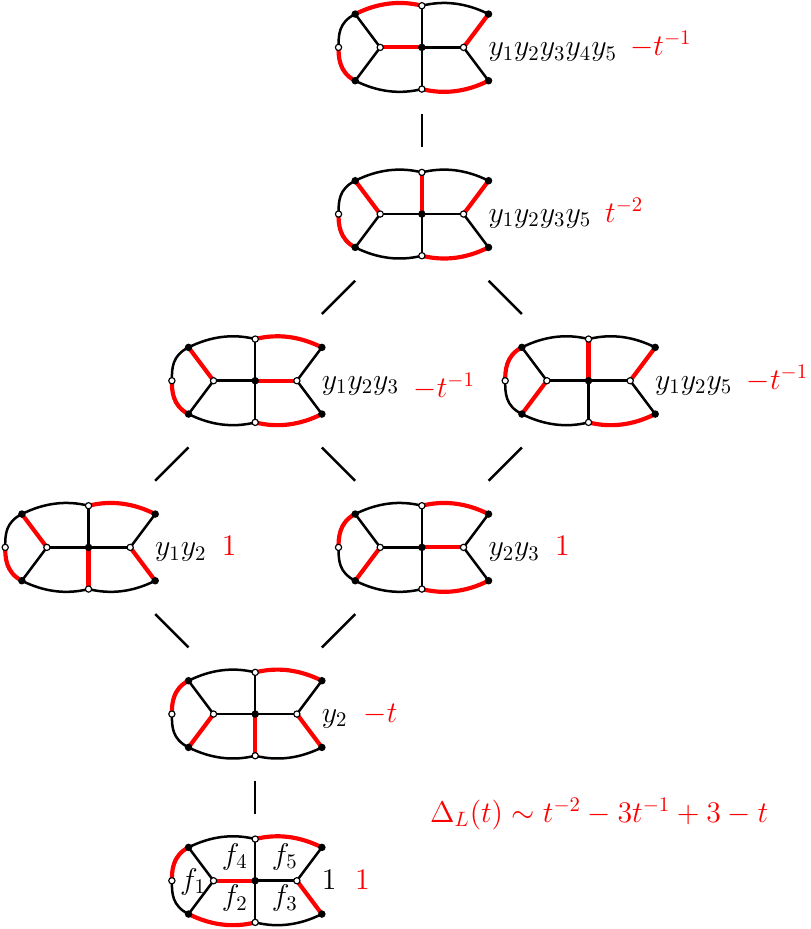}
    \caption{The Alexander polynomial of the Whitehead link in Figure \ref{fig:whitehead_graph}, computed as a weighted sum of dimers. The original monomials are in black and their specializations are in red. Notice that the result is the same as that in Figure \ref{fig:clock_lattice} up to $\sim$.}
    \label{fig:whitehead_dimer}
\end{figure}

\begin{proof}
In both sides of \eqref{eq:alexdimer}, the term indexed by $\hat{0}$ is 1. So to show the equality in \eqref{eq:alexdimer}, it suffices to show that if $S$ is a Kauffman state obtained from state $S'$ by performing a clock move on segment $j$, then
    $$\frac{(-1)^{b(S)}\langle L | S\rangle}{(-1)^{b(S')}\langle L | S'\rangle} = \begin{cases}
    -t & \text{if segment } j \text{ exits an undercrossing and enters an overcrossing,} \\
    -t^{-1} & \text{if segment } j \text{ exits an overcrossing and enters an undercrossing, and} \\
    -1 & \text{otherwise.} \\
\end{cases}$$

In all cases, performing a clock move causes $S$ to either lose or gain a black hole. Suppose first that segment $j$ exits an overcrossing and enters an undercrossing. Then, the Kauffman weights incident to the crossings are the following. Below, $\alpha$ is $\pm 1$ or $\pm t$.

\begin{center}
\includegraphics[width=0.4\textwidth]{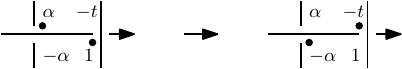}
\end{center}

The clock move increases the degree of $\langle L | S' \rangle$ by one. Additionally, $S'$ either loses or gains a black hole, meaning the clock move changes the sign of $(-1)^{b(S')}\langle L | S' \rangle$. And so, $(-1)^{b(S)}\langle L | S \rangle = (-t)(-1)^{b(S')}\langle L | S' \rangle$.

Suppose that segment $j$ exits an undercrossing and enters an overcrossing. We have the following.

\begin{center}
\includegraphics[width=0.4\textwidth]{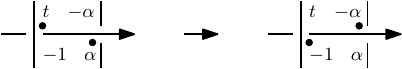}
\end{center}

So, the clock move decreases the degree of $\langle L | S' \rangle$ by one. As a result, $(-1)^{b(S)}\langle L | S\rangle = (-t^{-1})(-1)^{b(S')}\langle L | S' \rangle$. Finally, if $i$ either exits and enters an undercrossing or either exits and enters an overcrossing, performing a clock move leaves $\langle L | S' \rangle$ unchanged. Hence, $(-1)^{b(S)}\langle L | S\rangle = (-1)(-1)^{b(S')}\langle L | S' \rangle$. The two cases are depicted below.

\begin{center}\begin{tabular}{l|r}
  \includegraphics[width=0.4\textwidth]{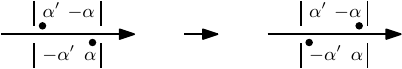} \hspace{12pt}   & \hspace{12pt}  \includegraphics[width=0.4\textwidth]{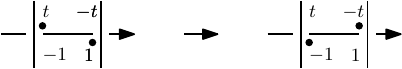} \\
\end{tabular}
\end{center}
Finally, the fact that $D_{G_{L_i}}(t) \sim \Delta_L(t)$ is immediate from \eqref{eq:alexdimer} and \cref{thm:state-sum-alexander-poly}, since $(-1)^{b(\hat{0})} \langle L | \hat{0} \rangle ^{-1}$ is equal to $\pm t^k$ for some $k$.
\end{proof}

\subsection{Connections to partition function specializations}\label{sec:connection-twisted-dimers}
    A formula for the Alexander polynomial of $L$ as a weighted sum over dimers of $G_{L,i}$ was given in \cite[Proposition 3.4]{TwistedDimer}. The authors of that work used edges rather than faces, and so wrote the Alexander polynomial as a specialization of the partition function of $G_{L,i}$ (see \cref{rem:dimer-polyomial-edges}) rather than as a specialization of the dimer face polynomial. The proof of \cite[Proposition 3.4]{TwistedDimer} utilized Kasteleyn matrices, so is rather different from the proof of \cref{thm:alexdimer}. Here, we briefly discuss the connection between their specialization of the partition function and our specialization of the dimer face polynomial in \cref{thm:alexdimer}. 

    In Cohen's work, at each crossing, for each present region incident to the crossing, one assigns two weights: a \textit{local weight} and a specific \textit{Kastelyn weight} informed by Kauffman's state summation model. 

    \begin{definition}[{cf. \cite[Algorithm 3.2, (D2) and (D3)]{TwistedDimer}}]
    Let $L$ be a link diagram and let $i$ be a segment of $L$. For $e=(b_c,w_r)\in\Edg(G_{L,i})$, define $\alpha(e)$ to be the product of the local weight and Kastelyn weight region $r$ receives near crossing $c$ as in Figure \ref{fig:cohen_weighting}. 
    \end{definition}

    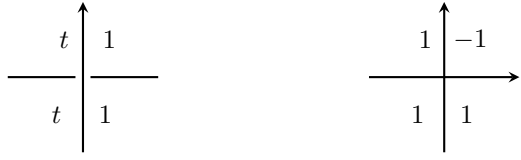
\begin{figure}[h]
    \centering
    \begin{tikzpicture}
    \draw[-stealth, black, thick] (5,0) -- (5,2);
    \draw[black, thick] (4,1) -- (4.9,1);
    \draw[black, thick] (5.1,1) -- (6,1);
    \node at (4.75,1.5) {$t$};
    \node at (4.65,.5) {$t$};
    \node at (5.35,1.5) {$1$};
    \node at (5.3,.5) {$1$};
    \end{tikzpicture}
    \hspace{1in}
    \begin{tikzpicture}
    \draw[black, thick] (1,0) -- (1,1);
    \draw[-stealth, black, thick] (1,1) -- (1,2);
    \draw[-stealth, black, thick] (0,1) -- (2,1);
    \node at (.75,1.5) {$1$};
    \node at (.65,.5) {$1$};
    \node at (1.35,1.5) {$-1$};
    \node at (1.3,.5) {$1$};
    \end{tikzpicture}
    \caption{Left: the local weighting of present regions around each crossing in \cite{TwistedDimer}. 
    As before, the orientation of the horizontal strand does not matter. Right: the Kasteleyn weighting from \cite{TwistedDimer}. Note the under- and overcrossing information does not change the weighting.}
    \label{fig:cohen_weighting}
    \end{figure}

The next proposition explains the relationship between the specialization $z_e \mapsto \alpha(e)$ and the specialization of \eqref{eq:alex-specialization}. Essentially, they intertwine with the change of variables from the dimer face polynomial to the partition function.

    \begin{proposition}
    Let $L$ be a link diagram, $\bar{L}$ the diagram obtained from $L$ by switching overcrossings and undercrossings, and let $i$ be a segment of $L$. Throughout, we use $\alpha(e)$ to denote the weight of $e$ an edge of $G_{\bar{L}, i}$, that is, using the crossing information of $\bar{L}$.
    Let $M$ be a matching of $G_{L,i}=G_{\bar{L}, i}$, $S$ the corresponding state (cf. Lemma \ref{lem:state-dimer-bijection}), and let $\mathbf{z}_M$ denote the corresponding term of the partition function $Z_{G_{\bar{L},i}}$. Let $p:= (-1)^{b(\hat{0})} \langle {L} | \hat{0}\rangle$. Then $\mathbf{z}_{\hat{0}}|_{z_e \mapsto \alpha(e)} = \sigma \cdot p$ where $\sigma \in \{\pm 1\}.$ Further,
    we have
    \[\begin{tikzcd}
    \mathbf{y}^{\height(M)} \arrow{rrr}{\text{\eqref{eq:face-to-edge-substitution} and multiply by } \mathbf{z}_{\hat{0}}} \arrow[d,"\text{\eqref{eq:alex-specialization}}"'] & && \mathbf{z}_M \arrow{d}{z_e \mapsto \alpha(e)}\\
    (-1)^{b(S)} \langle {L}|S\rangle \cdot p^{-1}\arrow[rrr,"\text{multiply by }\sigma\cdot p"] & & & \sigma \cdot (-1)^{b(S)} \langle {L}|S\rangle
    \end{tikzcd}
    \]
where in the specialization \eqref{eq:alex-specialization} we use the crossing information from ${L}$. 
  \end{proposition}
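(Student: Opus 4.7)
The plan is to decompose the commutative diagram and reduce the proposition to a pair of local identities. The top row is a restatement of \cref{rem:dimer-polyomial-edges}; the bottom row is a trivial rescaling; and the left column is exactly the term indexed by $M$ in Theorem~\ref{thm:alexdimer}, namely $\mathbf{y}^{\height(M)}|_{\eqref{eq:alex-specialization}} = p^{-1}(-1)^{b(S)}\langle L|S\rangle$ (which is what the proof of that theorem establishes monomial-by-monomial, after normalizing by the value at $\hat 0$). So all the content is concentrated in the right column, together with the preliminary sign claim $\mathbf{z}_{\hat 0}|_{z_e\mapsto\alpha(e)} = \sigma\cdot p$.

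Applying $z_e\mapsto\alpha(e)$ to the edge-to-face conversion of \cref{rem:dimer-polyomial-edges} yields
\[
\mathbf{z}_M\big|_{z_e\mapsto\alpha(e)}
\;=\;\mathbf{z}_{\hat 0}\big|_{z_e\mapsto\alpha(e)}\cdot\mathbf{y}^{\height(M)}\big|_{y_f\mapsto\beta(f)},
\qquad
\beta(f):=\prod_{\substack{e\in f\\\text{black-white}}}\alpha(e)\cdot\prod_{\substack{e\in f\\\text{white-black}}}\alpha(e)^{-1}.
\]
Hence the proposition reduces to two statements:
(i) for every face $f$ of $G_{L,i}$ with associated segment $j=j(f)$, $\beta(f)$ equals the scalar assigned to $y_j$ by \eqref{eq:alex-specialization}; and
(ii) $\mathbf{z}_{\hat 0}|_{z_e\mapsto\alpha(e)} = \sigma\cdot p$ for a fixed $\sigma\in\{\pm 1\}$.
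Given (i) and (ii), a one-line computation using Theorem~\ref{thm:alexdimer} gives $\mathbf{z}_M|_{z_e\mapsto\alpha(e)} = \sigma(-1)^{b(S)}\langle L|S\rangle$, which is exactly the right column, so the whole diagram then commutes.

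Statement (ii) is a direct computation: since every perfect matching of $G_{L,i}$ has one edge per crossing, $\mathbf{z}_{\hat 0}|_{z_e\mapsto\alpha(e)}=\prod_c\alpha(e_c)$ where $e_c$ is the edge of $\hat 0$ at crossing $c$, and the factorization of $\alpha$ into local and Kasteleyn weights matches, crossing by crossing, the factorization of $(-1)^{b(\hat 0)}\langle L|\hat 0\rangle$, up to a sign absorbed into $\sigma$. The main work, and the main obstacle, is (i). A face $f=f_j$ is the $4$-cycle on black vertices $b_{c_1},b_{c_2}$ (the crossings at the endpoints of $j$) and white vertices $w_{r_1},w_{r_2}$ (the regions adjacent to $j$), so $\beta(f)$ expands as the ratio $\alpha(b_{c_1}w_{r_1})\alpha(b_{c_2}w_{r_2})/\bigl(\alpha(b_{c_1}w_{r_2})\alpha(b_{c_2}w_{r_1})\bigr)$. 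The expectation is that the Kasteleyn part always contributes the sign $-1$ appearing in \eqref{eq:alex-specialization}, while the local part (computed in $\bar L$) produces the power of $t$, via the following geometric observation: at a crossing where $j$ is the overstrand in $L$, $j$ is the understrand in $\bar L$, so $r_1,r_2$ lie on \emph{opposite} sides of the overstrand in $\bar L$, forcing the local ratio there to be $t^{\pm 1}$; conversely, at a crossing where $j$ is the understrand in $L$, $r_1,r_2$ lie on the \emph{same} side of the overstrand in $\bar L$ and the local ratio is $1$. Thus the passage from $L$ to $\bar L$ in Cohen's weighting is exactly what recovers the over/under classification in \eqref{eq:alex-specialization}. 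The verification then splits into four cases depending on the behavior of $j$ at its two endpoints; each case is a small picture-based calculation, but the bookkeeping-heavy portion is tracking the Kasteleyn signs and black-hole conventions so that (ii) yields one fixed $\sigma$, consistent across all cases.
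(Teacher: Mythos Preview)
Your proposal is correct and follows essentially the same strategy as the paper: identify the top arrow as \cref{rem:dimer-polyomial-edges}, the left arrow as \cref{thm:alexdimer}, the bottom arrow as trivial, and reduce the whole claim to the computation that the alternating product $\beta(f)$ of Cohen weights around a face equals the scalar assigned to $y_f$ by \eqref{eq:alex-specialization}, together with the sign claim for $\mathbf{z}_{\hat 0}$. The paper carries out exactly this reduction and the same case analysis for $\beta(f)$; your added geometric remark about why passing to $\bar L$ swaps which strand is the overstrand, and hence why Cohen's local weights on $\bar L$ recover the over/under dichotomy of \eqref{eq:alex-specialization}, is a helpful gloss that the paper leaves implicit.
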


    \begin{proof} 
    Comparing \cref{fig:cohen_weighting} applied to $\bar{L}$ with \cref{fig:weighting} applied to $L$, we see that if we exchange over-crossings and under-crossings, the present regions around each crossing are weighted in the same way, up to sign. In \cref{fig:cohen_weighting}, these weightings give the specialization $z_e \mapsto \alpha(e)$. In \cref{fig:weighting}, these weightings are used to compute $(-1)^{b(S)}\langle L | S \rangle$. Using the bijection between matchings of $G_{\bar{L},i}= G_{L,i}$ and states of $L$ (cf. \cref{lem:state-dimer-bijection}), we have that $\mathbf{z}_{\hat{0}}|_{z_e \mapsto \alpha(e)} = \sigma \cdot p$ where $\sigma \in \{\pm 1\}.$
    
    We now turn to the commutative diagram. The top arrow is \cref{rem:dimer-polyomial-edges}, the left vertical arrow is \cref{thm:alexdimer}, and the bottom arrow is clear. The right arrow may be deduced from \cite[Algorithm 3.2, (D4) and Algorithm 3.6]{TwistedDimer}. 
    
    Alternately, one can check using case analysis that under the specialization $z_e \mapsto \alpha(e)$, we have
    \[\prod_{\substack{e \in f \\ \text{black-white}}} \alpha(e)\prod_{\substack{e \in f \\ \text{white-black}}} \alpha(e)^{-1} = \begin{cases}
       -t & \text{if segment } j \text{ exits an overcrossing and enters an undercrossing} \\
       & \text{in }\bar{L} \\
    -t^{-1} & \text{if segment } j \text{ exits an undercrossing and enters an overcrossing}\\
    & \text{in } \bar{L}, \text{ and} \\
    -1 & \text{otherwise.} \\ 
    \end{cases}\]
    This means that the change of variables in \eqref{eq:face-to-edge-substitution} followed by the specialization $z_e \mapsto \alpha(e)$ has exactly the same effect on $y_f$ as \eqref{eq:alex-specialization}. So the top arrow followed by the right arrow has the effect of performing \eqref{eq:alex-specialization} on $\mathbf{y}^{\height(M)}$ and then multiplying by the specialization of $\mathbf{z}_{\hat{0}}$, which is $\sigma\cdot p$. This is the same as the effect of the left arrow followed by the bottom arrow.
    \end{proof}

\subsection{Multivariate Alexander polynomials and submodule polynomials.}
 In the work \cite{B21}, for a curl-free diagram $L$ of a prime link and any fixed segment $i$ of $L$, the authors define a polynomial $F_{T(i)}(\mathbf{y})$ which specializes to the Alexander polynomial via the same specialization as in \cref{thm:alexdimer}. In this section, we explain the relationship between $F_{T(i)}$ and the dimer face polynomial $D_{G_{L,i}}$.

We briefly review the setup of \cite{B21}. Fix $L$ a curl-free diagram of a prime link. The authors define a quiver\footnote{which is $\extQ_{G_L}$ (see \cref{def:extended-quiver}) up to reversing all arrows and adding two-cycles} $Q_L$ whose vertices are the segments, and a potential $W$ on $Q_L$. For each choice of segment $i$, they define a module $T(i)$ over the Jacobean algebra associated to $(Q_L, W)$. The module $T(i)$ consists of a vector space for each segment of $L$ (equivalently each face of $G_{L,i}$, since segment $i$ is assigned the zero-dimensional vector space), as well as maps between these vector spaces. The polynomial $F_{T(i)}$ is the \emph{submodule polynomial}\footnote{In \cite{B21}, $F_{T(i)}$ is called the ``$F$-polynomial" of $T(i)$, because submodule polynomials are often $F$-polynomials in a related cluster algebra. In the recent work \cite{BMS24}, Bazier-Matte and Schiffler show that $F_{T(i)}$ is indeed an $F$-polynomial of a cluster algebra; this can also be deduced from \cref{thm:dimer-poly-is-F-poly} of this work.} of $T(i)$ and by \cite[Corollary 6.8 (b)]{B21} is equal to
\begin{equation*}F_{T(i)}=\sum_{M \subset T(i)} \mathbf{y}^{\text{dim} M}. \end{equation*}
The sum is over submodules $M$ of $T(i)$ and $\mathbf{y}^{\text{dim} M} \in \mathbb{Z}^{\faces(G_{L,i})}$ records the dimension of the vector space of $M$ sitting in each face of $G_{L,i}$.

By \cite[Theorem 1.2]{B21}, the submodule lattice of $T(i)$ is isomorphic to Kauffman's clock lattice $\mathcal{S}_{L,i}$, and thus by \cref{lem:state-dimer-bijection}, to the dimer lattice $\mathcal{D}_{G_{L,i}}$. This suggests that $F_{T(i)}$ is closely related to the dimer face polynomial $D_{G_{L,i}}$, as the next proposition verifies.

\begin{proposition}\label{prop:BMS-poly-and-dimer-poly}
	Let $L$ be a curl-free diagram of a prime link, and let $i$ be a segment of $L$. The polynomial $F_{T(i)}$ is equal to $D_{G_{L,i}}$. 
\end{proposition}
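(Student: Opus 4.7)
The plan is to combine the two lattice isomorphisms already at our disposal. By \cite[Theorem 1.2]{B21}, the submodule lattice of $T(i)$ is isomorphic to Kauffman's clock lattice $\mathcal{S}_{L,i}$, and by \cref{lem:state-dimer-bijection}, $\mathcal{S}_{L,i}$ is isomorphic to the dimer lattice $\mathcal{D}_{G_{L,i}}$. Composing these yields a bijection $\Phi$ between submodules of $T(i)$ and dimers of $G_{L,i}$. Since both generating functions sum a monomial over these (bijective) indexing sets, it suffices to prove that $\dim M = \height(\Phi(M))$ as vectors in $\mathbb{Z}^{\faces(G_{L,i})}$ for every submodule $M \subset T(i)$, where the vertices of $Q_L$ are identified with the segments of $L$, equivalently the non-infinite faces of $G_{L,i}$.

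I would establish this equality by induction on rank in the submodule lattice (equivalently, in the dimer lattice). For the base case, the zero submodule has dimension vector $\mathbf{0}$ and corresponds under $\Phi$ to the minimal dimer $\hat{0}$, which has height vector $\mathbf{0}$. For the inductive step, let $M \lessdot M'$ be a cover in the submodule lattice. Then the quotient $M'/M$ is a simple module supported at a single vertex $f$ of $Q_L$, so $\dim M' = \dim M + \mathbf{e}_f$. Under the bijection of \cite[Theorem 1.2]{B21}, this cover corresponds to a clock move at the crossing indexed by $f$, and \cref{lem:state-dimer-bijection} together with Figure~\ref{fig:clock-vs-flip} translates that clock move into a flip at the face $f$ of $G_{L,i}$. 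By \cref{lem:ht_flip}, $\height(\Phi(M')) = \height(\Phi(M)) + \mathbf{e}_f$, and the inductive hypothesis completes the step. Summing over all submodules gives $F_{T(i)} = \sum_M \mathbf{y}^{\dim M} = \sum_D \mathbf{y}^{\height(D)} = D_{G_{L,i}}$.

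The main obstacle, and really the only nontrivial piece of bookkeeping, is verifying that the two face-labelings agree: when a submodule cover $M \lessdot M'$ has simple quotient at vertex $f$ of $Q_L$, the corresponding flip in $\mathcal{D}_{G_{L,i}}$ actually takes place at the face $f$ of $G_{L,i}$ (and not at some other face). This requires unpacking the construction of $T(i)$ in \cite{B21}, where the vector space of $T(i)$ at vertex $f$ is exactly what keeps track of the presence of a state marker in the region next to segment $f$; once this is observed, the matching of labels is immediate from the description of the lattice isomorphism in \cite[Theorem 1.2]{B21}. No additional analysis of cluster-algebraic or representation-theoretic content is needed beyond the statements already cited.
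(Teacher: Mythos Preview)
Your proposal is correct and follows essentially the same approach as the paper: both argue by induction on rank in the lattice, matching the zero submodule with $\hat{0}$ at the base and then using that a cover in the submodule lattice increases the dimension vector by $\mathbf{e}_f$ at exactly the segment $f$ where the corresponding clock move (hence flip) occurs. The paper handles the face-labeling verification you flag by citing \cite[Lemma 6.4]{B21} directly, so the specific lemma you were looking for is that one; also note that clock moves are indexed by \emph{segments}, not crossings, so your phrase ``clock move at the crossing indexed by $f$'' should read ``clock move at the segment $f$.''
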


\begin{proof}
We prove equality term by term, going up the Kauffman lattice. The submodule of $T(i)$ corresponding to the clocked state $\hat{0}$ is the zero module, so the corresponding term of $F_{T(i)}$ is 1. This is the same as the term of $D_{G_{L,i}}$ corresponding to the clocked state.

    Let $S, S' \in \mathcal{S}_{L, i}$ be states in the Kauffman lattice and suppose $S$ is obtained from $S'$ by a clock move at segment $a$, so $S \lessdot S'$. Let $N, N'$ the corresponding submodules of $T(i)$, and $M, M'$ the corresponding matchings of $G_{L,i}$. Then by \cite[Lemma 6.4]{B21}, the module $N'$ is obtained from $N$ by increasing the dimension at segment $a$ by 1 (and changing the maps). That is, if segment $a$ is in face $f_a$ of $G_{L,i}$, then $\mathbf{y}^{\text{dim}N'}= y_{f_a} \mathbf{y}^{\text{dim}N}$. By \cref{lem:state-dimer-bijection}, we also have $\mrk(M')= y_{f_a} \mrk(M)$. So if $\mathbf{y}^{\text{dim}N} =\mrk(M) $, we also have $\mathbf{y}^{\text{dim}N'} =\mrk(M') $.
\end{proof}

\subsection{On multivariate Alexander polynomials of composite links}

In this section, we show factorization for (certain) multivariate Alexander polynomials of connect sums of link diagrams. This can be viewed as a generalization of the well-known fact that the Alexander polynomial of a composite link factors.

To form the connect sum of two link diagrams $L_1$ and $L_2$, one breaks segments $i_1$ of $L_1$ and $i_2$ of $L_2$ bounding the exterior region and adjoins them in a way which is consistent with orientation and does not introduce any additional crossings (see Figure \ref{fig:connect_sum}).

\begin{figure}
    \centering
    \includegraphics[height=9cm]{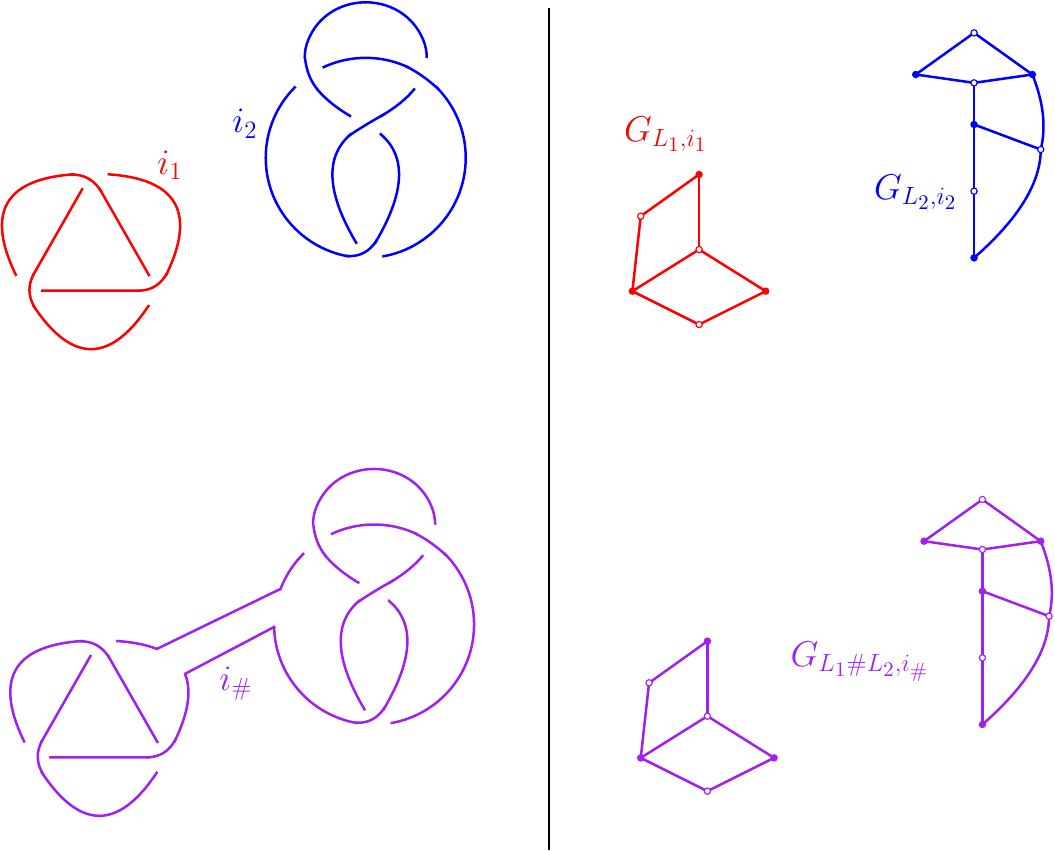}
    \caption{Left, top: link diagram $L_1$ in red and link diagram $L_2$ in blue. Left, bottom: the connect sum $L_1 \# L_2$ along segments $i_1$ and $i_2$. Right: the corresponding truncated face-crossing incidence graphs.}
    \label{fig:connect_sum}
\end{figure}

\begin{proposition} \label{prop:dimer-poly-connect-sum}
    Let $L_1$ and $L_2$ be link diagrams. Label the segments of $L_1$ by $1,\dots,m$, and label the segments of $L_2$ by $m+1,\dots, n$. Fix segments $i_1$ of $L_1$ and $i_2$ of $L_2$ bounding the exterior region. Choose one of the segments of $L_1 \# L_2$ formed by joining $i_1$ and $i_2$, and denote it $i_\#$. Then, 
    $$G_{L_1 \# L_2, i_\#} = G_{L_1, i_1}\sqcup G_{L_2, i_2}$$
    and 
    $$D_{G_{L_1\# L_2, i_\#}}(y_1,\dots, y_n) = D_{G_{L_1, i_1}}(y_1,\dots, y_m)\cdot D_{G_{L_2, i_2}}(y_{m+1},\dots, y_n).$$
\end{proposition}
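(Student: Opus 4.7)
The plan is to prove the two assertions in sequence: first the graph equality by a direct analysis of the connect sum, then deduce the polynomial factorization from that decomposition using the topological formulation of height.

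\textbf{Graph equality.} I would begin by cataloguing the crossings and regions of $L_1 \# L_2$. The crossings are simply the disjoint union of those of $L_1$ and $L_2$, and the planar regions come from those of $L_1 \sqcup L_2$ with exactly two merges: the two exterior regions combine into a new exterior $E$ of $L_1 \# L_2$, and the unique interior region of $L_1$ adjacent to $i_1$ merges with the interior region of $L_2$ adjacent to $i_2$ to form a single ``merged interior'' $I$ sandwiched between the two new bridge segments. The crucial observation is that both new segments have $E$ on one side and $I$ on the other, so the two white vertices on $f_{i_\#}$ are exactly those representing $E$ and $I$. In $G_{L_1 \# L_2}$, every edge joining an $L_1$-crossing to an $L_2$-region (or vice versa) must be incident to one of these two white vertices, because $E$ and $I$ are the only regions of $L_1\#L_2$ that touch crossings on both sides. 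Hence truncation at $i_\#$ removes precisely these ``cross-edges,'' so the resulting graph splits as a disjoint union of two subgraphs; a direct check on vertices, edges, and planar embeddings identifies these pieces as $G_{L_1, i_1}$ and $G_{L_2, i_2}$.

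\textbf{Polynomial factorization.} By \cref{rmk:dimer-lattice-arbitrary-graph}, the graph equality provides a natural lattice product $\mathcal{D}_{G_{L_1\#L_2,i_\#}} \cong \mathcal{D}_{G_{L_1,i_1}} \times \mathcal{D}_{G_{L_2,i_2}}$, under which each matching $M$ decomposes as $M = M_1 \sqcup M_2$ and the minimum $\hat{0}$ decomposes as $\hat{0}_1 \sqcup \hat{0}_2$. Consequently $\overrightarrow{M \triangle \hat{0}}$ splits as the disjoint union of $\overrightarrow{M_1 \triangle \hat{0}_1}$ and $\overrightarrow{M_2 \triangle \hat{0}_2}$, with these oriented cycles lying in geographically disjoint parts of the plane. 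Using the topological formula in \cref{def:height}, no cycle of one summand can encircle any face in the other, so $\height(M)_f = \height(M_i)_f$ for every non-infinite face $f$ of the $L_i$-component. By \cref{cor:ht} this gives $\mathbf{y}^{\height(M)} = \mathbf{y}^{\height(M_1)} \cdot \mathbf{y}^{\height(M_2)}$, and summing over matchings produces the desired product formula.

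The only delicate point is variable bookkeeping: segments of $L_1$ adjacent to the exterior of $L_1$ or to the $i_1$-adjacent interior do not index non-infinite faces of $G_{L_1, i_1}$, since they are absorbed into the infinite face upon truncation, and similarly for $L_2$; the notation $D_{G_{L_1, i_1}}(y_1, \dots, y_m)$ is shorthand for a polynomial in the variables indexed by the actually-remaining non-infinite faces, and analogously for the other two polynomials. Once the face correspondences are set up via the graph decomposition above, the variable sets on both sides of the factorization coincide, and the identity becomes essentially a restatement of the product structure. I do not expect a serious obstacle beyond verifying the geometric claim in the first step that both new bridge segments share the same pair of adjacent regions.
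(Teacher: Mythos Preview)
Your proposal is correct and follows essentially the same approach as the paper. For the graph equality, both you and the paper argue by tracking which regions merge under connect sum and observing that the two deleted white vertices are exactly those corresponding to the merged regions; your account is simply more detailed. For the polynomial factorization, the paper says only that it ``follows in a straightforward way from the statement about the graphs,'' whereas you spell this out via the lattice product and the height decomposition---a perfectly valid way to fill in that step, and your observation that the two components sit side-by-side (not nested) in the plane is exactly what makes the height argument go through.
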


\begin{proof} Let $R_1$ and $R_2$ denote the non-exterior regions of $L_1$ and $L_2$ bounded by $i_1$ and $i_2$, respectively. Let $R$ denote the region of $L_1\#L_2$ formed by joining $i_1$ and $i_2$.

The connect sum $L_1 \# L_2$ has the crossing-region adjacency relations of $L_1$ and $L_2$, except that any crossing incident to $R_1$ in $L_1$ is incident to $R$ in $L_1\# L_2$. Similarly, any crossing incident to $R_2$ in $L_2$ is incident to $R$ in $L_1\# L_2$. This shows the statement about $G_{L_1 \# L_2, i_\#}$.

The statement about the dimer face polynomials follows in a straightforward way from the statement about the graphs.

\end{proof}

\section{Background on cluster algebras}\label{sec:background-cluster}
In this section, we review skew-symmetric cluster algebras of geometric type, as well as their $F$-polynomials, $g$-vectors, and $d$-vectors. See e.g. \cite{FZ07}  for additional details. 

\begin{definition}[Quiver and quiver mutation]
A \textbf{quiver} is a directed graph $Q$ with no loops or directed 2-cycles. Edges of $Q$ are called \textbf{arrows}. Each vertex of $Q$ is declared either \textbf{mutable} or \textbf{frozen}. The \textbf{mutable part} of $Q$ is the induced subquiver of $Q$ on the mutable vertices. If $k$ is a mutable vertex of $Q$, \textbf{mutating $Q$ at $k$} produces a new quiver $\mu_k(Q)$, which is obtained from $Q$ by
      \begin{enumerate}
            \item adding an arrow $i\rightarrow j$ for every path $i\rightarrow k\rightarrow j$;
            \item reversing all arrows incident to $k$;
            \item deleting all $2$-cycles, one by one.
        \end{enumerate}
\end{definition}

\begin{definition}[Seeds]
    Let $\mathcal{F}$ be a field of rational functions in $n$ algebraically independent variables over $\mathbb{C}$. A \textbf{seed} of rank $r \leq n$ in $\mathcal{F}$ is a pair $\Sigma=(\mathbf{x}, Q)$ where $\mathbf{x}=(x_1, \dots, x_r, \dots, x_n)$ is a free generating set for $\mathcal{F}$ and $Q$ is a quiver on $[n]$ where vertex $i$ is mutable if $i \leq r$ and is frozen otherwise. The tuple $\mathbf{x}$ is a \textbf{cluster}, the elements $x_i$ are \textbf{cluster variables}, and the cluster variable $x_i$ is \textbf{mutable} if $i \leq r$ and \textbf{frozen} otherwise.
    \end{definition}

\begin{definition}[Seed mutation]
    Let $\Sigma=(\mathbf{x}, Q)$ be a seed in $\mathcal{F}$ and let $k$ be a mutable vertex of $Q$. \textbf{Mutating $\Sigma$ at $k$} produces a new seed $\mu_k(\Sigma)=(\mathbf{x}', \mu_k(Q))$. The cluster $\mathbf{x}'$ is defined by $\mathbf{x}'= \mathbf{x} \setminus \{x_k\} \cup \{x_k'\}$ where
        \[x_k x_k'= \prod_{i \to k} x_i + \prod_{k \to i} x_i.\]
\end{definition}

Seed mutation is an involution. That is, $\mu_k(\mu_k(\Sigma))=\Sigma$.

\begin{definition}[Cluster algebra]
        Given a seed $\Sigma$ in field $\mathcal{F}$, let $\mathcal{C}$ denote the set of cluster variables obtained by performing arbitrary sequences of mutations to $\Sigma$. The cluster algebra $\mathcal{A}(\Sigma)$ is the $\mathbb{C}$-subalgebra of $\mathcal{F}$ generated by the elements of $\mathcal{C}$, together with the inverses of the frozen variables\footnote{Other common conventions include taking $\mathcal{A}(\Sigma)$ to be the $\mathbb{Z}$-subalgebra with this generating set, or to omit the inverses of frozen variables from the generating set.}.
\end{definition}

Notice that $\mathcal{A}(\Sigma)$ is completely determined by any seed which can be obtained from $\Sigma$ by mutation.

The following theorem summarizes a number of central results in the theory of cluster algebras. The \emph{Laurent phenomenon}, or the Laurent polynomial expression for each cluster variable in terms of an initial cluster, is due to \cite{FZ02}. The sharper \emph{positive Laurent phenomenon}, which asserts the coefficients of this Laurent polynomial are positive, is due to \cite{LS15} for our definition of cluster algebras. The nonnegativity of the denominator vector and its relation to compatibility are due to \cite{CL20-denom-vec}.

\begin{theorem}\label{thm:cluster-Laurent-positivity-denom}
    Let $\Sigma=((x_1, \dots, x_n), Q)$ be a seed and let $z$ be a cluster variable of the cluster algebra $\mathcal{A}(\Sigma)$ which is not in $\Sigma$. Then $z$ has a positive Laurent expression in terms of the initial cluster variables $x_1, \dots, x_n$. More precisely,
    \begin{enumerate}
    \item there is a polynomial $P^{\Sigma}_{z}(x_1, \dots, x_n) \in \mathbb{Z}_{\ge 0}[x_1, \dots, x_n]$ which is not divisible by any $x_i$ and a vector $\mathbf{d}_{z}^{\Sigma}=(d_j) \in (\mathbb{Z}_{\ge 0})^n$ such that
    \[z= \frac{P^{\Sigma}_{z}(x_1, \dots, x_n)}{x_1^{d_1} \cdots x_n^{d_n}}.\]
    \item We have $d_j=0$ if and only if $z$ and the initial cluster variable $x_j$ are \textbf{compatible}, meaning that they appear together in some seed.
    \end{enumerate}
\end{theorem}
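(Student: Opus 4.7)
The plan is to assemble the three assertions from established cluster algebra results rather than reprove them from scratch. For the existence of a Laurent expression $z = P/(x_1^{d_1}\cdots x_n^{d_n})$ with $P \in \mathbb{Z}[x_1, \ldots, x_n]$ coprime to each $x_i$, I would invoke the Laurent phenomenon of Fomin--Zelevinsky. Their proof (the ``caterpillar lemma'') proceeds by induction on the length of a mutation sequence reaching $z$, using the exchange relation $x_k x_k' = \prod_{i \to k} x_i + \prod_{k \to i} x_i$ together with a compatibility argument for neighboring mutations to show that each step preserves Laurentness in the initial cluster. Once $z \in \mathbb{Z}[x_1^{\pm 1}, \ldots, x_n^{\pm 1}]$ is established, extracting the greatest monomial common denominator gives the claimed form with $P$ coprime to each $x_i$ and $d_j \in \mathbb{Z}$; uniqueness is automatic.

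Positivity of the coefficients of $P$ is the positivity theorem of Lee--Schiffler, which in the skew-symmetric setting of this paper directly yields $P \in \mathbb{Z}_{\ge 0}[x_1, \ldots, x_n]$; an alternative route is Gross--Hacking--Keel--Kontsevich via scattering diagrams. Nonnegativity of $\mathbf{d}^{\Sigma}_z$ and the compatibility characterization in (ii) are the content of Cao--Li. The easy direction of compatibility is: if $z$ and $x_j$ share a seed $\Sigma'$, then $z$'s Laurent expansion in $\Sigma'$ has no $x_j$ in the denominator, and one transports this back to $\Sigma$ using that mutation away from $x_j$ cannot introduce $x_j$ into the denominator at the initial seed. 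The converse, that $d_j = 0$ forces $z$ and $x_j$ to be compatible, is the main technical contribution of Cao--Li and rests on their structural analysis relating $d$-vectors to $g$-vectors and on sign-coherence results.

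The main obstacle historically was positivity, a long-standing conjecture of Fomin--Zelevinsky that required substantial new machinery (greedy bases, quiver representations, or scattering diagrams) to resolve, and the nontrivial direction of (ii) was also established only recently. From the perspective of the present paper, however, each ingredient is now available as a citable black-box theorem, so assembling the statement is a matter of verifying that the hypotheses (skew-symmetric, geometric type, with frozen variables inverted) match those of each source.
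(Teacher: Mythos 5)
Your proposal is correct and takes essentially the same approach as the paper: Theorem~\ref{thm:cluster-Laurent-positivity-denom} is stated in the paper as a summary of known results with citations to Fomin--Zelevinsky for the Laurent phenomenon, Lee--Schiffler for positivity, and Cao--Li for the nonnegativity of denominator vectors and the compatibility characterization, and no proof is given beyond these attributions. Your identification of the sources and their roles matches the paper exactly.
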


\begin{remark}\label{rmk:denom-vec}
    The vector $\mathbf{d}_{z}^{\Sigma}$ of \cref{thm:cluster-Laurent-positivity-denom} is called the \emph{denominator vector} of the cluster variable $z$. It is conjectured that different cluster variables have different denominator vectors, see \cite[Conjecture 7.6]{FZ07}.
\end{remark}

We now turn to $F$-polynomials and $g$-vectors, which are another way to encode cluster variables. First, we define a special choice for the frozen parts of a quiver and seed. This special choice turns out to encode the cluster variables for arbitrary frozen variables. 

\begin{definition}[Principal coefficients]
    Let $Q$ be a quiver with mutable vertices $[r]$. The \textbf{framed quiver} $\Qprin$ is the quiver obtained from $Q$ by deleting all frozen vertices, adding $r$ additional frozen vertices $1^{\bullet}, \dots, r^{\bullet}$ and one arrow $i^{\bullet} \to i$ for each $i \in [r]$. For a seed $\Sigma=((x_1, \dots, x_n), Q)$, the \emph{framing} of $\Sigma$ is the seed $\prinSig=((x_1, \dots, x_r, y_1, \dots, y_r), \Qprin)$. We say that $\prinSig$ has \textbf{principal coefficients}. 
\end{definition}

Because the mutable parts of $Q$ and $\Qprin$ are the same, \cite[Theorem 4.8]{CKLP} implies that there is a one-to-one correspondence between mutable variables of $\mathcal{A}(\Sigma)$ and $\mathcal{A}(\prinSig)$. Concretely, if the seed $\mu_{j_q} \circ \cdots \circ \mu_{j_1}(\Sigma)$ has cluster $(z_1, \dots, z_r, x_{r+1}, \dots, x_n)$ and the seed $\mu_{j_q} \circ \cdots \circ \mu_{j_1}(\prinSig)$ has cluster $(z_1', \dots, z_r', y_{1}, \dots, y_r)$, then $z_i$ corresponds to $z_i'$.

\begin{definition}[$F$-polynomials]
Let $\Sigma$ be a seed. Let $z$ be a mutable cluster variable in $\mathcal{A}(\Sigma)$, and let $z'$ be the corresponding cluster variable in $\mathcal{A}(\prinSig)$. We define the \textbf{$F$-polynomial} of $z$ with respect to $\Sigma$ as
\[F_{z}^{\Sigma}(y_1, \dots, y_r):=P_{z'}^{\prinSig}(1, \dots, 1, y_1, \dots, y_r)\]
where $P_{z'}^{\prinSig}$ is as in \cref{thm:cluster-Laurent-positivity-denom}.
If $Q$ is the quiver of $\Sigma$, we may also write $F_{z}^Q$ for $F_{z}^{\Sigma}$.
\end{definition}

Notice that the $F$-polynomial $F_z^{\Sigma}$ is computed using $\mathcal{A}(\prinSig)$, so it may seem strange to associate the $F$-polynomial to a cluster variable of $\mathcal{A}(\Sigma)$. As we will now review, from the $F$-polynomial $F_z^{\Sigma}$ and the seed $\Sigma$, one can in fact recover the cluster variable $z$ of $\mathcal{A}(\Sigma)$. 

We will need the following notion which is based on \cite[Prop. 3.9]{FZ07} but first appeared in this form in \cite{fraser2016quasi}. 
(See also \cite[Section 11]{Postnikov} and \cite[Remark 7.2]{marsh2016twists} which define the same quantity, called a \emph{face weight} and \emph{shear weight} respectively, in the special case that the $x_i$'s label faces of a plabic graph.)

\begin{definition}\label{def:exchange-ratio}
    Let $\Sigma=(\mathbf{x}, Q)$ be a seed. For $j$ a mutable vertex, the \textbf{exchange ratio} is
    \[\hat{y}_j:= \frac{\prod_{i \to j}x_i}{\prod_{j \to i}x_i}.\]
\end{definition}

The next theorem involves the \emph{$g$-vector} of a cluster variable, which can be viewed as the degree of the corresponding variable in $\mathcal{A}(\prinSig)$ with respect to a particular $\mathbb{Z}^r$-grading \cite[Section 6]{FZ07}. There is also a recursive definition for $g$-vectors (see e.g. \cite{M-green}). We will not need the explicit definition, so do not recall it here.

\begin{theorem}[{\cite[Corollary 6.3]{FZ07}}]\label{thm:g-F-cluster-expansion}
    Let $\Sigma$ be a seed with cluster $(x_1, \dots, x_n)$ and let $z$ be a mutable cluster variable of $\mathcal{A}(\Sigma)$. Then there is a vector $\mathbf{g}_{z}^{\Sigma}= (g_j) \in \mathbb{Z}^r$, called the \textbf{$g$-vector} of $z$ such that 
    \[z=x_1^{g_1} \cdots x_r^{g_r} \cdot F_{z}^{\Sigma}(\hat{y}_1, \dots, \hat{y}_r).\]
\end{theorem}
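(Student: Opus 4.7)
The plan is to prove the formula first in the cluster algebra $\mathcal{A}(\prinSig)$ with principal coefficients, and then transfer it to $\mathcal{A}(\Sigma)$ by coefficient specialization. The key tool is a natural $\mathbb{Z}^r$-grading on $\mathcal{A}(\prinSig)$ under which every cluster variable is homogeneous, and its degree will turn out to be the $g$-vector.

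First I would equip the ambient Laurent polynomial ring in $x_1,\dots,x_r,y_1,\dots,y_r$ with the $\mathbb{Z}^r$-grading where $\deg(x_i)=\mathbf{e}_i$ and $\deg(y_j)=-\mathbf{b}_j$, with $\mathbf{b}_j$ the $j$-th column of the signed adjacency matrix of the mutable part of $Q$. A short calculation shows each initial exchange relation $x_k x_k' = \prod_{i\to k} x_i + \prod_{k\to i} x_i$ in $\prinSig$ is homogeneous: the framing arrow $k^{\bullet}\to k$ contributes the $y_k$ factor that exactly balances the grading between the two monomials. One then checks inductively that mutation preserves the grading, using the known rules for how the exchange matrix transforms under mutation. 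Consequently every cluster variable $z'\in\mathcal{A}(\prinSig)$ is homogeneous, and I define $\mathbf{g}_z^{\Sigma}$ to be its degree.

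Next I would derive the separation formula in the principal-coefficient case. By \cref{thm:cluster-Laurent-positivity-denom}, $z'$ is a Laurent polynomial $P(x,y)/x^{\mathbf{d}}$, and by the previous step it is homogeneous of degree $\mathbf{g}:=\mathbf{g}_z^{\Sigma}$. Each exchange ratio $\hat{y}_j = y_j\prod_i x_i^{b_{ij}}$ has degree $\mathbf{0}$, so $x^{\mathbf{g}}F_z^{\Sigma}(\hat{y}_1,\dots,\hat{y}_r)$ is also homogeneous of degree $\mathbf{g}$. Specializing $x_i\mapsto 1$ on both sides gives $P(1,y)=F_z^{\Sigma}(y)$, which holds by definition of $F_z^{\Sigma}$. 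Because a homogeneous Laurent polynomial of prescribed degree is determined by its restriction to $x=1$ (a monomial $y^a$ has the unique lift $x^{\mathbf{g}+Ba}y^a$ of the correct degree), I conclude $z' = x^{\mathbf{g}}F_z^{\Sigma}(\hat{y}_1,\dots,\hat{y}_r)$ in $\mathcal{A}(\prinSig)$. Finally, to transfer this to $\mathcal{A}(\Sigma)$, I would use the coefficient specialization from $\prinSig$ to $\Sigma$ that sends each principal variable $y_j$ to the Laurent monomial in the frozens of $\Sigma$ recording the arrows between frozen vertices and $j$ in $Q$. This map is compatible with mutation, so it sends $z'$ to $z$ and sends each principal $\hat{y}_j$ to the exchange ratio $\hat{y}_j$ of $\Sigma$ from \cref{def:exchange-ratio}, yielding the claimed identity.

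The main obstacle is the inductive step that mutation preserves the $\mathbb{Z}^r$-grading, together with the parallel fact that the coefficient specialization commutes with mutation. Both amount to matrix identities governing how the extended exchange matrix transforms under a single mutation; they are the technical heart of Fomin--Zelevinsky's ``separation of additions.'' Once these are in hand, everything else in the argument is formal manipulation of homogeneous Laurent polynomials.
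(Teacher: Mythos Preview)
The paper does not give a proof of this theorem; it is stated as a background result and attributed to \cite[Corollary 6.3]{FZ07}. Your sketch is essentially the standard Fomin--Zelevinsky argument from that source: introduce the $\mathbb{Z}^r$-grading on $\mathcal{A}(\prinSig)$ with $\deg x_i=\mathbf{e}_i$ and $\deg y_j=-\mathbf{b}_j$, show inductively that every cluster variable is homogeneous (defining the $g$-vector as its degree), observe that $\hat{y}_j$ has degree $\mathbf{0}$ so that $x^{\mathbf{g}}F(\hat{y})$ is homogeneous of the correct degree, and use that a homogeneous Laurent polynomial in $x,y$ of fixed degree is uniquely determined by its specialization at $x=1$. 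Transferring to an arbitrary seed $\Sigma$ via the coefficient specialization $y_j\mapsto \prod_{i>r} x_i^{\pm(\text{arrows between }i\text{ and }j)}$ is also the standard ``separation of additions'' step from \cite{FZ07}. So there is nothing to compare against in the present paper, and your outline correctly reconstructs the cited proof; the only caveat is that the inductive preservation of the grading and the compatibility of the specialization with mutation are genuine computations (Propositions~6.1 and Theorem~3.7 of \cite{FZ07}) that you would need to carry out in full rather than merely assert.
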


We note that again, the $g$-vector $\mathbf{g}_{z}^{\Sigma}$ depends only on the cluster variable $z'$ in $\mathcal{A}(\prinSig)$ corresponding to $z$. We will sometimes use the notation $\mathbf{g}_{z}^{Q}$ instead of $\mathbf{g}_{z}^{\Sigma}$. We also note that, due to the algebraic independence of the initial cluster variables, once we write $x_1^{g_1} \cdots x_r^{g_r} \cdot F_{z}^{\Sigma}(\hat{y}_1, \dots, \hat{y}_r)$ in lowest terms over a common denominator, we obtain ${P^{\Sigma}_{z}(x_1, \dots, x_n)}/({x_1^{d_1} \cdots x_n^{d_n}})$.

We next describe how $F$-polynomials and $g$-vectors behave under the addition of mutable vertices to the quiver $Q$.

\begin{proposition}\label{prop:F-g-adding-mutable}
    Let $Q$ be a quiver with mutable vertices $[r]$ and $Q'$ be a quiver with mutable vertices $[r']$ with $r \leq r'$ such that the induced subgraph of $Q'$ on $[r]$ is the mutable part of $Q$. Fix a sequence of mutations $\mu_{\mathbf{p}}$ of $Q$. Consider the mutable cluster variable $w:=w_i$ in $\mu_{\mathbf{p}}(\mathbf{x}, Q)$ and the mutable cluster variable $w':=w_i'$ in $\mu_{\mathbf{p}}(\mathbf{x}', Q')$. Denote their respective $g$-vectors by $\mathbf{g}_{w}^Q:=(g_i) \in \mathbb{Z}^r$ and $\mathbf{g}_{w'}^{Q'}:=(g'_i) \in \mathbb{Z}^{r'}$. Then 
    \[F_{w}^{Q}(y_1, \dots, y_r)= F_{w'}^{Q'}(y_1, \dots, y_{r'}) \qquad \text{and} \qquad g_j= g'_j \text{ for }j \in [r].\]
\end{proposition}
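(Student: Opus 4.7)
The plan is to prove Proposition~\ref{prop:F-g-adding-mutable} by induction on the length of $\mu_{\mathbf{p}}$, simultaneously tracking the evolution of the extended exchange matrices, the $F$-polynomials, and the $g$-vectors under the mutations applied in parallel to both $\Qprin$ and $(Q')^{\pr}$. The main tools are the standard Fomin--Zelevinsky mutation formulas for $F$-polynomials and $g$-vectors from \cite{FZ07}. For $0 \le t \le |\mathbf{p}|$, let $\widetilde{B}^t$ and $(\widetilde{B}')^t$ be the extended exchange matrices after $t$ mutations of $\mu_{\mathbf{p}}$ applied to $\Qprin$ and $(Q')^{\pr}$, and let $F^{t,Q}_i, F^{t,Q'}_i, \mathbf{g}^{t,Q}_i, \mathbf{g}^{t,Q'}_i$ denote the corresponding $F$-polynomials and $g$-vectors in position $i$.

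The joint inductive hypothesis I would maintain at time $t$ has three parts: (a) the entries $\widetilde{b}^t_{ij}$ and $(\widetilde{b}')^t_{ij}$ agree for $i,j \in [r]$, the entries $\widetilde{b}^t_{l^\bullet, j}$ and $(\widetilde{b}')^t_{l^\bullet, j}$ agree for $l, j \in [r]$, and $(\widetilde{b}')^t_{l^\bullet, j} = 0$ whenever $l \in [r'] \setminus [r]$ and $j \in [r]$; (b) for $i \in [r]$, $F^{t,Q'}_i$ has no dependence on $y_{r+1}, \dots, y_{r'}$ and equals $F^{t,Q}_i$; (c) for $i \in [r]$, the projection of $\mathbf{g}^{t, Q'}_i$ onto its first $r$ coordinates equals $\mathbf{g}^{t, Q}_i$. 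The base case $t = 0$ is immediate from the construction of $\Qprin$ and $(Q')^{\pr}$ together with the hypothesis that the induced subquiver of $Q'$ on $[r]$ is the mutable part of $Q$.

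For the inductive step at some $k \in [r]$, claim (a) follows directly from the matrix mutation formula, since each updated entry in the specified range is computed from entries of the old matrix in ranges already controlled by (a) at time $t$. For (b) and (c), I would apply the mutation recursions for $F$-polynomials and $g$-vectors from \cite{FZ07}. The crucial simplification on the $(Q')^{\pr}$ side is that positions $i \in [r'] \setminus [r]$ are never mutated by $\mu_{\mathbf{p}}$, so $F^{t, Q'}_i = 1$ and $\mathbf{g}^{t, Q'}_i = \mathbf{e}_i$ for all $t$; the former makes those factors trivial in the $F$-polynomial recursion, while $\mathbf{e}_i$ with $i \notin [r]$ contributes zero to the first $r$ coordinates of the $g$-vector recursion. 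Combined with the vanishing entries from (a), which prevent any $y_l$ with $l > r$ from ever being introduced, the $(Q')^{\pr}$ recursion collapses onto the $\Qprin$ recursion. The main obstacle here is bookkeeping---carefully verifying that every ingredient appearing on the $(Q')^{\pr}$ side of the mutation formulas is either matched on the $\Qprin$ side by (a)--(c) or is trivial in the above sense---rather than any conceptual difficulty.
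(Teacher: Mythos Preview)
Your proposal is correct and takes essentially the same approach as the paper's proof: induction on the length of $\mu_{\mathbf{p}}$, using the Fomin--Zelevinsky recursions for $F$-polynomials and $g$-vectors, with the key observation that positions $i\in[r']\setminus[r]$ are never mutated so their $F$-polynomials remain $1$ and their $g$-vectors remain $\mathbf{e}_i$. Your explicit joint inductive hypothesis on the exchange-matrix entries (part (a)) makes precise what the paper records more informally as ``the induced subgraph of $\mu_{\mathbf{p}}(Q')$ on $[r]$ is $\mu_{\mathbf{p}}(Q)$ and there are no arrows from $(r+1)^\bullet,\dots,(r')^\bullet$ to $1,\dots,r$,'' but the substance is the same.
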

\begin{proof}
We may assume that both $Q$ and $Q'$ are framed quivers, since the $F$-polynomials and $g$-vectors are determined by the framings of $Q$ and $Q'$.

We will use induction on the length of the mutation sequence. The base case is when the length of the mutation sequence is $0$; in this case, all $F$-polynomials in both seeds are $1$, and the $g$-vector of the initial cluster variable $x_i$ is the standard basis vector $\mathbf{e}_i$. 

Now suppose the length of the mutation sequence is at least 1. It is straightforward to check, again by induction, that the induced subgraph of $\mu_{\mathbf{p}}(Q')$ on $[r]$ is exactly $\mu_{\mathbf{p}}(Q)$. Moreover, in $\mu_{\mathbf{p}}(Q')$, there are no arrows from the frozen vertices $(r+1)^{\bullet}, \dots, (r')^{\bullet}$ to the mutable vertices $1, \dots, r$. 

We first deal with $F$-polynomials. We use \cite[(5.3)]{FZ07}, which details how $F$-polynomials change under one mutation. If the final mutation in $\mathbf{p}$ is not at $i$, then we have $F_w^Q = F_{w'}^{Q'}$ by the inductive hypothesis. If it is at $i$, then using the inductive hypothesis, the formulas of \cite[(5.3)]{FZ07} for $F_w^Q$ and $F_{w'}^{Q'}$ are nearly the same. The formula for $F_{w'}^{Q'}$ can be obtained from that for $F_w^Q$ by multiplying each term by products of $F_{x_q}^{Q'}$, where $q>r$ and $x_q$ is an initial cluster variable. Since we never mutate at $q$ in the mutation sequence $\mathbf{p}$, these $F$-polynomials are all $1$, so $F_w^Q = F_{w'}^{Q'}$.

We now turn to $g$-vectors. We will use \cite[pg. 3]{M-green}, which details how $g$-vectors change under one mutation. Again, if the final mutation in $\mathbf{p}$ is not at $i$, then the $g$-vectors of $w$ and $w'$ are not affected by the final mutation and we have the desired equality by the inductive hypothesis. If the final mutation is at $i$, then \cite[pg. 3]{M-green} together with the inductive hypothesis show that, for $j \in [r]$, $g_j$ and $g_j'$ differ only by the $j$th coordinate of $g$-vectors $\mathbf{g}_{x_q}^{Q'}$, where $q>r$ and $x_q$ is an initial cluster variable. In this case, $\mathbf{g}_{x_q}^{Q'}= \mathbf{e}_q$ as we never mutate at $q$ in the mutation sequence $\mathbf{p}$, so the $j$th coordinate of this vector is 0. Thus we have $g_j = g_j'$ as desired.
\end{proof}

\section{Dimer face polynomials as $F$-polynomials}\label{sec:dimer_f_poly}

In this section, we show that if $G$ has property $(*)$, the dimer face polynomial $D_G$ is an $F$-polynomial for a particular cluster algebra. We also give an explicit mutation sequence to reach this $F$-polynomial. The main result is stated in \cref{thm:dimer-poly-is-F-poly}.  

\subsection{Reduction sequences for bipartite plane graphs}

We now define the dual quiver of a bipartite plane graph.

\begin{definition}\label{def:dual-quiver} Given a bipartite plane graph $G$, the dual quiver $Q_G$ is constructed as follows:
\begin{itemize}
    \item Place a mutable vertex in each non-infinite face $f$ of $G$.
    \item For each edge $e$ of $G$ which separates two distinct non-infinite faces $f, f'$, draw an arrow across $e$ so that the white vertex is on the right.
    \item Delete oriented 2-cycles, one by one, in any order. 
\end{itemize}
\end{definition}

See \cref{fig:quiver-ex} for an example of $Q_G$.

\begin{figure}
    \centering
    \includegraphics[width=0.5\linewidth]{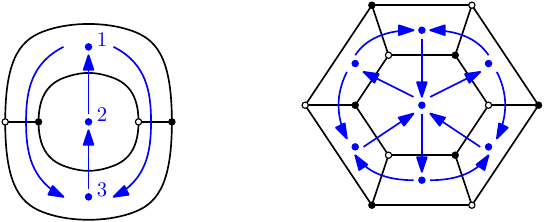}
    \caption{Two examples of $Q_G$, shown superimposed on $G$. Note that on the left, there are two arrows from $1$ to $3$.}
    \label{fig:quiver-ex}
\end{figure}

\begin{remark} 
In the above definition of dual quivers for bipartite plane graphs, an oriented $2$-cycle may arise for instance from a $2$-valent vertex $v$ or from two faces that border each other twice with an even number of edges between them. In the former case, deleting such a $2$-cycle is equivalent to an edge contraction move removing $v$. In the latter case, this configuration cannot occur for a quadrilateral face in a graph with property $(*)$ (see the proof of \cref{lem:moves-effect-on-Q}).  Consequently, deleting 2-cycles in $Q_G$ will not affect the correspondence between square moves of bipartite plane graphs and quiver mutations.   
\end{remark}

\begin{remark}
    Note that all vertices of the dual quiver $Q_G$ defined above are mutable. 
    This is slightly different than the convention taken when defining the dual quiver of a \emph{plabic} graph (see e.g. \cite[Definition 7.1.4]{IntroCA7}).
\end{remark}

We will show that the dimer face polynomial of $G$ is an $F$-polynomial for $\mathcal{A}(Q_G)$. To specify which $F$-polynomial, we need some additional terminology.

\begin{definition}\label{def:moves}
We introduce the following operations on bipartite plane graphs: edge contraction/uncontraction, square move, and bigon removal, respectively.

\begin{center}
\includegraphics[width=\textwidth]{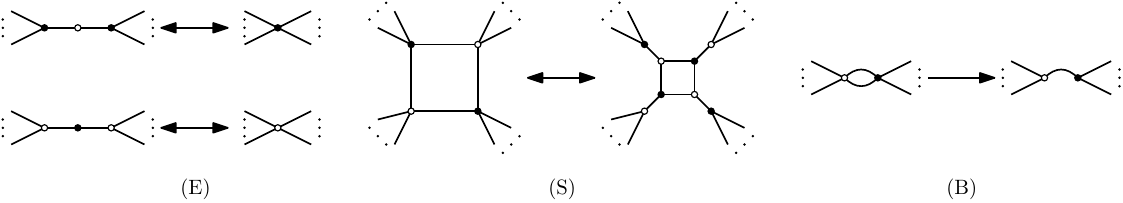} 
\end{center}
On the left-hand side of (E), we assume that the two vertices of the same color are distinct and are degree at least 2. On the left-hand side of (S), we assume all vertices are degree at least 3. On the left side of (B), we assume both vertices are degree at least 2.
\end{definition}

\begin{definition}
    Let $G$ be a graph with property $(*)$. A \emph{reduction sequence} for $G$  is a sequence of the moves (E), (S), (B), which turn $G$ into a graph with a single edge. Numbering the non-infinite faces of $G$, we represent a reduction sequence by the list $\mathbf{r}=(f_1, f_2, \dots, f_q)$ of faces at which (S) or (B) were performed. That is, $f_1$ is the first face at which (S) or (B) is performed, $f_2$ is the second, etc.
\end{definition}

See \cref{fig:reduction-seq} for an example of a reduction sequence.

\begin{figure}
    \centering
    \includegraphics[width=0.8\linewidth]{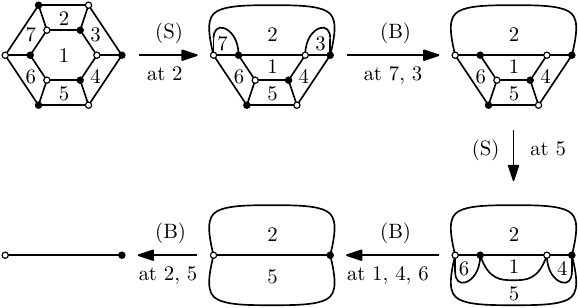}
    \caption{An illustration of the reduction sequence $\mathbf{r}=(2,7,3,5,1,4,6,2,5)$ for the graph $G$ on the upper left. Arrows are labeled by the moves relating successive graphs; (E) moves are omitted.} 
    \label{fig:reduction-seq}
\end{figure}

\begin{proposition}\label{prop:reduc-seq-exists}
    Let $G$ be a graph with property $(*)$. Then, $G$ has at least one reduction sequence.
\end{proposition}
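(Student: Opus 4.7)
The plan is to proceed by strong induction on the number of edges $|E(G)|$, combined with Euler's formula and a case analysis at each step.

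\emph{Base case.} If $|E(G)|=1$, then $G$ is itself a single edge and the empty sequence works.

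\emph{Inductive step.} Assume $|E(G)| \geq 2$. I first claim that property $(*)$ forbids leaves (vertices of degree $1$): if $v$ had degree $1$ with unique neighbor $w$, then every perfect matching would have to contain the edge $\{v,w\}$, so any other edge incident to $w$ could not lie in any matching, contradicting property $(*)$ (unless $w$ also has degree $1$, but then by connectedness $G$ would be a single edge, contradicting $|E(G)| \geq 2$). Hence every vertex of $G$ has degree $\geq 2$.

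Next, I view $G$ as a graph on $S^2$ (so the infinite face is counted) and apply Euler's formula $V-E+F=2$ together with bipartiteness (so every face size is even and at least $2$). Suppose for contradiction that $G$ has no vertex of degree $2$, no bigon face, and no quadrilateral face. Then every vertex has degree $\geq 3$ and every face has size $\geq 6$, so
\[
2E = \sum_{v \in V} \deg(v) \geq 3V \quad\text{and}\quad 2E = \sum_{f} |f| \geq 6F,
\]
giving $V-E+F \leq \tfrac{2}{3}E - E + \tfrac{1}{3}E = 0$, a contradiction. Therefore $G$ has a degree-$2$ vertex, a bigon face, or a quadrilateral face (with all four corner vertices of degree $\geq 3$, given the preceding cases are excluded).

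\emph{Applying a move and recursing.} In the first two cases, apply (E) or (B), respectively. Each of these moves strictly decreases $|E(G)|$, and a routine check shows they preserve property $(*)$ (there is a natural bijection between the matchings of $G$ and those of the reduced graph $G'$). So the inductive hypothesis applies to $G'$, and prepending the (E) or (B) move to a reduction sequence of $G'$ yields one for $G$.

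\emph{The hard case.} The main obstacle is the third case, since (S) does not reduce $|E(G)|$. My plan is to show that an appropriate choice of quadrilateral face $f$ makes the (S) move produce a reducible local configuration (a new degree-$2$ vertex or a new bigon face), after which (E) or (B) reduces $|E|$ and the induction kicks in. Concretely, one chooses a quadrilateral face $f$ that shares a vertex of degree $3$ with a neighboring face: after applying (S) at $f$, the degree-$3$ vertex becomes a degree-$2$ vertex, or two parallel edges combine into a bigon. To guarantee the existence of such an $f$, one refines the Euler-type counting: in a graph with property $(*)$, no degree-$2$ vertices, and no bigons, the discrepancy in $V-E+F=2$ forces both degree-$3$ vertices and quadrilateral faces to exist, and a further counting argument (weighting each quadrilateral face by the number of degree-$3$ corners and summing) shows that at least one quadrilateral face must be adjacent to a degree-$3$ vertex. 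One then verifies that (S) preserves property $(*)$ (as the matching set changes only combinatorially and every edge remains in some matching), completing the inductive step.
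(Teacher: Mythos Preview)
Your approach via Euler's formula and induction on $|E(G)|$ is genuinely different from the paper's (which embeds $G$ as a plabic graph with one boundary vertex and invokes the non-trivial fact, from the theory of reduced plabic graphs, that any non-reduced plabic graph can be brought to contain a bigon face via the local moves). The first part of your argument is fine: the no-leaves observation and the Euler count forcing a degree-$2$ vertex, a bigon, or a quadrilateral are correct, and the side claim that some quadrilateral has a degree-$3$ corner can indeed be proved by a discharging argument (though you only gesture at it).

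The real gap is in the ``hard case,'' and it is a termination problem. The move (S) does not decrease $|E(G)|$. In the direction that makes a degree-$3$ corner drop to degree $2$ (the ``exploding'' direction, which inserts four new trivalent corners and four legs), the move \emph{adds} four edges; the subsequent (E)-contraction of the new degree-$2$ vertex removes only two. So after your proposed step the edge count has gone \emph{up} by two, and the induction on $|E|$ does not close. You give no alternative decreasing invariant and no argument that iterating (S)+(E) cannot cycle indefinitely. (Attempting instead to apply (S) in the collapsing direction requires \emph{all four} corners to be trivalent, which your discharging argument does not guarantee; and first (E)-uncontracting the other corners again increases $|E|$.)

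This is exactly the difficulty the paper sidesteps by citing the plabic-graph result: the standard proofs that a non-reduced plabic graph can be transformed to one with a bigon use strand diagrams or trip permutations to furnish a terminating invariant, not an Euler-type count. Your Euler/discharging step locates a good face to mutate at, but it does not by itself supply a quantity that strictly decreases under (S), and that is the missing ingredient.
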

The proof of \cref{prop:reduc-seq-exists}, which appears in the next subsection, follows readily from existing work on \emph{reduced plabic graphs} \cite{Postnikov,IntroCA7}.

We need one additional definition, 
based on \cite[Section 2]{speyer2007perfect} and \cite[Eq. (3.11)]{di2014t} 
which gave a weighting to matchings in the context of the Octahedron Recurrence (equivalently $T$-systems). 

\begin{definition}\label{def:h-vec}
    Let $G$ be a plane graph and let $M$ be a subset of the edges of $G$.
    We define the vector $\hvec^G_M = (h_f)_{f \in \faces(G)}$ 
    by 
    \begin{equation}\label{eq:h-vec}
     h_f:=|f|/2 -|M \cap f| -1. 
    \end{equation}
\end{definition}
We will often, but not always, take $M$ to be a matching of $G$; sometimes $M$ will instead be a matching of a subgraph of $G$.

We now state the main theorem of this section: the dimer face polynomial $D_G$ is an $F$-polynomial for the cluster algebra with initial quiver $Q_G$. If $\mathbf{r}=(i_1, \dots, i_q)$, we write $\mu_{\mathbf{r}} (\Sigma)$ for the seed $\mu_{i_q} \circ \cdots \circ \mu_{i_2}\circ \mu_{i_1}(\Sigma)$, so that the first entry of $\mathbf{r}$ gives the first mutation performed, the second entry the second, etc. 

\begin{theorem}\label{thm:dimer-poly-is-F-poly}
    Let $G$ be a graph with property $(*)$ and let $\mathbf{r}=(f_1, \dots, f_q)$ be a reduction sequence for $G$. Consider a seed $\Sigma=(\mathbf{x}, Q_G)$. Let $z$ be the cluster variable of $\mu_{\mathbf{r}}(\Sigma)$ labeling vertex $f_q$. 
    We have
    \[F^{Q_G}_z(y_1, \dots, y_n)= D_G(y_1, \dots, y_n).\]
    Further, the $g$-vector $\mathbf{g}_z^{Q_G}$ of $z$ is equal to $\mathbf{h}_{\hat{0}}^G$, 
    where $\hat{0}$ denote the bottom matching in our dimer lattice for graph $G$.
\end{theorem}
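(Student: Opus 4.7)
The plan is to induct on the length $q$ of the reduction sequence $\mathbf{r}=(f_1,\dots,f_q)$, by analyzing how each move transforms both the combinatorial data on $G$ (the dual quiver, the dimer lattice, the bottom matching, the dimer face polynomial) and the cluster-algebra data on $\mathcal{A}(Q_G)$ (the cluster variable $z$, its $F$-polynomial, its $g$-vector). The base case is the terminal graph of the reduction sequence (a single edge or a graph with a single face), which can be checked directly: for a $2k$-gon one has $D_G = 1+y_1$ and a single mutation of $Q_G$ at the unique face produces a cluster variable with $F$-polynomial $1+y_1$ and $g$-vector $-1$, matching $\mathbf{h}^G_{\hat{0}}=k-k-1=-1$. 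The bigon and single-edge cases are similar.

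The heart of the proof is three structural lemmas describing the effect of the moves (E), (S), (B) on $Q_G$ and on $D_G$. First, an edge contraction (E) preserves $Q_G$ (modulo $2$-cycles) and induces a height-preserving bijection on dimers, so $D_G$ is unchanged. Second, a square move (S) at a face $f$ satisfies $Q_{G'}=\mu_f(Q_G)$; the dimers of $G$ and $G'$ split according to their restriction to the four edges of the square, and by carefully reorganizing these restrictions one obtains an identity on dimer face polynomials which, under the principal-coefficients framing, is exactly the exchange relation $x_f\,x_f' = \prod_{i\to f} x_i \cdot y_{f,\text{in}} + \prod_{f\to i} x_i \cdot y_{f,\text{out}}$ at vertex $f$. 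Third, a bigon removal (B) at a face $f$ corresponds to the mutation $\mu_f$ followed by deletion of the (now isolated, or doubled) vertex $f$ from $Q_G$; the dimer face polynomial transforms compatibly because the two bigon matchings contribute a $(1+y_f)$ factor to $D_G$ on matchings that pair the bigon vertices through the bigon, while matchings that do not are in bijection with matchings of $G'$.

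For the inductive step, I take the first move $f_1$ in $\mathbf{r}$ and apply the appropriate structural lemma to relate $(G,Q_G,D_G,\hat{0})$ to $(G',Q_{G'},D_{G'},\hat{0}')$. Since $(f_2,\dots,f_q)$ is a reduction sequence for $G'$, the inductive hypothesis gives $F_{z'}^{Q_{G'}} = D_{G'}$ and $\mathbf{g}_{z'}^{Q_{G'}} = \mathbf{h}^{G'}_{\hat{0}'}$ for the corresponding cluster variable $z'$. In the (S) case, $Q_{G'}=\mu_{f_1}(Q_G)$, so $z$ and $z'$ are the same cluster variable of $\mathcal{A}(Q_G)=\mathcal{A}(Q_{G'})$, and I recover $F_z^{Q_G}$ and $\mathbf{g}_z^{Q_G}$ by transporting through one mutation of the initial seed — this is the step where the identity from the structural lemma exactly matches the mutation formula for $F$-polynomials and $g$-vectors. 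In the (B) case, $Q_{G'}$ has one fewer mutable vertex, so I invoke \cref{prop:F-g-adding-mutable} to lift $F_{z'}^{Q_{G'}}$ to an $F$-polynomial for the intermediate quiver $\mu_{f_1}(Q_G)$, and then undo the mutation $\mu_{f_1}$ as before.

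The main obstacle will be establishing the structural lemmas for (S) and especially (B) in sufficient detail: one needs a precise bijective account of how dimers of $G$ split into classes near $f$, and how each class contributes to the numerator and denominator of the exchange relation at vertex $f$. A secondary subtlety is the $g$-vector claim: one must track how $\mathbf{h}^G_{\hat{0}}$ shifts when $\hat{0}$ changes (note that an (S) move at $f$ can change which edges of the square are in the new bottom matching $\hat{0}'$), and verify the resulting shifts match the tropical formula for $g$-vector mutation. These verifications reduce, in each case, to an explicit local computation around the affected face using the formula $h_f=|f|/2 - |\hat{0}\cap f|-1$ and the mutation rules recorded earlier in \cref{sec:background-cluster}.
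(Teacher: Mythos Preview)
Your proposal is correct and follows essentially the same approach as the paper: induction on the length of the reduction sequence, with separate structural lemmas for (E), (S), and (B) that track the effect on both $Q_G$ and $D_G$, invoking \cref{prop:adjacent-F} to transport $F$-polynomials and $g$-vectors across one mutation of the initial seed in the (S) case, and \cref{prop:F-g-adding-mutable} together with one such transport in the (B) case. The only imprecision is your phrasing of the (S) identity as ``the exchange relation $x_f x_f'=\cdots$'': what is actually needed (and what the paper verifies via an explicit case analysis on the local picture of $\hat 0$ near $f$) is the adjacent-seed relation of \cref{prop:adjacent-F} between $F_z^{\Sigma}$ and $F_z^{\mu_f(\Sigma)}$ for the \emph{same} cluster variable $z$, not the exchange relation between two different cluster variables.
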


See \cref{fig:big-ex} for an illustration of \cref{thm:dimer-poly-is-F-poly}.

\begin{figure}
    \centering
    \includegraphics[height=0.9\textheight]{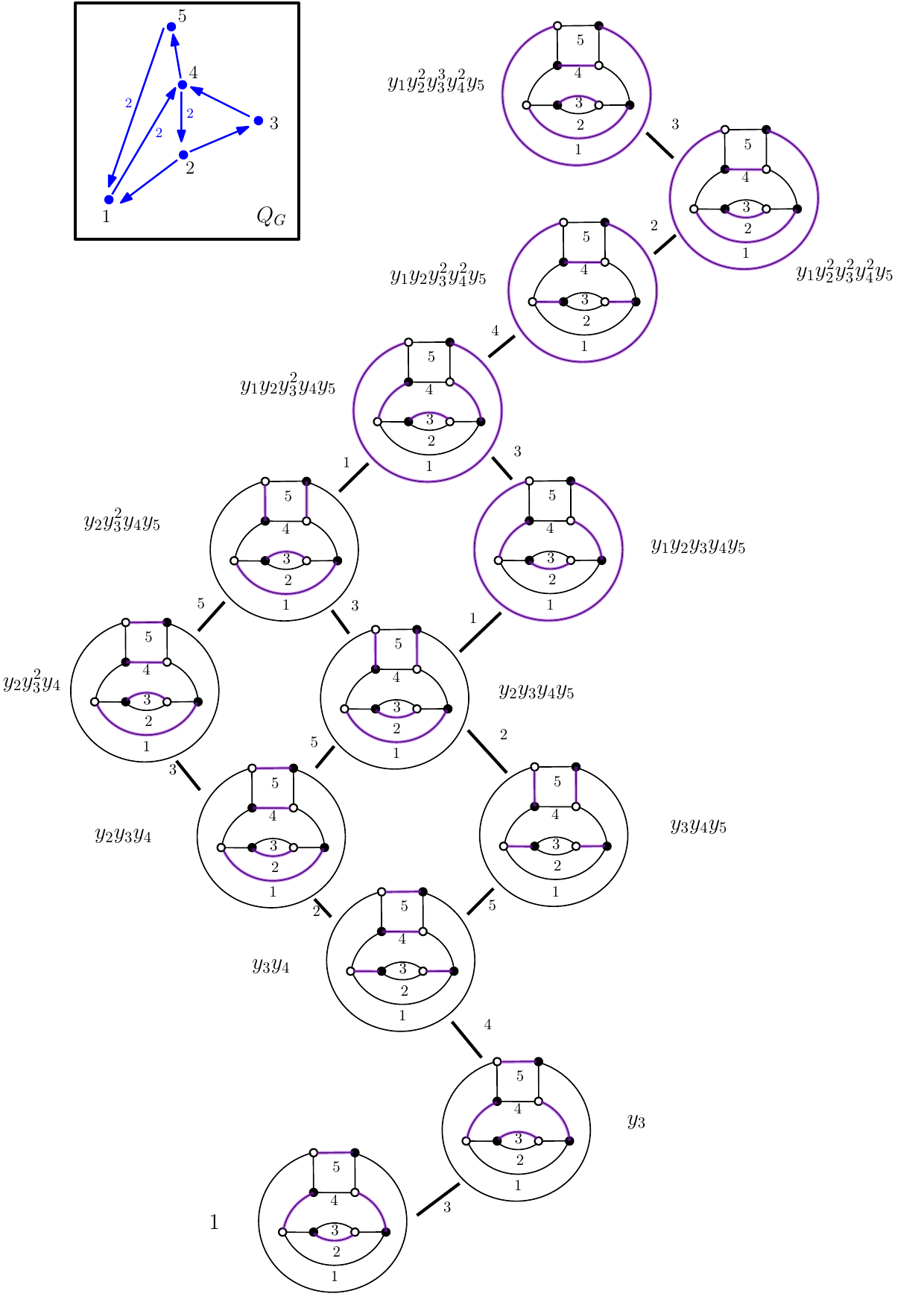}
    \caption{An example of $\mathcal{D}_G$ and $Q_G$. Next to each matching $M$ is its multivariate rank $\mrk(M)=\mathbf{y}^{\height(M)}$. In $Q_G$, the numbers next to arrows are multiplicities. A reduction sequence for $G$ is $(3,2,4,5,1)$. The cluster variable labeling vertex $1$ in the seed $\mu_1 \circ \mu_5 \circ \mu_4 \circ \mu_2 \circ \mu_3 (\mathbf{x}, Q_G)$ has $F$-polynomial $D_G$ and $g$-vector $\mathbf{h}_{\hat{0}}^G=(0,0,-1,0,0)$.}
    \label{fig:big-ex}
\end{figure}

\begin{remark}
    \cref{thm:dimer-poly-is-F-poly} implies that the dimer face polynomial of an arbitrary plane graph $G$ is the $F$-polynomial of a cluster monomial in some cluster algebra. If $G$ has no perfect matchings, then $D_G=1$. If $G$ has at least one perfect matching, we use \cref{rmk:dimer-lattice-arbitrary-graph} to endow $\mathcal{D}_G$ with a distributive lattice structure. Let $G' = G_1 \sqcup \cdots \sqcup G_r$ be the graph obtained by deleting all edges of $G$ which are not in any perfect matching. We have $D_G= D_{G_1} \cdots D_{G_r}$, so $D_G$ is a product of compatible $F$-polynomials for the cluster algebra with initial quiver $Q_{G'}=Q_{G_1} \sqcup \cdots \sqcup Q_{G_r}$. We note that $Q_{G'}$ may not be an induced subquiver of $Q_G$.
\end{remark}

Using \cref{thm:dimer-poly-is-F-poly}, we may readily give the cluster expansion of $z$, 
extending results in \cite{speyer2007perfect, di2014t, vichitkunakorn2016solutions, eager2012colored} 
to cases of arbitrary graphs with property $(*)$.  
We delay our proof of the cluster expansion
until after the proof of \cref{thm:dimer-poly-is-F-poly}.  
The formula here is reminiscent of formulas for twisted Pl\"ucker coordinates in \cite{marsh2016twists,MS-twist} and we discuss the relationship between our results and theirs in Section \cref{sec:positroid-var-applications}. 

\begin{corollary}\label{cor:easy-cluster-expansion}
    Let $G$, $\Sigma$, and $z$ be as in \cref{thm:dimer-poly-is-F-poly}. The Laurent polynomial expression for $z$ in terms of the initial seed $\Sigma$ is
    \[z= \sum_{M \in \mathcal{D}_G} \mathbf{x}^{\mathbf{h}_M^{G}}
    = \sum_{M \in \mathcal{D}_G} \prod_{f \in \faces(G)} x_f^{|f|/2 -|M \cap f| -1}  \]
    and the denominator vector of $z$ (see \cref{thm:cluster-Laurent-positivity-denom}) has all coordinates equal to $1$.
\end{corollary}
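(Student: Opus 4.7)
The plan is to combine \cref{thm:dimer-poly-is-F-poly} with the general cluster-algebraic formula in \cref{thm:g-F-cluster-expansion} and then verify the resulting Laurent expansion monomial-by-monomial over the dimer lattice. By \cref{thm:g-F-cluster-expansion}, we have $z = \mathbf{x}^{\mathbf{g}_z^{Q_G}} F_z^{Q_G}(\hat{y}_1, \ldots, \hat{y}_n)$, and \cref{thm:dimer-poly-is-F-poly} together with \cref{cor:ht} identifies $\mathbf{g}_z^{Q_G} = \mathbf{h}_{\hat{0}}^G$ and $F_z^{Q_G}(\mathbf{y}) = \sum_{M \in \mathcal{D}_G} \mathbf{y}^{\height(M)}$. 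So the entire content of the corollary boils down to the monomial identity
\[
\hat{\mathbf{y}}^{\height(M)} = \mathbf{x}^{\mathbf{h}_M^G - \mathbf{h}_{\hat{0}}^G} \qquad \text{for every } M \in \mathcal{D}_G,
\]
which, once established, immediately yields $z = \sum_M \mathbf{x}^{\mathbf{h}_M^G}$.

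I would prove this identity by induction on the rank of $M$ in the dimer lattice. The base case $M = \hat{0}$ is immediate, since both sides equal $1$. For the inductive step, let $M'$ be covered by $M$ via an up-flip at face $f_0$. By \cref{lem:ht_flip}, $\height(M) = \height(M') + \mathbf{e}_{f_0}$, so it suffices to verify the single-face identity $\hat{y}_{f_0} = \mathbf{x}^{\mathbf{h}_M^G - \mathbf{h}_{M'}^G}$, whose right-hand side is $\prod_f x_f^{|M' \cap f| - |M \cap f|}$. At $f = f_0$ the exponent vanishes because the up-flip swaps the $|f_0|/2$ white-black edges of $f_0$ for its $|f_0|/2$ black-white edges, keeping $|M \cap f_0|$ constant. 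At a neighboring face $f$ sharing an edge $e$ with $f_0$, each such $e$ contributes $+1$ or $-1$ to the exponent of $x_f$ depending on whether $e$ is white-black or black-white in $f_0$. Unfolding the dual-quiver convention of \cref{def:dual-quiver} (arrows point from the face in which the edge is black-white to the face in which it is white-black) shows this matches the $x_f$-exponent in $\hat{y}_{f_0}$. The one point requiring care is precisely this orientation book-keeping, but it is routine.

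For the denominator vector claim, I would argue directly from the Laurent expansion $\sum_M \mathbf{x}^{\mathbf{h}_M^G}$ just established. The $x_f$-exponent $|f|/2 - |M \cap f| - 1$ is bounded below by $-1$ because $|M \cap f| \le |f|/2$ for any matching $M$; equality $|M \cap f| = |f|/2$ holds precisely when $M$ restricts to a perfect matching of the cycle $\partial f$, and such matchings exist by \cref{cor:prop*=all-faces-flipped}. Hence $x_f^{-1}$ is both the smallest and an attained power of $x_f$ appearing. Clearing denominators by $\prod_f x_f$ produces the polynomial $\sum_M \prod_f x_f^{|f|/2 - |M \cap f|}$; for each face $g$ there is a matching with $|M \cap g| = |g|/2$, yielding an exponent-zero monomial in $x_g$. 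Since every term in the sum has a nonnegative coefficient, no cancellation can destroy such monomials, so the numerator is not divisible by any $x_g$. Therefore the denominator vector of $z$ is the all-ones vector.
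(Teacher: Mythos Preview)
Your proof is correct and follows essentially the same approach as the paper's own proof: both reduce to showing $\hat{y}_{f_0} = \mathbf{x}^{\mathbf{h}_M^G - \mathbf{h}_{M'}^G}$ for matchings related by a single flip, verify this via the edge-by-edge orientation bookkeeping of \cref{def:dual-quiver}, and then read off the denominator vector from the observation that $|f|/2 - |M \cap f| - 1 \ge -1$ with equality attained for every $f$ by \cref{cor:prop*=all-faces-flipped}. Your explicit mention of the $f = f_0$ case and of positivity preventing cancellation in the numerator are minor elaborations on what the paper leaves implicit.
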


We obtain a corollary on the Newton polytopes of the $F$-polynomials in \cref{thm:dimer-poly-is-F-poly} by applying \cref{prop:face-lattice-newton,cor:newton-lattice-points}.

\begin{corollary}\label{cor:F-poly-newton-poly}
    Let $G$ and $z$ be as in \cref{thm:dimer-poly-is-F-poly}. Then every integer point in the Newton polytope of 
    $F^{Q_G}_z=D_G$ is a vertex, and the face lattice of the Newton polytope is isomorphic to the lattice of elementary subgraphs of $G$.
\end{corollary}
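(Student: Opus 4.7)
The proof is essentially immediate once one combines the identification $F^{Q_G}_z = D_G$ supplied by \cref{thm:dimer-poly-is-F-poly} with the two structural facts about $\Newton(D_G)$ already established in \cref{sec:newton-polytope}. So my plan is simply to chain these three results together with minimal additional argument.

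First, I would invoke \cref{thm:dimer-poly-is-F-poly} to replace the $F$-polynomial $F^{Q_G}_z$ in the statement with the dimer face polynomial $D_G$. This reduces the entire claim to a statement purely about the Newton polytope $\Newton(D_G)$, freeing us from cluster-algebraic considerations. At this point the cluster variable $z$ and the reduction sequence $\mathbf{r}$ play no further role.

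Second, I would apply \cref{cor:newton-lattice-points}, which uses the affine isomorphism $\psi$ of \cref{thm:affine} between $\Newton(D_G)$ and the perfect matching polytope $PM(G)$ to deduce that every integer point of $\Newton(D_G)$ is a vertex. This gives the first assertion of the corollary. The hypothesis of property $(*)$ for $G$, which is built into our running setup in \cref{thm:dimer-poly-is-F-poly}, is exactly what is needed to apply this result.

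Third, for the face-lattice assertion, I would cite \cref{prop:face-lattice-newton}, which again uses the affine isomorphism $\psi$ together with \cite[Theorem 7.3]{PSW} to identify the face lattice of $\Newton(D_G)$ with the inclusion lattice of elementary subgraphs of $G$. Since affine isomorphisms of polytopes preserve face lattices, this is exactly the claim. There is no substantial obstacle to overcome here: the only thing to be careful about is to verify that the hypotheses of \cref{thm:dimer-poly-is-F-poly}, \cref{cor:newton-lattice-points}, and \cref{prop:face-lattice-newton} are all met under the standing assumption that $G$ has property $(*)$, which they are. The proof can therefore be presented in a single short paragraph.
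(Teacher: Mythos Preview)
Your proposal is correct and matches the paper's approach exactly: the paper states the corollary as an immediate consequence of \cref{thm:dimer-poly-is-F-poly} together with \cref{prop:face-lattice-newton} and \cref{cor:newton-lattice-points}, which is precisely the chain of results you invoke.
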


There has been substantial interest in Newton polytopes of $F$-polynomials and Laurent polynomial expressions for cluster variables \cite{Kalman-Newton,Fei-Newton,MSB-Newton,Li-Pan}. By \cite[Theorem 4.4]{Li-Pan}, the Newton polytope $\Newton(F)$ of any $F$-polynomial is \emph{saturated}, meaning that all lattice points of $\Newton(F)$ are exponent vectors of terms in $F$, and a lattice point is a vertex if and only if the corresponding term has coefficient 1. We remark that \cref{cor:F-poly-newton-poly} recovers these results when $F=D_G$, as by \cref{cor:01} all coefficients of $D_G$ are 1.

\subsection{Results needed for the proof of \cref{thm:dimer-poly-is-F-poly}}

In this subsection, we build up to the proof of \cref{thm:dimer-poly-is-F-poly}, proving some preparatory results and then comparing how the $F$-polynomial changes when a move is performed to $G$ to how the dimer face polynomial changes. We deal separately with (E), (S), and (B) in \cref{lem:e-moves-dont-change-dimers}, \cref{thm:sq-move-step}, and \cref{thm:bigon-step}, respectively. In the next subsection, we use these results and induction to complete the proof of \cref{thm:dimer-poly-is-F-poly}.

We start with a helpful lemma on the structure of graphs with property $(*)$. 
See also \cite[Lemma 3.7]{MS-twist}.

\begin{lemma}\label{lem:edges-dif-face-each-side}
    If $G$ is a graph with property $(*)$ and is not a single edge, then no edge of $G$ has the same face on both sides.
\end{lemma}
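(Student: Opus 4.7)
The plan is to argue by contradiction: assume $e = (u,v)$ is an edge of $G$ with the same face $f$ on both sides, and derive that $G$ must be the single edge $e$.

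The first step is topological. I will show that such an edge $e$ must be a \emph{bridge} (cut edge) of $G$. Suppose otherwise; then $e$ lies on some cycle $C$ of $G$. Since $G$ is embedded in the plane and $C$ is a simple closed curve, the Jordan curve theorem splits $\mathbb{R}^2 \setminus C$ into two connected open regions. Any face of $G$ is contained in exactly one of these regions (as it is a connected subset of $\mathbb{R}^2 \setminus G \subseteq \mathbb{R}^2 \setminus C$), and the two open sides of $e$ lie in different regions. Hence the faces on the two sides of $e$ are different, contradicting the assumption.

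The second step uses property $(*)$ to force a parity contradiction. Let $G_u$ and $G_v$ be the two connected components of $G \setminus \{e\}$, containing $u$ and $v$ respectively. By property $(*)$, the edge $e$ belongs to some perfect matching $M$; in $M$, the vertex $u$ is matched to $v$, so $M$ restricts to a perfect matching of $G_u \setminus \{u\}$. Therefore $|V(G_u)| - 1$ is even, i.e.\ $|V(G_u)|$ is odd. Now suppose $u$ has degree at least $2$, so that there exists some edge $e' = (u,w)$ with $e' \neq e$ and $w \in V(G_u)$. Applying property $(*)$ to $e'$ gives a perfect matching $M'$ containing $e'$; this matching cannot contain $e$, so $M'$ restricts to a perfect matching of $G_u \setminus \{u,w\}$. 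This forces $|V(G_u)| - 2$, and hence $|V(G_u)|$, to be even, contradicting the previous parity count. So $u$ has degree exactly $1$, and by the symmetric argument so does $v$. Since $G$ is connected, this means $G$ is the single edge $e$, contradicting the hypothesis.

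The main conceptual step is the first one, but it is a clean application of the Jordan curve theorem once phrased correctly; the second step is then a short combinatorial parity argument using property $(*)$ twice. Neither step requires any of the cluster-algebraic or dimer-face-polynomial machinery of the paper, only the definition of property $(*)$ and the basic planarity setup.
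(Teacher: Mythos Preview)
Your proof is correct and follows essentially the same approach as the paper: both first establish that such an edge $e$ must be a bridge, and then derive a parity contradiction by comparing a matching containing $e$ (forcing one component to have an odd number of vertices) with a matching containing an adjacent edge $e'$ (forcing that same component to have an even number of vertices). The only cosmetic differences are that the paper constructs the separating closed curve directly from a path in the face $f$ rather than invoking a cycle through $e$, and that the paper finishes with a single adjacent edge $e'$ at either endpoint rather than showing both endpoints have degree~$1$ by symmetry.
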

\begin{proof}
    Suppose for the sake of contradiction that some edge $e$ of $G$ has a (finite or infinite) face $f$ on both sides. Choose a short line segment $l$ perpendicular to $e$, with endpoints $q^+$ and $q^-$ in $f$. As $f$ is path-connected, there is a path $p$ from $q^+$ to $q^-$ in $f$, which in particular does not intersect $G$. Now, the closed curve $p \cup l$ encloses some vertices of $G$. Let $G'$ be the induced subgraph of $G$ on these vertices. Note that $G'$ includes exactly one vertex of $e$, say $v$, and is a connected component of $G \setminus e$.

    \begin{center}
        \includegraphics[width=0.25\textwidth]{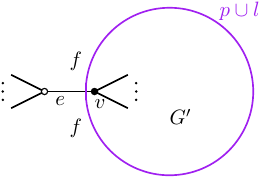}
    \end{center}

     By assumption, $e$ is in some matching $M$ of $G$. The matching $M$ restricts to a perfect matching of $G' \setminus v$, so $G' \setminus v$ has an even number of vertices and $G'$ has an odd number of vertices. Since $G$ is not a single edge and is connected, $e$ is adjacent to another edge $e'$. By assumption, there is a matching $N$ of $G$ which uses $e'$ and thus does not use $e$ (and this is true whether or not $e'$ is incident to $v$ or the other endpoint of $e$).  Such a matching $N$ would have to restrict to a perfect matching of $G'$. But $G'$ has an odd number of vertices, a contradiction.
\end{proof}

Now, we verify that the hypothesis of \cref{thm:dimer-poly-is-F-poly} is preserved under the three moves. 

\begin{proposition}\label{prop:moves-preserve-*}
    The set of graphs with property $(*)$ is closed under moves (E), (S), (B).
\end{proposition}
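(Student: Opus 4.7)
The proof proceeds by handling each of moves (E), (S), (B) in turn. For each, two things must be verified: that $G'$ remains finite, connected, bipartite, and plane with a proper black-white coloring, and that every edge of $G'$ is contained in some perfect matching of $G'$.

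The topological conditions on $G'$ are essentially immediate from the local nature of each move. Each move modifies $G$ only within a small disk of the planar embedding, so $G'$ inherits finiteness and a planar embedding. Connectedness is preserved because the modified region remains connected in $G'$, and the degree hypotheses on the left-hand sides of the moves prevent any vertex from becoming isolated or any piece from being cut off. Bipartiteness and proper coloring are verified by inspection of the local picture of each move: in each case there is a unique way to color new or merged vertices consistent with the global two-coloring.

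For the matching property, the plan is to build, for each move, a natural correspondence between perfect matchings of $G$ and perfect matchings of $G'$: matched edges outside the modified region transfer unchanged, while matched edges inside the modified region are prescribed by a local rule determined by the move. Given an edge $e$ of $G'$, if $e$ lies outside the modified region then it corresponds to an edge of $G$ which lies in some perfect matching $M$ of $G$ by hypothesis, and applying the correspondence yields a matching of $G'$ containing $e$. If $e$ lies inside (or adjacent to) the modified region, I would exhibit a matching of $G'$ containing $e$ directly, by choosing a matching of $G$ whose local configuration in the modified region maps to a configuration of $G'$ containing $e$. The degree hypotheses on the moves — degree $\geq 2$ for (E) and (B), degree $\geq 3$ for (S) — guarantee the existence of matchings of $G$ with the required local behavior.

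The main obstacle I anticipate is the square move (S), where one must verify that each of the four edges of the new square in $G'$ lies in some matching of $G'$. The key step is a case analysis of how a perfect matching of $G$ can restrict to the boundary of the original square — using zero, one, or two opposite edges — together with a demonstration that both of the ``two opposite edges'' configurations are realized by at least one matching of $G$. The degree-$\geq 3$ condition is essential here: it ensures that the external edges at each square vertex provide enough flexibility to produce matchings of $G$ restricting to each pair of opposite edges. One may also appeal to \cref{lem:edges-dif-face-each-side} to guarantee that the square really has four distinct boundary edges (as opposed to an edge appearing on both sides of the face), which is needed to make the case analysis clean.
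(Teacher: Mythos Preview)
Your outline for (E) and (B) is fine and matches the paper's treatment. The gap is in (S), specifically the sentence ``The degree-$\geq 3$ condition is essential here: it ensures that the external edges at each square vertex provide enough flexibility to produce matchings of $G$ restricting to each pair of opposite edges.'' This is the heart of the matter, and the degree condition alone does \emph{not} give it to you. Knowing that each external edge $e_i$ lies in \emph{some} matching $M_i$ does not produce a single matching using all four $e_i$ simultaneously (equivalently, using none of the edges of $f$); without such a matching you cannot realize both flip configurations of $f$ in $G'$, and hence cannot conclude that all edges of $f$ in $G'$ are matchable.

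The paper supplies the missing mechanism: a double-dimer argument. One observes that among the $M_i$ there must be two, say $M_i$ and $M_j$, which contain \emph{opposite} edges of $f$. The superposition $M_i\cup M_j$ decomposes into paths and even cycles; the two opposite edges of $f$ lie on distinct cycles $C_1\neq C_2$ (this requires a short argument). Swapping $M_i$ for $M_j$ along one of these cycles, i.e.\ taking $N:=(M_i\setminus C_2)\cup(M_j\cap C_2)$, produces a matching containing all four external edges $e_1,e_2,e_3,e_4$. This step is the genuine content of the (S) case and is not recoverable from the degree hypothesis by itself.
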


\begin{proof}
    It is straightforward to verify that if $G$ has property $(*)$, then applying (E) or (B) results in another graph with property $(*)$. Notice that on the left-hand side of (B) either both vertices have degree 2 or both have degree at least 3, or $G$ would not have property $(*)$.

    For move (S), let $G'$ be the graph on the left and $G$ be the graph on the right, and $f$ the face at which the square move is performed. We show that if $G$ has property $(*)$, then so does $G'$. The reverse argument is identical, as one can use (E) moves to assume all vertices of $f$ are trivalent in $G'$.  
    For every matching $M$ of $G$, there is a matching of $G'$ which agrees with $M$ on all edges of $G'$ that are not in $f$ (see \cref{fig:square-preserves-*}). 
    So, we only need to show that every edge of $f$ is in some matching of $G'$. By assumption, for $i=1, 2,3,4$, $G$ has a matching $M_i$ which uses $e_i$. If some $M_i$ uses all of $e_1, e_2, e_3, e_4$ as shown on the right of \cref{fig:square-preserves-*}, then $G'$ has a matching which admits an up-flip at $f$, shown on the left of \cref{fig:square-preserves-*}, and every edge of $f$ is either in this matching or its up-flip.

    \begin{figure}
        \centering
        \includegraphics[width=0.5\textwidth]{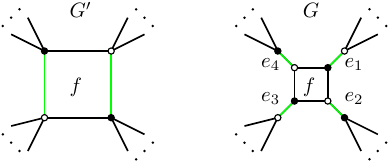}
        \caption{Graphs $G'$ and $G$ related by a square move at $f$, with the edges of $G$ labeled as in the proof of \cref{prop:moves-preserve-*}. In green, a matching $M$ of $G$ and a matching of $G'$ which agrees with $M$ away from $f$.}
        \label{fig:square-preserves-*}
    \end{figure}    

    We will, in fact, show that property $(*)$ guarantees there is a matching of $G'$ that contains all four edges $e_1, e_2, e_3,$ and $e_4$.  Assume otherwise.  By assumption, for $i=1, 2,3,4$, $G$ has a matching $M_i$ which uses $e_i$.  If no $M_i$ uses all of the $e_i$, there are two possibilities for $M_i$ around $f$, shown in \cref{fig:options-for-Mi}. 
    Note that $M_i$ must also use either $e_{i-1}$ or $e_{i+1}$, taking subscripts modulo $4$.  

    \begin{figure}
        \centering
        \includegraphics[width=0.8\textwidth]{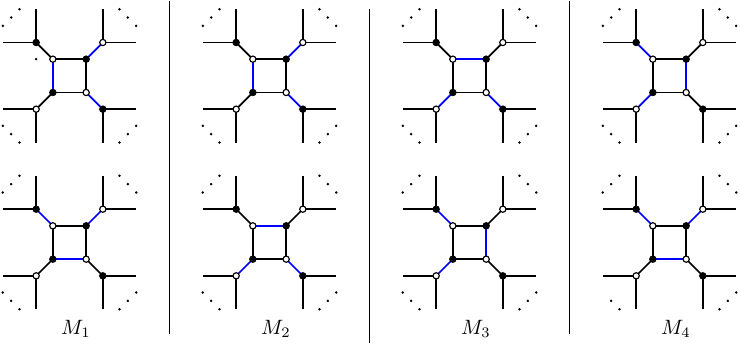}
        \caption{In the proof of \cref{prop:moves-preserve-*}, the options for the matching $M_i$ around the face $f$.}
        \label{fig:options-for-Mi}
    \end{figure}

    Regardless of how you choose the $M_i$ from the four columns of \cref{fig:options-for-Mi}, you will always obtain two matchings $M_i, M_j$ containing opposite edges of $f$. In particular, if $M_1$ uses $e_1$ and $e_2$ without loss of generality, then either (i) $M_2$ or $M_3$ uses $e_3$ and $e_4$ or (ii) $M_2$ uses $e_2$ and $e_3$ while $M_3$ uses $e_1$ and $e_4$.  The union $M_i \cup M_j$ is a \emph{double dimer} of $G$, and consists of a disjoint union of even cycles, with alternating edges in $M_i$ and $M_j$, and paths, where every edge is in both $M_i$ and $M_j$. In particular, each vertex of $G$ has degree exactly 2 in $M_i \cup M_j$. Around $f$, the union $M_i \cup M_j$ looks like the left of \cref{fig:dbl-dimer}, with one edge of $f$ in the cycle $C_1$ and the other edge in the cycle $C_2$.
    
    The cycles $C_1$ and $C_2$ are not the same. To see this, walk along $C_2$ starting at $v_1$, then going to the edge in $f$, then to $v_2$, and so on; call this the ``counterclockwise walk" on $C_2$. An edge $e$ of $C_2$ is in $M_i$ precisely if one encounters first the black vertex of $e$ and then the white vertex in the counterclockwise walk. If $C_2=C_1$, then the counterclockwise walk must leave $v_2$, encounter $v_4$, then the blue edge in $f$, then $v_3$, then finally return to $v_1$. Since $M_i \cup M_j$ is planar, this means some vertex of $G$ has degree 4 in $M_i \cup M_j$, which is impossible.
    
    As $C_1 \neq C_2$, the matching $N:= (M_i \setminus C_2) \cup (M_j \cap C_2)$ is as pictured in \cref{fig:dbl-dimer}, and contains $e_1, e_2, e_3, e_4$ as desired.
  \begin{figure}
      \centering
      \includegraphics[width=0.5\textwidth]{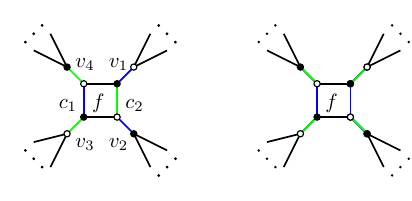}
      \caption{
      Left: the union $M_i \cup M_j$ in the proof of \cref{prop:moves-preserve-*}, with $M_i$ in green. The edge of $f$ in $M_j$ is in the cycle $C_1,$ while the edge of $f$ in $M_i$ is in the cycle $C_2$. Right: in green, the matching $N$ in the proof of \cref{prop:moves-preserve-*}. This is obtained from $M_i$ by removing all edges in $C_2 \cap M_i$ and adding all edges in $C_2 \cap M_j$.}
      \label{fig:dbl-dimer}
  \end{figure}    
\end{proof}

We next show that reduction sequences always exist, which is \cref{prop:reduc-seq-exists}.

\begin{proof}[Proof of \cref{prop:reduc-seq-exists}] We induct on the number of non-infinite faces of $G$. The base case is when $G$ is a single edge. The trivial sequence of no moves is a reduction sequence.

Now assume $G$ has at least one non-infinite face. It suffices to show that one can apply (E) and (S) moves to $G$ to obtain a graph $G'$ with a bigon face $f$. Indeed, if $G''$ is the graph obtained from $G'$ by applying (B) to $f$, then by induction, $G''$ has a reduction sequence $\mathbf{r}''$. A reduction sequence of $G$ is given by the sequence of moves to get from $G$ to $G'$, then the move (B) at $f$, then $\mathbf{r}''$. 

We now show that there is a sequence of (E) and (S) moves which turns $G$ into a graph with a bigon face. We use the theory of \emph{reduced plabic graphs}, as presented in \cite{IntroCA7}. To make $G$ into a plabic graph, choose any vertex $v$ on the infinite face of $G$, embed $G$ in a disk, and add an edge from $v$ to a vertex on the disk, which we label $1$. The resulting graph $H$ is a plabic graph with one boundary vertex, which is not reduced as it has at least one face which does not touch the boundary of the disk. 

By \cite[Proposition 7.4.9]{IntroCA7}, $H$ can be transformed by the local moves (M1), (M2), (M3) of \cite[Definition 7.1.3]{IntroCA7} into a bipartite plabic graph $H'$ with a bigon face\footnote{\cite[Proposition 7.4.9]{IntroCA7} guarantees that one can turn $H$ into a graph $H'$, not necessarily bipartite, with a ``hollow digon" face. If the vertices of the digon are the same color, one can contract one of the edges to get a loop, then add a degree 2 vertex of the opposite color to create a bigon face with vertices of opposite colors. Then one can apply (M2) moves to make $H'$ bipartite.}, without ever creating internal leaves in the (M3) moves. 
By adding in extra (M2) moves to make the intermediate graphs bipartite, we may also obtain $H'$ from $H$ by a sequence of (E) and (S) moves. If no trivalent vertex in an (S) move is adjacent to the vertex 1, we still get a sequence of valid (E), (S) moves when we delete the vertex 1. We apply this sequence of moves to $G$ gives a graph $G'$ with a bigon face. If a trivalent vertex in an (S) move is adjacent to vertex 1, then we apply the sequence of moves up until that square move to $G$. The resulting graph has a square face with one degree 2 vertex $w$. Applying (E) to contract the edges adjacent to $w$ gives a graph $G'$ with a bigon face.
\end{proof}

The next lemma establishes how the moves effect the dual quiver of $G$.

\begin{lemma}\label{lem:moves-effect-on-Q}
    Suppose $G$ has property $(*)$. If $G'$ is a graph obtained from $G$ by move 
  (E), move (S) at face $f$, or move (B) at face $f$ then 
       $$Q_{G'}=\begin{cases}
        Q_G\\
        \mu_f(Q_G)\\
        \mu_f(Q_G) \text{ with vertex } $f$ \text{ deleted}
    \end{cases}$$
    respectively.
\end{lemma}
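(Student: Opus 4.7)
The plan is to perform a local analysis for each of the three moves, observing that each changes $G$ only in a bounded region and so affects $Q_G$ only correspondingly.

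For (E), the proof should be immediate: the degree-two vertex $w$ being contracted has exactly two incident edges, and these both bound the same pair of faces (the two faces meeting at $w$). Hence the set of faces of $G$ and the set of edges separating distinct pairs of faces are preserved, giving $Q_{G'}=Q_G$ directly from \cref{def:dual-quiver}.

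For (B), let the bigon $f$ have edges $e_1, e_2$ between white vertex $v_1$ and black vertex $v_2$, and let $g_1, g_2$ be the faces across $e_1, e_2$ from $f$. First I would observe that $g_1 \neq g_2$: otherwise, by \cref{lem:edges-dif-face-each-side} applied to $G'$ (which has property $(*)$ by \cref{prop:moves-preserve-*}), $G'$ would have to be a single edge, a degenerate base case. The two edges of the bigon are oppositely oriented as seen from $f$, so using the ``white on the right'' convention the arrows across them in $Q_G$ form a length-two path through $f$, say $g_2 \to f \to g_1$. Mutation at $f$ then reverses these two arrows and adds an arrow $g_2 \to g_1$; deleting vertex $f$ discards the reversed arrows. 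On the other hand, $Q_{G'}$ agrees with $Q_G$ away from $f$, while the unique remaining edge of the former bigon separates $g_1$ and $g_2$ and a direct orientation check shows that it contributes the arrow $g_2 \to g_1$ (in either case of which of $e_1, e_2$ was deleted). Finally, one should verify that 2-cycle cancellations in $\mu_f(Q_G)$ match those in $Q_{G'}$, which follows since both quivers are uniquely determined from the underlying graphs.

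For (S), this is essentially the standard correspondence between bipartite square moves and quiver mutation; see e.g.\ \cite[Chapter 7]{IntroCA7} or \cite{Postnikov}. After using (E) moves to make all four corners of $f$ trivalent, the ``white on the right'' convention gives directly that the four arrows between $f$ and its four neighbors alternate in direction around $f$. Hence the color swap at the corners reverses all four of these arrows, implementing step (2) of mutation at $f$. A local check at each of the four corners of $f$ then accounts for the path-completion step of mutation, as the square move locally modifies the edges between the faces adjacent to $f$ in precisely the way needed to introduce one arrow per length-two path through $f$. I expect the main technical subtlety to be the bookkeeping for 2-cycle cancellations across the different local configurations at the corners, but again these are forced to match the 2-cycle cancellations in $Q_{G'}$ by the uniqueness of the dual quiver construction.
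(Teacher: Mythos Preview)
Your proposal is correct and follows essentially the same approach as the paper: a local analysis of each move, using \cref{lem:edges-dif-face-each-side} (together with \cref{prop:moves-preserve-*}) to rule out face coincidences that would spoil the comparison. The one point the paper makes more explicit is that for (S), consecutive faces around the square are distinct by \cref{lem:edges-dif-face-each-side}, so the four arrows incident to $f$ are genuinely present in $Q_G$ and not lost to 2-cycle cancellation before mutation; you should state this rather than leaving it implicit in the appeal to the literature.
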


\begin{remark}  For the cases of (E) and (S) moves, this has appeared previously in \cite[Theorem 2]{Scott} and \cite[Prop. 7.1.5]{IntroCA7}.  We note that the additional hypothesis (7.1.1) used in the latter is implied by the assumption that $G$ has property $(*)$ because of \cref{lem:edges-dif-face-each-side}.
\end{remark}

\begin{proof}[Proof of \cref{lem:moves-effect-on-Q}]
    \noindent \textbf{(E):} Performing move (E) either removes or introduces an oriented 2-cycle. Since oriented 2-cycles are deleted from $Q_G$, performing (E) does not change the dual quiver.

    \noindent \textbf{(S):} We may assume using (E) moves that the vertices of face $f$ are trivalent, as (E) does not change the dual quiver. \cref{lem:edges-dif-face-each-side} implies that among the four faces surrounding $f$, the consecutive ones are distinct; otherwise there would be an edge of $G$ with the same face on both sides. Notice that none of the arrows dual to edges of $f$ are in oriented 2-cycles, and so these four arrows are exactly the arrows incident to $f$ in $Q_G$. It is now straightforward to check that $Q_{G'}$ is obtained from $Q_G$ by adding an arrow $f' \to f''$ for each path $f' \to f \to f''$ in $Q_G$, reversing all arrows incident to $f$, and deleting oriented 2-cycles. This shows $Q_{G'}=\mu_f(Q_G)$.

    \noindent \textbf{(B):} Property $(*)$ implies that the vertices of the bigon face $f$ are either both degree 2 or both degree at least 3. (If exactly one vertex were degree 2, an edge adjacent to the other vertex would never be used in a matching.) In the former case, $G$ is just the bigon $f$, $Q_G$ is a quiver with one vertex and no arrows, and $Q_{G'}$ is the empty quiver, and the claim holds. 
    
    So we may suppose both vertices of $f$ are degree at least 3. Let $f', f''$ be the faces adjacent to $f$. These faces are distinct, as otherwise, one could use (E) to create an edge with the same face on both sides, contradicting \cref{lem:edges-dif-face-each-side}. The arrows adjacent to $f$ in $Q_G$ are, say, $f' \to f \to f''$. The quiver $Q_{G'}$ is obtained from $Q_G$ by replacing the path $f' \to f \to f''$ with the edge $f' \to f''$, deleting $f$ and then possibly deleting a 2-cycle. This is exactly the same as $\mu_f(Q_G)$ with the vertex $f$ deleted.
    
\end{proof}

We now turn to how the $F$-polynomials in \cref{thm:dimer-poly-is-F-poly} and the dimer face polynomials $D_G$ change under each of the moves.

We begin with (E) moves. The $F$-polynomials in \cref{thm:dimer-poly-is-F-poly} do not depend on the (E) moves in a reduction sequence for $G$. The next lemma, which is well known (see e.g. \cite[Figure 23]{GK}), shows that the dimer face polynomials do not either. 

\begin{lemma}\label{lem:e-moves-dont-change-dimers}
    Let $G$ be a graph with property $(*)$ and suppose that $G'$ is related to $G$ by move (E). Then, identifying faces of $G$ and $G'$, $D_G=D_{G'}$.  
\end{lemma}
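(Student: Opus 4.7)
The plan is to exhibit an explicit bijection $\phi\colon\mathcal{D}_G\to\mathcal{D}_{G'}$ that preserves the minimum element and commutes with down-flips, after identifying $\faces(G)$ with $\faces(G')$. Granting this, \cref{prop:satchains} and \cref{def:dimer-poly} immediately give $\mrk(M)=\mrk(\phi(M))$ for every $M\in\mathcal{D}_G$, so $D_G=D_{G'}$.

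First, I would unpack what the move (E) does. It contracts a path $v_1-w-v_2$ in $G$, where $w$ is a bivalent vertex (say white) and $v_1,v_2$ are distinct same-colored (black) neighbors of $w$, into a single black vertex $v$ of $G'$ whose incident edges are exactly the edges of $G$ incident to $v_1$ or $v_2$ other than the two edges at $w$. Topologically, the two edges $(v_1,w),(w,v_2)$ and the vertex $w$ are deleted, and $v_1,v_2$ are merged, so the faces of $G$ and $G'$ are in canonical bijection. I would record once and for all this identification $\faces(G)\leftrightarrow\faces(G')$, noting that the two faces incident to $w$ in $G$ become the two faces incident to $v$ in $G'$ along the corresponding portion of their boundaries.

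Next I would define $\phi$. Since $w$ has degree two, any $M\in\mathcal{D}_G$ contains exactly one of $(v_1,w),(w,v_2)$; removing that edge and viewing the remaining edges in $G'$ produces a perfect matching $\phi(M)$ of $G'$ (the removed edge vacates $w$ along with one of $v_1$ or $v_2$, and the other of $v_1,v_2$ is matched by $M$ to a third vertex which becomes a neighbor of $v$ in $G'$). Conversely, given $N\in\mathcal{D}_{G'}$, the vertex $v$ is matched along a single edge, and this edge came from either $v_1$ or $v_2$ in $G$; inserting the edge $(w,v_2)$ or $(v_1,w)$ respectively gives the unique preimage, so $\phi$ is a bijection. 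I would then observe that $\phi(\hat 0_G)=\hat 0_{G'}$ because the bijection is clearly order-preserving from below (the minimum matching in either lattice is the unique one reachable only by up-flips).

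The only real work is showing $\phi$ intertwines down-flips. Let $f$ be a non-infinite face and suppose $M'\lessdot M$ in $\mathcal{D}_G$ via a down-flip at $f$. If $f$ is not one of the two faces adjacent to the path $v_1-w-v_2$, the down-flip occurs entirely away from $w$ and $\phi(M')\lessdot\phi(M)$ at the corresponding face of $G'$ is immediate. If $f$ is one of the two adjacent faces, the two edges $(v_1,w),(w,v_2)$ appear consecutively on $\partial f$ and, because $v_1,v_2$ have the same color, one is black-white and the other is white-black in $f$. A down-flip at $f$ exchanges precisely one of these two edges for the other, contributing no net change at $w$; the induced change on $\phi(M)$ is exactly a down-flip at the corresponding face of $G'$, whose boundary uses the single edge of $v$ replacing the two edges at $w$. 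Doing this case analysis (for both of the two adjacent faces and in both directions of the bijection) is the main obstacle, but it is a small and local check. Having verified this, the lattice isomorphism $\phi$ preserves face labels on every cover relation, so $\mrk(M)=\mrk(\phi(M))$ and summing yields $D_G=D_{G'}$.
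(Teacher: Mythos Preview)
Your proposal is correct and follows essentially the same approach as the paper: exhibit the natural local bijection on matchings induced by (E) and verify that it is a lattice isomorphism preserving the face label on each cover relation. The paper's proof is terser, simply asserting that one can up-flip a face $f$ in $M$ if and only if one can up-flip $f$ in the image matching; your case analysis for faces adjacent to $w$ spells out exactly why that assertion holds (the two edges $(v_1,w),(w,v_2)$ contribute one black-white and one white-black edge to such a face, and the flip swaps them).
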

\begin{proof}
    There is a natural bijection $\epsilon$ between the dimer sets $\mathcal{D}_G$ and $\mathcal{D}_{G'}$, given by
\begin{center}
    \includegraphics[width=0.7\textwidth]{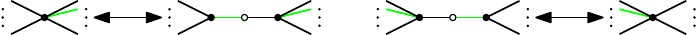}
\end{center}
where the dimers agree on all edges that are not pictured (and the color of the vertices may be opposite).
This map and its inverse are order-preserving: one can up-flip a face $f$ in a dimer $M$ of $G$ if and only if one can up-flip the face $f$ in the dimer $\epsilon(M)$ of $G'$. 
\end{proof}

For square and bigon moves, we will need the following result, relating $F$-polynomials computed with respect to adjacent initial seeds. This result is a combination of \cite[Proposition 2.4]{DWZ10} and \cite[(9.1)]{DWZ10}; in particular, \cite[Proposition 2.4]{DWZ10} involves two additional parameters $h_b, h_b'$, which are shown in \cite[(9.1)]{DWZ10} to equal $\min(0, g_b)$ and $\min(0, g_b')$ respectively. 
We note that \cite[Proposition 2.4]{DWZ10} is essentially \cite[(6.15),(6.28)]{FZ07}, though in somewhat different notation.
 
\begin{proposition}\label{prop:adjacent-F}
    Let $\Sigma=(\mathbf{x}, Q)$ be a seed and $\Sigma':= \mu_b(\Sigma)$. Choose $z$ a mutable cluster variable in $\mathcal{A}(\Sigma)$, with $g$-vector $\mathbf{g}^{\Sigma}=(g_1, \dots, g_n)$ with respect to $\Sigma$ and $g$-vector $\mathbf{g}^{\Sigma'}=(g'_1, \dots, g'_n)$ with respect to $\Sigma'$. Let $q_{ab}= \#\{a \to b \text{ in }Q\} - \#\{b \to a \text{ in }Q\}$. We set 
   
    \begin{equation}\label{eq:y-mut}
    y_a':= \begin{cases}
        y_b^{-1} & \text{if } a=b\\
        y_a(1+y_b)^{q_{ab}} & \text{if } q_{ab}\geq 0\\
        y_a (1+y_b^{-1})^{q_{ab}} & \text{if } q_{ab}\leq 0
    \end{cases}.
    \end{equation}
    Then 
    \begin{equation}\label{eq:adjacent-F}
   (1+y_b)^{\min(0, g_b)}F^{\Sigma}_z(y_1, \dots, y_n) = (1+y_b')^{\min(0, g'_b)} F^{\Sigma'}_z(y_1', \dots, y_n')     
    \end{equation}
  and 
    \begin{equation}\label{eq:g-mut}
        g_a'=\begin{cases}
            -g_b & \text{if } a=b\\
            g_a+q_{ab} g_b - q_{ab}\min(0, g_b) & \text{if } q_{ab} \geq 0\\
            g_a - q_{ab}\min(0, g_b) & \text{if } q_{ab} \leq 0\\
        \end{cases}.
    \end{equation}
\end{proposition}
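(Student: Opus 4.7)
The plan is to work inside the principal coefficient cluster algebra $\mathcal{A}(\prinSig)$, where both $F_z^{\Sigma}$ and $\mathbf{g}_z^{\Sigma}$ are defined, and derive both identities from the separation formula of Theorem~\ref{thm:g-F-cluster-expansion}. Writing the principal-coefficient counterpart of the cluster variable $z$ in two ways, once with respect to $\Sigma$ and once with respect to $\Sigma'=\mu_b(\Sigma)$, we obtain
\[
\mathbf{x}^{\mathbf{g}^{\Sigma}}\, F_z^{\Sigma}(\hat{y}_1,\ldots,\hat{y}_r)\;=\;z\;=\;(\mathbf{x}')^{\mathbf{g}^{\Sigma'}}\, F_z^{\Sigma'}(\hat{y}_1',\ldots,\hat{y}_r'),
\]
where $\mathbf{x}'$ and $\hat{y}'_j$ are the cluster and exchange ratios at $\Sigma'$. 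I would rewrite the right-hand side in terms of $\mathbf{x}$ and the $\hat{y}_j$'s using the exchange relation $x_b x_b' = \bigl(\prod_{b\to i} x_i\bigr)(1+\hat{y}_b)$, and then compare the two sides term by term.

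For the $g$-vector formula \eqref{eq:g-mut}, substituting $x_b'=(1+\hat{y}_b)(\prod_{b\to i} x_i)/x_b$ into $(\mathbf{x}')^{\mathbf{g}^{\Sigma'}}$ produces an $\mathbf{x}$-monomial of predictable exponent, dressed by a power $(1+\hat{y}_b)^{g_b^{\Sigma'}}$. Since $F_z^{\Sigma}$ has constant term $1$ and is coprime to $1+\hat{y}_b$, specializing all $y_j\to 0$ kills the higher $F$-polynomial contributions and forces the leading $\mathbf{x}$-exponents on both sides to agree. This comparison pins down $\mathbf{g}^{\Sigma'}$ in terms of $\mathbf{g}^{\Sigma}$ and the mutation data $q_{ab}$; the correction $-q_{ab}\min(0,g_b)$ records the fact that the factor $(1+\hat{y}_b)^{g_b}$ is absorbed into either the $\mathbf{x}$-monomial or the $F$-polynomial factor depending on the sign of $g_b$.

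For the $F$-polynomial identity \eqref{eq:adjacent-F}, I would specialize $x_i\mapsto 1$ for all $i$. Under this specialization $\hat{y}_j\mapsto y_j$, and the transformation of exchange ratios forced by the mutation $\Sigma\to\Sigma'$ becomes exactly the map $y_a\mapsto y_a'$ of \eqref{eq:y-mut}. The two sides then reduce to $F_z^{\Sigma}(y)$ and $F_z^{\Sigma'}(y')$, each multiplied by a power of $(1+y_b)$ that was produced in the preceding step when rewriting $(\mathbf{x}')^{\mathbf{g}^{\Sigma'}}$ in terms of $\mathbf{x}^{\mathbf{g}^{\Sigma}}$. The prefactors $(1+y_b)^{\min(0,g_b)}$ and $(1+y_b')^{\min(0,g_b')}$ in \eqref{eq:adjacent-F} are exactly what is needed to balance these contributions and leave polynomial identities on both sides.

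The main obstacle is the bookkeeping of sign conventions: the asymmetric case split on $q_{ab}\geq 0$ versus $q_{ab}\leq 0$ in \eqref{eq:y-mut}--\eqref{eq:g-mut}, and the $\min(0,\cdot)$ corrections in \eqref{eq:adjacent-F}, all arise because $F$-polynomials are normalized to have constant term $1$, hence to be coprime to $1+y_b$. I would handle this uniformly using the tropical identity $[c]_+ - [-c]_+ = c$ with $[c]_+ = \max(0,c)$, which both tracks on which side of the equality the factor $(1+y_b)^{|g_b|}$ sits and expresses the two $q_{ab}$ cases of \eqref{eq:g-mut} in a single formula. A cleaner alternative, and the one essentially taken in \cite{DWZ10}, is to verify \eqref{eq:g-mut} first by induction on mutation length using the recursive definition of $g$-vectors, and then deduce \eqref{eq:adjacent-F} by combining it with the mutation rule \cite[(5.3)]{FZ07} for $F$-polynomials.
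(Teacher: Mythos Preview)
The paper does not supply its own proof here; the proposition is quoted from the literature, specifically as the combination of \cite[Proposition~2.4]{DWZ10} and \cite[(9.1)]{DWZ10} (equivalently \cite[(6.15),(6.28)]{FZ07}), as noted in the sentence preceding the statement. Your closing ``cleaner alternative'' is precisely that route, so in that sense you have correctly identified what the paper is invoking.

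Your primary sketch via the separation formula is a reasonable heuristic, but two steps do not go through as written. First, the displayed identity $\mathbf{x}^{\mathbf{g}^{\Sigma}} F_z^{\Sigma}(\hat y)=z=(\mathbf{x}')^{\mathbf{g}^{\Sigma'}} F_z^{\Sigma'}(\hat y')$ is not correct inside $\mathcal{A}(\prinSig)$: when you apply \cite[Corollary~6.3]{FZ07} with $\mu_b(\prinSig)$ as the base seed, the coefficients at that seed are no longer principal, and the general separation formula carries an additional tropical denominator $F_z^{\Sigma'}|_{\mathrm{Trop}}$ evaluated at the mutated tropical $y$'s. That denominator, not merely the rewriting of $(\mathbf{x}')^{\mathbf{g}'}$, is where the exponents $\min(0,g_b)$ and $\min(0,g_b')$ in \eqref{eq:adjacent-F} originate; identifying it is exactly the content of \cite[(9.1)]{DWZ10} (the equality $h_b=\min(0,g_b)$), which is a separate argument you have not supplied. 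Second, your extraction of \eqref{eq:g-mut} by sending $y_j\to 0$ fails on the $\Sigma'$ side: since $\hat y_b'=\hat y_b^{-1}$, this limit sends $\hat y_b'\to\infty$, so $F_z^{\Sigma'}(\hat y')$ need not tend to $1$ and the $\mathbf{x}$-exponents cannot simply be read off. (The asserted coprimality of $F_z^{\Sigma}$ with $1+y_b$ is also not a consequence of having constant term $1$.) In short, what your sketch leaves undone is exactly the work carried out in \cite{FZ07,DWZ10}; the alternative you name at the end is the argument the paper is citing.
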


Since mutation is an involution, we may also reverse the roles of $\Sigma, \Sigma'$ in \cref{prop:adjacent-F}, and the same result will hold.

Now we turn to square moves. Recall from \cref{def:h-vec} the definition of the vector $\hvec^G_{\hat{0}}$. The next theorem shows that the conclusion of \cref{thm:dimer-poly-is-F-poly} is preserved under applying square moves to $G$.

\begin{theorem}\label{thm:sq-move-step}
    Suppose $G$ and $G'$ are graphs with property $(*)$ related by a square move at face $f$. Let $\mathbf{r}$ be a reduction sequence for $G$, and let $z$ be as in \cref{thm:dimer-poly-is-F-poly}. The sequence $\mathbf{r}'=(f, \mathbf{r})$ is a reduction sequence for $G'$; define $z'$ analogously to $z$, using $G'$ instead. Let $\mathbf{g}:=\mathbf{g}^{Q_G}_z$ and $\mathbf{g}':=\mathbf{g}^{Q_{G'}}_{z'}$ denote the $g$-vector of $z$ and $z'$, respectively.  Then
    
    \vspace{1em}
    
  \begin{itemize}
      \item The resulting cluster variables $z$ and $z'$ are equal.
      \item We have $\mathbf{g}= \hvec^G_{\hat{0}}$ if and only if $\mathbf{g}'= \hvec^{G'}_{\hat{0}}$.
      \item Assuming $\mathbf{g}= \hvec^G_{\hat{0}}$ and $\mathbf{g}'= \hvec^{G'}_{\hat{0}}$, then $D_G= F^{Q_G}_{z}$ if and only if $D_{G'}= F^{Q_{G'}}_{z'}$.
  \end{itemize}
    
\end{theorem}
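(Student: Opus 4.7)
By \cref{lem:moves-effect-on-Q}, $Q_{G'}=\mu_f(Q_G)$. Choose the initial seed $\Sigma'=(\mathbf{x}',Q_{G'})$ so that $\mu_f(\Sigma')=\Sigma$. Then
\[
\mu_{\mathbf{r}'}(\Sigma')=\mu_{\mathbf{r}}(\mu_f(\Sigma'))=\mu_{\mathbf{r}}(\Sigma),
\]
and so the cluster variables at position $f_q$ in these two final seeds coincide as elements of $\mathcal{A}(\Sigma)=\mathcal{A}(\Sigma')$. This yields the first bullet point, $z=z'$.

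For the remaining two bullets, the key tool is \cref{prop:adjacent-F} applied with $b=f$ to the pair $(\Sigma,\Sigma'=\mu_f(\Sigma))$ and the (common) cluster variable $z=z'$. It produces an explicit formula \eqref{eq:g-mut} relating $\mathbf{g}^{Q_G}_z$ to $\mathbf{g}^{Q_{G'}}_{z'}$, and an explicit formula \eqref{eq:adjacent-F} (together with \eqref{eq:y-mut}) relating $F^{Q_G}_z$ to $F^{Q_{G'}}_{z'}$. Both ``if and only if'' assertions reduce to verifying that the \emph{same} two transformation rules relate $\mathbf{h}^G_{\hat 0}$ to $\mathbf{h}^{G'}_{\hat 0}$, and $D_G$ to $D_{G'}$.

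For the $g$-vector rule applied to $\mathbf{h}_{\hat 0}$: since the square move swaps the colors of the four vertices of $f$, black-white and white-black edges of $f$ are interchanged, and one checks directly that $|f\cap\hat 0_G|$ and $|f\cap\hat 0_{G'}|$ are complementary values in $\{0,2\}$, so $h^G_f=-h^{G'}_f\in\{\pm 1\}$. For a neighboring face $f'$, the only edges whose membership in $\hat 0$ can change between $G$ and $G'$ are the (at most two) edges shared with $f$, and the resulting change in $|f'\cap\hat 0|$ is precisely what \eqref{eq:g-mut} prescribes in terms of the arrows between $f$ and $f'$ in $Q_G$. For the $F$-polynomial rule applied to $D$: we must establish
\[
(1+y_f)^{\min(0,h_f^G)}\,D_G(\mathbf{y})=(1+y_f')^{\min(0,h_f^{G'})}\,D_{G'}(\mathbf{y}'),
\]
with $\mathbf{y}'$ given by \eqref{eq:y-mut}. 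This is a classical ``urban renewal'' identity for dimers at a square face (see \cref{rem:dimer-polyomial-edges} for the translation from edge to face variables), proved by partitioning $\mathcal D_G$ and $\mathcal D_{G'}$ according to their restrictions to $f$ and constructing the corresponding bijection.

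\textbf{Main obstacle.} The bulk of the work is verifying this urban-renewal identity in face variables, and then matching the $\min(0,h_f)$ exponents on both sides. When $h_f^G=-1$, dimers of $G$ come in matched pairs via the up/down flip at $f$, yielding a $(1+y_f)$ factor in $D_G$ that cancels against $(1+y_f)^{\min(0,h_f^G)}=(1+y_f)^{-1}$; when $h_f^G=+1$ the factor instead sits on the $G'$ side. Checking that these cancellations precisely reproduce the face-variable substitution \eqref{eq:y-mut} —\,and that the induced identity is exactly \eqref{eq:adjacent-F}\,— is the technically delicate part of the argument; the ``if and only if'' structure of the conclusion then follows because mutation at $f$ is an involution.
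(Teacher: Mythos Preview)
Your high-level strategy is exactly the paper's: identify $z=z'$ via $Q_{G'}=\mu_f(Q_G)$, then use \cref{prop:adjacent-F} to reduce both remaining bullets to checking that $\mathbf{h}^G_{\hat 0}$, $\mathbf{h}^{G'}_{\hat 0}$ are related by \eqref{eq:g-mut} and that $D_G$, $D_{G'}$ are related by \eqref{eq:adjacent-F}. The gap is in the execution of these checks.

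Your claim that ``$|f\cap\hat 0_G|$ and $|f\cap\hat 0_{G'}|$ are complementary values in $\{0,2\}$, so $h^G_f=-h^{G'}_f\in\{\pm 1\}$'' is false. The bottom matching can perfectly well use exactly one edge of the square face $f$, giving $h^G_f=2-1-1=0$ (and likewise $h^{G'}_f=0$). The paper's cases (a) and (b) are precisely this situation, and they occupy most of the case analysis. Once $h_f$ can be $0$, your description of the $g$-vector check for neighboring faces no longer matches \eqref{eq:g-mut}, and your $F$-polynomial argument collapses as well: when $h_f^G=-1$ it is \emph{not} true that all dimers of $G$ pair off by a flip at $f$ (a matching may use zero or one edges of $f$, and such matchings admit no flip there), so $D_G$ is not divisible by $(1+y_f)$ and there is no global factor to cancel. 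What actually happens is a local $4\times 4$ correspondence between configurations of $\hat 0$ and of a generic matching $M$ near $f$, and one must track how $\height(M)$ changes in each case after the substitution \eqref{eq:y-mut} and multiplication by $(1+y_f)^{\min(0,h_f)}$. Your appeal to ``classical urban renewal'' hides exactly this; the face-variable identity you need is not quite the standard edge-weight one, and in particular depends on the value of $h_f\in\{-1,0,1\}$ through the multiplicative correction factor.
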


\begin{proof}
We first describe the relationship between the two $F$-polynomials and the two $g$-vectors. By \cref{lem:moves-effect-on-Q}, $Q_{G'}= \mu_f(Q_G)$. The $F$-polynomial depends only on the initial quiver, not the initial cluster variables,
so we may choose to compute $F^{Q_{G}}_{z}$ by framing the seed $\Sigma= (\mathbf{x}, Q_G)$ and to compute $F^{Q_{G'}}_{z'}$ by framing the seed $\mu_f(\Sigma)$. With this choice, the mutation sequences to obtain $z$ and $z'$ differ by one entry and the two initial seeds are related by that same mutation, hence we obtain the equality $z =z'$.  This shows our first claim and ensures that  
we are in exactly the situation of \cref{prop:adjacent-F}. So the $g$-vectors are related by \eqref{eq:g-mut} and we obtain $F^{Q_{G}}_{z}=F^{\Sigma}_z$ from $F^{Q_{G'}}_{z'}=F^{\mu_f(\Sigma)}_z$ by the substitution $y_i \mapsto y_i'$, where $y_i'$ is defined in \eqref{eq:y-mut}, and then multiplying by 
\[\frac{(1+ y_f^{-1})^{\min(0, -g_f)}}{(1+y_f)^{\min(0, g_f)}}.\]

For the second item of the theorem, recall that $\mathbf{g}'$ is obtained from $\mathbf{g}$ by \eqref{eq:g-mut} with $b=f$, and swapping the roles of $\Sigma$ and $\Sigma'$ (so swapping $\mathbf{g}$ with $\mathbf{g'}$ and $Q$ with $\mu_b(Q)$) in \eqref{eq:g-mut}, the equality still holds. Suppose $\hvec^{G}_{\hat{0}}= \mathbf{g}$. If $\hvec^{G'}_{\hat{0}}$ is obtained from $\hvec^{G}_{\hat{0}}$ by \eqref{eq:g-mut}, or if $\hvec^{G}_{\hat{0}}$ is obtained from $\hvec^{G'}_{\hat{0}}$ by \eqref{eq:g-mut} with $Q_{G'}$ in place of $Q_G$, then $\hvec^{G'}_{\hat{0}}=\mathbf{g}'$. One may also reverse this argument by swapping $G$ and $G'$ everywhere. 
So it suffices to show that either $\hvec^{G'}_{\hat{0}}$ is obtained from $\hvec^{G}_{\hat{0}}$ by \eqref{eq:g-mut} with $b=f$, or that $\hvec^{G}_{\hat{0}}$ is obtained from $\hvec^{G'}_{\hat{0}}$ by \eqref{eq:g-mut} with $b=f$ and $Q_{G'}$ rather than $Q_G$. 

There are 4 cases,

depending on the behavior of the bottom matching of $G$ around $f$. See \cref{fig:sq-move-bottom-matching} for an illustration.

Cases (a) and (b) differ since $\hat{0}$ includes a white-black edge of the face $f$ in case (a) but this is a black-white edge in case (b).  Further, $90^\circ$ rotations of cases (c) and (d) are impossible because such a rotation of case (c) would yield a matching of $G$ that contains the black-white edges of $f$ and thus admits a down-flip at $f$, so cannot be the bottom matching $\hat{0}$.  Analogously, a $90^\circ$ rotation of case (d) would yield a matching of $G'$ containing the black-white edges of $f$ which would not be the bottom matching.

Note that we may have $i=k$ or $j=\ell$ (but not both), and \cref{lem:edges-dif-face-each-side} implies no other faces can be the same. It is straightforward to verify that in each of the 4 cases, if no faces of $G$ can be down-flipped in the matching using the edges in the bottom row of \cref{fig:sq-move-bottom-matching}, then no faces of $G'$ can be down-flipped in the matching using the edges in the top row. That is, in each case, the top row of \cref{fig:sq-move-bottom-matching} does in fact depict the bottom matching of $G'$.

\begin{figure}
    \centering
    \includegraphics[width=0.9\textwidth]{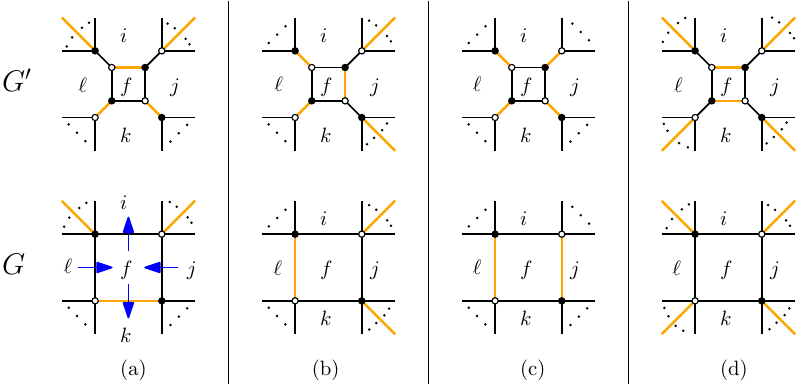}
    \caption{In orange, possible bottom matchings of $G'$ (top) and $G$ (bottom) in the proof of \cref{thm:sq-move-step}. We omit the 180 degree rotations of (a) and (b), since the arguments are identical for those cases. Matchings in the same column agree on all edges that are not shown. In Case (a), bottom, the arrows of $Q_G$ involving $f$ are in blue.} 
    \label{fig:sq-move-bottom-matching}
\end{figure}

In the arguments below, we use the notation $\hvec^G_{\hat{0}}=(h_a)_{a \in \faces(G)}$ and $\hvec^{G'}_{\hat{0}}=(h'_a)_{a \in \faces(G')}$.   
We follow the labeling of faces from \cref{fig:sq-move-bottom-matching}.  
In cases (a), (b),  
\[h_f=2-1-1=0 \qquad \text{and}\qquad h'_f=2-1-1=0.\]
We also see that we add two edges to each of the faces $i,j,k,\ell$ when going from $G$ to $G'$, and increase the number of edges per face in the bottom matching by one. No other faces are affected. Thus we have 
$\hvec^G_{\hat{0}}=\hvec^{G'}_{\hat{0}}$. By inspection, in this case $\hvec^{G'}_{\hat{0}}$ is obtained from $\hvec^G_{\hat{0}}$ by \eqref{eq:g-mut} with $b=f$. 

In case (c), we have $h_f=-1$ and $h'_f=1$. One can compute that
\begin{equation*}
    h'_i= \begin{cases}
        h_i -1 & \text{if }i \neq k\\
        h_i -2 & \text{if }i=k
    \end{cases}, \qquad \text{} \qquad
     h'_j=h_j,
     \qquad \text{} \qquad
    h'_k= \begin{cases}
        h_k -1 & \text{if }i \neq k\\
        h_k -2 & \text{if }i=k
    \end{cases} \qquad \text{and} \qquad
     h'_\ell=h_\ell.
\end{equation*}
For example, the first equality is because we add two (or four) edges to $i$, and increase the number of edges in the bottom matching by two (or four).
From the top left of \cref{fig:sq-move-bottom-matching}, we see $q_{i f}$ is either $-1$ or $-2$, depending on if $i =k$. So we have
\[h_i'= h_i - q_{if} \min(0, h_f).\]

We also have $h_j'=h_j = h_j + q_{jf} h_f - q_{jf} \min(0, h_f)$, since $h_f <0$. Analogous formulas hold for $h_k', h_\ell'$ and all other entries of $\hvec^G_{\hat{0}}$ and $\hvec^{G'}_{\hat{0}}$ agree. So $\hvec^{G'}_{\hat{0}}$ is obtained from $\hvec^G_{\hat{0}}$ by \eqref{eq:g-mut} with $b=f$ for case (c) as well.

Case (d) is identical to case (c) if you swap $G$ and $G'$ and rotate face labels by $90^\circ$, i.e. rename $i$ with $j$, $j$ with $k$, etc. So the argument for case (c) shows $\hvec^{G}_{\hat{0}}$ is obtained from $\hvec^{G'}_{\hat{0}}$ by \eqref{eq:g-mut} using $Q_{G'}$ instead of $Q_G$ and with $b=f$. 

We now turn to the third item in the theorem, on $F$-polynomials. It suffices to verify that $D_G$ can be obtained from $D_{G'}$ by the substitutions $y_f \mapsto y_f^{-1}$ and
\[ y_i\mapsto
    y_i\left(\frac{y_f}{1+y_f} \right)^{\#\{f \to i\}}, \quad y_j \mapsto y_j (1+y_f)^{\#\{j \to f\}}, \quad y_k\mapsto
    y_k\left(\frac{y_f}{1+y_f} \right)^{\#\{f \to k\}}, \quad y_\ell \mapsto y_\ell (1+y_f)^{\#\{\ell \to f\}},\]
(where the exponents are based on the arrows of $Q_G$, and thus are all equal to $1$ unless two faces coincide), followed by multiplication by 
\begin{equation}
\label{eq:multfactor} 
\frac{(1+ y_f^{-1})^{\min(0, -h_f)}}{(1+y_f)^{\min(0, h_f)}} = 
\begin{cases}
1 & \text{ in cases (a),(b) where }h_f=0\\
(1+y_f) & \text{ in case (c) where }h_f=-1\\
(1+y_f^{-1})^{-1} = y_f (1+y_f)^{-1} & \text{ in case (d) where }h_f=1.
\end{cases}
\end{equation}

We will use this below after we first analyze the effect of the substitutions $y_f, y_i, y_j, y_k,$ and $y_\ell$. 

We first consider terms of $D_{G'}$.  
Note that for each matching $M$ of $G'$, there are either one or two matchings of $G$ that agree with $M$ away from $f$, which we will call $N$ and $N'$. Similarly, for each matching $N$ of $G$, there are either one or two matchings, called $M$ and $M'$, which agree with $N$ away from $f$. See \cref{fig:sq-move-match-corresp} for an illustration.
Our method of proof is to show that for each of the possible bottom matchings in \cref{fig:sq-move-bottom-matching}, when we apply the specified substitutions and multiplication to $\mathbf{y}^{\height(M)}$ (or $\mathbf{y}^{\height(M)}+ \mathbf{y}^{\height(M')}$, as appropriate), we obtain exactly $\mathbf{y}^{\height(N)}$ (or $\mathbf{y}^{\height(N)} + \mathbf{y}^{\height(N')}$, as appropriate). We determine the exponents of $y_f, y_i, y_j, y_k, y_\ell$ in $\height(M)$ (resp. $\height(M')$, $\height(N)$, and $\height(N')$) using \cref{prop:ht_altitude}. In this calculation, each term of $D_{G'}$ appears precisely once, and each term of $D_{G}$ appears precisely once, so this verifies $D_G$ is obtained from $D_{G'}$ by the substitutions and multiplication above. 

\begin{figure}
    \centering
    \includegraphics[width=\textwidth]{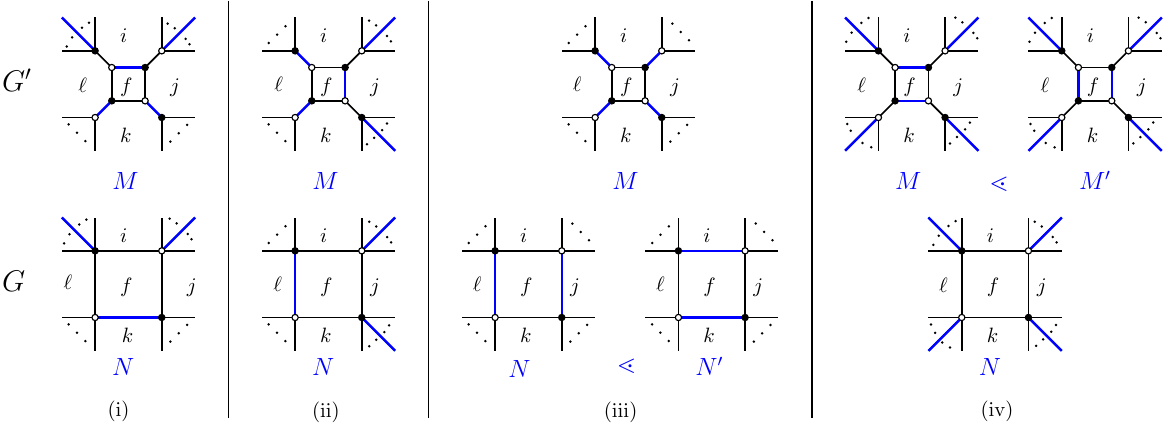}
    \caption{Correspondence between matchings of $G'$ (top) and matchings of $G$ (bottom) in the proof of \cref{thm:sq-move-step}. Matchings in the same column agree on the edges not pictured. For brevity, we do not picture the $180^\circ$ rotations of the first two columns.}
    \label{fig:sq-move-match-corresp}
\end{figure}

There are 16 cases to consider since, up to rotation, the bottom matching $\hat{0}$ of $G'$ can look locally around face $f$ like one of the four matchings illustrated in \cref{fig:sq-move-bottom-matching} (top row), and for each possible bottom matching, there are also four possible types of local configurations, up to rotation, for an arbitrary matching $M$, as illustrated in \cref{fig:sq-move-match-corresp} (top row).  Applying the square move to $\hat{0}$ of $G'$ yields the bottom matching $\hat{0}$ of $G$ (as well as another matching in case (c)), and also replaces $M$ (resp. $M$ and $M'$ in case (iv)) with $N$ (resp. $N$ and $N'$ in case (iii)), a matching of $G$.  The exponents of $y_f, y_i, y_j, y_k, y_\ell$ in $\height(N)$ are similarly computed using \cref{prop:ht_altitude}, and agree with the results of the aforementioned substitution and multiplication.

We begin by illustrating these arguments in the case when the bottom matching $\hat{0}$ of $G'$ looks locally like the configuration of case (a).

\noindent\textbf{Case (a), (i):} Considering \cref{fig:case-a}, if $M$ locally looks like case (i), then the local configuration around face $f$ involves doubled edges and there are walks from faces $i$, $j$, $k$, $\ell$ to $f$ which do not cross any edges of $\overrightarrow{M \triangle 0}$.  Thus, all 5 coordinates $\height(M)_i,\height(M)_j,\height(M)_k,\height(M)_\ell$, and $\height(M)_f$ are equal to each other.
Furthermore, for this example, $\hat{0}_G$ is in case (a) so the quantity \eqref{eq:multfactor} equals $1$, and under the specified substitution and multiplication, $\mathbf{y}^{\height(M)}$ is left invariant.

After applying the square move to get corresponding terms of $D_G$, we obtain $\overrightarrow{N \triangle 0}$ as in (i) of \cref{fig:case-a}.  It is evident that $\height(N)= \height(M)$ in this case, so $\mathbf{y}^{\height(N)} =  \mathbf{y}^{\height(M)}$, exactly as desired.

\begin{figure}[h]
\includegraphics[width=\textwidth]{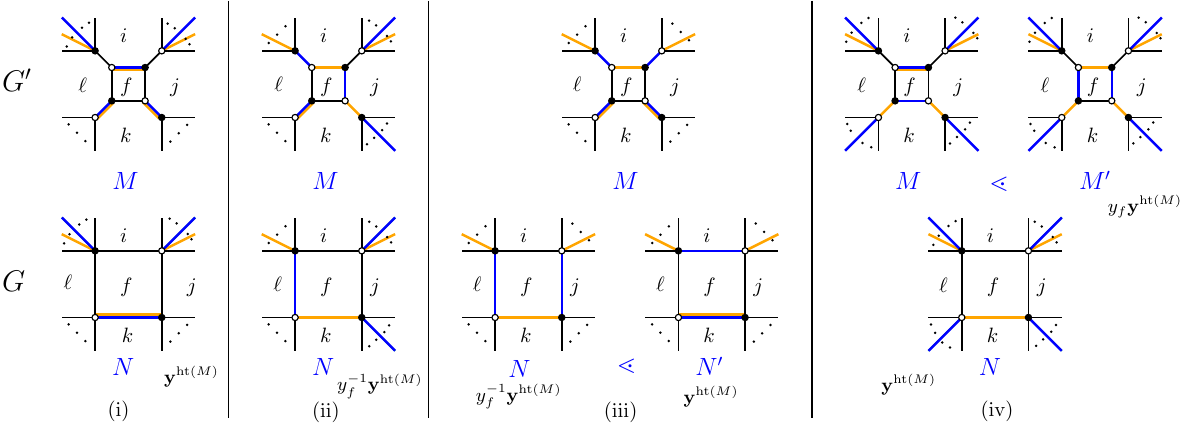}
\caption{\label{fig:case-a} Orange edges give the bottom matching of $G'$ (top) and $G$ (bottom) in case (a) from \cref{fig:sq-move-bottom-matching}. In blue, matchings of $G'$ (top) and the corresponding matchings of $G$ (bottom). Next to each matching different from $M$ is the corresponding term of the dimer polynomial, written in terms of $\mathbf{y}^{\height(M)}$.
}
\end{figure}

\noindent\textbf{Case (a), (ii):} We repeat the above analysis, continuing to assume that the bottom matching $\hat{0}$ of $G'$ is as in case (a), but letting $M$ be as in case (ii).  Then, a cycle in $\overrightarrow{M \triangle 0}$ separates faces $i$ and $j$ from the other three faces. Since the edge between $i$ and $f$ is in $\hat{0}$ and so oriented white-to-black in $\overrightarrow{M \triangle 0}$, $\height(M)_i= \height(M)_f -1$ by \cref{prop:ht_altitude}. We also have $\height(M)_j= \height(M)_f -1$ since there is a walk from $i$ to $j$ which passes through no edges of $\overrightarrow{M \triangle 0}$.
The other two coordinates $\height(M)_k,\height(M)_\ell$ are equal to $\height(M)_f$ because there are walks from faces $k$, $\ell$ to $f$ which do not cross any edges of $\overrightarrow{M \triangle 0}$.
Further, since $\hat{0}$ is in case (a), the quantity \eqref{eq:multfactor} equals $1$ just as above.

Under the desired substitutions, one may check that the binomial factors arising from $y_i'$ and $y_j'$ cancel each other out, as do those from $y_k'$ and $y_{\ell}'$, so we obtain 
\[y_f^r y_i^{r-1}y_j^{r-1}y_k^r y_\ell^r \mapsto 
y_f^{-r} ~ 
y_i^{r-1} \bigg(\frac{y_f}{1+y_f}\bigg)^{r-1} ~y_j^{r-1} (1+y_f)^{r-1} ~ 
y_k^r \bigg(\frac{y_f}{1+y_f}\bigg)^r ~ y_\ell^r (1+y_f)^r\]
\[ = y_f^{-1} (y_f^r y_i^{r-1}y_j^{r-1} y_k^r y_\ell^r).\]
Under the specified substitution and multiplication, $\mathbf{y}^{\height(M)} \mapsto y_f^{-1} \mathbf{y}^{\height(M)}$.

Applying the square move to get the corresponding matching $N$ of $G$, we see $\height(N)= \height(M)-\mathbf{e}_f$. In particular, the cycle in $\overrightarrow{M \triangle {0}}$ separating faces $i$, and $j$ from the rest of the local configuration now ``bends" around face $f$ in $\overrightarrow{N \triangle {0}}$ so that faces $i$, $j$ and $f$ are on the same side (see \cref{fig:case-a} (ii)).  
So $\mathbf{y}^{\height(N)}= y_f^{-1}\mathbf{y}^{\height(M)}$, exactly as desired. 

\noindent\textbf{Case (a), (iii):} If $M$ is as in case (iii), then a cycle in $\overrightarrow{M \triangle 0}$ separates $i$ from the other four faces. Since the edge between $i$ and $f$ is in $\hat{0}$ and so oriented white-to-black in $\overrightarrow{M \triangle 0}$, $\height(M)_i= \height(M)_f -1$ by \cref{prop:ht_altitude}. The other three coordinates $\height(M)_j,\height(M)_k,\height(M)_\ell$ are equal to $\height(M)_f$ because there are walks from faces $j$, $k$, $\ell$ to $f$ which do not cross any edges of $\overrightarrow{M \triangle 0}$.  In this case, under the substitutions above, the binomial factors do not fully cancel out each other, and one may check that 
\[y_f^r y_i^{r-1}y_j^{r}y_k^r y_\ell^r \mapsto 
y_f^{-r} ~ 
y_i^{r-1} \bigg(\frac{y_f}{1+y_f}\bigg)^{r-1} ~y_j^r (1+y_f)^r ~ 
y_k^r \bigg(\frac{y_f}{1+y_f}\bigg)^r ~ y_\ell^r (1+y_f)^r
= y_f^{r}y_f^{-1} (1+y_f) y_i^{r-1}y_j^r y_k^r y_\ell^r\]
\[= 
(1+y_f^{-1})(y_f^r y_i^{r-1}y_j^{r}y_k^r y_\ell^r). \]
Under the specified substitution and multiplication by \eqref{eq:multfactor} which equals $1$, we get $\mathbf{y}^{\height(M)} \mapsto \mathbf{y}^{\height(M)} (1+y_f^{-1})$.
We turn now to the corresponding terms of $D_G$. The corresponding matchings $N$ and $N'$ of $G$ are as in column (iii) of \cref{fig:case-a}. Then $\height(N')= \height(M)$ and $\height(N)=\height(N')-\mathbf{e}_f=\height(M)-\mathbf{e}_f$ since $N$ is obtained from $N'$ by a down-flip at face $f$ (cf. \cref{lem:ht_flip}). 
So $\mathbf{y}^{\height(N')} + \mathbf{y}^{\height(N)}= (1+y_f^{-1}) \mathbf{y}^{\height(M)}$, exactly as desired. 

\noindent\textbf{Case (a), (iv):} Finally, if $M$ and $M'$ are as in case (iv), then a cycle in $\overrightarrow{M \triangle 0}$ separates $k$ from the other four faces. Since the edge between $j$ and $k$ is in $\hat{0}$ and so oriented white-to-black in $\overrightarrow{M \triangle 0}$ and $\overrightarrow{M' \triangle 0}$, we have $\height(M)_k= \height(M)_f +1$  by \cref{prop:ht_altitude}. The other three coordinates $\height(M)_j,\height(M)_k,\height(M)_\ell$ are equal to $\height(M)_f$. Since $M'$ is obtained from $M$ by an up-flip at $f$, we have $\height(M')=\height(M) + \mathbf{e}_f$.

Again, under the substitutions above, the binomial factors do not fully cancel out each other, and one may check that $\mathbf{y}^{\height(M)}$ transforms as
\[y_f^r y_i^{r}y_j^{r}y_k^{r+1} y_\ell^r \mapsto 
y_f^{-r} ~ 
y_i^{r} \bigg(\frac{y_f}{1+y_f}\bigg)^{r} ~y_j^r (1+y_f)^r ~ 
y_k^{r+1} \bigg(\frac{y_f}{1+y_f}\bigg)^{r+1} ~ y_\ell^r (1+y_f)^r\]
\[= y_f(1+y_f)^{-1}(y_f^{r}y_i^{r}y_j^r y_k^{r+1} y_\ell^r) \]
while $\mathbf{y}^{\height(M')}$ transforms instead as
\[y_f^{r+1} y_i^{r}y_j^{r}y_k^{r+1} y_\ell^r \mapsto 
y_f^{-r-1} ~ 
y_i^{r} \bigg(\frac{y_f}{1+y_f}\bigg)^{r} ~y_j^r (1+y_f)^r ~ 
y_k^{r+1} \bigg(\frac{y_f}{1+y_f}\bigg)^{r+1} ~ y_\ell^r (1+y_f)^r\]
\[= (1+y_f)^{-1}(y_f^{r}  y_i^{r}y_j^r y_k^{r+1} y_\ell^r). \]
Under the specified substitution and multiplication by \eqref{eq:multfactor} which equals $1$, we get $\mathbf{y}^{\height(M)} + \mathbf{y}^{\height(M')} \mapsto 
\mathbf{y}^{\height(M)} y_f(1+y_f)^{-1} + 
\mathbf{y}^{\height(M)} (1+y_f)^{-1}
= \mathbf{y}^{\height{(M)}}$.
We turn now to the corresponding terms of $D_G$. The corresponding matching $N$ of $G$ is as in column (iv) of \cref{fig:case-a}. Then $\height(N)= \height(M)$, and so $\mathbf{y}^{\height(N)}= \mathbf{y}^{\height(M)}$, exactly as desired. 

If the bottom matching $\hat{0}$ is not as in case (a), we utilize variants of the above arguments.
For instance, if $\hat{0}$ is as in case (c), the analogous superpositions are illustrated in \cref{fig:case-c}.  The algebraic substitutions and height calculations proceed analogously to above. The only substantive difference in argument is that the multiplicative factor \eqref{eq:multfactor} is instead $(1+y_f)$ in this case.  This factor arises since the bottom matching $\hat{0}$ of $G'$ corresponds via square move to two (not one) matchings of $G$.  One of them is the corresponding bottom matching of $G$, this is the one illustrated in column (c) of \cref{fig:sq-move-bottom-matching}, but a $90^\circ$ rotation of this matching of $G$ also arises, and that corresponds a height of $y_f$ instead of $1$, the second term of \eqref{eq:multfactor} in this case.  The arguments for cases (b) and (d) are analogous to those for (a) and (c), although the multipicative factor \eqref{eq:multfactor} is $y_f(1+y_f)^{-1}$ in case (d) since two matchings of $G'$ correspond to the bottom matching of $G$.

\begin{figure}
\includegraphics[width=\textwidth]{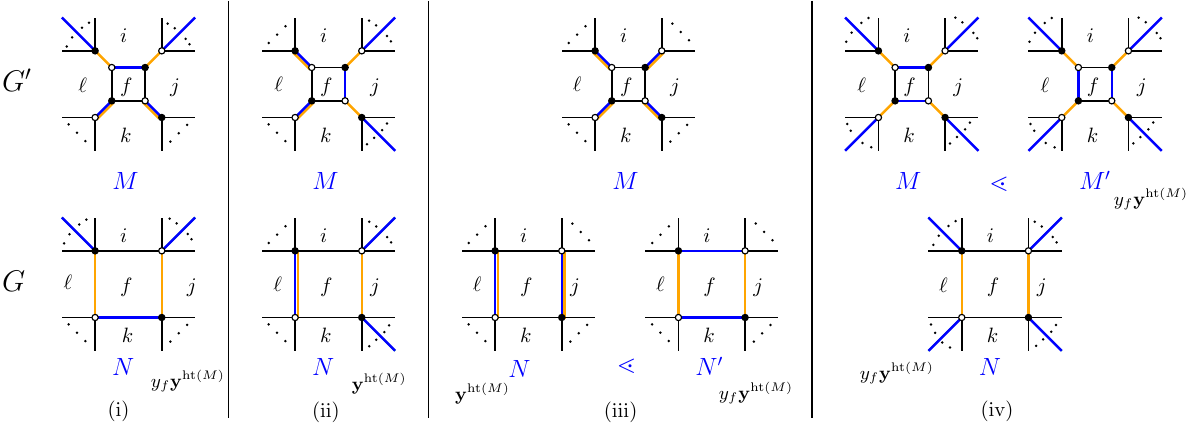}
\caption{\label{fig:case-c} Orange edges give the bottom matching of $G'$ (top) and $G$ (bottom) in case (c) from \cref{fig:sq-move-bottom-matching}. In blue, matchings of $G'$ (top) and the corresponding matchings of $G$ (bottom). Next to each matching different from $M$ is the corresponding term of the dimer polynomial, written in terms of $\mathbf{y}^{\height(M)}$.}

\end{figure}
\end{proof}

\begin{remark}\cref{thm:sq-move-step} asserts that under a square move, dimer face polynomial changes according to the substitutions in \eqref{eq:y-mut} followed by multiplication by \eqref{eq:multfactor}. Similar results have been proved for partition functions in various contexts. In the case of plabic graphs (see \cref{sec:positroid-var-applications}), Postnikov shows in \cite[Lemma 12.2]{Postnikov} that, under a square move, the images of the boundary measurement map are related by applying a substitution $z_e \mapsto z_e'$, which becomes precisely the substitution in \eqref{eq:y-mut} under the change of variables between $\{z_e\}$ and $\{y_f\}$ in \cref{rem:dimer-polyomial-edges}. Translating his results into the language of matchings (cf. the discussion of urban renewal in \cite[Section 3.4]{MS-twist}), for plabic graphs, under a square move, the partition functions for \emph{almost perfect matchings with a fixed boundary} are related by the substitution $z_e \mapsto z_e'$ in \cite[Figure 7]{MS-twist}, followed by simultaneous rescaling. There is an alternative indirect proof of \cref{thm:sq-move-step} using Postnikov's result and the transformations between partition functions and the dimer face polynomials in \cref{rem:dimer-polyomial-edges}. See also \cite[7.6]{marsh2016twists} where Marsh and Scott use shear weights and Pl\"ucker relations for the special case of reduced plabic graphs for Grassmannians.

A related argument appears in Speyer's work \cite[Sec 4.2]{speyer2007perfect} for the special case of graphs from the Octahedron recurrence and using a different definition of face weights.  Goncharov and Kenyon \cite[Theorem 4.7]{GK} approach a larger class of graphs associated to cluster integrable systems and use the Cluster-Poisson structure to demonstrate that under a square move, the partition function changes according to the substitutions $y_i \mapsto y_i'$. 
\end{remark}

We now examine the effect of bigon removal.

\begin{theorem}\label{thm:bigon-step}
       
        Suppose $G$ is a graph with property $(*)$ and $G'$ is obtained from $G$ by a bigon removal at face $f$.  Let $\mathbf{r}'$ be a reduction sequence for $G'$, and let $z'$ be as in \cref{thm:dimer-poly-is-F-poly}. The sequence $\mathbf{r}=(f, \mathbf{r}')$ is a reduction sequence for $G$; define $z$ analogously to $z'$, using $G$ instead. Let $\mathbf{g}:=\mathbf{g}^{Q_G}_z$ and $\mathbf{g}':=\mathbf{g}^{Q_{G'}}_{z'}$ denote the $g$-vector of $z$ and $z'$, respectively. If $F_{z'}^{Q_{G'}}= D_{G'}$ and $\mathbf{g}'= \hvec^{G'}_{\hat{0}}$, then $F_{z}^{Q_G}= D_G$ and $\mathbf{g}= \hvec^G_{\hat{0}}$.
        
\end{theorem}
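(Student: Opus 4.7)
The plan is to follow the template of \cref{thm:sq-move-step}. By \cref{lem:moves-effect-on-Q}, $Q_{G'}$ is the induced subquiver of $\mu_f(Q_G)$ on $\faces(G)\setminus\{f\}$. Setting $\Sigma=(\mathbf{x},Q_G)$ and $\Sigma'=\mu_f(\Sigma)$, the cluster variable $z$ is equivalently obtained by applying $\mathbf{r}'$ to $\Sigma'$. Since $\mathbf{r}'$ does not mutate at $f$, \cref{prop:F-g-adding-mutable} yields
\[F_z^{\mu_f(Q_G)}(y_1,\ldots,y_n)=F_{z'}^{Q_{G'}}=D_{G'}\]
(independent of $y_f$), along with $(\mathbf{g}_z^{\mu_f(Q_G)})_j=(\hvec^{G'}_{\hat{0}})_j$ for all $j\neq f$, using the hypotheses of the theorem.

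Next I would compare $D_G$ and $D_{G'}$ combinatorially. Matchings of $G$ split into three types according to their intersection with the bigon $f$: type (a) contains the black-white edge $e_1$, type (b) contains the white-black edge $e_2$, and type (c) contains neither. Flips at $f$ pair type-(a) with type-(b) matchings, yielding a $2$-to-$1$ correspondence with matchings of $G'$ containing the surviving bigon edge, while type-(c) matchings biject with matchings of $G'$ avoiding it. The bottom matching in $\mathcal{D}_G$ is then of type (b) or of type (c) according to whether the bottom matching in $\mathcal{D}_{G'}$ uses the surviving bigon edge. Walks in $G^*$ avoiding $f$ together with \cref{prop:ht_altitude} express heights in $G$ in terms of the corresponding heights in $G'$, giving an explicit formula for $D_G$ in terms of $D_{G'}$ (with a substitution in $y_{f'}, y_{f''}$ and a prefactor involving $y_f$). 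A direct calculation with the formula $h_g=|g|/2-|M\cap g|-1$ shows that $\hvec^G_{\hat{0}}$ differs from $\hvec^{G'}_{\hat{0}}$ only in the coordinates indexed by $f$, $f'$, and $f''$, with explicit shifts.

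To finish, I would apply \cref{prop:adjacent-F} with $b=f$ to pass from $(F_z^{\mu_f(Q_G)}, \mathbf{g}_z^{\mu_f(Q_G)})$ to $(F_z^{Q_G}, \mathbf{g}_z^{Q_G})$. The remaining coordinate $g_f':=(\mathbf{g}_z^{\mu_f(Q_G)})_f$, not supplied by the first paragraph, is pinned down via \eqref{eq:g-mut} by the target identity $(\mathbf{g}_z^{Q_G})_f=(\hvec^G_{\hat{0}})_f$ (which forces $g_f'=-(\hvec^G_{\hat{0}})_f$). Substituting into \eqref{eq:adjacent-F} and using the $y$-substitutions \eqref{eq:y-mut}, the combinatorial formula from the previous paragraph shows the result equals $D_G$, so $F_z^{Q_G}=D_G$. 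The remaining $g$-vector coordinates are verified via \eqref{eq:g-mut} to match $\hvec^G_{\hat{0}}$.

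The hard part will be reconciling, in each of the two cases for the bottom matching in $\mathcal{D}_{G'}$, the substitution-plus-prefactor formula for $D_G$ coming from the dimer combinatorics with the output of \cref{prop:adjacent-F}. The arrows at $f$ in $Q_G$ are determined by which of $e_1,e_2$ is black-white in $f$, so careful bookkeeping of orientations and signs is needed to align both sides.
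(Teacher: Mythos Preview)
Your overall architecture matches the paper: reduce to $\Sigma'=\mu_f(\Sigma)$, use \cref{lem:moves-effect-on-Q} and \cref{prop:F-g-adding-mutable} to identify $F_z^{\mu_f(Q_G)}$ with $D_{G'}$ and to pin down all coordinates of $\mathbf{g}_z^{\mu_f(Q_G)}$ except the one at $f$, then invoke \cref{prop:adjacent-F} and compare with a combinatorial relation between $D_G$ and $D_{G'}$. The combinatorial correspondence you sketch (matchings of $G$ grouped by their intersection with the bigon, and the two cases for the bottom matching of $G'$) is also what the paper does.

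The genuine gap is in how you determine $g_f':=(\mathbf{g}_z^{\mu_f(Q_G)})_f$. You write that it is ``pinned down via \eqref{eq:g-mut} by the target identity $(\mathbf{g}_z^{Q_G})_f=(\hvec^G_{\hat{0}})_f$''. But that identity is part of what you are trying to prove; you cannot use it as input. The quantity $g_f'$ is a well-defined integer attached to $z$ and the seed $\mu_f(\Sigma)$, and you must compute it by some independent argument before you can evaluate either \eqref{eq:g-mut} or the multiplicative factor in \eqref{eq:adjacent-F}. Without it you do not know which power of $(1+y_f)$ to multiply by, so you cannot carry out the comparison with your combinatorial formula for $D_G$.

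The paper supplies exactly this missing ingredient, and it is the crux of the argument. Because $z$ is obtained from $\mu_f(\Sigma)$ by mutations in $\mathbf{r}'$, which never touches $f$, the cluster variable $u_f$ at vertex $f$ in $\mu_f(\Sigma)$ is compatible with $z$. By \cref{thm:cluster-Laurent-positivity-denom}(2), $u_f$ therefore appears with exponent zero in the Laurent expansion $\mathbf{u}^{\mathbf{g}''}F_z^{\Sigma''}(\hat{y}_1,\ldots,\hat{y}_n)$ once written in lowest terms. Since $F_z^{\Sigma''}=D_{G'}$, one can read off from the heights of matchings of $G'$ (specifically, from whether the surviving edge $e$ lies in $\hat{0}_{G'}$) exactly how $u_f$ occurs in the terms of $D_{G'}(\hat{y})$, and this forces $g_f''\in\{0,1\}$ accordingly. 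Only after this is established can \eqref{eq:g-mut} and \eqref{eq:adjacent-F} be applied non-circularly to obtain $\mathbf{g}=\hvec^G_{\hat{0}}$ and $F_z^{Q_G}=D_G$.
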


\begin{proof}
Let $\Sigma=(\mathbf{x}, Q_G)$ and set $\Sigma'':=\mu_f(\Sigma)$. Note that $z$ is the cluster variable of $\mu_{\mathbf{r}}(\Sigma'')$ indexed by the final mutation in the reduction sequence. We have that $F_{z}^{\Sigma}$ and $F_{z}^{\Sigma''}$ are related as in \cref{prop:adjacent-F}.

By \cref{lem:moves-effect-on-Q}, the quiver $Q_{G'}$ differs from $Q''=\mu_f(Q_G)$ by deleting vertex $f$. In other words, $Q_{G'}$ is the induced subquiver of $Q''$ on all vertices besides $f$. Applying \cref{prop:F-g-adding-mutable} with $w=z'$ and $w'=z$, we have that $F_{z}^{\Sigma''}$ is equal to $F_{z'}^{Q_{G'}}$, and the $g$-vectors $\mathbf{g}'=\mathbf{g}_{z'}^{Q_{G'}}$ and $\mathbf{g}'':=\mathbf{g}_z^{\Sigma''}$ agree in all coordinates except the coordinate $g''_f$ of the latter vector.

\begin{figure}
    \centering
   \includegraphics[width=0.7 \textwidth]{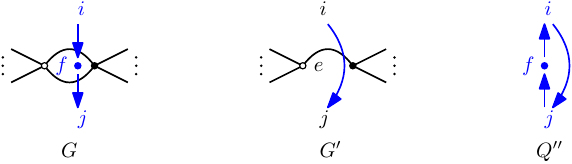}
    \caption{Left: The graph $G$ near the face $f$ and the arrows of $Q_G$ adjacent to $f$ in the proof of \cref{thm:bigon-step}. Center: The graph $G'$ and the arrow contributed by the faces $i$ and $j$. Right: The arrows of $\mu_f(Q_G)=Q''$ that differ from those of $Q_G$.
} 
    \label{fig:bigon-scenario}
\end{figure}

We first use the equalities $F_{z}^{\Sigma''}=F_{z'}^{Q_{G'}}= D_{G'}$ together with some properties of $F$-polynomials to deduce the value of $g_f''$. Suppose the faces adjacent to $f$ in $G$ are $i, j$, and let $e$ be the edge of $f$ which is not deleted in the bigon removal, so $e$ is an edge shared by $i,j$ in $G'$. Say $e$ is white-black in $j$ and black-white in $i$. See \cref{fig:bigon-scenario}. We claim
\begin{equation}\label{eq:g-for-bigon-removal}g_f''= \begin{cases}
    0 & \text{if } e \notin \hat{0}_{G'}\\
    1 & \text{if }e \in \hat{0}_{G'}.
\end{cases}\end{equation}

By \cref{thm:g-F-cluster-expansion}, a Laurent polynomial formula for $z$ in terms of the cluster $\mathbf{u}$ of $\Sigma''$ is 
\[F_z^{\Sigma''}(\hat{y}_1, \dots, \hat{y}_n) \cdot \mathbf{u}^{\mathbf{g}''}.\]
If you put this expression over a common denominator and write it in lowest terms, you obtain the expression from \cref{thm:cluster-Laurent-positivity-denom} (1). In particular, by \cref{thm:cluster-Laurent-positivity-denom} (2), the resulting expression does not have $u_f$ as a factor of the denominator, as $z$ is compatible with $u_f$. It also does not have $u_f$ as a factor of the numerator. Because of the arrow configuration in $Q''$ (see \cref{fig:bigon-scenario}, right) the variable $u_f$ is in the numerator of $\hat{y}_i$ and in the denominator of $\hat{y}_j.$ We next determine how $y_i$ and $y_j$ appear in terms of $F_z^{\Sigma''}$.

Suppose that $e \notin \hat{0}_{G'}$. Then for any term $\mathbf{y}^{\height(M)}$ of $D_{G'}= F_{z}^{\Sigma''}$, either $\height(M)_i = \height(M)_j$ or $\height(M)_i = \height(M)_j +1$. Indeed, as $i, j$ are adjacent faces, there is at most one cycle $C$ in $\overrightarrow{M \triangle {0}}$ which encircles one of $i,j$ but not the other. If there is such a cycle, then $C$ must contain $e$. This implies $e \in M$, so $e$ is oriented black-to-white. This means that $e$ is left-to-right with respect to the dual edge of the dual graph $G^*$ oriented from $i$ to $j$. Using \cref{prop:ht_altitude}, this implies $\height(M)_i = \height(M)_j +1$. A similar argument shows that if $e \in \hat{0}_{G'}$, then in any term $\mathbf{y}^{\height(M)}$ of $D_{G'}$, either $\height(M)_i = \height(M)_j$ or $\height(M)_i = \height(M)_j -1$. Using this, we will determine how $u_f$ appears in terms of $F_z^{\Sigma''}(\hat{y}_1, \dots, \hat{y}_n)$.

If $e \notin \hat{0}_{G'}$, the above paragraph implies $u_f$ only appears in numerators of terms in $F_z^{\Sigma''}(\hat{y}_1, \dots, \hat{y}_n)$. When we write $F_z^{\Sigma''}(\hat{y}_1, \dots, \hat{y}_n)$ over a (least) common denominator, $w_f$ still appears only in the numerator, and does not appear in all terms since $F$-polynomials have constant term 1. Thus, $g_f''=0$; otherwise, $u_f$ would be a factor of either the numerator or denominator of $z$.

If $e \in \hat{0}_{G'}$, then $w_f$ appears with exponent $0$ or $-1$ in terms of $F_z^{\Sigma''}(\hat{y}_1, \dots, \hat{y}_n)$. There is at least one term in which $u_f$ appears with exponent $-1$: there is some matching $M$ of $G'$ which uses an edge adjacent to $e$. Thus, $e$ is in a cycle of $\overrightarrow{M \triangle {0}}$, and is oriented white-to-black in this cycle. Using \cref{prop:ht_altitude}, we have $\height(M)_i = \height(M)_j-1$, so $u_f$ appears with exponent $-1$ in this term. Thus, when $F_z^{\Sigma''}(\hat{y}_1, \dots, \hat{y}_n)$ is written over a least common denominator, $u_f$ is a factor of the denominator, $u_f^2$ is not, and $u_f$ is not a factor of the numerator. This implies that $g_f''=1$; otherwise, $u_f$ would be a factor of the denominator or the numerator of $z$. This shows \eqref{eq:g-for-bigon-removal}.

\begin{figure}
    \centering
    \includegraphics[width=0.9\textwidth]{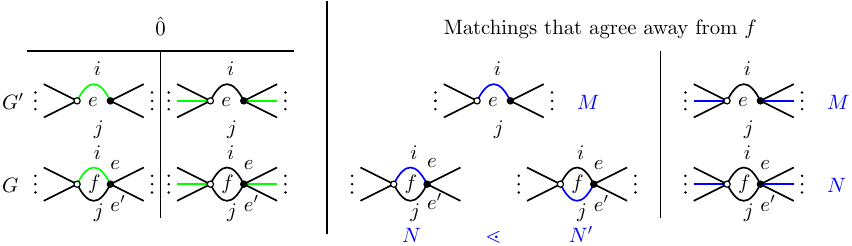}
    \caption{Left: The two possibilities for bottom matchings of $G'$ and $G$. Right: Correspondence between matchings of $G'$ (top) and $G$ (bottom). Matchings in the same column agree on the edges not pictured.}
    \label{fig:bigon-bottom}
\end{figure}

Now, we show that $\mathbf{g}=\hvec^G_{\hat{0}}$. All coordinates of $\mathbf{g}''$ which are not indexed by $f$ agree with the coordinates of $\mathbf{g}'=\hvec^{G'}_{\hat{0}}$. Also, $\mathbf{g}, \mathbf{g}''$ are related by \eqref{eq:g-mut}. The two possible bottom matchings of $G'$ and $G$ are given in \cref{fig:bigon-bottom}, left. It is easy to see that if $e \notin \hat{0}_{G'}$, then $h^G_f=0$ and all other coordinates of $\hvec^G_{\hat{0}}$ are the same as those of $\hvec^{G'}_{\hat{0}}$. This agrees with $\mathbf{g}$, computed using \eqref{eq:g-mut} with $\mathbf{g}$ taking the role of $\mathbf{g}'$, $\mathbf{g}''$ taking the role of $\mathbf{g}$, and $q_{ab}$ computed using $Q''$.
If $e \in \hat{0}_{G'}$, then $h^G_f=-1$, which is equal to $g_f$. We also have that $h^G_{j}=h^{G'}_j+1$, since in $G$, one fewer edge of $j$ is in the bottom matching. All other coordinates of $\hvec^G_{\hat{0}}$ and $\hvec^{G'}_{\hat{0}}$ agree. This again agrees with $\mathbf{g}$, computed using \eqref{eq:g-mut}.

Finally, we show that $F_z^{Q_G}=D_G$, using \cref{prop:adjacent-F} together with the assumption that $F_z^{\Sigma''}=F_{z'}^{Q_{G'}}= D_{G'}$. By \cref{prop:adjacent-F}, $F_z^{\Sigma}$ is obtained from $F_z^{\Sigma''}$ by the substitutions
\begin{equation}\label{eq:bigon-sub}
    y_f \mapsto y_f^{-1}, \quad y_i \mapsto y_i(1+y_f), \quad y_j \mapsto y_j (1+y_f^{-1})^{-1}= y_j \frac{y_f}{1+y_f}
\end{equation}
followed by multiplication by 
\[\begin{cases}
    1 & \text{ if }g_f=0\\
   {1+y_f} & \text{ if }g_f=-1.\\
\end{cases}\]
So it suffices to show that $D_G$ can be obtained from $D_{G'}$ by the same substitutions and multiplication.

As in the proof of \cref{thm:sq-move-step}, for each matching $M$ of $G'$ there are one or two matchings $N, N'$ of $G$ which agree with $M$ away from $f$. Distinct matchings of $G'$ correspond to distinct matchings of $G$, and every matching of $G$ corresponds to some matching of $G'$. See \cref{fig:bigon-bottom}, right. We will compare the term $\mathbf{y}^{\height(M)}$ in $D_{G'}$ to $\mathbf{y}^{\height(N)}$ or $\mathbf{y}^{\height(N)}+\mathbf{y}^{\height(N')}$ in $D_G$ to verify they are related by \eqref{eq:bigon-sub}
and the appropriate multiplication. 

We consider the case then the bottom matching of $G'$ uses $e$ (\cref{fig:bigon-bottom}, far left) and $g_f=-1$, as the other case is similar.

Suppose a matching $M$ of $G'$ does not use $e$, so $\overrightarrow{M \triangle {0}}$ has a cycle using $e$. As argued previously, this implies that $r:=\height(M)_j = \height(M)_i +1$. When the edge $e'$ is added, every cycle of $\overrightarrow{M \triangle {0}}$ which encircles $j$ becomes a cycle of $\overrightarrow{N \triangle {0}}$ which encircles $f$ and $j$. So $\mathbf{y}^{\height(N)}= y_f^{r+1} \mathbf{y}^{\height(M)}$. On the other hand, under \eqref{eq:bigon-sub}, $y_i^{r}y_j^{r+1} \mapsto (y_i^r y_j^{r+1}) y_f^{r+1}(1+y_f)^{-1}$, so under \eqref{eq:bigon-sub} and multiplication by $1+y_f$, $\mathbf{y}^{\height(M)} \mapsto \mathbf{y}^{\height(N)}$.

If $M$ does use $e$, then $\height(M)_i=\height(M)_j=:r$. We have $$\mathbf{y}^{\height(N')} + \mathbf{y}^{\height(N)}= y_f^r \mathbf{y}^{\height(M)} + y_f^{r+1} \mathbf{y}^{\height(M)}= (1+y_f) y_f^r \mathbf{y}^{\height(M)}.$$
On the other hand, under \eqref{eq:bigon-sub}, $y_i^r y_j^r \mapsto (y_i^r y_j^r) y_f^r$, so after multiplication by $1+y_f$, we have $\mathbf{y}^{\height(M)} \mapsto \mathbf{y}^{\height(N')} + \mathbf{y}^{\height(N)}.$ 
\end{proof}

\subsection{Proofs of \cref{thm:dimer-poly-is-F-poly,cor:easy-cluster-expansion}}
\begin{proof}[Proof of \cref{thm:dimer-poly-is-F-poly}]
We now prove \cref{thm:dimer-poly-is-F-poly}. By \cref{lem:e-moves-dont-change-dimers}, (E) moves do not change $D_G$; by \cref{lem:moves-effect-on-Q} (E) moves do not change $Q_G$ and thus do not change any $F$-polynomials. So we may assume, by replacing $G$ with another graph related by (E) moves, that one may apply a move at $f_1$ in $G$.

We proceed by induction on the length of the reduction sequence $\mathbf{r}$. If $\mathbf{r}=(f_1)$ consists of a single face, then $G$ is a bigon, and $Q_G$ is a single vertex. One can verify that $D_G=1+y_f= F_z^{Q_G}$, and that $\hvec^G_{\hat{0}}=(-1)=\mathbf{g}$.

Now, suppose $\mathbf{r}= (f_1, \mathbf{r}')$ is length at least 2. Let $G'$ be the graph obtained from $G$ by performing the appropriate move at $f_1$; it has reduction sequence $\mathbf{r}'$, of shorter length. If $f_1$ is a square face, then the inductive hypothesis together with \cref{thm:sq-move-step} (with the roles of $G$ and $G'$ swapped) imply that $\mathbf{g}=\hvec^G_{\hat{0}}$ and $D_G=F_z^{Q_G}$. If $f_1$ is a bigon, then the inductive hypothesis together with \cref{thm:bigon-step} imply that $\mathbf{g}=\hvec^G_{\hat{0}}$ and $D_G=F_z^{Q_G}$. This completes the proof.
\end{proof}

\begin{proof}[Proof of \cref{cor:easy-cluster-expansion}]
Let $\mathbf{g}$ and $F$ respectively denote the $g$-vector and $F$-polynomial of $z$ with respect to $\Sigma$. Recall from \cref{def:exchange-ratio} that for each face $a \in \faces(G)$, or equivalently every vertex $a$ of the quiver, we define the exchange ratio
\[\hat{y}_a:= \frac{\prod_{f \to a} x_f}{\prod_{a \to f} x_f}.\]

By \cref{thm:dimer-poly-is-F-poly}, we know that 
$\mathbf{x}^{\mathbf{g}}=\mathbf{x}^{\hvec^G_{\hat{0}}}.$

So, appealing to \cref{thm:g-F-cluster-expansion}, to show the desired cluster expansion, it suffices to show that 
\[D_G(\hat{y}_f) = \sum_{M \in \mathcal{D}_G} \mathbf{x}^{\hvec^G_M -\hvec^G_{\hat{0}}}.\]
We argue equality term by term, traveling up the dimer lattice. The term indexed by $\hat{0}$ on the right and left sides is 1. Assume we have equality for the term indexed by $N$ and suppose $N \lessdot M$ are related by an up-flip at face $a$. On the left-hand side, we multiply the term indexed by $N$ by $\hat{y}_a$. On the right-hand side, we multiply by
\[\mathbf{x}^{\hvec^G_N -\hvec^G_M}= \prod_{f \in \faces(G)} x_f^{|M \cap f| - |N \cap f|}.\]
Every white-black edge of $a$ which is in $f$ contributes $1$ to the power of $x_f$, while every black-white edge of $a$ which is in $f$ contributes $-1$ to the power of $x_f$. Since each white-black edge of $a$ which is in $f$ also contributes an arrow $f \to a$, and each black-white edge contributes an arrow $a \to f$, we see that 
\[\prod_{f \in \faces(G)} x_f^{|M \cap f| - |N \cap f|} = \hat{y}_a.\] This shows the terms indexed by $M$ on both sides are equal.

For the denominator vector statement, notice that $|f|/2-|M \cap f| -1 \geq -1$, and equality occurs precisely when $f$ can be flipped in $M$. So $x_f$ appears in the term $\mathbf{x}^{\hvec^G_M}$ with negative exponent if and only if $f$ can be flipped in matching $M$, in which case the exponent of $x_f$ is $-1$. By \cref{cor:prop*=all-faces-flipped}, every face can be flipped in some matching, so every $x_f$ appears in some term with exponent $-1$. So 
\[z=\frac{P(\mathbf{x})}{\prod_{f \in \faces(G)} x_f}\]
where $P(\mathbf{x})$ is not divisible by any $x_f$. This gives the desired statement about the denominator vector.
\end{proof}

\section{Further cluster algebra applications to links} 
In this section, we focus on a link diagram $L$ and the corresponding graphs $G_{L,i}$. We first show that all of the dimer face polynomials $\{D_{G_{L,i}}\}_i$ are $F$-polynomials in a single cluster algebra. We then discuss the relationship between certain moves, which appeared recently in \cite{BMS24} to prove a stronger statement, and the graph moves in \cref{def:moves}. Finally, we focus on 2-bridge links, whose Alexander polynomials were previously connected to type $A$ $F$-polynomials and were computed using matchings of \emph{snake graphs} \cite[Section 8]{B21} based on earlier work on snake graphs and continued fractions \cite{CS16} on the one hand, and on cluster algebras and Jones polynomials \cite[Sections 5 and 6]{LS-JonesPoly} on the other. 
In this case, we show that $G_{L,i}$ is not a snake graph, but does have the same dimer lattice.

\label{sec:further-applications-to-links}

\subsection{All multivariate Alexander polynomials in one cluster algebra}

In \cref{sec:alexander-poly-and-dimers}, we showed that the Alexander polynomial of a link with diagram $L$ can be obtained by choosing any segment $i$ of $L$ and specializing the polynomial $D_{G_{L,i}}$. The graphs $G_{L,i}$ satisfy property $(*)$, so \cref{thm:dimer-poly-is-F-poly} shows that $D_{G_{L,i}}$ is an $F$-polynomial for the cluster algebra $\mathcal{A}(\mathbf{x}, Q_{G_{L,i}})$. In this section, we show that all of the polynomials $\{D_{G_{L,i}}\}_i$ are $F$-polynomials of a single larger cluster algebra. This was conjectured in \cite{B21}. It was also proved, using categorical techniques, in \cite{BMS24}, 

while we were 
in the final stages of preparing this manuscript.

We first need a slightly extended notion of dual quiver $Q_G$.

\begin{definition}\label{def:extended-quiver}
    Let $G$ be a bipartite plane graph. The \emph{extended dual quiver} $\extQ_G$ is defined according to the same procedure as in \cref{def:dual-quiver}, except there is also a mutable vertex in the infinite face, and arrows are drawn across every edge separating two distinct faces.
\end{definition}

\begin{theorem}\label{thm:cluster-stuff-for-link-diag}
    
    Let $L$ be a connected prime-like link diagram with no nugatory crossings and $G_L$ the face-crossing incidence graph. For a segment $i$, let $\hat{0}_{L,i}$ denote the bottom matching of the graph $G_{L,i}$ and let $f_i$ denote the face containing $i$. Then for every segment $i$, there is a cluster variable $z_i$ in the cluster algebra $ \mathcal{A}(\Sigma_0)=\mathcal{A}(\mathbf{x}, \extQ_{G_L})$ such that 
    \[F^{\Sigma_0}_{z_i}(\mathbf{y})= D_{G_{L,i}}(\mathbf{y}).\]
    Further,
    \begin{enumerate}
        \item the $g$-vector of $z_i$ is equal to $\hvec^G_{\hat{0}_{L,i}}-\mathbf{e}_{f_i}$ and so has entries
        
    \[g_f= \begin{cases}
       0 & \text{if }f=f_i\\
       |f \cap \hat{0}_{L,i}|+1 & \text{ else};
   \end{cases}\]
    \item the formula for $z_i$ in terms of the initial cluster is
 
    \[z_i = \sum_{M \in \mathcal{D}_{G_{L, i}}} \mathbf{x}^{\hvec^G_M- \mathbf{e}_{f_i}}\]
    \item the denominator vector of $z_i$ has entries 
    \[d_f =\begin{cases}
        0 & \text{ if }f \text{ shares a white vertex with }f_i\\
        1 & \text{ else.} 
    \end{cases} \]
    \end{enumerate}
\end{theorem}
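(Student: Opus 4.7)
The plan is to deduce the theorem from Theorem \ref{thm:dimer-poly-is-F-poly} applied to the truncated graph $G_{L,i}$ together with Proposition \ref{prop:F-g-adding-mutable}, which will transfer information from $\mathcal{A}(Q_{G_{L,i}})$ to $\mathcal{A}(\extQ_{G_L})$. Fix a reduction sequence $\mathbf{r}=(f_1,\ldots,f_q)$ for $G_{L,i}$, which exists by Proposition \ref{prop:reduc-seq-exists} since $G_{L,i}$ has property $(*)$. Define $z_i$ to be the cluster variable at position $f_q$ of $\mu_{\mathbf{r}}(\Sigma_0)$. The first observation is that the induced subquiver of $\extQ_{G_L}$ on $\faces(G_{L,i})$ is exactly $Q_{G_{L,i}}$: a face of $G_L$ lies in $\faces(G_{L,i})$ precisely when its boundary avoids the two deleted white vertices $w_1,w_2$, so its $G_L$- and $G_{L,i}$-boundaries coincide, and edges of $G_L$ between any two such faces give identical arrows in the two quivers. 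Applying Theorem \ref{thm:dimer-poly-is-F-poly} to $G_{L,i}$ then produces a cluster variable $z'\in\mathcal{A}(Q_{G_{L,i}})$ with $F$-polynomial $D_{G_{L,i}}$ and $g$-vector $\hvec^{G_{L,i}}_{\hat{0}_{L,i}}$; Proposition \ref{prop:F-g-adding-mutable} then yields both the main $F$-polynomial claim $F^{\Sigma_0}_{z_i}=D_{G_{L,i}}$ and the matching of $g$-vector coordinates of $z_i$ and $z'$ on $\faces(G_{L,i})$.

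For the cluster expansion (2), I will mimic the proof of Corollary \ref{cor:easy-cluster-expansion} in the ambient quiver $\extQ_{G_L}$. The key identity is that for any up-flip $N\lessdot M$ at a face $a\in\faces(G_{L,i})$,
\[
\mathbf{x}^{\hvec^G_M-\hvec^G_N}=\hat{y}_a,
\]
the exchange ratio of $a$ in $\Sigma_0$. Both sides are monomials whose $x_f$-exponent equals the signed count of $a$'s boundary edges shared with $f$ in $G_L$ (with sign determined by whether the edge is white-black or black-white in $a$); since $a\in\faces(G_{L,i})$, all boundary edges of $a$ lie in $G_{L,i}$, so the calculation proceeds exactly as in the proof of Corollary \ref{cor:easy-cluster-expansion}. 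Iterating along a saturated chain from $\hat{0}_{L,i}$ to $M$ via Proposition \ref{lem:ht_flip} and summing then yields
\[
\sum_{M\in\mathcal{D}_{G_{L,i}}}\mathbf{x}^{\hvec^G_M-\mathbf{e}_{f_i}}=\mathbf{x}^{\hvec^G_{\hat{0}_{L,i}}-\mathbf{e}_{f_i}}\cdot F^{\Sigma_0}_{z_i}(\hat{y}).
\]
Combining this with $z_i=\mathbf{x}^{\mathbf{g}^{\Sigma_0}_{z_i}}F^{\Sigma_0}_{z_i}(\hat{y})$ from Theorem \ref{thm:g-F-cluster-expansion}, both (1) and (2) will follow once I verify $\mathbf{g}^{\Sigma_0}_{z_i}=\hvec^G_{\hat{0}_{L,i}}-\mathbf{e}_{f_i}$.

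The main obstacle will be identifying the $g$-vector coordinates at the vertices $f\notin\faces(G_{L,i})$ (namely $f_i$ and faces of $G_L$ adjacent to $w_1$ or $w_2$). Agreement on $\faces(G_{L,i})$-coordinates comes from Proposition \ref{prop:F-g-adding-mutable}. For $f\notin\faces(G_{L,i})$, the vertex $f$ is not in $\mathbf{r}$, so $x_f$ remains in every intermediate cluster; hence $z_i$ and $x_f$ are compatible and $d_f=0$ by Theorem \ref{thm:cluster-Laurent-positivity-denom}(2). Combining this with the observation that $\hat{y}_k$ for $k\in\faces(G_{L,i})$ cannot involve $x_{f_i}$ (since the neighbors of $f_i$ in $\extQ_{G_L}$ all lie outside $\faces(G_{L,i})$, being adjacent to $w_1$ or $w_2$) pins down $g_{f_i}=0$; an analogous comparison of $x_f$-exponents in the two Laurent expressions handles the remaining merged faces. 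Finally, the denominator vector (3) is read off from the cluster expansion: for $f\in\faces(G_{L,i})$, property $(*)$ of $G_{L,i}$ ensures some matching $M$ achieves $|M\cap f|_{G_L}=|f|/2$, giving $d_f=1$; for all other $f$, the compatibility argument above gives $d_f=0$.
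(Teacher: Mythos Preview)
Your approach is essentially identical to the paper's: realize $Q_{G_{L,i}}$ as an induced subquiver of $\extQ_{G_L}$, transport the $F$-polynomial and the $\faces(G_{L,i})$-coordinates of the $g$-vector via Theorem~\ref{thm:dimer-poly-is-F-poly} and Proposition~\ref{prop:F-g-adding-mutable}, then determine the remaining $g$-vector coordinates from the compatibility $d_f=0$ together with an $x_f$-exponent count in $D_{G_{L,i}}(\hat{y})$. Your argument for $g_{f_i}=0$ is in fact slightly cleaner than the paper's, since you observe directly that no $\hat{y}_k$ with $k\in\faces(G_{L,i})$ involves $x_{f_i}$.

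The one place where your sketch is too thin is the phrase ``an analogous comparison of $x_f$-exponents \ldots\ handles the remaining merged faces.'' For a face $f\neq f_i$ sharing a white vertex with $f_i$, the exponent comparison only yields
\[
g_f=\max_{M\in\mathcal{D}_{G_{L,i}}}|M\cap f|\;-\;|\hat{0}_{L,i}\cap f|,
\]
and to identify this with $(\hvec^G_{\hat{0}_{L,i}})_f=|f|/2-|\hat{0}_{L,i}\cap f|-1$ you must show $\max_M|M\cap f|=1$. This is where the \emph{prime-like} hypothesis is actually used: it guarantees the second white vertex of $f$ is not also in $f_i$, so exactly two (adjacent) edges of $f$ survive in $G_{L,i}$; property~$(*)$ then gives a matching using one of them, while adjacency prevents any matching from using both. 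You should make this step explicit, since without it the identification of $g_f$ with $h_f$ does not follow, and it is the only point in the proof where prime-likeness enters.
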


We note that the existence of the cluster variables $z_i$ and the formula for their denominator vectors were also obtained in \cite{BMS24}. For an illustration of \cref{thm:cluster-stuff-for-link-diag}, see \cref{fig:2-dimer-poly-for-whitehead}.

\begin{figure}
\centering
\includegraphics[width=0.9\textwidth]{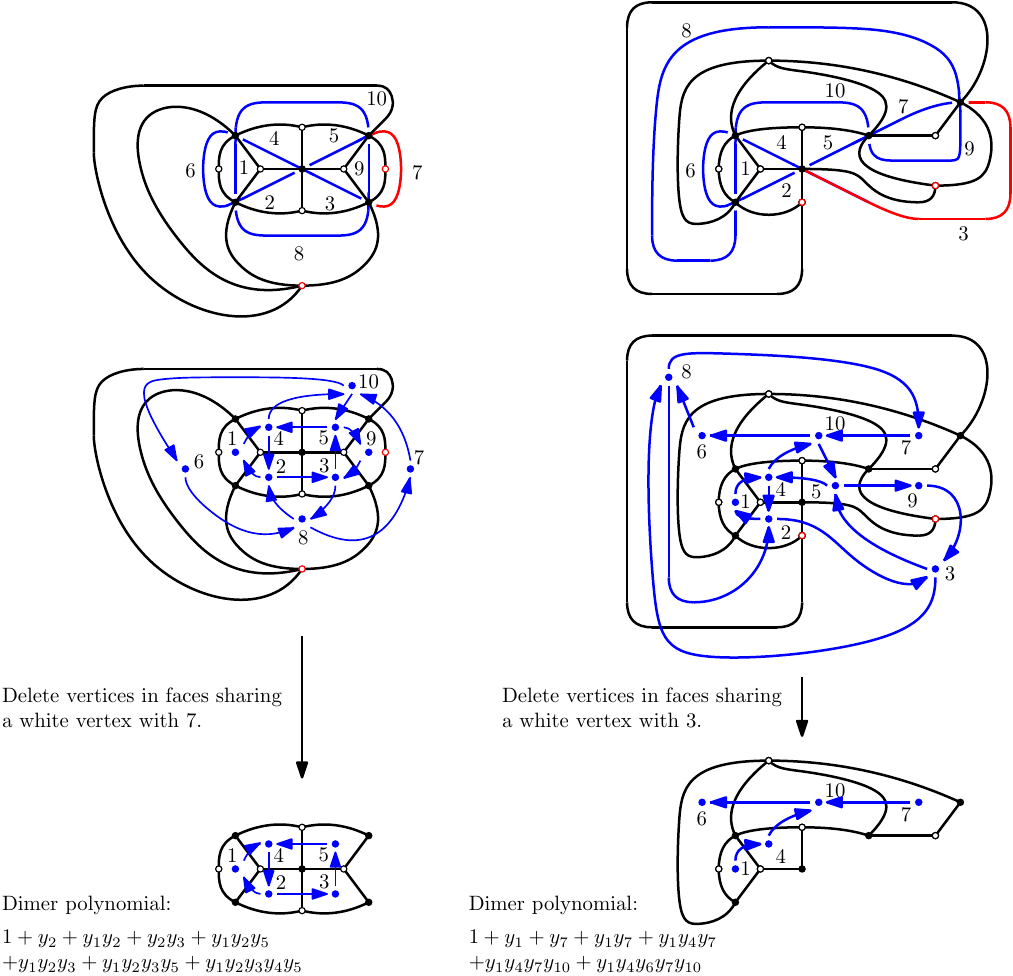}
\caption{Top: two different stereographic projections of the link diagram $L$ and graph $G_L$ from \cref{fig:whitehead_graph}. On the left, the infinite face of $G_L$ is $f_7$. On the right, it is $f_3$. Center: the corresponding stereographic projections of the quiver $\extQ_{G_L}$. Bottom: The graphs $G_{L,i}$ for $i=7$ (left) and $i=3$ (right), their dual quivers, and their dimer polynomials, which are $F$-polynomials of $\mathcal{A}(\extQ_{G_L})$ by \cref{thm:cluster-stuff-for-link-diag}. By \cref{thm:alexdimer}, both polynomials specialize to the Alexander polynomial of $L$.}
\label{fig:2-dimer-poly-for-whitehead}
\end{figure}

\begin{proof} Choose a segment $i$ of $L$. 

    For the existence of the cluster variable $z_i$, note that $G_L$ and $\extQ_{G_L}$ may both be drawn on the sphere. Stereographically project both $G_L$ and $\extQ_{G_L}$ to the plane so that the face $f_i$ is the infinite face. Deleting the white vertices of $f_i$ in $G_L$ yields the graph $G_{L,i}$. Deleting the vertices of $\extQ_G$ labeled by faces sharing a white vertex with $f_i$ yields the quiver $Q_{G_{L,i}}$. That is, the quiver $Q_{G_{L,i}}$ is an induced subquiver of $Q_{G_L}$. By \cref{thm:dimer-poly-is-F-poly}, $D_{G_{L,i}}$ is an $F$-polynomial in the cluster algebra $\mathcal{A}(Q_{G_{L,i}})$, obtained from the initial seed by mutation sequence $\mathbf{r}$. By \cref{prop:F-g-adding-mutable}, there is a cluster variable $z_i$ of $\mathcal{A}(\mathbf{x}, \mathbf{y}, \extQ_{G_L})$, again obtained by the mutation sequence $\mathbf{r}$, whose $F$-polynomial is $D_{G_{L,i}}$.

    Since $Q_{G_{L_i}}$ omits face $f_i$ and its neighbors, mutation sequence $\mathbf{r}$ cannot include these, hence 

    $z_i$ is compatible with the initial cluster variables
  \[
  \{x_f : f=f_i \text{ or } f \text{ adjacent to } f_i \}. 
  \]
    This, together with \cref{cor:easy-cluster-expansion}, also implies that the denominator vector of $z_i$ is as claimed in (3).

    We prove (1) and (2) together. First, for all faces $f$ of $G_{L}$ which are also faces of $G_{L,i}$, \cref{thm:dimer-poly-is-F-poly} and \cref{prop:F-g-adding-mutable} implies that 
    \[g_f= |f|/2 - |f \cap \hat{0}_{L,i}|-1.\]
    This is equal to the coordinate $h_f$ of $\hvec^G_{\hat{0}_{L,i}}$ by definition. So we only need to compute $g_{f}$ for $f$ containing a white vertex of $f_i$.
    Second, an identical proof as the proof of \cref{cor:easy-cluster-expansion} shows that 
    \[D_{G_{L,i}}(\hat{y}_f)= \sum_{M \in \mathcal{D}_{G_{L, i}}} \mathbf{x}^{\hvec^G_M-\hvec^G_{\hat{0}_{L,i}}}.
    \]
    We will utilize \cref{thm:cluster-Laurent-positivity-denom,thm:g-F-cluster-expansion} to determine the remaining entries of the $g$-vector. In particular, we use that
    \begin{equation}\label{eq:hard-cluster-expansion}
    \mathbf{x}^{\mathbf{g}} D_{G_{L,i}}(\hat{y}_f)=\frac{P_{z_i}^{\Sigma_0}(\mathbf{x})}{\prod_{f \in \faces(G_{L})} x_f^{d_f}}\end{equation}
    where $P_{z_i}^{\Sigma_0}(\mathbf{x})$ is as in \cref{thm:cluster-Laurent-positivity-denom}, and in particular is a polynomial which has no $x_f$ as a factor.

    Suppose $f$ is a face containing a white vertex of $f_i$. When we write $D_{G_{L,i}}(\hat{y}_f)$ as a rational expression in lowest possible terms, it is possible that it will have a factor of $x_f$. 
    The exponent of $x_f$ is 
    \begin{equation*}
        \min_{M \in \mathcal{D}_{G_{L,i}}} |\hat{0}_{L,i} \cap f| - |M \cap f|= |\hat{0}_{L,i} \cap f| - \max_{M \in \mathcal{D}_{G_{L,i}}} |M \cap f|.
    \end{equation*}
    Because $d_f=0$, \eqref{eq:hard-cluster-expansion} implies that multiplying by $x^{\mathbf{g}}$ must exactly cancel this factor of $x_f$. That is, 
    \begin{equation}\label{eq:obvious-eq-for-g}
        g_f = -|\hat{0}_{L,i} \cap f| + \max_{M \in \mathcal{D}_{G_{L,i}}} |M \cap f|.
    \end{equation}

    If $f=f_i$, then no edges of $f$ are in $G_{L,i}$, so \eqref{eq:obvious-eq-for-g} implies that $g_f=0$. In this case, the corresponding coordinate $h_f$ of $\hvec^G_{\hat{0}_{L,i}}$ is $h_f=|f|/2 -0 -1=1$. So $h_{f_i}-1=g_{f_i}$, as desired.

    If $f$ contains a white vertex of $f_i$ but is not equal to $f_i$, then the other white vertex of $f$, say $v$, is not in $f_i$. Otherwise, one can show that $L$ is not prime-like. 
    So exactly two edges of $f$, the ones adjacent to $v$, are in $G_{L,i}$. Since every edge of $G_{L_i}$ is in some matching, there is some matching that contains one of these edges; since these edges are adjacent, no matching contains both. Thus 
    \[\max_{M \in \mathcal{D}_{G_{L,i}}} |M \cap f|=1 \quad \text{ and } \quad g_f = 1-|\hat{0}_{L,i} \cap f|= |f|/2 - |\hat{0}_{L,i} \cap f| -1 = h_f.\] 

    This completes the proof of (1). Item (2) now follows by multiplying $D_{G_{L,i}}(\hat{y}_f)$ by $\mathbf{x}^\mathbf{g}$.
\end{proof}

Combining \cref{thm:alexdimer} and \cref{thm:cluster-stuff-for-link-diag}, we obtain the following corollary.

\begin{corollary}\label{cor:alex-poly-specialization-F-poly}
Let $L$ be a connected prime-like link diagram with no nugatory crossings and with segments numbered $1, \dots, s$. Let $G_L$ be the face-crossing incidence graph, set $Q_L:=\extQ_{G_L}$ and for any segment $i$ of $L$, let $z_i$ be as in \cref{thm:cluster-stuff-for-link-diag}. Then the Alexander polynomial $\Delta_L$ of $L$ can be obtained from any one of the $F$-polynomials $F^{Q_L}_{z_1}(\mathbf{y}), \dots,F^{Q_L}_{z_s}(\mathbf{y})$ in $\mathcal{A}(Q_L)$ using the specialization of \cref{thm:alexdimer}.
\end{corollary}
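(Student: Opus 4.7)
The plan is to deduce the corollary by chaining together the two ingredients already established in the paper, namely \cref{thm:cluster-stuff-for-link-diag}, which identifies each $F^{Q_L}_{z_i}(\mathbf{y})$ with a particular dimer face polynomial, and \cref{thm:alexdimer}, which specializes that dimer face polynomial to the Alexander polynomial. No new combinatorial content is required; the task is simply to verify that the hypotheses of both theorems are simultaneously in force and that the chain of equalities composes in the expected way.

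First, I would check that the running hypotheses are met. By \cref{assump:connected-prime-like-no-nugatory}, every connected prime-like link diagram without nugatory crossings has the property that, for each segment $i$, the truncated face-crossing incidence graph $G_{L,i}$ satisfies property~$(*)$. In particular, the extended dual quiver $\extQ_{G_L}=Q_L$ is well-defined and, for each segment $i$, the polynomial $D_{G_{L,i}}(\mathbf{y})$ is a valid dimer face polynomial in the sense of \cref{def:dimer-poly}, and the mutation sequence producing $z_i$ is well-defined.

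Next, I would apply \cref{thm:cluster-stuff-for-link-diag} once for each $i \in \{1,\dots,s\}$ to obtain the identity
\[
F^{Q_L}_{z_i}(\mathbf{y}) \;=\; D_{G_{L,i}}(\mathbf{y}).
\]
Then, for a fixed but arbitrary choice of $i$, I would apply \cref{thm:alexdimer} to $D_{G_{L,i}}$: specializing its variables according to the rule in \eqref{eq:alex-specialization}, which is determined entirely by the over/undercrossing pattern at the endpoints of each segment of $L$, yields a polynomial in $t$ which equals $\Delta_L(t)$ up to the equivalence $\sim$ (i.e., multiplication by a signed power of $t$). Composing the two identities gives the desired conclusion for every $i$.

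The only point that warrants a brief remark, rather than an actual obstacle, is that the specialization in \cref{thm:alexdimer} is phrased for the variables indexed by segments of $L$, whereas the variables of $F^{Q_L}_{z_i}$ are indexed by the faces of $G_L$; but \cref{thm:cluster-stuff-for-link-diag} is proved by matching these index sets via the bijection between segments of $L$ and faces of $G_L$ (with $f_i$ identified with segment $i$), so the specialization transports unambiguously. I do not expect a real difficulty here; the corollary is a formal consequence of the two preceding theorems.
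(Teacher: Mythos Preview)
Your proposal is correct and matches the paper's approach exactly: the paper introduces this corollary with the sentence ``Combining \cref{thm:alexdimer} and \cref{thm:cluster-stuff-for-link-diag}, we obtain the following corollary'' and gives no further proof, so the argument is precisely the two-step chain $F^{Q_L}_{z_i}(\mathbf{y}) = D_{G_{L,i}}(\mathbf{y})$ followed by the specialization of \cref{thm:alexdimer}.
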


\subsection{Comparison of moves}

In \cite{BMS24}, Bazier-Matte and Schiffler show that all cluster variables $z_i$ from \cref{thm:cluster-stuff-for-link-diag} are in fact compatible, using an explicit mutation sequence from the initial quiver\footnote{Technically their initial quiver is $\extQ_{G_L}$ with the arrows reversed. They also show that the mutation sequence to the seed with cluster $\{z_i\}$ returns to the opposite of the initial quiver.} $\extQ_{G_L}$. Their mutation sequence involves two moves on the link diagram: a ``diagram Reidemeister III" move and a ``diagram bigon reduction" move, not to be confused with the graph bigon removal moves from \cref{def:moves}! In this section, we summarize how to reinterpret these moves as compositions of bigon removals and square moves in the face-incidence graph $G_L$.

See the top rows of \cref{fig:diagram-bigon-in-graphs,fig:diagram-R3-in-graphs} for a depiction of diagram bigon reduction and diagram Reidemeister III moves. We emphasize that these are operations on a link diagram without crossing data, as they ignore the crossing information. In \cite{BMS24}, diagram bigon reduction corresponds to the mutation sequence $a, b$, where segments $a,b$ are as in \cref{fig:diagram-bigon-in-graphs}. Diagram RIII corresponds to mutation sequence $a,b,c,a$, where segments $a,b,c$ are as in \cref{fig:diagram-R3-in-graphs}.

The bottom rows of \cref{fig:diagram-bigon-in-graphs,fig:diagram-R3-in-graphs} show the effect of the diagram moves on the face-crossing incidence graph $G_L$. Each diagram move corresponds to a sequence of (B) or (S) moves. The sequence of moves gives exactly the mutation sequence that \cite{BMS24} assign to each diagram move. In summary, \cref{fig:diagram-bigon-in-graphs,fig:diagram-R3-in-graphs} shows that the mutation sequence of \cite{BMS24} can be interpreted in terms of $G_L$, the face-incidence graph of the link diagram.

\begin{figure}[h]
    \centering
    \includegraphics[width=0.7\textwidth]{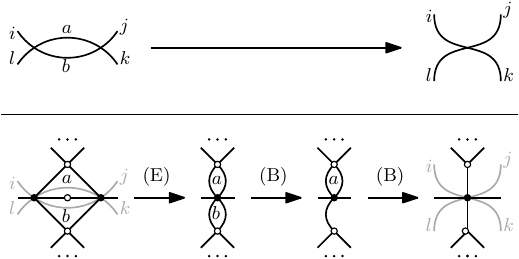}
    \caption{Top: diagram bigon reduction on a link diagram $L$. Bottom: the effect of diagram bigon reduction on the graph $G_L$, which is a sequence of bigon removals.}
    \label{fig:diagram-bigon-in-graphs}
\end{figure}

\begin{figure}[h]
    \centering
    \includegraphics[width=\textwidth]{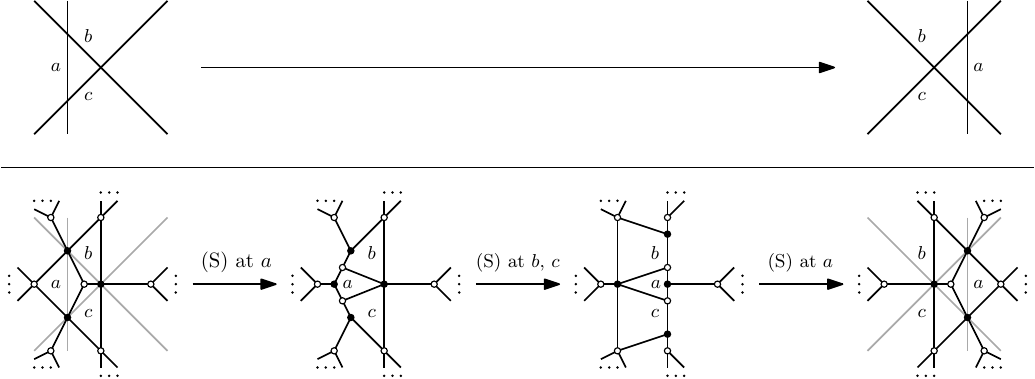}
    \caption{Top: diagram Reidemeister III on a link diagram $L$. Bottom: the effect of diagram Reidemeister III on the graph $G_L$. Assuming that there are no nugatory crossings either before or after the move, all faces of $G_L$ depicted here are square, and the move can be written as a sequence of four square moves. This sequence of four square moves is the ``$Y$-$\Delta$" move of Goncharov and Kenyon \cite[Figure 30]{GK}.} 
\label{fig:diagram-R3-in-graphs}    
\end{figure}

\subsection{Example: 2-bridge links, snake graphs, and type $A$ $F$-polynomials.} 
\label{sec:2_bridge}

In this section, we focus on the special case of 2-bridge links. A number of papers obtained the Alexander polynomials of 2-bridge links by specializing type $A$ $F$-polynomials \cite{B21,Wataru-Yuji,LS-JonesPoly}. By \cite{mussch,MSW}, type $A$ $F$-polynomials are dimer face polynomials for \emph{snake graphs}. 
    (Since snake graphs have property $(*)$ and exhibit type $A$ Dynkin quivers as their dual quivers, see \cref{lem:snake-quiver}, this identification of their dimer face polynomials as type $A$ $F$-polynomials can also be deduced from \cref{thm:dimer-poly-is-F-poly}. 
See also \cite{rabideau2018f}, which provides $F$-polynomials for such snake graphs in the context of their connection to continued fractions.) This also implies that the Alexander polynomials of 2-bridge links are specializations of snake graph dimer face polynomials. Here, we give an alternate proof of these specializations,
 and to do so we first consider a different but related family of graphs, which arise as face-crossing incidence graphs of 2-bridge links. 
In particular, when $L$ is (the standard regular diagram of) a 2-bridge link, and $i$ is a distinguished segment we call the \textit{lower segment} of $L$, we give a concrete description of $G_{L,i}$ and show that $Q_{G_{L,i}}$ is a type $A$ quiver. The graphs $G_{L,i}$ have previously appeared in work of Propp \cite[Section 4]{P05}. We recall Propp's proof that $G_{L,i}$ has the same dimer lattice, and thus dimer face polynomial, as a snake graph.

We define 2-bridge links using their Conway notation, following \cite{M96}. 

\begin{definition}[\cite{M96}]\label{def:standard_regular}
    Let $\alpha_1,\dots,\alpha_m$ be positive integers. The diagram $C(\alpha_1,\dots,\alpha_m)$ is the alternating diagram of the form shown in \cref{fig:standard-regular-2-bridge}.
    \begin{figure}
        \includegraphics[width=0.7\textwidth]{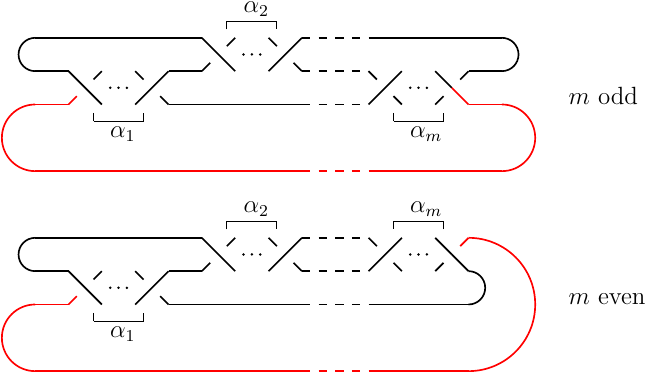}
        \caption{The diagram $C(\alpha_1,\dots,\alpha_m)$ of a 2-bridge link. The lower segment is in red. Note that the crossing information depends on the parity of each $\alpha_i$.}
        \label{fig:standard-regular-2-bridge}
    \end{figure}
    The red highlighted segment in \cref{fig:standard-regular-2-bridge} is the \textbf{lower segment} of the standard regular diagram. A link is a \textbf{2-bridge link} if it has a diagram of the form $C(\alpha_1,\dots,\alpha_m)$. 
    \end{definition}

For $L=C(\alpha_1,\dots,\alpha_m)$ and $i$ the lower segment, the graph $G_{L,i}$ is easy to describe. It is obtained by gluing together subgraphs of the following type.

\begin{definition}
Let $k$ be a positive integer. The \textbf{downward $k$-birdwing} is the bipartite plane graph shown in \cref{fig:birdwing}. The \textbf{upward $k$-birdwing} is the downward $k$-birdwing reflected across the horizontal axis. The downward $0$-birdwing is a single vertical edge, with white vertex on top.
\begin{figure}[h]
        \includegraphics[width=0.15\textwidth]{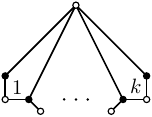}
        \caption{A downward $k$-birdwing.}
        \label{fig:birdwing}
    \end{figure}
\end{definition}

\begin{lemma}\label{lem:2-bridge-gluing-birdwing}
      For $L=C(\alpha_1,\dots,\alpha_m)$ a 2-bridge link and $i$ the lower segment, $G_{L,i}$ can be obtained as follows. In a line from right to left, place a downward $(\alpha_1 -1)$-birdwing, then a disjoint upward $(\alpha_2 -1)$-birdwing, a disjoint $(\alpha_3 -1)$-birdwing, etc. For $j=1, \dots m-1$, draw a \textbf{connecting edge} from the rightmost white (resp. black) vertex of the $j^\text{th}$ birdwing to the leftmost black (resp. white) vertex of the $(j+1)^\text{th}$ birdwing.
\end{lemma}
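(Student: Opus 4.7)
The plan is to verify the description of $G_{L,i}$ by directly unpacking the face-crossing incidence structure of the standard regular diagram $C(\alpha_1, \ldots, \alpha_m)$. The diagram is a horizontal concatenation of $m$ twist regions whose orientations alternate (horizontal/vertical) with the parity of $j$; the $j$-th twist contributes $\alpha_j$ crossings (which become black vertices in $G_L$) and $\alpha_j - 1$ internal lens-shaped bounded regions between consecutive crossings (which become white vertices). The endpoint crossings of each twist also bound one region shared with the adjacent twist (or with the outer/lower face at the two ends of the chain). My goal is to show that, after deleting the two white vertices of the infinite face containing the lower segment $i$, the resulting graph is the gluing of birdwings described.

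First I would focus on a single twist region in isolation. I would check that the bipartite graph built from its $\alpha_j$ black vertices, $\alpha_j - 1$ internal white vertices, and the two ``endpoint'' white vertices (adjacent only to the first or last crossing of the twist) is exactly an $(\alpha_j - 1)$-birdwing. The orientation of the birdwing (upward or downward) is determined by whether the twist is horizontal or vertical, i.e.\ by the parity of $j$; this is an immediate consequence of the vertex-coloring convention on $G_L$ together with the alternating shape of the twists in $C(\alpha_1, \ldots, \alpha_m)$. This step reduces to drawing one picture of a generic twist and reading off the resulting subgraph.

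Next I would analyze the gluing between twists $j$ and $j+1$. The region shared between them is incident to exactly one crossing at each end of the two twists, so it becomes a single white vertex joined by two edges, one to the last black vertex of the $j$-th birdwing and one to the first black vertex of the $(j+1)$-th birdwing. Contracting one of these two edges (equivalently, recognizing that the shared white vertex is degree two and identifying it with the ``connecting edge'' of the lemma's gluing) produces precisely the connecting edge described in the statement, with the color/position of its endpoints alternating correctly with $j$. I would handle the two end-twists separately: the lower segment $i$ lies in the outer face, and the two white vertices of $G_L$ on that face that get deleted in the truncation $G_{L,i}$ are exactly the two ``outer'' endpoint white vertices at the two ends of the chain of twists, yielding the shape of the first and last birdwings in the description.

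The main obstacle I anticipate is not any single deep step but careful bookkeeping: the parity of $j$ controls the orientation of each birdwing, the colors of the vertices of the shared region, and which of the two endpoint white vertices of a twist is absorbed into the infinite face. To avoid getting lost, I would check the small cases $m = 1$ and $m = 2$ explicitly and then argue by induction on $m$: appending an $(m+1)$-th twist only changes the graph locally near what was previously the leftmost endpoint region, replacing the leftmost endpoint white vertex of the current construction by a new $(\alpha_{m+1}-1)$-birdwing glued on via a single connecting edge, with orientation flipped from the current leftmost birdwing. Once the conventions are fixed this inductive step is immediate from the local picture of a twist.
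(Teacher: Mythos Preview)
Your overall strategy matches the paper's: identify each twist region with a birdwing and identify the edges ``between'' adjacent twists with the connecting edges. The paper's proof is essentially a one-line ``by inspection,'' noting that the $(\alpha_j-1)$-birdwing is the subgraph of $G_{L,i}$ on the $\alpha_j$ crossings of twist $j$ together with the $\alpha_j-1$ lens regions and the single region above (or below) that twist, and that the connecting edges are the edges bounding the face of $G_{L,i}$ corresponding to the segment joining twist $j$ to twist $j+1$.

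There is, however, a concrete error in your gluing analysis. You posit a single ``shared region'' between twists $j$ and $j+1$ that becomes a new degree-$2$ white vertex, and you then contract one of its edges to obtain a single connecting edge. In the standard diagram $C(\alpha_1,\dots,\alpha_m)$ this is not what happens: the two planar regions bordering the connecting segment are precisely the apex region of the $j$-th birdwing on one side and the apex region of the $(j+1)$-th birdwing on the other. No new white vertex appears and no contraction is needed; $G_{L,i}$ is literally the flock graph. Correspondingly, the lemma's ``(resp.)'' is asserting \emph{two} connecting edges for each $j$ (one from the rightmost white of the $j$-th birdwing to the leftmost black of the $(j+1)$-th, and one from the rightmost black to the leftmost white), not a single edge depending on parity; this is confirmed later in the proof of \cref{prop:flock-is-snake}. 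Your planned check of the case $m=2$ would immediately reveal both points, so the fix is routine, but as written the gluing paragraph is incorrect.
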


We call graphs constructed as in \cref{lem:2-bridge-gluing-birdwing} \textbf{flock graphs}. See \cref{fig:flock-graph-on-2-bridge} for an example of \cref{lem:2-bridge-gluing-birdwing}.

\begin{proof}
    This is clear from the definition of $C(\alpha_1, \dots, \alpha_m)$ and of the graph $G_{L,i}$. Indeed, by inspection, the downward $(\alpha_1 -1)$-birdwing is precisely the subgraph of $G_{L,i}$ generated by the white vertices corresponding to the regions enclosed by the first $\alpha_1$ crossings and the region above them, as well as the black vertices corresponding to those crossings. The $(\alpha_j-1)$-birdwing is similarly the subgraph of $G_{L,i}$ generated by the white vertices of regions enclosed by the $\alpha_j$ crossings and the region above or below them, and the black vertices of those crossings. The connecting edges surround the segment of $C(\alpha_1, \dots, \alpha_m)$ which goes from the $\alpha_j$ crossings to the $\alpha_{j+1}$ crossings.
\end{proof}

\begin{figure}[h]
    \centering
    \includegraphics[width=0.8\linewidth]{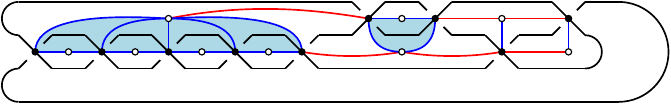}
    \caption{The diagram $L=C(5,2,1,1)$ and the graph $G_{L,i}$, where $i$ is the lower segment. The birdwings are shown in blue, while the connecting edges are in red.} 
    \label{fig:flock-graph-on-2-bridge}
\end{figure}

\begin{lemma}\label{lem:2-bridge-quiver}
For $L=C(\alpha_1,\dots,\alpha_m)$ a 2-bridge link and $i$ the lower segment, the quiver $Q_{G_{L,i}}$ is a type $A$ quiver (i.e. it is an orientation of a path).
\end{lemma}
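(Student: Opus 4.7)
My plan is to prove \cref{lem:2-bridge-quiver} by enumerating the bounded faces of $G_{L,i}$ using the explicit flock-graph description from \cref{lem:2-bridge-gluing-birdwing}, arranging them in a linear sequence, and then verifying that the dual quiver has exactly one arrow between each consecutive pair and no arrows elsewhere.

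First, I would examine the bounded faces of a single $k$-birdwing viewed as a subgraph of $G_{L,i}$. The faces internal to a $k$-birdwing form a linear sequence of $k$ quadrilateral faces, such that consecutive faces in the sequence share exactly one edge while non-consecutive faces share none. Hence the induced subquiver of $Q_{G_{L,i}}$ on these faces is an orientation of a path on $k$ vertices.

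Second, I would analyze what happens at each junction between consecutive birdwings. A connecting edge joining the rightmost vertex of one birdwing to the leftmost vertex of the next cuts off a new bounded region of the plane, creating one additional ``junction face'' of $G_{L,i}$. This junction face is bounded by the connecting edge together with a short chain of edges drawn from the right boundary of the left birdwing and the left boundary of the right birdwing. Crucially, it meets the rest of $G_{L,i}$ only along the rightmost face of the left birdwing and the leftmost face of the right birdwing. Concatenating the per-birdwing sequences from the first step with the junction faces in between yields a total ordering $(f_1, f_2, \dots, f_n)$ of the bounded faces of $G_{L,i}$ in which edge-adjacency occurs exactly between consecutive entries.

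Third, I would check that after deleting $2$-cycles in \cref{def:dual-quiver}, there is precisely one arrow between each consecutive pair $f_s, f_{s+1}$ in the above sequence. Non-consecutive pairs share no edges, so contribute nothing. For consecutive pairs internal to a single birdwing, the faces share a single edge and thus contribute a single arrow. At a junction, the junction face and an adjacent birdwing face typically share one or two edges; I would verify by tracking the bipartite $2$-coloring at the rightmost and leftmost vertices of each birdwing that these shared edges are oriented in the \emph{same} direction across the dual (i.e.\ they produce parallel arrows rather than a $2$-cycle), so that the deletion of $2$-cycles leaves at least one arrow between $f_s$ and $f_{s+1}$.

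The main obstacle is the last bookkeeping point: verifying that no pair of shared edges between consecutive faces contributes an opposite pair of arrows which would then cancel to yield a disconnection. This amounts to a short case analysis on the parity of each $\alpha_j$, using that the rule in \cref{lem:2-bridge-gluing-birdwing} alternates upward and downward birdwings so that the bipartite coloring at the glued endpoints flips in a predictable way. Once this case analysis is complete, the sequence of bounded faces exhibits $Q_{G_{L,i}}$ as an orientation of a path, i.e.\ a type $A$ quiver.
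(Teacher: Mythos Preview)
Your approach is the same as the paper's: the paper simply notes that the two extreme non-infinite faces of $G_{L,i}$ are each adjacent to a unique non-infinite face while every other non-infinite face is adjacent to exactly two, and concludes that $Q_{G_{L,i}}$ is an orientation of a path.

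The concern you flag in your third step is not actually present, and the case analysis you propose is unnecessary. By \cref{lem:2-bridge-gluing-birdwing} there are \emph{two} connecting edges between consecutive birdwings, not one; together with one boundary edge from each of the two neighbouring birdwings (or the single edge of a $0$-birdwing), these four edges bound a quadrilateral junction face. Inside a $k$-birdwing, consecutive quadrilateral faces share only the single edge from the apex to their common fan vertex. Hence \emph{every} pair of consecutive non-infinite faces of $G_{L,i}$ shares exactly one edge, so $Q_{G_{L,i}}$ has no $2$-cycles and no multi-arrows to begin with; the deletion step in \cref{def:dual-quiver} is vacuous here. Finally, the parity of $\alpha_j$ plays no role: the graph $G_{L,i}$ depends only on the tuple $(\alpha_1,\dots,\alpha_m)$, not on the crossing information of $L$.
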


\begin{proof}
The leftmost and rightmost non-infinite faces of $G_{L,i}$ are adjacent to a unique non-infinite face of $G_{L,i}$. All other non-infinite faces are adjacent to two others. Thus, $Q_{G_{L,i}}$ is an orientation of a path.
\end{proof}

In \cite[Section 4]{P05}, Propp shows that flock graphs have the same dimer lattices as \emph{snake graphs.}
A snake graph is a connected bipartite plane graph obtained by gluing together squares along one edge so that they ``snake" up and to the right in the plane. See \cref{fig:snake_}, right, for an example. A more precise definition following \cite[Sec. 2]{CS16} appears below.

\begin{definition}
\label{def:snake_graphs} 
Let $(\alpha_1, \dots, \alpha_m)$ be a sequence of positive integers whose sum is $d+1$. The associated \textbf{sign sequence} $(s_1,\dots, s_{d+1}) \in \{+,-\}^{d+1}$ consists of $\alpha_1$ minuses, then $\alpha_2$ pluses, then $\alpha_3$ minuses, and so on. The \textbf{snake graph} $\mathcal{G}[\alpha_1, \dots, \alpha_m]$ consists of $d$ boxes $B_1, \dots, B_d$ glued together as follows.
\begin{itemize}
    \item Place $B_1$ in the plane so that its edges are parallel to the axes. Label the bottom edge with $s_1=-$. 
    \item Suppose $B_1, \dots, B_{j}$ are already placed, and either the bottom or left edge of $B_j$ is labeled with $s_j$. If $s_{j+1} \neq s_j$, glue $B_{j+1}$ to $B_j$ along the edge of $B_{j+1}$ parallel to the labeled edge. If $s_{j+1} = s_j$, glue $B_{j+1}$ to $B_j$ along the unique edge of $B_j$ which shares a vertex with the labeled edge of $B_j$ and is either the top or the right edge. In both cases, label the shared edge of $B_{j+1}$ and $B_{j}$ with $s_{j+1}$. 
    \item Although there is no $B_{d+1}$ to glue to $B_d$, we label an edge of $B_{d}$ with $s_{d+1}$ according to the rules above.
\end{itemize}
 We properly color the vertices of $\mathcal{G}[\alpha_1, \dots, \alpha_m]$ so that the bottom left vertex is white. If two parallel edges of $B_j$ are labeled with $s_j$ and $s_{j+1}$, we call the other two edges \textbf{connecting edges} and draw them in red. 
\end{definition}

\begin{remark}\label{rem:snake-graph-zig-zag}
    If the connecting edges of $\mathcal{G}[\alpha_1, \dots, \alpha_m]$ are removed, we are left with a disjoint union of $m$ \emph{zig-zags}, consisting of $\alpha_j-1$ boxes for $j=1, \dots, m$. Said differently, $\mathcal{G}[\alpha_1, \dots, \alpha_m]$ consists of $m$ zig-zags attached using connecting edges. See \cref{fig:snake_}, right for an example; the edges of the zig-zags are in blue.
\end{remark}

\begin{figure}
    \centering
    \includegraphics[width=0.4\linewidth]{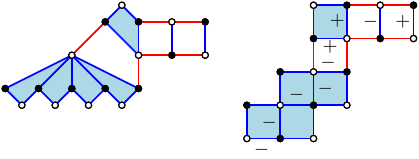}
    \caption{Left: The graph $G_{C(5,2,1,1), i}$. Right, the snake graph $\mathcal{G}(5,2,1,1)$. The sign sequence for the snake graph is $(-,-,-,-,-,+,+,-,+)$.} 
    \label{fig:snake_}
\end{figure}

As a companion to \cref{lem:2-bridge-quiver}, we also note the following result describing the dual quivers for snake graphs, using the same proof.

\begin{lemma}\label{lem:snake-quiver}
For a snake graph $G = \mathcal{G}[\alpha_1, \dots, \alpha_m]$ defined as in \cref{def:snake_graphs}, the quiver $Q_G$ is a type $A$ quiver (i.e. it is an orientation of a path).
\end{lemma}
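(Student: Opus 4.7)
The plan is to follow the template set by the proof of \cref{lem:2-bridge-quiver}, replacing the combinatorics of flock graphs with that of snake graphs. First I would observe that the non-infinite faces of the snake graph $\mathcal{G}[\alpha_1, \dots, \alpha_m]$ are exactly the boxes $B_1, \dots, B_d$ appearing in \cref{def:snake_graphs}, so the vertex set of $Q_G$ is naturally indexed by $\{1, \dots, d\}$.

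The key step is to verify that each box $B_j$ shares an edge with at most two other boxes, namely $B_{j-1}$ (when $j > 1$) and $B_{j+1}$ (when $j < d$). This follows directly from the inductive construction in \cref{def:snake_graphs}: $B_{j+1}$ is glued to $B_j$ along exactly one edge, and because the placement of successive boxes proceeds monotonically up or to the right in the plane, once $B_j$ is surrounded on some sides by earlier boxes, no later box can later be glued onto a new side of $B_j$ without overlapping one of $B_1, \dots, B_{j-1}$ or $B_{j+1}$. In particular, $B_1$ and $B_d$ are each edge-adjacent to a unique other box, while every internal $B_j$ is edge-adjacent to exactly two.

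Given this adjacency structure, constructing $Q_G$ via \cref{def:dual-quiver} places a mutable vertex at each $B_j$ and yields a single arrow between $B_j$ and $B_{j+1}$ for $1 \leq j \leq d-1$, with an orientation determined by the colors of the endpoints of the shared edge. Because no two distinct boxes share more than one edge, no oriented $2$-cycles arise in the preliminary quiver, and the $2$-cycle deletion step does nothing. Thus $Q_G$ is an orientation of the path on the vertices $B_1, \dots, B_d$, which is precisely what it means to be a type $A$ quiver.

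The only real obstacle is a careful bookkeeping check that the two gluing rules of \cref{def:snake_graphs} (depending on whether $s_{j+1} = s_j$ or $s_{j+1} \neq s_j$) never cause a previously-built box $B_k$ with $k < j$ to share an edge with $B_{j+1}$. This can be dispatched by a short case analysis on the four local configurations at $B_j$, or more cleanly by using the fact that the entire snake graph lies in the first quadrant of a grid with each new box occupying a previously-unused unit square lying weakly up and to the right of $B_j$; I expect no deeper difficulty beyond this combinatorial verification.
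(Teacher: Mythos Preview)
Your proposal is correct and follows essentially the same approach as the paper, which simply says the lemma holds ``using the same proof'' as \cref{lem:2-bridge-quiver}. You have spelled out in more detail what that shared argument looks like for snake graphs---identifying the non-infinite faces with the boxes $B_1,\dots,B_d$ and checking the adjacency pattern---but the underlying idea is identical.
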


The following proposition was shown in \cite[Section 4]{P05}; we include the proof here for the convenience of the reader.

\begin{proposition}\label{prop:flock-is-snake}
    Let $L=C(\alpha_1,\dots,\alpha_m)$ be a 2-bridge link and let $i$ be the lower segment. The dimer lattices of $G_{L,i}$ and the snake graph $\mathcal{G}[\alpha_1,\dots,\alpha_m]$ are isomorphic. 
\end{proposition}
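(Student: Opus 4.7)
The strategy is to exhibit an explicit face correspondence between $G_{L,i}$ and $\mathcal{G}[\alpha_1,\dots,\alpha_m]$ under which the multivariate dimer face polynomials of the two graphs coincide, and then invoke the injectivity of the height map to upgrade this to a lattice isomorphism. First I would identify the non-infinite faces of $G_{L,i}$ with the boxes of $\mathcal{G}[\alpha_1,\dots,\alpha_m]$: each $(\alpha_j{-}1)$-birdwing has $\alpha_j{-}1$ triangular/quadrilateral internal faces, which I will match with the $\alpha_j{-}1$ boxes forming the $j$-th zig-zag of the snake graph, while the pair of connecting edges joining the $j$-th and $(j{+}1)$-th birdwings carves out a face of $G_{L,i}$ that I will match with the corresponding transition box of the snake graph. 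A counting check confirms that both graphs have exactly $\sum_j \alpha_j - 1$ non-infinite faces, so this assignment is a bijection.

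Next I would verify that, under this bijection, the dual quivers $Q_{G_{L,i}}$ and $Q_{\mathcal{G}[\alpha_1,\dots,\alpha_m]}$ coincide as type $A$ quivers with identical orientations. Both are paths by \cref{lem:2-bridge-quiver} and \cref{lem:snake-quiver}, so the only content is to check the orientation at each arrow. Each arrow of $Q_G$ is determined locally by the black/white coloring of the edge it crosses in $G$. At an arrow inside a birdwing, this is forced by the coloring of the crossing in the link diagram; at an arrow crossing a connecting edge, one checks the two adjacent birdwings contribute a compatible colored edge. Matching this against the snake graph, where the coloring alternates in the predictable pattern dictated by the sign sequence, gives the compatibility of orientations. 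By \cref{thm:dimer-poly-is-F-poly}, both $D_{G_{L,i}}$ and $D_{\mathcal{G}[\alpha_1,\dots,\alpha_m]}$ are $F$-polynomials in $\mathcal{A}(Q_{G_{L,i}}) = \mathcal{A}(Q_{\mathcal{G}[\alpha_1,\dots,\alpha_m]})$ corresponding to cluster variables obtained by analogous reduction sequences, and so the two polynomials are equal in the identified face variables.

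Finally, I would bootstrap this polynomial identity to a lattice isomorphism. By \cref{lem:mon}, for each of the two graphs the map $M \mapsto \height(M)$ is injective, so the bijection between the supports of the two equal polynomials gives a bijection $\Phi : \mathcal{D}_{G_{L,i}} \to \mathcal{D}_{\mathcal{G}[\alpha_1,\dots,\alpha_m]}$ characterized by $\height(\Phi(M)) = \height(M)$. By \cref{lem:ht_flip} together with \cref{thm:dimer-lattice}, the partial order on each dimer lattice is precisely the coordinatewise order on height vectors, so $\Phi$ is an isomorphism of posets and hence of lattices.

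The main obstacle I anticipate is the orientation compatibility check in the second step. The coloring of $G_{L,i}$ is inherited from the knot diagram (with black vertices on the crossings), while the coloring of the snake graph is fixed by declaring the bottom-left vertex white; under the face bijection these colorings match up at each birdwing/zig-zag only after a careful parity bookkeeping across successive birdwings, depending on the parities of the $\alpha_j$. Propp's argument in \cite[Section 4]{P05} carries out exactly this local verification, and adapting it here reduces to checking agreement of up-flip moves at each face, which together with the polynomial-level argument above closes the proof.
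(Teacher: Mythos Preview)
Your approach is genuinely different from the paper's, and while the overall strategy can be made to work, there is a real gap in your step 3 and an unjustified deferral in step 2.

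The paper's proof is completely elementary: it introduces an intermediate graph $G$ consisting of two rows of vertices $v_1,\dots,v_m$ and $u_1,\dots,u_m$ with $\alpha_j$ parallel edges between $v_j$ and $u_j$, plus connecting edges $v_jv_{j+1}$ and $u_ju_{j+1}$. It then observes that both $G_{L,i}$ and $\mathcal{G}[\alpha_1,\dots,\alpha_m]$ can be transformed into $G$ using only (E) moves (each birdwing collapses to a multi-edge, and each zig-zag collapses to a multi-edge). Since the proof of \cref{lem:e-moves-dont-change-dimers} exhibits an explicit order-preserving bijection of dimers under an (E) move, this immediately gives a lattice isomorphism. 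No cluster algebra, no $F$-polynomials, no orientation bookkeeping.

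Your route instead invokes \cref{thm:dimer-poly-is-F-poly} twice and tries to conclude that the two dimer face polynomials agree as $F$-polynomials. The gap is in the sentence ``corresponding to cluster variables obtained by analogous reduction sequences, and so the two polynomials are equal.'' Having the same initial quiver does not by itself force two $F$-polynomials to coincide; you need to know that the two reduction sequences produce the \emph{same} cluster variable. You never argue this. It can be repaired: by \cref{cor:easy-cluster-expansion} both cluster variables have the all-ones denominator vector, and in finite type $A$ distinct cluster variables have distinct denominator vectors, so they must agree. But you do not say this, and ``analogous reduction sequences'' is not a proof.

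Separately, in step 2 you identify the orientation check as the ``main obstacle'' and then defer it to Propp. But Propp's argument in \cite[Section 4]{P05}, as reproduced in the paper, does not carry out any orientation verification; it is precisely the (E)-move reduction above. So the deferral points to a source that does not contain what you need. (The orientation match is in fact true, and follows \emph{a posteriori} from the paper's argument via \cref{lem:moves-effect-on-Q}, but you would need an independent verification to avoid circularity.)

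In short: your approach can be salvaged with the denominator-vector argument and an honest orientation check, but it is substantially heavier than the two-line (E)-move argument the paper actually uses.
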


\begin{proof}
Let $G$ be the graph constructed as follows: place two rows of $m$ vertices $v_1, \dots, v_m$ and $u_1, \dots u_m$ in the plane, and color $v_1$ white, $u_1$ black, and have the colors alternate along the rows. Add $\alpha_j$ edges between $v_j$ and $u_j$. Then add connecting edges from $v_{j}$ to $v_{j+1}$ and from $u_{j}$ to $u_{j+1}$.

We will show that using (E) moves, both $G_{L,i}$ and $\mathcal{G}:=\mathcal{G}[\alpha_1,\dots,\alpha_m]$ can be transformed into $G$. The result will then follow by \cref{lem:e-moves-dont-change-dimers}.

For the graph $G_{L,i}$, each $k$-birdwing may be contracted to two vertices with $k+1$ edges between them (see \cref{fig:birdwing-zigzag-collapse} for an example). So by \cref{lem:2-bridge-gluing-birdwing}, it is clear that $G_{L,i}$ can be transformed to $G$ using (E) moves.

\begin{figure}[h]
    \includegraphics[width=0.8\textwidth]{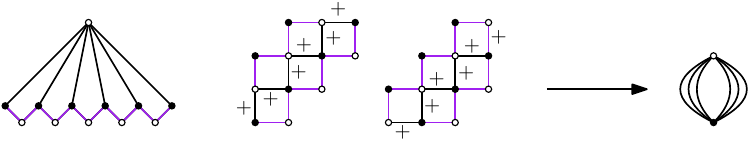}
    \caption{Using (E) moves to turn a birdwing or a zig-zag into two vertices connected by multiple edges. }
    \label{fig:birdwing-zigzag-collapse}
    \end{figure}

We now turn to the snake graph $\mathcal{G}$. Note that any edge of $\mathcal{G}$ which is in a zig-zag but is not labeled by a sign can be contracted away using (E) moves, since one of the vertices in such an edge is degree 2. Performing these moves collapses a zig-zag with $k-1$ boxes into a pair of vertices connected by $k$ edges (see \cref{fig:birdwing-zigzag-collapse} for an example). So, using \cref{rem:snake-graph-zig-zag}, $\mathcal{G}$ can be transformed into a properly colored graph $G'$ consisting of edges of multiplicity $\alpha_1, \dots, \alpha_m$ connected via connecting edges in a line. Rotating 45 degrees to the right, we see that the first of these edges has the white vertex on top, since the bottom left vertex of $\mathcal{G}$ is white. Thus, $G'=G$.
\end{proof}

We summarize the content of this section in the following proposition.

\begin{proposition}
\label{prop:TypeAF}
Let $L=C(\alpha_1, \dots, \alpha_m)$ be a 2-bridge link and $i$ its lower segment. Let $\mathcal{G}:=\mathcal{G}(\alpha_1, \dots, \alpha_m)$ be the corresponding snake graph.  The multivariate Alexander polynomial $D_{G_{L,i}}$ is equal to dimer face polynomial $D_{\mathcal{G}}$ of the snake graph. Moreover, it is a type $A$ $F$-polynomial $F_z^{Q_{G_{L,i}}}$. In particular, the Alexander polynomial $\Delta_L$ may be obtained from any of these polynomials by the specialization of \cref{thm:alexdimer}.
\end{proposition}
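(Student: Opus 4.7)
The plan is to assemble the three assertions of the proposition from results already established earlier in the paper, since each follows from a combination of one or two results proved above.

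First, I would prove the equality $D_{G_{L,i}} = D_{\mathcal{G}}$. The key input is \cref{prop:flock-is-snake}, whose proof reduces both $G_{L,i}$ and $\mathcal{G}$ to a common intermediate bipartite plane graph $G$ (with rows of vertices $v_1,\dots,v_m$ and $u_1,\dots,u_m$ joined by multi-edges and connecting edges) using only sequences of edge contraction/uncontraction moves (E). By \cref{lem:e-moves-dont-change-dimers}, an (E) move does not alter the dimer face polynomial once one identifies faces across the move (since there is an order-preserving bijection on dimer lattices sending flippable faces to flippable faces). Iterating this along each of the two (E)-move sequences yields $D_{G_{L,i}} = D_G = D_{\mathcal{G}}$, with a canonical identification of non-infinite faces (namely the $m-1$ faces between consecutive birdwings on the $G_{L,i}$ side match the connecting faces of the snake graph on the $\mathcal{G}$ side).

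Next, I would argue that $D_{G_{L,i}}$ is a type $A$ $F$-polynomial. By \cref{assump:connected-prime-like-no-nugatory} and the discussion preceding it, $G_{L,i}$ has property $(*)$, so \cref{thm:dimer-poly-is-F-poly} produces a reduction sequence $\mathbf{r}$ and a cluster variable $z$ in $\mathcal{A}(\mathbf{x},Q_{G_{L,i}})$ (where the initial mutable quiver is $Q_{G_{L,i}}$) with $F^{Q_{G_{L,i}}}_z = D_{G_{L,i}}$. By \cref{lem:2-bridge-quiver}, the dual quiver $Q_{G_{L,i}}$ is an orientation of a path, hence is a type $A$ quiver; so the cluster algebra in question is of type $A$, and $F^{Q_{G_{L,i}}}_z$ is a type $A$ $F$-polynomial.

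Finally, the specialization to the Alexander polynomial is immediate: \cref{thm:alexdimer} asserts that for any connected, prime-like link diagram without nugatory crossings and any choice of segment $i$, the specialization \eqref{eq:alex-specialization} applied to $D_{G_{L,i}}$ yields $\Delta_L(t)$ up to $\sim$. Applying this with $L = C(\alpha_1,\dots,\alpha_m)$ and $i$ the lower segment, and using the equality $D_{G_{L,i}}=D_{\mathcal{G}}=F_z^{Q_{G_{L,i}}}$ established in the previous steps, transports the specialization to each of the other two polynomials.

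I do not expect a serious obstacle here, as every ingredient is already in place: the delicate work was done in \cref{prop:flock-is-snake} (matching the dimer lattices of the flock graph and the snake graph via (E) moves) and in the general \cref{thm:dimer-poly-is-F-poly,thm:alexdimer}. The only point requiring care is to make the face identification through the intermediate graph $G$ explicit, so that the equality $D_{G_{L,i}}=D_{\mathcal{G}}$ really is an equality of polynomials in the same set of variables rather than merely up to a relabeling; but this is routine once one traces through the (E)-move reductions.
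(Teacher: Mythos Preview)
Your proposal is correct and follows essentially the same approach as the paper: the paper's proof also cites \cref{prop:flock-is-snake} for $D_{G_{L,i}}=D_{\mathcal{G}}$, \cref{thm:dimer-poly-is-F-poly} together with \cref{lem:2-bridge-quiver} for the type~$A$ $F$-polynomial claim, and \cref{thm:alexdimer} for the specialization. One small inaccuracy in your parenthetical: there are more than $m-1$ non-infinite faces to identify (the intermediate graph $G$ has $\sum_j(\alpha_j-1)$ bigon faces in addition to the $m-1$ connecting faces), but this does not affect the argument.
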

\begin{proof}
That $D_{G_{L,i}}$ is a type $A$ $F$-polynomial is a consequence of \cref{thm:dimer-poly-is-F-poly} and \cref{lem:2-bridge-quiver}. The equality of $D_{G_{L,i}}$ and $D_{\mathcal{G}}$ follows from \cref{prop:flock-is-snake}. The last sentence is implied by \cref{thm:alexdimer}.
\end{proof}

\section{Applications to cluster structures on positroid varieties} \label{sec:positroid-var-applications}

In this section, we relate our results to another common occurrence of bipartite plane graphs in cluster algebras: reduced plabic graphs and the cluster structure on open positroid varieties. In particular, we use our results to show for an open positroid variety, every Pl\"ucker coordinate and every twisted Pl\"ucker coordinate is a cluster monomial.

\subsection{Background on plabic graphs and open positroid varieties}
In this section, we denote by $\Gr_{k,n}$ the Grassmannian of $k$-planes in $\mathbb{C}^n$, and denote by $P_I$ the Pl\"ucker coordinate on $\Gr_{k,n}$ indexed by the $k$-element subset $I \subset [n]$. We assume familiarity with \cite{MS-twist,GL-positroid}, and point the reader to those sources for the geometric and combinatorial details omitted here.

Plabic graphs were originally defined by Postnikov \cite{Postnikov}, and in that formulation were not required to be bipartite. Here we largely follow the conventions of \cite[Section 3.1]{MS-twist}, adding Postnikov's \emph{leafless} assumption \cite[Definition 12.5]{Postnikov} for simplicity.

\begin{definition}
    A \textbf{plabic graph} of type $(k,n)$ is a planar bicolored graph $G$ embedded in a disk with $n$ black boundary vertices $1, \dots, n$ on the boundary of the disk such that
    \begin{itemize}
        \item each internal (i.e. non-boundary) vertex is adjacent only to vertices of the opposite color;
        \item each boundary vertex is adjacent to exactly zero or one internal vertex;
        \item any vertex of degree $1$ is adjacent to a boundary vertex;
        \item $G$ has $n-k$ more black vertices than white vertices. 
    \end{itemize}
    A plabic graph $G$ is \textbf{reduced} if no sequence of (E) or (S) moves applied to $G$ produces a bigon face. 
    The \textbf{dual quiver with frozens} $\frzQ_G$ of $G$ is the quiver obtained from $Q_G$ by freezing vertices corresponding to faces adjacent to the infinite face.
\end{definition}

Plabic graphs were introduced by Postnikov \cite{Postnikov} to parametrize positroid cells, which stratify the totally nonnegative Grassmannian. Inspired by his work, Knutson--Lam--Speyer introduced \emph{open positroid varieties}, which stratify $\Gr_{k,n}$ \cite{KLS}. Each reduced plabic graph $G$ of type $(k,n)$ corresponds to an open positroid variety $\Pi_G^{\circ} \subset \Gr_{k,n}$. Reduced plabic graphs related by (E) and (S) moves correspond to the same open positroid variety. Each open positroid variety $\Pi_G^{\circ}$ is cut out by the vanishing of certain Pl\"ucker coordinates and the non-vanishing of others (see \cref{lem:nonvanishing-pluckers-matching}).

For any reduced plabic graph $G$, there is a combinatorial procedure to label its faces with Pl\"ucker coordinates $\{P_{I_f}\}_{f \in \faces(G)}$, called the \emph{source}\footnote{One may also label the faces of $G$ with its \emph{target} Pl\"ucker coordinates. This will produce the target seed rather than the source seed. The target and source seeds are related by two applications of the twist map, follows by rescaling by Laurent monomials in frozen variables.} Pl\"ucker coordinates of $G$. These are non-vanishing on $\Pi_G^\circ$. The \textbf{source seed} $\Sigma_G$ associated to $G$ is the seed whose quiver is $\frzQ_G$ and whose cluster is $\{P_{I_f}\}_{f \in \faces(G)}$. 

The following theorem is due to Scott \cite{Scott} in the case of the top-dimensional positroid variety, Serhiyenko--Sherman-Bennett--Williams \cite{SSBW} in the case of open Schubert varieties, and Galashin--Lam \cite{GL-positroid} for arbitrary open positroid varieties.

\begin{theorem}\label{thm:positroid-cluster-struc}
    Let $G$ be a reduced plabic graph and $\Sigma_G$ the corresponding source seed. Then 
    \[\mathbb{C}[\Pi_G^{\circ}]= \mathcal{A}(\Sigma_G).\]
    Further, if $G, G'$ are related by a square move, then $\Sigma_G$ and $\Sigma_{G'}$ are related by mutation. 
\end{theorem}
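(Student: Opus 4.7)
I would start with the second assertion. The quiver compatibility $\frzQ_{G'} = \mu_f(\frzQ_G)$ is essentially \cref{lem:moves-effect-on-Q} applied to the mutable part of $\frzQ_G$, together with the observation that the frozen vertices (faces adjacent to the boundary of the disk) are untouched by a local move in the interior. For the cluster compatibility, one must show that the source Pl\"ucker labels $P_{I_f}$ in $\Sigma_G$ and $P_{I'_f}$ in $\Sigma_{G'}$ satisfy the exchange relation
\[
P_{I_f} \, P_{I'_f} \;=\; \prod_{a \to f} P_{I_a} \;+\; \prod_{f \to b} P_{I_b},
\]
where the labels of the surrounding faces $a, b$ are unchanged by the square move. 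This unpacks to a short three-term Pl\"ucker relation in $\Gr_{k,n}$, which one extracts by tracking how the four trips of $G$ that pass through the square at $f$ are rerouted in $G'$.

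\textbf{Plan: the algebra equality.} The inclusion $\mathcal{A}(\Sigma_G) \subseteq \mathbb{C}[\Pi_G^{\circ}]$ is the easier half. The initial cluster variables are regular on $\Pi_G^{\circ}$ by the definition of source labels, and the frozen ones are nowhere vanishing there. By the square-move step above, every seed reachable by mutating at square faces consists entirely of Pl\"ucker coordinates, hence is regular on $\Pi_G^{\circ}$; this extends to all cluster variables via the Laurent phenomenon (\cref{thm:cluster-Laurent-positivity-denom}) and the connectedness of the exchange graph under square-move mutations. For the deeper inclusion $\mathbb{C}[\Pi_G^{\circ}] \subseteq \mathcal{A}(\Sigma_G)$, I would invoke the Muller--Speyer twist automorphism $\tau$ of $\Pi_G^{\circ}$ and their explicit formula expressing each pullback $\tau^{*}(P_J)$, for $P_J$ nonvanishing on $\Pi_G^{\circ}$, as a weighted sum over almost perfect matchings of a subgraph of $G$. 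By \cref{thm:dimer-poly-is-F-poly}, this dimer sum is an $F$-polynomial of $\mathcal{A}(\Sigma_G)$ multiplied by a Laurent monomial in frozen variables; thus $\tau^{*}(P_J)$ is a cluster monomial. Since nonvanishing Pl\"ucker coordinates generate $\mathbb{C}[\Pi_G^{\circ}]$ and $\tau^{*}$ is an algebra automorphism, this yields $\mathbb{C}[\Pi_G^{\circ}] \subseteq \mathcal{A}(\Sigma_G)$.

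\textbf{Main obstacle.} The delicate step is the bridge between the two combinatorial descriptions of $\tau^{*}(P_J)$: the Muller--Speyer formula lives in edge (equivalently, target Pl\"ucker) variables and uses almost perfect matchings, whereas \cref{thm:dimer-poly-is-F-poly} gives $F$-polynomials in face variables and uses honest perfect matchings. The change of variables in \cref{rem:dimer-polyomial-edges} supplies the algebraic bridge, but one must carefully account for both the $g$-vector of the relevant cluster variable and the twist map's action on the frozen coordinates to confirm that the net correction is exactly the monomial prefactor needed for $\tau^{*}(P_J)$ to be a cluster monomial, rather than merely a Laurent expression with uncontrolled denominator.
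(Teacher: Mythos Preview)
The paper does \emph{not} prove this theorem: it is quoted as a background result, attributed to Scott for the Grassmannian, to Serhiyenko--Sherman-Bennett--Williams for open Schubert varieties, and to Galashin--Lam in general. Those proofs go through categorification (Leclerc's cluster categories and related machinery), not through dimer combinatorics. So there is no ``paper's own proof'' to compare against; you are proposing an independent argument.

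Your plan has real gaps. For the inclusion $\mathcal{A}(\Sigma_G)\subseteq\mathbb{C}[\Pi_G^\circ]$, the Laurent phenomenon only tells you that an arbitrary cluster variable is a Laurent polynomial in the source Pl\"uckers, whose denominator may involve \emph{mutable} variables; that does not force regularity on $\Pi_G^\circ$. Nor do square moves exhaust the mutations: a reduced plabic graph can have mutable faces that are hexagons or larger, and mutating there is not a square move, so ``connectedness of the exchange graph under square-move mutations'' does not reach all seeds. The actual arguments in the cited papers use geometric input (normality of positroid varieties, a starfish-type argument, or categorical machinery) that you have not supplied.

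For the reverse inclusion your idea is more interesting, and indeed the paper later (in \cref{thm:pluckers-cluster-monomials}) carries out exactly the computation you describe to show each $P_J\circ\tau$ is a cluster monomial. But notice that the paper \emph{uses} \cref{thm:positroid-cluster-struc} and \cref{thm:twist-quasi-cluster} as inputs to that argument, not as consequences. To turn your sketch into a self-contained proof you would need, beyond \cref{thm:dimer-poly-is-F-poly}, the technical work of \cref{lem:setup-for-twist-cluster-mono} and \cref{cor:plabic-graph-f-poly-product} relating almost-perfect matchings to honest dimers on a subgraph, plus a separate check that twisted \emph{frozen} Pl\"uckers are units in $\mathcal{A}(\Sigma_G)$ (otherwise your generation argument via the automorphism $\tau^*$ breaks down at the inverted frozens). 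None of this is impossible, but it is substantially more than what you have written, and it would constitute a new proof of the Galashin--Lam theorem rather than a recovery of an argument already in the paper.
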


In \cite{MS-twist}, Muller--Speyer introduced an automorphism $\tau$ on $\Pi_G^{\circ}$ called the \emph{twist}\footnote{They define the left and right twists, which are inverses of each other.  We use the left twist here.}. (Their definition was motivated by previous work of Marsh--Scott \cite{marsh2016twists} who defined a twist map in the case of Grassmannians, i.e. the top-dimensional positroid variety. The Marsh--Scott twist differs from the Muller--Speyer twist for the top-dimensional positroid variety by rescaling.) While we do not need its precise definition here, Muller--Speyer provided a formula for ``twisted Pl\"ucker coordinates" $P_J \circ \tau$ in terms of the source Pl\"uckers of $G$. This formula involves almost perfect matchings of $G$.

\begin{definition}
\label{def:almost-perfect}
    Let $G$ be a plabic graph of type $(k,n)$. An \textbf{almost-perfect matching} $M$ of $G$ is a subset of edges which covers each internal vertex exactly once and does not include any edges between boundary vertices. The \textbf{boundary} of $M$ is
    \[\partial M := \{i \in [n]: i \text{ is covered by an edge of }M\}.\]
    As $G$ is type $(k,n)$, $\partial M$ has size $k$.
\end{definition}

We have the following lemma, which will be useful later and follows from \cite[Lemma 11.10]{Postnikov} and \cite[Corollary 5.12]{KLS}.

\begin{lemma}\label{lem:nonvanishing-pluckers-matching}
    Let $G$ be a reduced plabic graph of type $(k,n)$. The Pl\"ucker coordinate $P_J$ is not identically zero on the positroid variety $\Pi^{\circ}_{G}$ if and only if $G$ has an almost perfect matching with boundary $J$.
\end{lemma}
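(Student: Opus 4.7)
The plan is to deduce the lemma by combining two classical results, one geometric and one combinatorial. Associated to the reduced plabic graph $G$ is a matroid $\mathcal{M}_G \subset \binom{[n]}{k}$, its positroid. By the definition of open positroid varieties in Knutson--Lam--Speyer \cite{KLS}, and in particular the characterization in \cite[Corollary 5.12]{KLS}, the Pl\"ucker coordinate $P_J$ is not identically zero on $\Pi^{\circ}_G$ if and only if $J$ is a basis of $\mathcal{M}_G$. So the first step is simply to invoke this reformulation and reduce the statement of the lemma to: $J$ is a basis of $\mathcal{M}_G$ if and only if $G$ has an almost perfect matching with boundary $J$.

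For the second step, I would invoke \cite[Lemma 11.10]{Postnikov}, which says exactly this: the bases of the positroid $\mathcal{M}_G$ of a reduced plabic graph $G$ are precisely the sets of the form $\partial M$ for $M$ an almost perfect matching of $G$. Postnikov states this in terms of his boundary measurement map, whose image has Pl\"ucker coordinates given by signed sums over almost perfect matchings with a fixed boundary, so that the nonvanishing Pl\"ucker indices parametrize the bases. Combining these two facts gives the claimed equivalence.

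The main obstacle here is not mathematical depth but convention-matching: \cite{Postnikov} and \cite{KLS} index positroids via strand permutations and sometimes via ``source" versus ``target" labelings, and the coloring convention on boundary vertices used to define $\partial M$ in Definition~\ref{def:almost-perfect} must be reconciled with Postnikov's conventions. The fix is routine: one checks that under our convention (boundary vertices all black, type $(k,n)$ meaning $n-k$ more black vertices than white, so that each almost perfect matching has $|\partial M|=k$), Postnikov's boundary map specializes to ours, and that the source Pl\"ucker labeling used in Section~\ref{sec:positroid-var-applications} agrees with the labeling of faces by bases of $\mathcal{M}_G$. Once these are aligned, no further argument is required.
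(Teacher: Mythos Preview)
Your proposal is correct and matches the paper's own justification exactly: the paper simply states that the lemma ``follows from \cite[Lemma 11.10]{Postnikov} and \cite[Corollary 5.12]{KLS}'' without further argument, which is precisely the two-step reduction you outline. Your additional remarks on aligning conventions are a welcome elaboration but are not required beyond what the paper provides.
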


We may flip a face in an almost-perfect matching, just as in \cref{def:dimer-poset}. We define a binary relation $\leq$ on the almost perfect matchings of $G$ with a fixed boundary using flips, exactly as in \cref{thm:dimer-lattice}.

\begin{theorem}[{\cite[Theorem B.1]{MS-twist}}]\label{thm:dimer-lattice-plabic}
    Let $G$ be a reduced plabic graph. Then the set of almost perfect matchings of $G$ with boundary $I$, endowed with the binary relation $\leq$, is a distributive lattice.
\end{theorem}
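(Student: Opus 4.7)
The plan is to reduce the statement to Propp's classical \cref{thm:dimer-lattice} on dimer lattices of plane bipartite graphs by building an auxiliary bipartite plane graph $G^I$ whose perfect matchings correspond bijectively and flip-compatibly to the almost-perfect matchings of $G$ with boundary $I$.

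First, I would construct $G^I$ as follows: starting from $G$, delete every boundary vertex $i \notin I$ together with its unique incident edge, and then regard each remaining boundary vertex $i \in I$ as an ordinary internal vertex, so that $G^I$ is naturally embedded in the plane rather than in a disk. Since each $i \in I$ has degree one in $G^I$, any perfect matching of $G^I$ is forced to contain its incident edge. The restriction map $M \mapsto M \cap E(G^I)$ is then a bijection from the set of almost-perfect matchings of $G$ with boundary $I$ onto $\mathcal{D}_{G^I}$, with inverse simply adjoining back the (now uncovered) boundary vertices $i \notin I$.

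Next, I would verify that flips correspond under this bijection. A cyclically bounded face of $G$ has its boundary entirely among internal edges of $G$ (no boundary vertex of $G$ can sit on the bounding cycle without forcing the face to border a boundary arc of the disk), so none of its bounding edges is deleted in forming $G^I$, and the face persists as a non-infinite cyclically bounded face of $G^I$. Conversely, every face of $G$ that met a boundary arc of the disk is absorbed, upon deletion, into the infinite face of $G^I$. Under the bijection, a down-flip at a cyclically bounded face $f$ in $G$ (which by definition acts only on the edges in the boundary of $f$, all internal) coincides with a down-flip at the same face $f$ of $G^I$, and conversely.

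If $G^I$ satisfies property $(*)$, Propp's theorem immediately yields the distributive lattice structure on $\mathcal{D}_{G^I}$, which then transports back along the bijection above. In general $G^I$ may be disconnected or contain edges lying in no perfect matching; in that case \cref{rmk:dimer-lattice-arbitrary-graph} still equips $\mathcal{D}_{G^I}$ with a distributive lattice structure, realized as a product of dimer lattices of the property-$(*)$ components obtained after discarding edges in no matching.

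The main obstacle is reconciling the cover relations of this product lattice with the flips in $G$, as flagged in \cref{rmk:dimer-lattice-arbitrary-graph}: one must rule out spurious cover relations coming from faces of the property-$(*)$ components which are not themselves faces of $G^I$, and confirm that every flip in $G$ at a cyclically bounded face does appear as a cover relation. The key observation for both directions is that an edge of $G^I$ belonging to no perfect matching (equivalently, an edge of $G$ lying in no almost-perfect matching with boundary $I$) can never participate in a flip at a cyclically bounded face of $G$ either, so deleting such edges destroys no genuine flip and creates no spurious one. Combining this with \cref{lem:edges-dif-face-each-side} completes the identification of posets and hence the theorem.
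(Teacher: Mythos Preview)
The paper does not prove this theorem; it is cited as \cite[Theorem B.1]{MS-twist}. More to the point, the paper comments directly on your strategy: in the remark following \cref{lem:setup-for-twist-cluster-mono}, the authors note that \cite{MS-twist} ``explicitly avoid using $G'$'' (your $G^I$) to prove \cref{thm:dimer-lattice-plabic}, and that in the present paper the logic runs the \emph{opposite} way---\cref{thm:dimer-lattice-plabic} is taken as input in order to deduce, via \cref{prop:arbitrary-graph-dimer-lattice-good}, that deleting unused edges from $G'$ creates no new faces.

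Your argument has a real gap at the final step. You correctly identify the obstacle: after deleting edges lying in no matching, a non-infinite face $f$ of some property-$(*)$ component $H_i$ may fail to be a face of $G^I$, being instead a union of several faces of $G^I$ glued along deleted edges. By \cref{cor:prop*=all-faces-flipped} such an $f$ can be flipped in some matching of $H_i$, producing a cover relation in $\prod_i \mathcal{D}_{H_i}$ that corresponds to no flip in $G$. Your ``key observation''---that an edge in no matching never participates in a flip of $G$---only shows that no flip of $G$ is destroyed; it says nothing about these spurious new flips in $H$. The paper's \cref{prop:arbitrary-graph-dimer-lattice-good} does rule them out, but its hypothesis is that $(\mathcal{D}_{G^I}, \leq_{G^I})$ already has a unique minimum, and in the paper this is supplied precisely by \cref{thm:dimer-lattice-plabic}. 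Without an independent argument for the unique minimum (which is what \cite{MS-twist} establishes by other means, using strand permutations), your reduction is circular. The closing appeal to \cref{lem:edges-dif-face-each-side} does not help either, as that lemma applies only to graphs already known to have property $(*)$.
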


In what follows,  if $G$ is a plabic graph, let $\hat{0}_J$ denote the minimal matching with boundary $J$. We let $x_f$ denote the source Pl\"ucker coordinate $P_{I_f}$. For $f \in \faces(G)$, let $w_f$ denote the number of white vertices in $f$ and let
\[\bd_f(J):= |\{j \in J\colon j \text{ is not adjacent to degree 1 vertex and the boundary edge } (j-1, j) \text{ is in } f\}|.\]
We use the notation
\[\mathbf{x}^{\bd(J)}:= \prod_{f \in \faces(G)} x_f^{\bd_f(J)}.\]
If $G$ is a plabic graph and $M$ a subset of edges which are not in the boundary of the disk, we define the vector $\hvec^G_M=(h_f)_{f\in \faces(G)}$ by
\[h_f:= w_f - |M \cap f| -1.\]
Note that if $f$ is not adjacent to the boundary of the disk, then $w_f=|f|/2.$ 

\begin{proposition}[{\cite[Proposition 7.10]{MS-twist}}] \label{prop:twist-in-dimers}
    Let $G$ be a reduced plabic graph. Then for any Pl\"ucker coordinate $P_J$, 
        \begin{align*}
        P_J \circ \tau &= \mathbf{x}^{\bd(J)} \sum_{M: \partial M =J} \mathbf{x}^{\hvec^G_M}.
        \end{align*}
\end{proposition}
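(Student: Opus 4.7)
The plan is to prove the formula by induction on the combinatorial structure of the reduced plabic graph $G$. The twist map is an intrinsic regular automorphism of $\Pi_G^{\circ}$, so the LHS $P_J \circ \tau$ depends only on the positroid variety and not on the choice of reduced plabic graph representing it. By Postnikov's theorem, any two reduced plabic graphs of type $(k,n)$ for the same positroid are connected by a sequence of (E) and (S) moves. Hence it suffices to (i) verify the formula in a base case for each positroid, and (ii) check that the RHS $\mathbf{x}^{\bd(J)} \sum_{M:\partial M = J} \mathbf{x}^{\hvec^G_M}$ is invariant under (E) and (S) moves.

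The (E) move step is direct: the natural bijection between almost-perfect matchings of $G$ and $G'$ (extending the bijection used in \cref{lem:e-moves-dont-change-dimers} to the almost-perfect setting) preserves boundaries. After identifying source Pl\"ucker labels across the move, a case check shows that $|M \cap f|$ and $w_f$ change compatibly for each face $f$, so $\hvec^G_M$ and $\bd(J)$ are preserved and the RHS is unchanged. The (S) move step is the heart of the proof. Under a square move at a face $f$, the source Pl\"ucker $x_f$ undergoes the cluster exchange mutation of \cref{thm:positroid-cluster-struc}, while the other source Pl\"uckers stay fixed. Almost-perfect matchings with boundary $J$ in $G$ and $G'$ are in a local correspondence around $f$ exactly analogous to the one in the proof of \cref{thm:sq-move-step}. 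Using the lattice structure on $\{M : \partial M = J\}$ provided by \cref{thm:dimer-lattice-plabic}, one carries out the same four-case analysis as in \cref{thm:sq-move-step}, split by the local configuration of the bottom matching around $f$, to verify that the binomial factors $(1+\hat{y}_f)^{\pm 1}$ produced by the matching correspondence match the cluster exchange relation for $x_f$. For the base case, one takes a canonical reduced plabic graph for which $\Pi_G^\circ$ and its twist map admit direct descriptions (for instance, a graph with a small number of faces representing a simple Richardson variety), and checks the formula by hand.

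The main obstacle will be the square move analysis when $f$ is adjacent to the boundary of the disk. In that case $w_f \neq |f|/2$, and the factor $\mathbf{x}^{\bd(J)}$ contributes nontrivially to the identity, so the bookkeeping is more delicate than in \cref{thm:sq-move-step}. Concretely, one must check that under the square move, $\bd_f(J)$ tracks exactly the boundary-edge contribution to matchings with boundary $J$, so that together with $w_f$ the exponents balance to give the correct binomial factors from the mutation of $x_f$. Organizing this requires a careful local identity at each boundary-adjacent face, paralleling the interior case of \cref{thm:sq-move-step} but with boundary terms tracked through $\bd_f(J)$; alternatively, one can bypass this by passing to Postnikov's boundary measurement map and writing both sides explicitly in terms of edge weights, reducing the identity to a manipulation with weighted matching sums.
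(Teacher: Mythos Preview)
This proposition is not proved in the paper at all: it is quoted verbatim from Muller--Speyer \cite[Proposition 7.10]{MS-twist}, as the citation in the statement indicates. The present paper uses this result as a black box to prove \cref{thm:pluckers-cluster-monomials}; there is no proof here to compare your proposal against.

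That said, your sketch is not a proof either. You correctly identify that invariance of the RHS under (E) and (S) moves would reduce the problem to a base case, and the (E) step is fine. But the (S) step is left as an appeal to ``the same four-case analysis as in \cref{thm:sq-move-step},'' which is not the same situation: in \cref{thm:sq-move-step} one tracks how $D_G$ transforms under the $y$-variable substitution \eqref{eq:y-mut}, whereas here the source Pl\"ucker $x_f$ actually changes under mutation, and you must verify a polynomial identity in the $x$-variables, not a substitution identity. You also leave the boundary-adjacent case explicitly open. Finally, your base case is a placeholder (``a canonical reduced plabic graph\ldots checks the formula by hand''): you would need one base graph for \emph{every} positroid, and for most positroids there is no graph small enough to make this a triviality. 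Muller--Speyer's actual proof goes through the boundary measurement map and a direct computation of the twist in network coordinates, rather than by move-invariance; if you want a self-contained argument you should consult their paper.
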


We note that the equalities in \cref{prop:twist-in-dimers} are consistent with \cref{lem:nonvanishing-pluckers-matching}, since if there are no almost perfect matchings with boundary $J$, the sum in \cref{prop:twist-in-dimers} is zero.

\begin{remark} 
For the special case they consider, Marsh--Scott prove a formula \cite[Theorem 1.1]{marsh2016twists} which, suitably rescaled, is algebraically equivalent to that of \cref{prop:twist-in-dimers}, see \cite[Remark 3.7, Proposition 3.8]{elkin2023twists} for the derivation.
\end{remark}

Finally, we need a result relating the twist map to the cluster structure on $\mathbb{C}[\Pi_G^{\circ}]$, which was proved independently by \cite{P23,CLSBW23}. Recall that a cluster monomial is a monomial of compatible cluster variables times a Laurent monomial in frozen variables.

\begin{theorem}\label{thm:twist-quasi-cluster}
    Let $G$ be a reduced plabic graph. Then 
    \begin{align*}\tau^*:\mathbb{C}[\Pi_G^{\circ}] &\to \mathbb{C}[\Pi_G^{\circ}] \\
    P_J &\mapsto P_J \circ \tau
    \end{align*}
    is a quasi-cluster homomorphism. In particular, $\tau^*$ and its inverse take cluster monomials to cluster monomials.
\end{theorem}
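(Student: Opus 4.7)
The plan is to show that the pullback $\tau^*$ takes the source cluster associated to $G$ to another cluster in $\mathbb{C}[\Pi_G^\circ]$, up to rescaling by Laurent monomials in frozen variables, and that this assignment is compatible with mutation in the sense required of a quasi-cluster homomorphism. The approach will rely on the explicit dimer formula of \cref{prop:twist-in-dimers} and on the existence of a second natural seed $\Sigma_G^{\mathrm{tgt}}$ whose cluster consists of \emph{target} (rather than source) Pl\"ucker coordinates of $G$. By \cref{thm:positroid-cluster-struc} and the invariance of $\mathbb{C}[\Pi_G^\circ]$ under the combinatorial involution of $G$ that interchanges source and target strand permutations, both seeds lie in the same cluster algebra and are related by an explicit sequence of mutations (corresponding to a well-known sequence of plabic moves).

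First I would compute $\tau^*(P_{I_f})$ for each face $f$ using \cref{prop:twist-in-dimers}: this yields a sum over almost-perfect matchings of $G$ with boundary $I_f$, weighted by the monomials $\mathbf{x}^{\hvec^G_M}$ and multiplied by the boundary factor $\mathbf{x}^{\bd(I_f)}$. Next, I would identify this sum, via a natural bijection between almost-perfect matchings of $G$ with a given boundary and matchings of the reflected plabic graph with the corresponding boundary, with $c_f \cdot P_{I_f^T}$, where $c_f$ is an explicit Laurent monomial in frozen source Pl\"uckers and $P_{I_f^T}$ is a target Pl\"ucker coordinate. This would establish that the source cluster of $G$ is mapped under $\tau^*$ to a frozen-rescaling of the target cluster of $G$, and hence (by the mutation sequence connecting the two seeds) to a cluster, up to rescaling, in the sense of quasi-cluster maps.

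To complete the proof I would check that the quivers of the source and target seeds match up appropriately under $\tau^*$ (indeed, they are related by the same combinatorial symmetry of $G$), and that the rescaling monomials $c_f$ satisfy the cocycle identity required of a quasi-cluster homomorphism, so that exchange relations transport correctly under $\tau^*$. The main obstacle is the identification carried out in the second paragraph: matching up the exponent vectors $\bd(I_f)$ and $\hvec^G_M$ on both sides of the bijection, and verifying that the correction factors $c_f$ are in fact Laurent monomials in \emph{frozen} variables (and not general cluster variables). This is precisely the technical content of \cite{P23,CLSBW23}. I note that the dimer-theoretic framework of \cref{sec:dimer_f_poly}, and in particular \cref{thm:dimer-poly-is-F-poly} identifying dimer face polynomials as $F$-polynomials, suggests an alternative route: one can recognize the sum $\sum_{M:\partial M=J}\mathbf{x}^{\hvec^G_M}$ as a cluster variable (or a cluster monomial) directly, bypassing an explicit bijection with target Pl\"uckers and reducing quasi-cluster compatibility to a local check at each mutation coming from a square move on $G$.
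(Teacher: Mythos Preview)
This theorem is not proved in the paper; it is quoted as a result from the literature, with attribution to \cite{P23,CLSBW23}. There is therefore no ``paper's own proof'' to compare against. The theorem functions in this paper purely as an input to the proof of \cref{thm:pluckers-cluster-monomials}.

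Your proposal is not really a proof but an outline of the strategy of \cite{P23,CLSBW23}, and you acknowledge as much when you write that the technical identification step ``is precisely the technical content of \cite{P23,CLSBW23}.'' The steps you describe (computing $\tau^*$ on source face labels via \cref{prop:twist-in-dimers}, identifying the result as target face labels times frozen Laurent monomials, checking the cocycle condition) are indeed the shape of those arguments, but none of the actual work is done here. In particular, the claim that $c_f$ is a Laurent monomial in \emph{frozen} variables and the verification that exchange ratios are preserved are the genuine content, and your sketch simply asserts them.

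Your suggested ``alternative route'' via \cref{thm:dimer-poly-is-F-poly} does not obviously lead anywhere for this statement. That theorem tells you that certain dimer sums are $F$-polynomials, and the paper does use it (together with the present theorem) to show that twisted Pl\"ucker coordinates are cluster monomials. But knowing that each $P_J\circ\tau$ is a cluster monomial is strictly weaker than knowing that $\tau^*$ is a quasi-cluster homomorphism: the latter requires that $\tau^*$ send \emph{every} cluster to a cluster up to frozen rescaling, compatibly with mutation, not merely that the images of the generators happen to be cluster monomials. So the dimer framework of \cref{sec:dimer_f_poly} does not by itself furnish a proof of the quasi-cluster property, and the paper does not claim otherwise.
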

 
\subsection{All Pl\"uckers are cluster monomials}
In this section, we will utilise \cref{thm:dimer-poly-is-F-poly} to show the following theorem.

\begin{theorem}\label{thm:pluckers-cluster-monomials}
    Let $\Pi_{G}^{\circ}$ be an open positroid variety, and let $P_J$ be a Pl\"ucker coordinate which is not identically zero on $\Pi_{G}^{\circ}$. Then $P_J$ and its twist $P_J \circ \tau$ are both cluster monomials in $\mathbb{C}[\Pi_{G}^{\circ}]= \mathcal{A}(\Sigma_G)$.
\end{theorem}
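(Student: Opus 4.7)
The plan is to first show that every non-vanishing twisted Pl\"ucker coordinate $P_J\circ\tau$ is a cluster monomial in $\mathcal{A}(\Sigma_G)$, and then to deduce the same for $P_J$ itself using that $(\tau^*)^{-1}$ is a quasi-cluster homomorphism by \cref{thm:twist-quasi-cluster} and hence sends cluster monomials to cluster monomials. The engine for the twisted-case argument is a direct comparison between the Muller--Speyer formula of \cref{prop:twist-in-dimers} and the dimer cluster expansion of \cref{cor:easy-cluster-expansion}.

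Starting from the reduced plabic graph $G$ and a Pl\"ucker $P_J$ non-vanishing on $\Pi_G^\circ$ (so \cref{lem:nonvanishing-pluckers-matching} supplies at least one almost-perfect matching of $G$ with boundary $J$), I would construct an auxiliary bipartite plane graph $H_J$ whose perfect matchings biject with the almost-perfect matchings of $G$ having boundary $J$. This is accomplished by processing the boundary appropriately (deleting boundary vertices $i\notin J$, and treating each boundary vertex $i\in J$ together with its incident edge as part of the interior) and then deleting every internal edge of $G$ not used in any matching with $\partial M=J$. Each connected component of $H_J$ has property $(*)$, and under the matching bijection, the product of dimer lattices from \cref{thm:dimer-lattice} on the components of $H_J$ matches the Muller--Speyer lattice of \cref{thm:dimer-lattice-plabic} on $G$.

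Next I would transport the cluster-algebraic data from $H_J$ into $\mathcal{A}(\Sigma_G)$. Each connected component $C$ of $H_J$ yields, via \cref{thm:dimer-poly-is-F-poly} together with \cref{prop:F-g-adding-mutable}, a cluster variable $z_C\in\mathcal{A}(\Sigma_G)$ whose $F$-polynomial equals $D_C$; distinct components correspond to disjoint reduction sequences that do not mutate any frozen vertex of $\Sigma_G$, so the $z_C$ are pairwise compatible and $\prod_C z_C$ is a cluster monomial. The key step is to identify each face variable of $H_J$ with a Laurent monomial in the initial Pl\"ucker cluster $\{P_{I_f}\}$ of $\Sigma_G$ in a way compatible with heights, so that by \cref{cor:easy-cluster-expansion} the Laurent expansion of $\prod_C z_C$ matches $\sum_{M:\partial M=J}\mathbf{x}^{\mathbf{h}^G\!(M)}$ up to a Laurent monomial in the boundary-adjacent, hence frozen, variables of $\Sigma_G$. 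Combined with the frozen prefactor $\mathbf{x}^{\bd(J)}$ of \cref{prop:twist-in-dimers}, this exhibits $P_J\circ\tau$ as a cluster monomial.

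The main obstacle is precisely the bookkeeping in the preceding paragraph: whenever an unused internal edge $e$ of $G$ separating two faces $f_1,f_2$ is deleted in forming $H_J$, those faces merge into a single face of $H_J$, and one must show that the resulting mismatch between $\sum_M\mathbf{x}^{\mathbf{h}^{H_J}\!(M)}$ and $\sum_M\mathbf{x}^{\mathbf{h}^{G}\!(M)}$ collapses to an $M$-independent Laurent monomial in frozen variables, and that $Q_C$ embeds as an induced mutable subquiver of a mutation of $\frzQ_G$ with no frozens touched. The key local observation driving this is that for each such unused $e$, the pair of quantities $(|M\cap f_1|,|M\cap f_2|)$ is controlled by boundary data, so its contribution to the height discrepancy reduces to the desired frozen monomial. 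Once these identifications are pinned down, the theorem is a direct assembly of \cref{prop:twist-in-dimers}, \cref{cor:easy-cluster-expansion}, and \cref{thm:twist-quasi-cluster}.
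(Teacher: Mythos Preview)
Your overall strategy matches the paper's: prove $P_J\circ\tau$ is a cluster monomial by comparing \cref{prop:twist-in-dimers} with the dimer formulas of \cref{thm:dimer-poly-is-F-poly}/\cref{cor:easy-cluster-expansion}, then untwist via \cref{thm:twist-quasi-cluster}. The construction of $H_J$ and the use of \cref{prop:F-g-adding-mutable} to push compatible cluster variables from $\mathcal{A}(Q_{H_i})$ into $\mathcal{A}(\Sigma_G)$ are also as in the paper.

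The genuine gap is your treatment of the ``face-merging'' obstacle. You anticipate that deleting unused edges may merge two faces $f_1,f_2$ of $G$ into one face of $H_J$, and you assert that the resulting discrepancy between $\mathbf{h}^{H_J}$ and $\mathbf{h}^G$ is an $M$-independent monomial in \emph{frozen} variables because ``$(|M\cap f_1|,|M\cap f_2|)$ is controlled by boundary data''. This is not justified: an unused interior edge can separate two \emph{mutable} faces, in which case the discrepancy would involve mutable variables and, more seriously, $Q_{H_J}$ would fail to be an induced subquiver of $\frzQ_G$ (your hedge ``induced mutable subquiver of a \emph{mutation} of $\frzQ_G$'' would then require a separate nontrivial argument you have not supplied). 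Without $Q_{H_J}$ sitting inside $\frzQ_G$ as an induced subquiver on mutable vertices, \cref{prop:F-g-adding-mutable} does not apply and the whole comparison collapses.

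The paper's resolution is not to control the merging but to prove it \emph{never occurs}: this is \cref{prop:arbitrary-graph-dimer-lattice-good} (applied in \cref{lem:setup-for-twist-cluster-mono}), which shows that if $(\mathcal{D}_{G'},\le_{G'})$ has a unique minimum then deleting unused edges creates no new non-infinite faces. The unique minimum is supplied by the Muller--Speyer lattice theorem (\cref{thm:dimer-lattice-plabic}), which is therefore the crucial external input. Once no faces merge, $Q_H$ is literally an induced mutable subquiver of $\frzQ_G$, and the only adjustments needed between $\prod_i z_i$ and $P_J\circ\tau$ are (i) a monomial in the variables $x_f$ with $f\notin\faces(H)$, all of which are compatible with the $z_i$, and (ii) a Laurent monomial in genuinely frozen variables coming from the boundary bookkeeping. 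You should replace your merging workaround with an appeal to (or a proof of) \cref{prop:arbitrary-graph-dimer-lattice-good}.
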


\begin{remark}
    The statement of \cref{thm:pluckers-cluster-monomials} also holds for the target cluster structure on $\mathbb{C}[\Pi_{G}^{\circ}]$, since by \cite{P23,CLSBW23} the source and target cluster structure have the same cluster monomials.
\end{remark}

\begin{remark}\label{rem:history-plucker-cluster-monomials}
In the special case when $P_J$ is a source (or target) Pl\"ucker coordinate for some reduced plabic graph $G'$ for $\Pi^{\circ}_G$, then $P_J$ is a cluster variable in $\mathbb{C}[\Pi_{G}^{\circ}]$ by \cite{GL-positroid} (see \cref{thm:positroid-cluster-struc}) and its twist is a cluster monomial by \cite{P23,CLSBW23} (see \cref{thm:twist-quasi-cluster}). Similarly, if $P_J$ is a unit in $\mathbb{C}[\Pi_{G}^{\circ}]$, then by \cite[Theorem 1.3 (1)]{GLS-factorial} it is a Laurent monomial in frozen variables and in particular is a cluster monomial. So in these special cases, \cref{thm:pluckers-cluster-monomials} was already known. In particular, when $\Pi^{\circ}_G$ is the unique top-dimensional positroid variety, \cref{thm:pluckers-cluster-monomials} was known for all Pl\"ucker coordinates, since all Pl\"ucker coordinates are source Pl\"ucker coordinates for some plabic graph (in this case, by \cite[Proposition 8.10]{marsh2016twists}, twists of Pl\"ucker coordinates are cluster variables times a Laurent monomial in frozen variables). 

However, for most positroids, only a small fraction of the nonzero Pl\"ucker coordinates are source (or target) Pl\"uckers. For example, some positroid varieties $\Pi^\circ_G$ have only one reduced plabic graph and only $|\faces(G)|$ source Pl\"ucker coordinates, but have many more nonzero, non-unit Pl\"ucker coordinates.
\end{remark}

To prove \cref{thm:pluckers-cluster-monomials}, we would like to connect \cref{prop:twist-in-dimers} to \cref{thm:dimer-poly-is-F-poly}. The latter theorem involves \emph{perfect matchings} of graphs with property $(*)$, while the twist formula uses \emph{almost perfect matchings} of plabic graphs. We first need some technical results to relate these two scenarios.

For $G$ a plane graph, let $\leq_G$ be the binary relation on $\mathcal{D}_G$ where $M \leq_G M'$ if $M$ is obtained from $M'$ by a sequence of down-flips on simply connected faces. Recall from \cref{rmk:dimer-lattice-arbitrary-graph} that there is a refinement $\leq$ of $\leq_G$ which endows $\mathcal{D}_G$ with the structure of a distributive lattice.  The following lemma gives a sufficient condition for this refinement $\leq$ to be equal to $\leq_G$.

\begin{proposition}\label{prop:arbitrary-graph-dimer-lattice-good}
    Let $G$ be a plane graph such that every non-infinite face is simply connected. Let $H = H_1 \sqcup \cdots \sqcup H_r$ be the graph obtained from $G$ by deleting all edges which are not in any perfect matching. If $(\mathcal{D}_G, \leq_G)$ is a poset with a unique minimum or maximum, then every non-infinite face of $H$ is a face of $G$. Further, $(\mathcal{D}_G, \leq_G)$ is the Cartesian product of the distributive lattices $(\mathcal{D}_{H_i}, \leq_{H_i})$, and is also equal to $(\mathcal{D}_H, \leq_H)$.
\end{proposition}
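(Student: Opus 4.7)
The plan is to first establish that $\leq_G$ is a suborder of the distributive lattice $\leq_H$ on $\mathcal{D}_G \cong \prod_i \mathcal{D}_{H_i}$ from \cref{rmk:dimer-lattice-arbitrary-graph}, and then to force every non-infinite face of $H$ to be a face of $G$ via a height argument. For the suborder claim: any flip at a simply-connected face $f$ of $G$ requires all black-white (or all white-black) edges of $\partial f$ to lie in some matching of $G$, hence in $H$. Then $\partial f$ is a cycle of $H$ separating $f$ from the rest of $\mathbb{R}^2$, so $f$ is a connected component of $\mathbb{R}^2 \setminus H$, i.e., a face of some $H_i$, and the flip is a cover in $\leq_{H_i}$. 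Thus $\leq_G \subseteq \leq_H$ as relations on $\mathcal{D}_G$, and the unique minimum $\hat{0}$ of $\leq_G$ must coincide with the minimum $\hat{0}_H = (\hat{0}_{H_1}, \ldots, \hat{0}_{H_r})$ of $\leq_H$.

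Now I would suppose for contradiction that some non-infinite face $\tilde f$ of $H_i$ is not a face of $G$. Then $\tilde f$ contains deleted edges of $G$ in its interior, partitioning it into simply-connected faces $f_1, \ldots, f_k$ of $G$. By \cref{cor:prop*=all-faces-flipped} applied to $H_i$, there is a matching $N$ of $H_i$ with $\height(N)_{\tilde f} > 0$; extending $N$ by $\hat{0}_{H_j}$ on the components $j \neq i$ yields a matching $M$ of $G$ with $\height_H(M)_{\tilde f} > 0$. The key observation is that the cycles of $\overrightarrow{M \triangle \hat{0}}$ are built from edges of $H$, so they do not enter the interior of the $H$-face $\tilde f$; hence winding numbers around any point in $f_j$ and around any point in $\tilde f$ agree, giving $\height_G(M)_{f_j} = \height_H(M)_{\tilde f} > 0$ for each $j$.

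On the other hand, $\hat{0} \leq_G M$ yields a chain of up-flips in $G$ from $\hat{0}$ to $M$. By the first paragraph each such flip is at a face of $G$ that is also a face of $H$, and in particular not at $\tilde f$ (which is assumed not to be a face of $G$). Applying \cref{lem:ht_flip} componentwise in $H_i$ to this chain, each flip leaves $\height_H(\cdot)_{\tilde f}$ invariant, so $\height_H(M)_{\tilde f} = 0$, contradicting the previous paragraph. Hence every non-infinite face of $H$ is a face of $G$; conversely every cover of $\leq_H$ (a flip at such a face) is then a cover of $\leq_G$, giving $\leq_G = \leq_H$, and the product structure follows from \cref{rmk:dimer-lattice-arbitrary-graph}. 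The case of a unique maximum is symmetric. The main obstacle I anticipate is carefully tracking that cycles of $M \triangle \hat{0}$ respect the $H$-face decomposition, so that heights in $G$ at $f_j$ really do agree with heights in $H$ at $\tilde f$, and that \cref{lem:ht_flip} can legitimately be invoked componentwise in $H$ rather than in $G$ (which need not have property $(*)$).
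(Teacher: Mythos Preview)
Your proposal is correct and arrives at the same contradiction as the paper, but via a different mechanism. Both proofs share the first step: any $\leq_G$-flip occurs at a face of $G$ whose boundary lies entirely in $H$, hence at a non-infinite face of some $H_j$, so $\leq_G$ refines the product order and the two minima coincide. From there the paper argues order-theoretically: the set $BW$ of matchings containing all black-white edges of $\tilde f$ is a nonempty $\leq_G$-order ideal (since no $G$-face adjacent to $\partial\tilde f$ can be down-flipped out of $BW$, and $\tilde f$ itself is not a $G$-face), so $\hat{0}\in BW$, whence $\hat{0}$ is down-flippable at $\tilde f$ in $H_i$, contradicting its minimality in the product order. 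You instead run a height argument: build $M$ with $\height_{H_i}(M|_{H_i})_{\tilde f}>0$, then observe that every flip in a $\leq_G$-chain from $\hat{0}$ to $M$ is at some face of $H$ other than $\tilde f$, so by \cref{lem:ht_flip} the $\tilde f$-coordinate of $\height_{H_i}$ is unchanged along the chain and must equal $0$. Your route is a bit more computational and the middle paragraph relating $\height_G$ to $\height_H$ is not actually needed for the contradiction (though it is correct for your specific $M$, since $M\triangle\hat{0}$ lives entirely in $H_i$). The paper's order-ideal argument is shorter and avoids invoking the height machinery in a graph without property $(*)$; your approach has the virtue of making the obstruction quantitative. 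One notational point to tighten: throughout you should write $\height_{H_i}(M|_{H_i})_{\tilde f}$ rather than $\height_H(M)_{\tilde f}$, since $\tilde f$ is a face of $H_i$ and other components $H_j$ could in principle sit inside it as an $H_i$-face.
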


\begin{proof}
Note that $\mathcal{D}_G, \mathcal{D}_H$ and the product $ \prod_i \mathcal{D}_{H_i}$ are all in natural bijection. So we identify the matchings in all three sets. We also identify a face with the edges in its boundary.
    We let $\leq$ denote the partial order on the product $\prod_{i} \mathcal{D}_{H_i}$. 
    
    If $M \lessdot_G M'$ are related by a flip at face $f$ of $G$, then all edges of $f$ are in a perfect matching of $G$. Thus, $f$ is also a face of some $H_i$, which implies $M \lessdot M'$. This means that if $M \leq_G M'$, then $M \leq M'$. In particular, $\leq_G$ is a coarsening of $\leq$. This implies that if $(\mathcal{D}_G, \leq_G)$ has a unique minimum (resp. maximum), it  agrees with the minimum (resp. maximum) of $(\mathcal{D}_G, \leq)$.

    Now, suppose for the sake of contradiction that some non-infinite face of $H$ is not a face of $G$. This face may not be simply connected. One connected component of the boundary of this face is the boundary of a non-infinite face $f$ of some $H_i$. The face $f$ is not a face of $G$. Since $H_i$ has property $(*)$ by construction, \cref{cor:prop*=all-faces-flipped} implies $f$ can be flipped in some matching of $G_i$. That is, there exist a matching of $G$ containing all white-black edges of $f$, and there exists a matching of $G$ containing all black-white edges of $f$. However, since $f$ is not a face of $G$, we cannot perform a flip at $f$.

    The set 
    \[WB=\{M \in \mathcal{D}_G: M \text{ contains the white-black edges of }f\}\]
    is an order filter in $(\mathcal{D}_G, \leq_G)$. Indeed, if $M \in WB$, then for any face $F'$ adjacent to $f$, either $M$ contains a black-white edge of $f'$ or $M$ does not contain some white-black edge of $f'$. So if $M'$ is obtained from $M$ by an up-flip at some face $f'$, $f'$ is not adjacent to $f$ and $M'$ is also in $WB$. By the definition of $\leq_G$, this implies that if $M \in WB$, then so are all $M'$ with $M' \geq_G M$.

    Similarly, the set 
     \[BW=\{M \in \mathcal{D}_G: M \text{ contains the black-white edges of }f\}\]
     is an order ideal in $(\mathcal{D}_G, \leq_G)$.

    In the case that $(\mathcal{D}_G, \leq_G)$ has a unique minimial element $\hat{0}$, this implies $\hat{0}$ is in $BW$. As we already argued, $\hat{0}$ is also the minimum element of $(\mathcal{D}_G, \leq)$. But this is impossible; since $f$ is a face of $G_i$ and $\hat{0}$ contains the black-white edges of $f$, we can perform a down-flip at $f$ in $H_i$. So $\hat{0}$ cannot be minimal in $(\mathcal{D}_G, \leq)$, a contradiction. An identical argument leads to a contradiction in the case that $(\mathcal{D}_G, \leq_G)$ has a unique maximal element, exchanging $BW$ for $WB$.

    This completes the argument that every non-infinite face of $H$ is a face of $G$. In particular, every non-infinite face of $H$ is simply connected. This, in turn, implies that every non-infinite face of each $H_i$ is a face of $H$ and thus of $G$. So, if a face of $G_i$ can be flipped in some matching in $\prod_i \mathcal{D}_{H_i}$, it is also a face of $G$ and of $H$ and can be flipped in those graphs as well. This shows that every cover relation in $(\mathcal{D}_G, \leq)$ is also a cover relation in $(\mathcal{D}_G, \leq_G)$ and $(\mathcal{D}_H, \leq_H)$ and so the posets coincide.
\end{proof}

\begin{lemma}\label{lem:setup-for-twist-cluster-mono}
    Let $G$ be a reduced plabic graph, and let $J$ be the boundary of some almost perfect matching of $G$. Let $H= H_1 \sqcup \cdots \sqcup H_r$ be the plane graph obtained from $G$ by deleting
    \begin{enumerate}
        \item all boundary vertices;
        \item all internal vertices adjacent to boundary vertices $\{j \in J\}$;
        \item all edges which are not in any almost perfect matching of $G$ with boundary $J$
    \end{enumerate}
    so that every connected component $H_i$ of $H$ has property $(*)$.
    Then under the natural identification of edges of $H$ with edges of $G$, the lattice of almost-perfect matchings of $G$ with boundary $J$ (see \cref{thm:dimer-lattice-plabic}) is isomorphic to $(\mathcal{D}_H, \leq_H)$ and the product of lattices $\prod_i (\mathcal{D}_{H_i}, \leq_{H_i})$.

    Further, every non-infinite face of $H$ is a face of $G$, and the dual quiver $Q_H$ is an induced subquiver of $\frzQ_G$ consisting only of mutable vertices.
\end{lemma}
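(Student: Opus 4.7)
My plan is to first identify the almost-perfect matchings of $G$ with boundary $J$ with the perfect matchings of the intermediate graph $H'$ obtained from $G$ by performing only steps $(1)$ and $(2)$, and then to pass to $H$ using Proposition~\ref{prop:arbitrary-graph-dimer-lattice-good}. The key observation is that once the boundary vertices and the internal neighbors $v_j$ (for $j \in J$) are deleted, the forced boundary edges $\{j v_j : j \in J\}$ vanish, and what remains of any almost-perfect matching is genuinely a perfect matching of $H'$.

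\textbf{Bijection and lattice isomorphism to $H'$.} For each $j \in J$, let $v_j$ denote the unique internal neighbor of $j$ in $G$. Any almost-perfect matching $M$ with $\partial M = J$ must contain the edges $\{j v_j : j \in J\}$, since $j$ is covered and $v_j$ is its only internal neighbor. The map $\Phi(M) := M \setminus \{j v_j : j \in J\}$ is then a perfect matching of $H'$, and adjoining these forced edges is the inverse operation; this gives a bijection between almost-perfect matchings with boundary $J$ and $\mathcal{D}_{H'}$. I will verify that $\Phi$ sends flips (in the sense of Theorem~\ref{thm:dimer-lattice-plabic}) bijectively to flips at faces of $H'$: a flip at a face $f$ of $G$ applied to a matching with boundary $J$ cannot involve any edge $j v_j$, so $f$ is disjoint from the deleted vertices and remains a face of $H'$; conversely any flip at a simply connected face of $H'$ corresponds to a flip at a face of $G$ by direct inspection.

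\textbf{Applying Proposition~\ref{prop:arbitrary-graph-dimer-lattice-good}.} By Theorem~\ref{thm:dimer-lattice-plabic}, almost-perfect matchings of $G$ with boundary $J$ form a distributive lattice under flips, so $(\mathcal{D}_{H'}, \leq_{H'})$ has a unique minimum via $\Phi$. After verifying that every non-infinite face of $H'$ is simply connected (using that the deleted vertices lie on or adjacent to the disk boundary), Proposition~\ref{prop:arbitrary-graph-dimer-lattice-good} yields that every non-infinite face of $H$ is a face of $H'$, and that $(\mathcal{D}_{H'}, \leq_{H'}) = (\mathcal{D}_H, \leq_H) = \prod_i (\mathcal{D}_{H_i}, \leq_{H_i})$. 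Combined with the previous paragraph, this gives the lattice statement. Moreover, every non-infinite face of $H$ is a non-infinite face of $G$ disjoint from all deleted vertices; in particular, it is disjoint from the disk boundary and thus corresponds to a mutable vertex of $\frzQ_G$.

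\textbf{Quiver statement and main obstacle.} Arrows of $Q_H$ are drawn dually across edges shared between two non-infinite faces of $H$. Since each such face is also a non-infinite face of $G$, and the separating edges are edges of $G$ as well, the arrows and their $2$-cycle cancellations coincide in $Q_H$ and in the restriction of $\frzQ_G$ to these faces; hence $Q_H$ is the induced subquiver of $\frzQ_G$ on mutable vertices corresponding to faces of $H$. The most delicate step will be verifying that the non-infinite faces of $H'$ are simply connected, which is the topological input needed for Proposition~\ref{prop:arbitrary-graph-dimer-lattice-good}; the geometric constraint that every deleted vertex lies on or immediately adjacent to the boundary of the disk is what rules out the creation of non-simply-connected faces in $H'$.
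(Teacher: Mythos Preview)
Your proposal is correct and follows essentially the same route as the paper: form the intermediate graph $H'$ (the paper's $G'$) by performing only deletions (1) and (2), identify almost-perfect matchings of $G$ with boundary $J$ with perfect matchings of $H'$, use \cref{thm:dimer-lattice-plabic} to obtain a unique minimum, and then apply \cref{prop:arbitrary-graph-dimer-lattice-good}. The paper is terser---it simply asserts that every non-infinite face of $G'$ is a face of $G$ (which immediately gives simple connectedness via reducedness of $G$), whereas you plan to verify simple connectedness directly; it would streamline your argument to establish that face statement first, since it simultaneously yields the simple-connectedness hypothesis and the backward direction of the flip correspondence.
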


\begin{proof}
Reducedness implies that $G$ is connected and so its faces are simply connected. Once we perform the deletions in steps (1) and (2), we have a plane graph $G'$ whose non-infinite faces are simply connected. Note that $G'$ has at least one perfect matching, since any almost-perfect matching of $G$ with boundary $J$ restricts to a perfect matching of $G'$. Also, every non-infinite face of $G'$ is a face of $G$, and the lattice of almost-perfect matchings of $G$ with boundary $J$ is clearly isomorphic to $(\mathcal{D}_{G'}, \leq_{G'}).$ By \cref{thm:dimer-lattice-plabic}, this implies that $(\mathcal{D}_{G'}, \leq_{G'})$ has a unique minimal element.
    Applying \cref{prop:arbitrary-graph-dimer-lattice-good} to $G'$, we get an isomorphism between $(\mathcal{D}_{G'}, \leq_{G'})$, $(\mathcal{D}_{H}, \leq_{H})$ and the product $\prod_i (\mathcal{D}_{H_i}, \leq_{H_i})$. By the same proposition, we also have that every non-infinite face of $H$ is a face of $G$.

    The statement about the dual quiver follows from the fact that every non-infinite face of $H$ is a face of $G$, and the definition of the dual quiver.
\end{proof}

\begin{remark}
Consider the graph $G'$ from the proof of \cref{lem:setup-for-twist-cluster-mono}. As mentioned therein, the dimers of $G'$ are in natural bijection with the almost perfect matchings of $G$ with boundary $J$. In \cite[Appendix B]{MS-twist}, they explicitly avoid using $G'$ to prove \cref{thm:dimer-lattice-plabic}. We point out that we make crucial use of \cref{thm:dimer-lattice-plabic} to deduce properties of the dimers of $G'$, rather than the other way around. In particular, we use \cref{thm:dimer-lattice-plabic} when we invoke \cref{prop:arbitrary-graph-dimer-lattice-good}, to ensure that deleting the edges of $G'$ that are not in any dimer does not create any new faces or any new flips.
\end{remark}

\begin{lemma}\label{cor:plabic-graph-f-poly-product}
    Let $G$ be a reduced plabic graph, and let $J$ and $H=H_1 \sqcup \cdots \sqcup H_r$ be as in \cref{lem:setup-for-twist-cluster-mono}. Then 
    \[D_H(\mathbf{y}) = \prod_i D_{H_i}(\mathbf{y})\]
    is a product of compatible $F$-polynomials for the cluster algebra $\mathcal{A}(\mathbf{x}, Q_G)$.
\end{lemma}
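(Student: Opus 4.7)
The equality $D_H(\mathbf{y}) = \prod_i D_{H_i}(\mathbf{y})$ is immediate from the disjoint-union structure of $H$ and \cref{rmk:dimer-lattice-arbitrary-graph}: a dimer of $H$ is a tuple $(M_1,\dots,M_r)$ with $M_i \in \mathcal{D}_{H_i}$, the face set decomposes as $\faces(H) = \bigsqcup_i \faces(H_i)$ (with each face of $H$ a face of $G$ by \cref{lem:setup-for-twist-cluster-mono}), and the dimer lattice of $H$ is the direct product of the $(\mathcal{D}_{H_i}, \le)$ with cover relations given by flips at individual faces of the $H_i$. Hence the multivariate rank factors as $\mrk(M_1,\dots,M_r) = \prod_i \mrk(M_i)$, and summing yields the product formula.

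For the $F$-polynomial statement, each $H_i$ has property $(*)$ by \cref{lem:setup-for-twist-cluster-mono}, so \cref{thm:dimer-poly-is-F-poly} provides a reduction sequence $\mathbf{r}_i$ for $H_i$ ending at some face $f_i$ and a cluster variable $z_i$ at position $f_i$ in $\mu_{\mathbf{r}_i}(\mathbf{x}, Q_{H_i})$ with $F$-polynomial $F_{z_i}^{Q_{H_i}} = D_{H_i}$. Because $Q_H = Q_{H_1} \sqcup \cdots \sqcup Q_{H_r}$ is itself a disjoint union of quivers, the cluster algebra $\mathcal{A}(\mathbf{x}, Q_H)$ splits as a tensor product of the $\mathcal{A}(\mathbf{x}, Q_{H_i})$, and the concatenated mutation sequence $\mathbf{r} := \mathbf{r}_1\mathbf{r}_2\cdots\mathbf{r}_r$ applied to $(\mathbf{x}, Q_H)$ yields a single seed simultaneously realizing $z_1,\dots,z_r$ at positions $f_1,\dots,f_r$. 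In particular the $z_i$ are pairwise compatible in $\mathcal{A}(Q_H)$.

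To transfer this to $\mathcal{A}(\mathbf{x}, Q_G)$, I appeal to the final assertion of \cref{lem:setup-for-twist-cluster-mono}: $Q_H$ sits inside $Q_G$ as an induced subquiver, all of whose vertices are mutable in $Q_G$ (by \cref{def:dual-quiver}, every vertex of $Q_G$ is mutable). Applying \cref{prop:F-g-adding-mutable} with $Q = Q_H$, $Q' = Q_G$, and the mutation sequence $\mathbf{r}$, the cluster variable $\tilde{z}_i$ at position $f_i$ in $\mu_{\mathbf{r}}(\mathbf{x}, Q_G)$ satisfies $F_{\tilde{z}_i}^{Q_G} = F_{z_i}^{Q_H} = D_{H_i}$. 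Since all the $\tilde{z}_i$ appear in the common seed $\mu_{\mathbf{r}}(\mathbf{x}, Q_G)$, they are pairwise compatible in $\mathcal{A}(\mathbf{x}, Q_G)$, exhibiting $D_H = \prod_i F_{\tilde{z}_i}^{Q_G}$ as a product of compatible $F$-polynomials, as desired. The only delicate point in the argument is the induced-subquiver hypothesis needed to invoke \cref{prop:F-g-adding-mutable}, which was established in \cref{lem:setup-for-twist-cluster-mono}; with that in hand, the already established compatibility in the disjoint-union cluster algebra $\mathcal{A}(Q_H)$ transfers for free.
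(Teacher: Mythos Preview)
Your proof is correct and follows essentially the same approach as the paper: invoke \cref{thm:dimer-poly-is-F-poly} on each $H_i$, use \cref{prop:F-g-adding-mutable} together with the induced-subquiver statement from \cref{lem:setup-for-twist-cluster-mono} to lift to $\mathcal{A}(Q_G)$, and use disjointness of the $Q_{H_i}$ to obtain compatibility. The only cosmetic differences are that the paper applies \cref{prop:F-g-adding-mutable} separately to each $Q_{H_i}\hookrightarrow Q_G$ rather than first passing through $Q_H$, and that the paper explicitly sets aside the components $H_i$ that are single edges (for which $D_{H_i}=1$ and \cref{thm:dimer-poly-is-F-poly} is not literally applicable since there is no reduction sequence); you may wish to add one sentence handling that trivial case.
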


\begin{proof}
If $H_i$ is a single edge, then $D_{H_i}=1$, so we may ignore those components.

    For each $i$ where $H_i$ is not a single edge, the quiver $Q_{H_i}$ is a connected component of the quiver $Q_H$. By \cref{lem:setup-for-twist-cluster-mono} the quiver $Q_H$ is an induced subquiver of $Q_G$, implying that $Q_{H_i}$ is also an induced subquiver of $Q_G$. Since $H_i$ has property $(*)$ by construction, $D_{H_i}(\mathbf{y})$ is the $F$-polynomial of a cluster variable $z'_i$ in $\mathcal{A}(\mathbf{x}, Q_{H_i})$ by \cref{thm:dimer-lattice}. By \cref{prop:F-g-adding-mutable}, $D_{H_i}(\mathbf{y})$ is also the $F$-polynomial of a cluster variable $z_i$ in $\mathcal{A}(\mathbf{x}, Q_{G})$, and in particular can be obtained from the initial seed just by mutating at the subquiver $Q_{H_i}$. Since the quivers $Q_{H_i}$ and $Q_{H_j}$ are disjoint for $i \neq j$, $z_i$ and $z_j$ are compatible. 
\end{proof}

\begin{proof}[Proof of \cref{thm:pluckers-cluster-monomials}]
Fix a reduced plabic graph $G$. Fix $P_J$ which is not identically zero on $\Pi_G^{\circ}$ or equivalently, $J$ such that $G$ has an almost-perfect matching with boundary $J$. 

We will show that $P_J \circ \tau$ is a cluster monomial. By \cref{thm:twist-quasi-cluster}, this implies that $P_J$ is a cluster monomial as well.

Recall from \cref{prop:twist-in-dimers} that the formula for $P_J \circ \tau$ in terms of the seed $\Sigma_G=(\{x_f\}, \frzQ_G)$ is
    \begin{align*}
    P_J \circ \tau &= \mathbf{x}^{\bd(J)} \sum_{N: \partial N =J} \mathbf{x}^{\hvec^G_N}
        .\end{align*}
        
        If $G$ has a unique matching with boundary $J$, then this matching is $\hat{0}_J$ and the sum above is trivial. So in this case, 
        \[P_J \circ \tau= \mathbf{x}^{\bd(J)} \mathbf{x}^{\hvec^G_{\hat{0}_J}}.\]
        Notice that since no faces can be flipped, any face of $G$ not adjacent to the boundary of the disk satisfies $w_f-1 \ge |\hat{0}_J \cap f|$, so the entry $h_f$ of $\hvec^G_{\hat{0}_J}$ is nonnegative. We also have $\bd_f(J) =0$ for such faces. Thus, for any mutable vertex $f$ of $\frzQ_G$, $x_f$ appears in $P_J \circ \tau$ with a nonnegative power. So $P_J \circ \tau$ is a cluster monomial.

        If $G$ has more than one almost-perfect matching with boundary $J$, let $H= H_1 \sqcup \cdots \sqcup H_r$ be as in \cref{lem:setup-for-twist-cluster-mono}. Say $H_1, \dots, H_p$ have at least one non-infinite face, and $H_{p+1}, \dots, H_r$ are single edges. By \cref{cor:plabic-graph-f-poly-product}, the dimer face polynomial $D_H(\mathbf{y})= \prod_i D_{H_i}$ is a product of compatible $F$-polynomials in $\mathcal{A}(\Sigma_G)$. For $i \in [p]$, let $z_i$ denote the cluster variable in $\mathcal{A}(\Sigma_G)$ whose $F$-polynomial is $D_{H_i}(\mathbf{y})$. 
        
        By an argument similar to the proof of \cref{cor:easy-cluster-expansion}, 
        \[z_i = \mathbf{x}^{\mathbf{g}_i} \sum_{M \in \mathcal{D}_{H_i}} \mathbf{x}^{\hvec^G_M-\hvec^G_{\hat{0}_{H_i}}}.
        \]
        By \cref{thm:dimer-poly-is-F-poly} and \cref{prop:F-g-adding-mutable}, for $f \in \faces(H_i)$, the $g$-vector entry is $g_f=w_f - |\hat{0}_{H_i} \cap f|-1$. For other faces of $G$ corresponding to mutable vertices of $\frzQ_G$, the $g$-vector entry is 
        \[g_f= \max_{M \in \mathcal{D}_{H_i}} |M \cap f| - |\hat{0}_{H_i} \cap f|.\]
        This follows by a very similar argument to the proof of \cref{thm:cluster-stuff-for-link-diag}. In particular, $g_f$ is zero unless $f$ is a face of $H_i$ or is adjacent to a face of $H_i$. 

        Now, we consider the formula for the cluster monomial $z_1 z_2 \dots z_p$ in terms of the initial cluster. Using that the dimer lattice on $H$ is the product of dimer lattices on $H_1, \dots, H_r$ and that the components $H_{p+1}, \dots, H_r$ do not contribute to the exponents for terms of the sum, we obtain
        \[z_1 \cdots z_p = \left(\prod_{i=1}^p \mathbf{x}^{\mathbf{g}_i} \right) \sum_{M \in \mathcal{D}_{H}} 
        \mathbf{x}^{\hvec^G_M-\hvec^G_{\hat{0}_{H}}}.\]

        If $f\in \faces(H)$, then at most one $z_i$ has $g$-vector with nonzero $g_f$. So the exponent of $x_f$ in $\prod_{i=1}^p \mathbf{x}^{\mathbf{g}_i}$ is again 
        \[w_f - |\hat{0}_{H_i} \cap f|-1 = w_f - |\hat{0}_{H} \cap f|-1.\]
        Otherwise, if $f \notin \faces(H)$ and corresponds to a mutable vertex in $\frzQ_G$, the exponent of $x_f$ in $\prod_{i=1}^p \mathbf{x}^{\mathbf{g}_i}$ is
        \begin{equation}\label{eq:exponent}\sum_{i=1}^p \max_{M \in \mathcal{D}_{H_i}} |M \cap f| - |\hat{0}_{H_i} \cap f| =  \sum_{i=1}^r \max_{M \in \mathcal{D}_{H_i}} |M \cap f| - |\hat{0}_{H_i} \cap f|
        = \max_{M \in \mathcal{D}_{H}} |M \cap f| - |\hat{0}_{H} \cap f|.\end{equation}
        The first equality holds because for $i=p+1, \dots, r$, the term corresponding to $H_i$ is zero, and the second holds because $H= H_1 \sqcup \cdots \sqcup H_r$.
        Note that the rightmost expression in \eqref{eq:exponent} is at most $w_f-1 - |\hat{0}_H \cap f|$, because if it were $w_f- |\hat{0}_H \cap f|$, then the face $f$ could be flipped in some matching and thus would be a face of $H$, contradicting our assumption.

        In summary, by multiplying $z_1 \cdots z_p$ by some monomial $Z$ in $\{x_f: f \notin \faces(H)\}$, we can obtain
        \begin{equation}\label{eq:easy-cluster-mono}
        Z \cdot z_1 \cdots z_p = \mathbf{x}^{\hvec^G_{\hat{0}_H}} \sum_{M \in \mathcal{D}_H} \mathbf{x}^{\hvec^G_M - \hvec^G_{\hat{0}_H}}= \sum_{M \in \mathcal{D}_H} \mathbf{x}^{\hvec^G_M}.
        \end{equation}
As all cluster variables $\{x_f: f \notin \faces(H)\}$ are compatible with $z_1, \dots, z_p$, the left hand side of \eqref{eq:easy-cluster-mono} is a cluster monomial. 

Finally, a matching $M$ of $H$ differs from a matching $N$ of $G$ with boundary $J$ only at the edges adjacent to a boundary vertex $j \in J$. So $|M \cap f|$ differs from $|N \cap f|$ only if $x_f$ is a frozen variable in $\Sigma_G$, and in this case, 
\[|M \cap f|= |N \cap f| - |\{j \in J\colon \text{vertex }j \in f\}|.\]
So we may rewrite \eqref{eq:easy-cluster-mono} using almost perfect matchings of $G$ with boundary $J$ as
\[ Z \cdot z_1 \cdots z_p =\left(\prod_{f \in \faces(G)} x_f^{|\{j \in J\colon j \in f\}|}\right)
\sum_{N: \partial N=J} \mathbf{x}^{\hvec^G_N}.\]
By \cref{prop:twist-in-dimers}, 
\[\left(\frac{\mathbf{x}^{\bd(J)}}{\prod_{f \in \faces(G)} x_f^{|\{j \in J\colon \text{vertex }j \in f\}|}}\right) Z \cdot z_1 \cdots z_p = P_J \circ \tau. \]
The left hand side is also a cluster monomial. By definition, $\bd_f(J)=0$ if $f$ is not adjacent to the boundary of the disk, so $\mathbf{x}^{\bd(J)}$ is a monomial in frozen variables. Also, the denominator above is a product of frozen variables. So the left hand side is a cluster monomial times a Laurent monomial in frozens, and thus is a cluster monomial.
This shows that $P_J \circ \tau$ is a cluster monomial, as desired.
\end{proof}

\section*{Acknowledgements} The authors are grateful to Mario Sanchez and Jim Propp for inspiring conversations. KM is supported by the National Science Foundation under Award No.~DMS-1847284 and Award No.~DMS-2348676. GM is supported by the National Science Foundation under Award No.~DMS-1854162.
 MSB is supported by the National Science Foundation under Award No.~DMS-2103282. 
 Any opinions, findings, and conclusions or recommendations expressed in this material are
those of the author(s) and do not necessarily reflect the views of the National Science
Foundation.

\bibliographystyle{amsplain}
\bibliography{refs}

\end{document}